\title{\textbf{$\ell^2$ Inference for Change Points in High-Dimensional Time Series via a Two-Way MOSUM}}
\author{Jiaqi Li\thanks{Department of Mathematics and Statistics, Washington University in St. Louis; Email: lijiaqi@wustl.edu}\qquad Likai Chen\thanks{Department of Mathematics and Statistics, Washington University in St. Louis; Email: likai.chen@wustl.edu}\qquad Weining Wang\thanks{Department of Economics and Related Studies, University of York; Email: weining.wang@york.ac.uk}\qquad Wei Biao Wu\thanks{Department of Statistics, University of Chicago; Email: wbwu@galton.uchicago.edu}}
\date{}
\begin{document}

\maketitle

\begin{abstract}
We propose an inference method for detecting multiple change points in high-dimensional time series, targeting dense or spatially clustered signals. Our method aggregates moving sum (MOSUM) statistics cross-sectionally by an $\ell^2$-norm and maximizes them over time. We further introduce a novel Two-Way MOSUM, which utilizes spatial-temporal moving regions to search for breaks, with the added advantage of enhancing testing power when breaks occur in only a few groups. The limiting distribution of an $\ell^2$-aggregated statistic is established for testing break existence by extending a high-dimensional Gaussian approximation theorem to spatial-temporal non-stationary processes. Simulation studies exhibit promising performance of our test in detecting non-sparse weak signals. Two applications, analyzing equity returns and COVID-19 cases in the United States, showcase the real-world relevance of our proposed algorithms.

\vspace{0.9cm}
\noindent\textbf{\textit{Keywords:}} multiple change-point detection, $\ell^2$ inference for break existence, Two-Way MOSUM, Gaussian approximation, temporal and spatial dependence, nonlinear time series

\end{abstract}

\setlength{\parindent}{15pt}

\let\altH\H

\def\One{\mathbf{1}}

\def\III{\text I}

\def\A{\mathcal A}
\def\B{\mathcal B}
\def\C{\mathcal C}
\def\D{\mathcal D}
\def\E{\mathcal E}
\def\F{\mathcal F}
\def\G{\mathcal G}
\def\H{\mathcal H}
\def\I{\mathcal I}
\def\K{\mathcal K}
\def\L{\mathcal L}
\def\M{\mathcal M}
\def\N{\mathcal N}
\def\P{\mathcal P}
\def\Q{\mathcal Q}
\def\S{\mathcal S}
\def\T{\mathcal T}
\def\U{\mathcal U}
\def\V{\mathcal V}
\def\W{\mathcal W}
\def\Z{\mathcal Z}

\def\BB{\mathbb B}
\def\EE{\mathbb E}
\def\II{\mathbb I}
\def\NN{\mathbb N}
\def\PP{\mathbb P}
\def\RR{\mathbb R}
\def\ZZ{\mathbb Z}

\def\ss{\mathbf s}

\def\bbell{\boldsymbol \ell}

\newtheorem{definition}{Definition}
\newtheorem{assumption}{Assumption}
\newtheorem{theorem}{Theorem}
\newtheorem{proposition}{Proposition}
\newtheorem{corollary}{Corollary}
\newtheorem{lemma}{Lemma}
\newtheorem{remark}{Remark}
\newtheorem{example}{Example}

\newpage

\section{Introduction}\label{sec_intro}
Change-point analysis is a fundamental problem in various fields of applications: in economics, the break effects of policy are of particular interest (\cite{chen_dynamic_2021}); in biology, high-amplitude co-fluctuations are utilized in cortical activity to represent dynamics of brain functional connectivity (\cite{faskowitz_edge-centric_2020,zamani_esfahlani_high-amplitude_nodate}); in network analysis, change-point detection can be employed for the anomaly of network traffic data caused by attacks (\cite{levy-leduc_detection_2009}), etc. The above list of scenarios spans a wide range of data structures, including high-dimensional data with temporal and cross-sectional dependence, which pose substantial challenges to change-point analysis. The paper aims to address this issue by providing theory on multiple break inference for high-dimensional time series allowing both temporal and spatial dependence.

There is a sizable literature on high-dimensional change-point detection. Various studies consider data aggregation, and many of them consider $\ell^{\infty}$-based {methods}. See, for example, \textcite{shao_self-normalized_2010,jirak2015uniform,chen_inference_2022,yu_finite_2021}. Most aforementioned studies focus on sparse signals, while an $\ell^2$-based approach favors non-sparse weak signals, and this is also the focus of this study. In the meanwhile, the $\ell^2$-type aggregation is quite common in the literature. \textcite{bai_common_2010} evaluates the performance of a least square estimation and establishes a distribution theory for single change-point estimator in panel data without cross-sectional dependence; \textcite{zhang_detecting_2010} develops a recursive algorithm based on {sums} of chi-squared statistics across samples with independent and identically distributed (i.i.d.) Gaussian noises, which could be viewed as an extension of the circular binary segmentation algorithm by \textcite{olshen_circular_2004}. In addition, \textcite{horvath2012change,chan2013darling,horvath2017asymptotic,bai2020estimation,liu2022change} study $\ell^2$-based cumulative sum (CUSUM) statistics to estimate and make inference for change points in linear regression or panel models. Although their methods primarily concentrate on scenarios with a single break per time series, their approaches bear similarities to ours, with the key difference being our utilization of a moving sum (MOSUM) variant. More recently, \textcite{enikeeva_high-dimensional_2019} proposes a linear and a scan CUSUM statistic with the minimax bound established for the change-point estimator of i.i.d. Gaussian data; \textcite{chen_high-dimensional_2020} introduces a coordinate-wise likelihood ratio test for online change-point detection for independent Gaussian data and present the response delay rate. In addition to aggregation, there are other well-known techniques for high-dimensional change-point analysis, including the U-statistics as demonstrated by \textcite{wang2020hypothesis, wang2020dating,yu_finite_2021}, threshold-based approaches proposed by \textcite{cho2015multiple,cho_change-point_2016}, and a projection-based method developed by \textcite{wang_samworth_2018}. In this paper, we consider a maximized $\ell^{2}$-type test statistic to adapt to different datasets containing signals of distinct temporal-spatial properties and errors with complex dependency structures.

Besides the challenge of change-point test brought by high-dimensionality, studies on multiple change-point detection have a long-standing tradition. In general, two broad classes of methods have been developed: model selection and hypothesis testing. Model selection approaches aim to treat change-point signals as parameters and derive estimators for them, such as the PELT algorithm (\cite{killick_optimal_2012}) and the fused LASSO penalty (\cite{tibshirani_spatial_2008,li_panel_2016,lee_lasso_2016}). \textcite{cho2022two} proposes a localised application of the Schwarz criterion for multiscale change points. \textcite{kuchibhotla2021uniform,xu2022change,wang2022optimal} consider change-point analysis for linear regression models featuring varying parameters, encompassing a broad range of nonlinear time series. As for testing, a traditional approach is binary segmentation developed by \textcite{scott_cluster_1974}. Its variants are considered in \textcite{bai_estimating_1998,olshen_circular_2004}. Moreover, \textcite{fryzlewicz2014wild} introduces a wild binary segmentation and \textcite{cho2015multiple} proposes a sparsified binary segmentation algorithm.
\textcite{yu2020review} reviews diverse minimax rates in change-point analysis literature.

In the context of testing, MOSUM is a notably popular technique for both univariate and multivariate time series, such as \textcite{huskova_permutation_2001} on i.i.d. data, \textcite{wu_inference_2007,eichinger_mosum_2018} on temporal dependent data, and \textcite{kirch_moving_2021} on multivariate time-continuous stochastic processes. MOSUM is attractive due to the simplicity of implementation and an overall control of significance level which avoids issues in multiple testing. However, for high-dimensional time series, when a MOSUM statistic aggregates all the series by an $\ell^2$-norm, the testing power would suffer if breaks only occur in a {portion} of them. Hence, in this paper, we propose a novel spatial-temporal moving sum algorithm called Two-Way MOSUM. This method utilizes moving spatial-temporal  regions to search for temporal breaks and locate spatial neighborhoods where temporal breaks occur. Such moving regions can be viewed as a generalized concept of the moving windows in previous MOSUM, which can aggregate signals adaptive to cross-sectional group structures to enhance testing power. We emphasize that overlapping groups are allowed in our method, and therefore the prior knowledge of groups is not a requirement for effective detection of breaks, since one can always search for all possible grouping scenarios. Nevertheless, prior grouping information is available in numerous data applications, which can boost the testing power by decreasing the number of searching windows. See, for example, in neuroscience, regions of interest (ROI) in human brains can be assigned to networks by different functions and the ROIs from the same network will undergo simultaneous functional change points (\cite{barnett_change_2016}); in finance, stock prices of industries are often grouped by market capitalization and a few number of sectors may experience market shocks at the same time (\cite{onnela_dynamics_2003}). Note that all the theoretical comparison of testing power in this paper is only among MOSUM-based statistics.

Although both $\ell^2$ aggregation and MOSUM statistics have been well investigated respectively, it is quite challenging to rigorously develop an inference theory for $\ell^2$-based MOSUM statistics to detect the existence of breaks for the high-dimensional data. To be more specific, when we take the maximum of $\ell^2$ statistics obtained from all the rolling windows over time, these aggregated statistics are temporally dependent even though the underlying errors may be independent. Most of the previous works concerning $\ell^2$-based statistics only provide inference for the break estimators by assuming the existence of a break, such as an $\ell^2$-type break location estimator and its inference introduced by \textcite{bai_common_2010} for single change-point estimation with cross-sectionally independent errors.
To the best of our knowledge, this study is the first to establish the limiting distribution of an $\ell^2$-type test statistic to facilitate the inference for change-point detection, which allows both spatial and temporal dependence.

To summarize, we contribute to the literature in both theory and algorithms. On the theory front, we propose an $\ell^2$-type MOSUM test statistic for multiple break detection in high-dimensional time series, allowing both temporal and spatial dependence. The Gaussian approximation (GA) result under the null is provided as a theoretical foundation to backup our detection of breaks (cf. Theorems \ref{thm1_constanttrend} \& \ref{thm3_nonli}). Correspondingly, we introduce an innovative Two-Way MOSUM statistic to account for spatially-clustered signals (cf. Theorem \ref{thm2_constanttrend}). Consistency results of estimators for number of breaks, temporal and spatial break locations, as well as break sizes are all established (cf. Theorem \ref{thm_consistency} \& Proposition \ref{prop_consistency}).

\textit{Roadmap.}
The rest of this article is organized as follows. Section \ref{sec_test} is devoted to the test specification and asymptotic properties with cross-sectional independence assumed. 
Section \ref{sec_test_spatial} serves as an extension to the cases where breaks might exist only in a subset of component series (clustered signals).
We follow with Section \ref{sec_test_nonlinear} as a generalization to nonlinear time series with spatial space in $\ZZ^v$, allowing both temporal and cross-sectional dependence.
In Section \ref{sec_data}, we deliver two empirical applications on testing structural breaks for the stock return and COVID-19 data.
The simulation studies and proofs are deferred to Appendix.

\textit{Notation.}
For a vector $v=(v_1,\ldots,v_d)\in\RR^d$ and $q>0$, we denote $|v|_q=(\sum_{i=1}^d|v_i|^q)^{1/q}$ and $|v|_{\infty}=\max_{1\le i\le d}|v_i|$. 
For $s>0$ and a random vector $X$, we say $X\in\L^s$ if $\lVert X\rVert_s=[\EE(|X|_2^s)]^{1/s}<\infty$, and denote $\E_0(X)=X-\EE(X)$. For two positive number sequences $(a_n)$ and $(b_n)$, we say $a_n=O(b_n)$ or $a_n\lesssim b_n$ (resp. $a_n\asymp b_n$) if there exists $C>0$ such that $a_n/b_n\le C$ (resp. $1/C\le a_n/b_n\le C$) for all large $n$, and 
say $a_n=o(b_n)$ or $a_n\gg b_n$ if $a_n/b_n\rightarrow0$ as $n\rightarrow\infty$. Let $(X_n)$ and $(Y_n)$ to be two sequences of random variables. Write $X_n=O_{\PP}(Y_n)$ if for $\forall \epsilon>0$, there exists $C>0$ such that $\PP(|X_n/Y_n|\le C)>1-\epsilon$ for all large $n$, and say $X_n=o_{\PP}(Y_n)$ if $X_n/Y_n\rightarrow 0$ in probability as $n\rightarrow\infty$.

\section{Testing and Estimating High-Dimensional Change Points}\label{sec_test}

In this section, we propose a test statistic based on an $\ell^2$ aggregated MOSUM and investigate its theoretical properties to test the presence of structural breaks. To formulate our model, let $Y_1,\ldots,Y_n$ be observed $p$-dimensional random vectors satisfying 
\begin{equation}
    \label{eq_model}
    Y_t=\mu(t/n)+\epsilon_t, \qquad t=1,\ldots,n,
\end{equation}
where $(\epsilon_t)_t$ is a sequence of $p$-dimensional stationary errors with zero-mean and $\mu(\cdot)$ is a $p$-dimensional vector of unknown trend functions. Our main interest is to detect the potential change points occurring on the trend function
\begin{equation}
    \label{eq_trend}
    \mu(u)=\mu_0+\sum^{K }_{k=1}\gamma_k\One_{u\ge u_k},
\end{equation}
where $K \in\NN$ is the number of structural breaks which is unknown and could go to infinity as $n$ increases; $u_1,\ldots,u_{K }$ are the time stamps of the breaks with $0=u_0<u_1<\ldots<u_{K }<u_{K +1}=1$ and the minimum gap $\kappa_n=\min_{0\le k\le K }(u_{k+1}-u_k)$, where $\kappa_n>0$ is allowed to tend to 0 as $n\rightarrow\infty$; $\mu_0\in\RR^p$ represents the benchmark level when no break occurs and $\gamma_k\in\RR^p$ is the jump vector at the time stamp $u_k$ with size $|\gamma_k|_2$.

It should be noted that not all entries of $\gamma_k$ need to be nonzero, which allows for cases where only a subset of time series experience a jump at the time stamp $u_k$. In such situations, it is preferable to aggregate only the series with breaks rather than all of them, as it can improve the testing power. A more detailed discussion of this scenario is given in Section \ref{sec_test_spatial} where the Two-Way MOSUM method is introduced. In this section, for readability, we focus solely on the improved MOSUM that aggregates all time series. For brevity, we assume the time series to be linear and cross-sectionally independent throughout Sections \ref{sec_test} and \ref{sec_test_spatial}, which will be relaxed to nonlinear and cross-sectionally dependent cases in Section \ref{sec_test_nonlinear}.

\subsection{$\ell^2$-Based Test Statistics}
This subsection is devoted to test the null hypothesis:
$$\H_0: \quad \gamma_1=\gamma_2=\ldots=\gamma_{K }=0,$$
which denotes the case with no breaks, against the alternative $\H_A$: there exists $k\in\{1,\ldots,K \}$, such that $\gamma_k\neq0$. Note that the number of breaks $K $ is allowed to go to infinity under some condition on the separation of breaks. We refer to a detailed discussion below Definition \ref{def_separation} in Section \ref{sec_GA}.

The primary reason for testing the presence of structural breaks is to prevent model misspecification. If we apply a change-point algorithm to a data generating process without any actual breaks, we may obtain false break estimates, leading to erroneous conclusions. Therefore, it is {necessary} to test for the existence of breaks before conducting further analysis. However, previous studies on change points in high-dimensional time series mostly focus on inference for break location estimators, such as Theorem 2.2 in \textcite{horvath2017asymptotic}, which assumes the existence of breaks. Although there are some available literature on change-point testing for high-dimensional time series (see \textcite{jirak2015uniform,chen_inference_2022,wang2022inference}), there is no existing theory of $\ell^2$-based statistics for testing break existence, which can be {necessary and} highly beneficial in identifying dense signals.

We define a jump vector $J(u)$ at the time point $u$ as $J(u)=0$ when no break occurs at the time stamp $u$, and 
$J(u)=\gamma_i$ when $u=u_i$ for some $1\le i\le K $. Intuitively, we can test the existence of breaks by evaluating the jump estimate
$\hat J(i/n)=\hat{\mu}^{(l)}_i-\hat{\mu}^{(r)}_i$, where 
\begin{equation}
    \label{eq_localest}
    \hat{\mu}^{(l)}_i=\hat{\mu}^{(l)}(i/n)=\frac{1}{bn}\sum_{t=i-bn}^{i-1}Y_t,\quad \hat{\mu}^{(r)}_i=\hat{\mu}^{(r)}(i/n)=\frac{1}{bn}\sum_{t=i}^{i+bn-1}Y_t,
\end{equation}
are the local averages on the left and {the} right hand sides of the time point $i/n$, respectively, and $b$ is a bandwidth parameter satisfying $b\rightarrow 0$ and $bn\rightarrow\infty$. {Without loss of generality, we assume that $bn$ is an integer.} We shall reject the null if $|\hat J(i/n)|_2$ is too large. To this end, we shall develop an asymptotic distributional theory which appears to be highly nontrivial.

Throughout Sections \ref{sec_test} and \ref{sec_test_spatial}, we assume that there is no dependence between component processes $(\epsilon_{t,j})_{t\in\ZZ}$, $1\le j\le p$, and we later relax this restriction to allow for weak cross-sectional dependence in Section \ref{sec_test_nonlinear}. To make all $p$ components in $\hat J(i/n)$ comparable, we need to specify the error process $(\epsilon_t)_{t\in\ZZ}$ and obtain the long-run variances for standardization. In particular, we model $\epsilon_t$ as a {vector moving average (VMA) process} in Sections \ref{sec_test} and \ref{sec_test_spatial}, which embraces many important
time series models such as a vector autoregressive moving averages (VARMA) model, {and we discuss a more general $\epsilon_t$ in Section \ref{sec_test_nonlinear} using functional dependence measure, which allows nonlinear forms.} Let 
\begin{equation}
    \label{eq_epsilon_linear}
    \epsilon_t=\sum_{k\ge 0}A_k\eta_{t-k},
\end{equation}
where $\eta_t\in\RR^{\tilde p}$ are i.i.d. random vectors with zero mean and an identity covariance matrix with $p\le \tilde p\le c p$, for some constant $c>1$. The coefficient matrices $A_k$, $k\ge 0$, take values in $\RR^{p\times \tilde p}$ such that $\epsilon_t$ is a proper random vector. {Define $\tilde A_0=\sum_{k\ge0}A_k$.} Then the long-run covariance matrix of $\epsilon_t$ and the diagonal matrix of the long-run standard deviations are
\begin{equation}
    \label{eq_longrunCov}
    \Sigma = \tilde A_0\tilde A_0^{\top}=(\sigma_{i,j})_{i,j=1}^p, \quad \Lambda = \text{diag}(\sigma_{1},\sigma_{2},\ldots,\sigma_{p}),
\end{equation}
respectively, where $\sigma_j^2=\sigma_{j,j}\ge c_{\sigma}$, for some constant $c_{\sigma}>0$, is the long-run variance of the $j$-th component series. Note that in Sections \ref{sec_test} and \ref{sec_test_spatial}, we assume $p=\tilde p$, and all $A_k$, $k\ge0$, are diagonal matrices, which indicates the independence between the component processes $(\epsilon_{t,j})_{t\in\ZZ}$.
We relax this assumption to weak cross-sectional dependence for a general $\epsilon_t$ in Section \ref{sec_test_nonlinear}, and also provide the results for the same linear $\epsilon_t$ in (\ref{eq_epsilon_linear}) as a special case in Appendix \ref{subsec_linear_sec_dep}.

Following the previous intuition, we test the existence of breaks by evaluating the gap vectors $\hat J(\cdot)$. Namely, we standardize $\hat J(\cdot)$ by the long-run standard deviations of each time series, that is, for $bn+1\le i\le n-bn$,
\begin{align}
    \label{eq:defvi}
    V_i=\Lambda^{-1}\hat J(i/n)=\Lambda^{-1}(\hat\mu^{(l)}_i-\hat\mu^{(r)}_i).    
\end{align}
To conduct the change-point detection with $p\rightarrow\infty$, we take the $\ell^2$ aggregation of each $V_i$ in the cross-sectional dimension, i.e. $|V_i|_2^2$, to capture dense signals. Note that by model (\ref{eq_model}), the random vector $V_i$ involves both the signal part $\EE V_i$ and the error part $V_i-\EE V_i$. We define
\begin{equation}
    \label{eq_thm1_c_def}
    \bar{c}=\sum_{j=1}^pc_j, \quad \text{where }c_j= \text{Var}(V_{i,j}),
\end{equation}
and $V_{i,j}\in\RR$ is the $j$-th coordinate of $V_i$. Since no break exists under the null hypothesis, i.e. $\EE V_i=0$, it follows that $|V_i|_2^2-\bar{c}$ is a centered statistic under the null. The detailed evaluation of $\bar{c}$ is deferred to Remark \ref{rmk_centering}. Finally, we move the windows in the temporal direction to find the maximum and formulate our $\ell^2$-based test statistic as follows:
\begin{equation}
    \label{eq_teststats}
    \Q_n=\max_{bn+1\le i\le n-bn}\big(|V_i|_2^2-\bar{c}\big).
\end{equation}
We consider $\Q_n$ as a feasible test statistic by assuming that the long-run standard deviation $\Lambda$ is known. The estimated long-run variances via a robust M-estimation method proposed by \textcite{chen_inference_2022} 
are utilized in Section \ref{sec_data} for applications, and the details are deferred to Appendix \ref{subsec_longrun}.

It is worth noticing that when breaks are sparse in the cross-sectional components, an $\ell^{\infty}$-type statistic, i.e.,
$\Q_{n,\infty}=\max_{bn+1\le i\le n-bn}|V_i|_{\infty}$, could be more powerful (\cite{chen_inference_2022}) than an $\ell^2$-based one. However, in the presence of weak dense signals, the $\ell^{\infty}$ {test} would have power loss (\cite{chen_monitoring_2022}) while the $\ell^2$-type statistic can boost the power {due to the aggregation of weak signals}; see Remark \ref{11} for a simple comparison. The current study targets change-point detection with non-sparse or spatially clustered signals. An $\ell^2$-based test statistic $\Q_n$ is therefore being proposed.

\begin{remark}[Comparison of $\ell^2$ and $\ell^{\infty}$ statistics]\label{11}
Here we present a simulation study to intuitively show that an $\ell^2$-based test statistic is generally more powerful in detecting weak dense signals compared to an $\ell^{\infty}$ one, while in the case of sparse signals, an $\ell^{\infty}$ type statistic appears better. Specifically, we perform a single change-point detection using test statistics $\Q_n$ and $\Q_{n,\infty}$ respectively, and compare their testing powers with varied proportions of cross-sectional dimensions containing jumps. The errors are generated from MA($\infty$) models defined in (\ref{eq_epsilon_linear}) with $\eta_t\sim t_9$ and the sample size $n=100$. We consider $p=50, 100$ and the window size $bn=20$. We set the coefficient matrix $A_k$ in (\ref{eq_epsilon_linear}) to be $A_k=\text{diag}(a_1k^{-3/2},a_2k^{-3/2},\ldots,a_pk^{-3/2})$, where $a_1,\ldots,a_p$ are uniformly ranging from $0.5$ to $0.9$. For each time series with a break, the jump size is $1$. All the reported powers in Figure \ref{fig_sim_compare} are averaged over $1000$ replicates. 
Intuitively, our test statistic $\Q_n$ incorporates the term $|V_i|_2$, which aggregates dense signals in a linear fashion with respect to the number of components $p$, while the standard deviation of weakly dependent random noises is aggregated on the order of $\sqrt{p}$. As a result, an $\ell^2$-type statistic is better suited for identifying dense signals, while its performance for sparse signals may be inferior since aggregating sparse signals will only add noise without any significant signal. After introducing Theorem \ref{thm1_constanttrend}, we shall provide a theoretical power comparison (cf. Remark \ref{remark_linf}). 
\end{remark}

\begin{figure}[!htbp]
\centering
\includegraphics[width=1\linewidth]{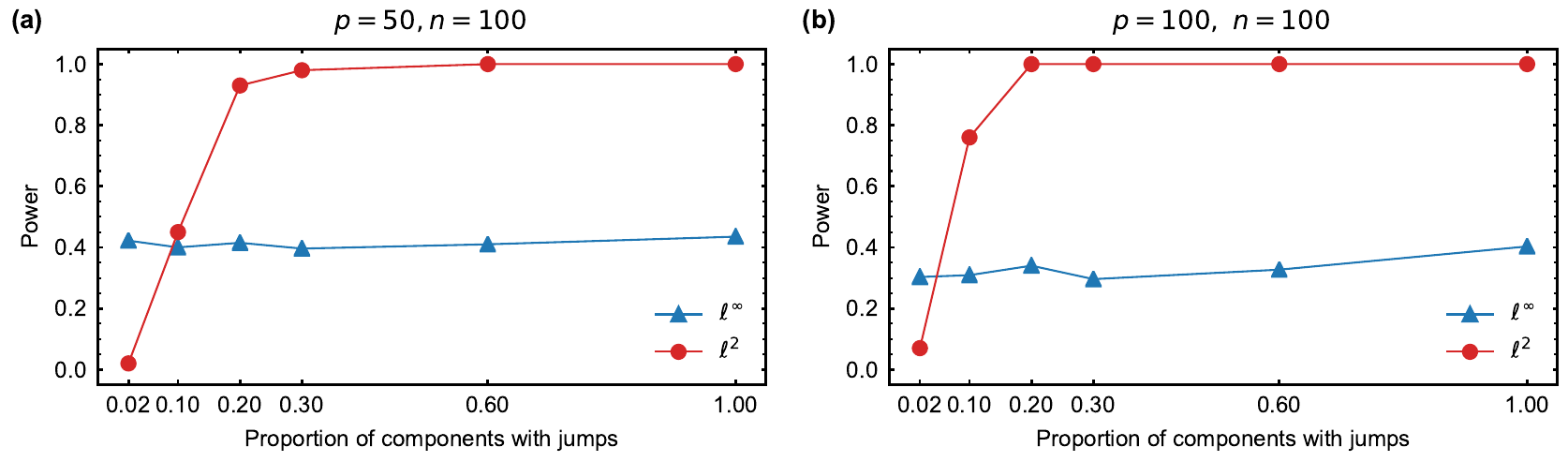}  
\caption{Power comparison of $\ell^{\infty}$ MOSUM and $\ell^2$ MOSUM.}
\label{fig_sim_compare}
\end{figure}

\subsection{Asymptotic Properties of Test Statistics}

To conduct the test, it is essential to understand the asymptotic behavior of the test statistic $\Q_n$. However, deriving the limiting distribution of $\Q_n$ under the null is highly challenging. This is because, even if the underlying errors are i.i.d., the standardized jump estimator $V_i$ defined in \eqref{eq:defvi} is still dependent over $i$ due to the overlapped observations among different moving windows. In this section, we provide an intuition for the theoretical proofs of our first main theorem, which extends the high-dimensional GA for dependent data. 

First, recall that the random vector $V_i$ can be decomposed into the expectation $\EE V_i$ and the deviation part $V_i-\EE V_i$. We have $\EE V_i=0$ for any $i$ under the null hypothesis. Let
\begin{equation}
    \label{eq_thm1_x_def}
    x_{i,j}=(V_{i,j}-\EE V_{i,j})^2-c_j \quad {\text{and}\quad X_j=(x_{bn+1,j},\ldots,x_{n-bn,j})^{\top},}
\end{equation}
where $c_j$ is defined in (\ref{eq_thm1_c_def}). By (\ref{eq:defvi}), we can write the test statistic $\Q_n$ under the null into
\begin{equation}
    \label{eq_thm1_GAform}
    \Q_n =\max_{bn+1\le i\le n-bn} \sum_{j=1}^p x_{i,j}.
\end{equation}
When the errors are cross-sectionally independent, $X_1, \ldots,X_p$ are also independent. Therefore, as $p$ goes to infinity, we can apply the high-dimensional GA theorem to (\ref{eq_thm1_GAform}) to derive the asymptotic distribution of $\Q_n$. In this way, the temporal dependence caused by the overlapped moving windows can be properly dealt with. 
We generalize this result in Section \ref{sec_test_nonlinear} with cross-sectionally dependence allowed between $X_1,\ldots,X_p$.

We introduce the centered Gaussian random vector $\Z=(\Z_{bn+1},\ldots,\Z_{n-bn})^{\top}\in\RR^{n-2bn}$ with covariance matrix $\Xi=\EE(\Z\Z^{\top})\in\RR^{(n-2bn)\times (n-2bn)}$, and denote the $i$-th element in $\Z$ by $\Z_i$. Here $\Xi=(\Xi_{i,i'})_{1\le i,i'\le n-2bn}$ with expression
\begin{equation}
    \label{eq_cov}
    \Xi_{i,i'}= p(bn)^{-2}g\big(|i-i'|/(bn)\big),
\end{equation}
where the function {$g(\cdot):[0,\infty)\mapsto \RR$} is defined as
\begin{equation}
    \label{eq_def_g}
    g(\zeta)=\begin{cases}
    18\zeta^2-24\zeta +8,\quad &0\le \zeta <1, \\
    2\zeta^2-8\zeta +8, \quad &1\le \zeta <2, \\
    0, \quad &\zeta \ge 2.
    \end{cases}
\end{equation}
Note that $g(\zeta)$ has bounded second derivatives except for the point $\zeta=1$.
The matrix $\Xi$ is asymptotically equal to the covariance matrix of $\sum_{j=1}^pX_j\in\RR^{n-2bn}$. The detailed evaluation of $g(\zeta)$ in (\ref{eq_def_g}) is deferred to Lemma \ref{lemma_cov_thm1} in Appendix \ref{sec_proofs}. The regime with $0\leq \zeta<1$ corresponds to correlations of statistics within length of $bn$, while $1\le \zeta <2$ is concerning statistics which are further apart ($bn<|i-i'|<2bn$) and still have weaker correlations. Finally, $\zeta \ge 2$ suggests that statistics beyond $2bn$ are uncorrelated.

By the GA theorem, we shall expect the asymptotic distribution of $\Q_n$ under the null to be approximated by the maximum coordinate of a centered Gaussian vector $\Z$, that is,
\begin{equation}
    \label{eq_GA1}
    \PP(\Q_n\le u)\approx \PP\big(\max_{bn+1\le i\le n-bn}\Z_i\le u\big).
\end{equation}
This result allows us to find the critical value of our proposed test statistic $\Q_n$ by Monte-Carlo replicates. We refer to a simulation study in Appendix \ref{subsec_sim_null}.

\subsection{Gaussian Approximation}\label{sec_GA}

In this section, we provide a theory which implies the critical value of the proposed change-point test. We first consider the simplest setting where the errors are assumed to be cross-sectionally independent, and a MOSUM statistic aggregating all the $p$ series is adopted. The novel Two-Way MOSUM follows in Section \ref{sec_test_spatial}. A generalized case with cross-sectionally dependent errors is investigated in Section \ref{sec_test_nonlinear}.

We begin with two necessary assumptions as follows.
\begin{assumption}[Finite moment]
    \label{asm_finitemoment}
    Assume that the innovations $\eta_{i,j}$ defined in (\ref{eq_epsilon_linear}) are i.i.d. with $\mu_q:=\lVert\eta_{1,1}\rVert_q<\infty$ for some $q\ge8$.
\end{assumption}
\begin{assumption}[Temporal dependence]
    \label{asm_temp_dep} There exist constants $C>0$, $\beta>0$ such that 
    $$\max_{1\le j\le p}\sum_{k\ge h}|A_{k,j,\cdot}|_2/\sigma_j\le C(1\vee h)^{-\beta},$$
    for all $h\ge0$, where $A_{k,j,\cdot}$ is the $j$-th row of $A_k$.
\end{assumption}
\begin{assumption}[Cross-sectional independence]
    \label{asm_sec_indep}
    Assume that for all $k\ge0$, the coefficient matrices $A_k$ defined in (\ref{eq_epsilon_linear}) are diagonal matrices. 
\end{assumption}
Assumption \ref{asm_finitemoment} puts tail assumptions on the moment of the noise sequences in our moving average model (\ref{eq_epsilon_linear}). Assumption \ref{asm_temp_dep} is a very general and mild temporal dependence condition, which requires the polynomial decay rate of the temporal dependence. It also ensures that the long-run variance is finite. Many interesting processes fulfill this assumption. We refer to a specific example in Appendix \ref{subsec_ex_temp_Dep}. Note that we introduce Assumption \ref{asm_sec_indep} for the simplicity to show the GA and it shall be relaxed to allow weak cross-sectional dependence in Section \ref{sec_test_nonlinear}.

Provided all the aforementioned conditions, we state our first main theorem as follows.
\begin{theorem}[GA with cross-sectional independence]
    \label{thm1_constanttrend}
    Suppose that Assumptions \ref{asm_finitemoment}--\ref{asm_sec_indep} are satisfied. Then, under the null hypothesis, for $\Delta_0=(bn)^{-1/3}\log^{2/3}(n)$,
    $$\Delta_1 = \Bigg(\frac{n^{4/q}\log^7(pn)}{p} \Bigg)^{1/6}, \quad \Delta_2 = \Bigg(\frac{n^{4/q}\log^3(pn)}{p^{1-2/q}}\Bigg)^{1/3},$$
    we have
    \begin{equation}
        \sup_{u\in\RR}\Big|\PP(\Q_n\le u)-\PP\big(\max_{bn+1\le i\le n-bn}\Z_i\le u\big)\Big|\lesssim \Delta_0+\Delta_1+\Delta_2,
    \end{equation}
    where the constant in $\lesssim$ is independent of $n,p,b$. If in addition, $\log(n)=o\{(bn)^{1/2}\}$ and
    \begin{equation}
        \label{eq_thm11_o1}
        n^4p^{2-q}\log^{3q}(pn)   {\rightarrow0, }
    \end{equation}
    then we have
    \begin{equation}
        \label{eq_thm11_result}
        \sup_{u\in\RR}\Big|\PP(\Q_n\le u)-\PP\big(\max_{bn+1\le i\le n-bn}\Z_i\le u\big)\Big|\rightarrow 0.
    \end{equation}
\end{theorem}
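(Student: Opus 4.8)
The plan is to recast the statement as a high-dimensional central limit theorem in the cross-sectional index $j$, then combine a Gaussian approximation for sums of independent random vectors with a Gaussian comparison step. Under $\H_0$ the trend in \eqref{eq_model} is constant, so $\mu_0$ cancels in $\hat\mu^{(l)}_i-\hat\mu^{(r)}_i$ and each $V_{i,j}$ in \eqref{eq:defvi} is a centered linear functional of $\{\eta_{t,j}\}_t$ only; hence $x_{i,j}$ in \eqref{eq_thm1_x_def} is centered, and by Assumption \ref{asm_sec_indep} the vectors $X_1,\dots,X_p\in\RR^{d}$, $d:=n-2bn$, are independent, with $\Q_n=\max_{bn+1\le i\le n-bn}\sum_{j=1}^p x_{i,j}$ (this is \eqref{eq_thm1_GAform}). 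Thus $\sum_{j=1}^p X_j$ is a sum of $p$ independent $d$-dimensional vectors, $\Q_n$ is the maximum of its coordinates, the ``dimension'' is $d\asymp n$, and the number of summands is $p\to\infty$; the temporal dependence is confined within each $X_j$ and is harmless for a CLT in $j$.

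The first block of work collects the moment and covariance inputs. Writing $V_{i,j}=\sum_k w_k\,\eta_{i-k,j}$ with deterministic weights $w_k$ (the convolution of the MA coefficients of $\epsilon_{t,j}$ with the $\pm(bn)^{-1}$ window weights), Assumption \ref{asm_temp_dep} gives $\sum_k w_k^2=c_j\asymp(bn)^{-1}$ and $\max_k|w_k|\lesssim(bn)^{-1}$; Rosenthal's inequality and Assumption \ref{asm_finitemoment} then yield $\lVert V_{i,j}\rVert_q\lesssim(bn)^{-1/2}$ uniformly in $i,j$, so $\lVert x_{i,j}\rVert_{q/2}\lesssim(bn)^{-1}$ and, by a union bound over the $\le n$ indices $i$, $\EE\max_i|x_{i,j}|^{q/2}\lesssim n\,(bn)^{-q/2}$ --- i.e. after rescaling by the natural scale $(bn)^{-1}$ the coordinatewise maximum has $(q/2)$-th moment $\lesssim n$. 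For the covariance, independence in $j$ gives $\mathrm{Cov}\big(\sum_j X_j\big)_{i,i'}=\sum_j\mathrm{Cov}(x_{i,j},x_{i',j})$, and expanding the quadratic forms gives $\mathrm{Cov}(x_{i,j},x_{i',j})=2\,\mathrm{Cov}(V_{i,j},V_{i',j})^2+(\EE\eta_{1,1}^4-3)\sum_k w_{i,k}^2w_{i',k}^2$, where the last term is $O((bn)^{-3})$; since $bn\cdot\mathrm{Cov}(V_{i,j},V_{i',j})$ converges uniformly to a piecewise-linear window-overlap kernel $h\big((i-i')/(bn)\big)$ (with $h(0)=2$, $h$ supported on $[0,2)$, independent of $j$ by the $\sigma_j^{-1}$ standardization) and $g=2h^2$ matches \eqref{eq_def_g}, one obtains $\mathrm{Cov}\big(\sum_j X_j\big)_{i,i'}=\Xi_{i,i'}+O\big(p\,(bn)^{-3}\big)$ uniformly (this is Lemma \ref{lemma_cov_thm1}); in particular, using $c_\sigma>0$, $\mathrm{Var}\big(\sum_j x_{i,j}\big)\asymp p\,(bn)^{-2}$ uniformly in $i$, so the target covariance is nondegenerate after normalization.

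The proof then splits into two comparisons. First, apply the high-dimensional Gaussian approximation theorem for sums of independent (non-identically distributed) random vectors, in its version requiring only a finite $(q/2)$-th moment of the coordinatewise maximum --- handled by truncation at a level one optimizes. With $d\asymp n$, $p$ summands, and $\EE\max_i|x_{i,j}|^{q/2}\lesssim n\,(bn)^{-q/2}$, the leading Gaussian-approximation term has order $\big(n^{4/q}\log^7(pn)/p\big)^{1/6}=\Delta_1$ and the truncation remainder has order $\Delta_2$, giving
\[
\sup_{u\in\RR}\Big|\PP(\Q_n\le u)-\PP\big(\max_{bn+1\le i\le n-bn}\widetilde\Z_i\le u\big)\Big|\lesssim\Delta_1+\Delta_2,
\]
where $\widetilde\Z$ is centered Gaussian with the \emph{exact} covariance $\mathrm{Cov}\big(\sum_j X_j\big)$. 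Second, invoke a Gaussian comparison inequality (Slepian-type interpolation together with the anti-concentration bound for maxima of Gaussian vectors): by Lemma \ref{lemma_cov_thm1} the normalized entrywise discrepancy between $\mathrm{Cov}(\sum_j X_j)$ and $\Xi$ is $O((bn)^{-1})$, so the laws of $\max_i\widetilde\Z_i$ and $\max_i\Z_i$ differ by $\lesssim(bn)^{-1/3}\log^{2/3}(n)=\Delta_0$. The triangle inequality yields the first bound of the theorem. For the limit, $\Delta_0\to0\iff\log n=o\{(bn)^{1/2}\}$, while \eqref{eq_thm11_o1} is precisely $\Delta_2\to0$ and, as $p\to\infty$, also forces $\Delta_1\to0$; hence \eqref{eq_thm11_result}.

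The main obstacle is Lemma \ref{lemma_cov_thm1}: one must expand the variances and lag-covariances of the quadratic statistics $x_{i,j}$ over overlapping windows, correctly normalized by the long-run variances, and show simultaneously that the limiting kernel is exactly $g$ in \eqref{eq_def_g} and that the error is uniformly $O(p(bn)^{-3})$ --- small enough that the Gaussian-comparison cost is only $\Delta_0$. This is delicate because the statistics are quadratic (so non-Gaussianity of $\eta$ enters through fourth cumulants, controlled by Assumptions \ref{asm_finitemoment}--\ref{asm_temp_dep}) and the kernel is only piecewise smooth, with a corner at $\zeta=1$. A secondary difficulty is that $x_{i,j}$ has only $q/2\ge4$ moments rather than sub-exponential tails, so the Gaussian approximation must be routed through a truncation argument, which is what produces the extra polynomial-in-$p$ factor in $\Delta_2$ and forces the strengthened rate condition \eqref{eq_thm11_o1}.
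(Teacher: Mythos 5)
Your proposal is correct and follows essentially the same route as the paper: rewrite $\Q_n$ under $\H_0$ as $\max_i\sum_{j=1}^p x_{i,j}$ with $X_1,\ldots,X_p$ independent, verify the moment bounds (your Rosenthal-based bounds $\lVert bn\,x_{i,j}\rVert_{q/2}\lesssim 1$ and $\EE\max_i|bn\,x_{i,j}|^{q/2}\lesssim n$, plus the variance lower bound) so that Proposition 2.1 of \textcite{chernozhukov_central_2017} with $B_n\lesssim n^{2/q}$ yields $\Delta_1+\Delta_2$, and then pass from the exact-covariance Gaussian maximum to the $\Xi$-covariance maximum via the Gaussian comparison lemma (Lemma \ref{lemma_chen2019}) using the $O(1/(bn))$ covariance discrepancy of Lemma \ref{lemma_cov_thm1}, which gives $\Delta_0$. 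Your derivation of the kernel as $g=2h^2$ plus an $O((bn)^{-3})$ fourth-cumulant term is just a slicker packaging of the paper's direct expansion in Lemma \ref{lemma_cov_thm1}; otherwise the arguments coincide.
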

The symbol $o(1)$ and $\rightarrow0$ in this theorem and the rest of the paper is in the asymptotic regime $n\wedge p\rightarrow\infty$. It is worth noting that the convergence rate of the GA in Theorem \ref{thm1_constanttrend} depends on the number of cross-sectional components $p$, and a larger $p$ is no longer a curse when utilizing an $\ell^2$-type test statistic. The intuition behind this is that when applying the GA, we effectively treat our $p$ cross-sectional components as equivalent to the "$n$ observations" in \textcite{chernozhukov_central_2017}. As such, a larger $p$ provides more information that can be used to detect change points, which can in turn reduce the approximation error.

Compared to the finite moment condition (E.2) in \textcite{chernozhukov_central_2017} which assumes that $q\ge4$, we require $q\ge8$ since $X_j$ in our test statistic $\Q_n$ is quadratic with regard to the random noise $\epsilon_{t,j}$. As for the GA rate in Proposition 2.1 in \textcite{chernozhukov_central_2017}, our $\Delta_1$ and $\Delta_2$ correspond to their rate with $p$ replaced by $n$ and $B_n = n^{2/q}$. It shall be noted that our additional term $\Delta_0$ is due to an additional step to compare non-centered Gaussian variables (cf. Lemma \ref{lemma_chen2019} in Appendix \ref{sec_proofs}).

\begin{remark}[Allowed dimension $p$ relative to $n$] 
In Theorem \ref{thm1_constanttrend}, we can allow $p$ to be of some polynomial order of $n$, and its order depends on the moment parameter $q$ defined in Assumption \ref{asm_finitemoment}. In particular, let $p\asymp n^{\nu_1}$, for some $\nu_1>0$. If $\nu_1>4/(q-2)$, then expression (\ref{eq_thm11_result}) holds. The larger moment parameter $q$ is, the weaker condition on $p$ we can allow.
\end{remark}

\begin{remark}[Comment on the convergence rate in Theorem \ref{thm1_constanttrend}]  We standardize $X_j$ defined in (\ref{eq_thm1_x_def}) and denote it by $X_j^*$, i.e. $X_j^*=bnX_j$. \textcite{chernozhukov_nearly_2021} derives a nearly optimal bound in the case when the smallest eigenvalue $\sigma_*^2$ of the covariance matrix of $X_j^*$ is bounded below from zero. However, this sharp approximation rate cannot be achieved in our case. To see this, we consider the simple case where the errors are i.i.d.. Then, for any integer $h$, the $(i,i+h)$-th element of the covariance matrix $\EE(X_j^*X_j^{*\top})$ takes the form of $g(|h|/(bn))$ in (\ref{eq_cov}). It shall be noted that $\EE(X_j^*X_j^{*\top})$ is a $2bn$-banded matrix and it is symmetric. Since $g(|h|/(bn))$ has bounded second derivative except for one point, for any four coordinates at the positions $(s,s+2i+1)$, $(s,s+2i+2)$, $(s+2i,s)$ and $(s+2i+1,s)$ in $\EE(X_j^*X_j^{*\top})$, $1\le s\le n-2bn$, $0\le i\le \lfloor(n-2bn-s-2)/2\rfloor$, we can bound them in the following way,
\begin{equation}
    \label{eq_thm1_covshape}
    -g\Big(\frac{2i+1}{bn}\Big)+g\Big(\frac{2i+2}{bn}\Big)-g\Big(\frac{2i}{bn}\Big)+g\Big(\frac{2i+1}{bn}\Big)=O\Big\{\frac{1}{(bn)^2}\Big\}.
\end{equation}
Inspired by (\ref{eq_thm1_covshape}), we define a vector $y=(-1,1,-1,1,\ldots)^{\top}\in\RR^{n-2bn}$. When $b^2n\rightarrow\infty$, the upper bound of the smallest eigenvalue $\sigma_*^2$ of $\EE(X_j^*X_j^{*\top})$ tends to $0$, that is 
\begin{equation}
    \label{eq_thm1_eigen}
    \sigma_*^2\le \frac{y^{\top}\EE(X_j^*X_j^{*\top})y}{y^{\top}y}=O\Big\{\frac{1}{b^2n}\Big\}\rightarrow0.
\end{equation}
Therefore, \textcite{chernozhukov_nearly_2021} is not applicable to our case and we instead extend the GA in \textcite{chernozhukov_central_2017} to achieve the rate in Theorem \ref{thm1_constanttrend}, which does not require the covariance matrix of $X_j^*$ to be non-degenerate.
\end{remark}

Theorem \ref{thm1_constanttrend} implies that, for any level $\alpha\in(0,1)$, we can choose the threshold value $\omega$ to be the quantile of the Gaussian limiting distribution indicated by Theorem \ref{thm1_constanttrend}:
\begin{equation}
    \label{eq_thm1_critical}
    \omega=\inf_{r\ge0}\Big\{r:\PP\big(\max_{bn+1\le i\le n-bn}\Z_i>r\big)\le \alpha\Big\}.
\end{equation}
We shall reject the null hypothesis if the test statistic $\Q_n$ exceeds the threshold value $\omega$, i.e. $\Q_n>\omega$. Under the alternative hypothesis, we show that when the break size is sufficiently large, we can achieve the testing power asymptotically tending to $1$ (cf. Corollary \ref{cor_power}).
Recall the trend function defined in (\ref{eq_trend}). We denote the break location by $\tau_k=nu_k$, $1\le k\le K $, and introduce the following assumption for the identification of breaks.
\begin{definition}[Temporal separation]
    \label{def_separation} Define the minimum gap between breaks as $\kappa_n=\min_{0\le k\le K }(u_{k+1}-u_k)$, for some $\kappa_n>0$, and we allow $\kappa_n\rightarrow0$ as $n$ diverges.     
\end{definition}
Definition \ref{def_separation}  concerns the separation of temporal break locations to ensure the consistency of break estimation, and it implies that $K $ cannot be larger than $1/\kappa_n$. When $\kappa_n\rightarrow0$, $K $ can grow to infinity,
which is in line with Assumption (1b) in \textcite{xu2022change}, where they allow the minimum spacing to be a function of $n$ and to vanish as $n$ diverges. In addition, we require the bandwidth parameter $b$ in (\ref{eq_localest}) to satisfy $b\ll \kappa_n$ as $n\rightarrow\infty$. Otherwise, if more than one break exists within a window of temporal width $bn$, the adopted MOSUM statistics might fail to distinguish the break time points in the same window. 
For any time point $i$ satisfying $|i-\tau_k|\leq bn$, we define the weighted break vectors as
\begin{equation}
    \label{eq_EV_appr}
    d_i=\EE V_i=\sum_{k=1}^K\Big(1-\frac{|i-\tau_{k}|}{bn}\Big)\Lambda^{-1}\gamma_{k}\One_{|i-\tau_k|\le bn},
\end{equation}
and let $\underline{d}=(d_{bn+1}^{\top},d_{bn+2}^{\top},\ldots,d_{n-bn}^{\top})^{\top}$. Under the alternative hypothesis, there exists at least one break, that is $\underline{d}\neq 0$. We evaluate our testing power in the following corollary.
\begin{corollary}[Power]
    \label{cor_power}
    Under Assumptions \ref{asm_finitemoment}--\ref{asm_sec_indep}, if (\ref{eq_thm11_o1}) holds, $b\ll \kappa_n$, and
    $$\max_{1\le k\le K}n(u_{k+1}-u_k)|\Lambda^{-1}\gamma_k|_2^2\gg \sqrt{p\log (n)},$$
    then the testing power $\PP(\Q_n>\omega)\rightarrow 1$, as $n\wedge p\rightarrow\infty$.
\end{corollary}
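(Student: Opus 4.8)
\emph{Proof plan.}
Fix the level $\alpha\in(0,1)$ and let $\omega$ be the threshold of (\ref{eq_thm1_critical}). The plan is to show that, under $\H_A$, one MOSUM window --- the one centred at the dominant break --- already exceeds $\omega$ with probability tending to one, whence $\PP(\Q_n>\omega)\to1$. Let $k^\star$ attain $\max_{1\le k\le K}n(u_{k+1}-u_k)|\Lambda^{-1}\gamma_k|_2^2$ and put $\tau=\tau_{k^\star}$. Since $b\ll\kappa_n$, Definition \ref{def_separation} guarantees both that $\tau\in[bn+1,n-bn]$ (the break times are bounded away from the endpoints by $n\kappa_n\gg bn$) and that no break other than $\tau_{k^\star}$ lies within $bn$ of $\tau$, so by (\ref{eq_EV_appr}) this window has $\EE V_\tau=d_\tau=\Lambda^{-1}\gamma_{k^\star}$ exactly and captures signal $|\EE V_\tau|_2^2=|\Lambda^{-1}\gamma_{k^\star}|_2^2$. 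As $i=\tau$ is admissible in (\ref{eq_teststats}), $\Q_n\ge|V_\tau|_2^2-\bar c$, so it is enough to prove $\PP(|V_\tau|_2^2-\bar c>\omega)\to1$.

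First I would control the size of $\omega$. By Theorem \ref{thm1_constanttrend}, $\omega$ is asymptotically the upper $\alpha$-quantile of $\max_{bn+1\le i\le n-bn}\Z_i$, where $\Z$ is centred Gaussian with covariance $\Xi$ of (\ref{eq_cov})--(\ref{eq_def_g}): every diagonal entry of $\Xi$ equals $8p/(bn)^2$ and $\Xi_{i,i'}=0$ once $|i-i'|\ge2bn$. A Gaussian maximal inequality then gives, for fixed $\alpha$ and all large $n$, $\omega\lesssim\sqrt{p\log n}/(bn)$. It therefore remains to show that $|V_\tau|_2^2-\bar c$ exceeds $\sqrt{p\log n}/(bn)$ with probability tending to one; since (as verified next) $|V_\tau|_2^2-\bar c$ concentrates around $|\Lambda^{-1}\gamma_{k^\star}|_2^2$, this reduces to $bn\,|\Lambda^{-1}\gamma_{k^\star}|_2^2\gg\sqrt{p\log n}$, which the corollary's signal condition $n(u_{k^\star+1}-u_{k^\star})|\Lambda^{-1}\gamma_{k^\star}|_2^2\gg\sqrt{p\log n}$ delivers once the detection bandwidth is set in accordance with the break spacing (recall $b\ll\kappa_n$), the MOSUM window of width $\asymp bn$ then aggregating the jump $\gamma_{k^\star}$ coherently.

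Next I would establish the concentration by decomposing, with $e_\tau=V_\tau-d_\tau$ and using $\bar c=\sum_j c_j=\EE|e_\tau|_2^2$ and $x_{\tau,j}=e_{\tau,j}^2-c_j$ from (\ref{eq_thm1_x_def}),
\[
|V_\tau|_2^2-\bar c=|d_\tau|_2^2+2\langle d_\tau,e_\tau\rangle+\sum_{j=1}^{p}x_{\tau,j}.
\]
The first term is the deterministic signal $|\Lambda^{-1}\gamma_{k^\star}|_2^2$. For the linear term, cross-sectional independence (Assumption \ref{asm_sec_indep}) gives $\text{Var}(2\langle d_\tau,e_\tau\rangle)=4\sum_j d_{\tau,j}^2 c_j\le4(\max_j c_j)\,|d_\tau|_2^2\lesssim|\Lambda^{-1}\gamma_{k^\star}|_2^2/(bn)$, using $c_j\asymp1/(bn)$ (the long-run variances in (\ref{eq_longrunCov}) are finite by Assumption \ref{asm_temp_dep}); hence it is $o_{\PP}(|\Lambda^{-1}\gamma_{k^\star}|_2^2)$, since $bn\,|\Lambda^{-1}\gamma_{k^\star}|_2^2\to\infty$. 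For the quadratic term, the $x_{\tau,j}$ are independent and centred with $\sum_j\text{Var}(x_{\tau,j})\asymp p/(bn)^2$ --- the estimate underlying Lemma \ref{lemma_cov_thm1}, finite because $q\ge8$ in Assumption \ref{asm_finitemoment} --- so Chebyshev's inequality gives $\sum_j x_{\tau,j}=O_{\PP}(\sqrt p/(bn))=o_{\PP}(|\Lambda^{-1}\gamma_{k^\star}|_2^2)$. Combining, $\Q_n\ge|V_\tau|_2^2-\bar c=|\Lambda^{-1}\gamma_{k^\star}|_2^2\,(1+o_{\PP}(1))>\omega$ with probability tending to one.

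The step I expect to be the main obstacle is the threshold analysis: extracting from Theorem \ref{thm1_constanttrend} a bound on $\omega$ with only a $\log n$ (rather than $\log(pn)$) factor genuinely uses the banded covariance (\ref{eq_cov})--(\ref{eq_def_g}) and the exact long-run-variance standardization fixing the diagonal of $\Xi$ at $8p/(bn)^2$, and one must then confirm that the signal accumulated by the detection window dominates that threshold --- which is exactly the role of the product $n(u_{k^\star+1}-u_{k^\star})|\Lambda^{-1}\gamma_{k^\star}|_2^2$ appearing in the hypothesis. The two noise bounds are routine reuses of moment estimates already needed for Theorem \ref{thm1_constanttrend}.
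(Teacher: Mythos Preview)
Your proposal is correct and in fact slightly more elementary than the paper's argument. The paper lower-bounds $\max_i(|V_i|_2^2-\bar c)$ by $\max_i|d_i|_2^2$ minus two error terms that it controls \emph{uniformly} over all windows: the cross term $\max_i|2\sum_j d_{i,j}(U_{i,j}^{(l)}-U_{i,j}^{(r)})/\sigma_j|$ via the Nagaev-type Lemma~\ref{lemma_tail_cross}(i), and the quadratic noise $\max_i|\bar c-\sum_j(U_{i,j}^{(l)}-U_{i,j}^{(r)})^2/\sigma_j^2|$ via the Gaussian approximation of Theorem~\ref{thm1_constanttrend}. You instead fix the single window at $\tau=\tau_{k^\star}$, so that uniform tail bounds are unnecessary and Chebyshev suffices for both noise pieces. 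This buys simplicity and a small robustness gain: the paper's route through Lemma~\ref{lemma_tail_cross}(i) implicitly invokes the moment-ratio condition $\max_k|\Lambda^{-1}\gamma_k|_q/|\Lambda^{-1}\gamma_k|_2=O(K^{-1/q})$, which is not among the corollary's hypotheses, whereas your single-window variance computation needs only Assumptions~\ref{asm_finitemoment}--\ref{asm_sec_indep}. The point you flag as the main obstacle --- that the hypothesis is stated with $n(u_{k^\star+1}-u_{k^\star})$ while the argument naturally yields a requirement on $bn|\Lambda^{-1}\gamma_{k^\star}|_2^2$ --- is handled the same way in the paper's proof, by taking $b$ up to (but still $\ll$) $\kappa_n$; your acknowledgment of this is appropriate.
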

The symbols $\ll$ and $\gg$ hold here and throughout the rest of the paper in the asymptotic regime that $n\wedge p\rightarrow\infty$. Corollary \ref{cor_power} provides a condition for the test power tending to $1$. It allows for cases with nontrivial alternatives. Namely, in some component series, the break sizes could be small and tend to $0$, as long as the aggregated size of the break is sufficiently large. 
It is generally challenging to make an exact comparison between different break statistics with different assumptions for complicated panel data. Nevertheless, we can observe that our detection lower bound is quite sharp in the situation where many weak signals with similar magnitude are present. For example, suppose the jump size of each component series is $\vartheta\in\RR$ and then we only require $n\kappa_n\vartheta^2\gg \sqrt{\log (n)/p}$. Compared with Table 1 in \textcite{cho2022two} which summarizes the procedures under different settings, $\sqrt{\log (n)/p}$ is smaller than all entries in the table since we have $\sqrt{p}$ in the denominator resulting from the $\ell^2$-aggregation. 

\begin{remark}[Detailed power comparison of $\ell^2$ and $\ell^{\infty}$ statistics]
\label{remark_linf} 
This remark is a complement to Remark \ref{11}. 
Now we clarify these differences by a detailed power comparison in those two cases. First, we consider sparse signals. Suppose that there is only one time series with a single break and the break size is $\vartheta^*$, and we use the MOSUM with bandwidth $b$ for detection. Then to ensure the power tending to $1$, $\Q_{n,\infty}$ only requires $\vartheta^*\gg \log^{1/2}(pn)(bn)^{-1/2}$ (see, for example,  \textcite{chen_inference_2022}), while $\Q_n$ needs a stronger condition by Corollary \ref{cor_power} that $ \vartheta^*\gg (p\log (n))^{1/4}(bn)^{-1/2}$. Secondly, we check dense signals. Suppose all series jump with the same size $\vartheta'$. Then, for $\Q_{n,\infty}$, we still need $\vartheta'\gg \log^{1/2}(pn)(bn)^{-1/2}$, while $\Q_n$ only requires $\vartheta' \gg \log^{1/4} (n)p^{-1/4}(bn)^{-1/2}$. 
\end{remark}

\subsection{Estimation of Change Points}\label{sec_estimation}
Based on the theoretical underpinnings of the test in prior subsections, we {can now present} our detection algorithm (cf. Algorithm \ref{algo1}). To begin with, we explicate our strategy for detecting and estimating change points. Furthermore, we showcase the consistency for the estimators pertaining to the number, time stamps, and sizes of breaks. A simulation study can be found in Appendix \ref{subsec_sim_alter}.

We consider the size of the break at time point $\tau_k$ normalized by the long-run standard deviation $\Lambda$, that is $|\Lambda^{-1}\gamma_k|_2$. Define the minimum of normalized break sizes over time as
\begin{equation}
    \label{eq_min_breaksize}
    \delta_p =\min_{1\le k\le K }|\Lambda^{-1}\gamma_k|_2,
\end{equation}
which can be viewed as the minimum strength of signals in the setting of Corollary \ref{thm1_constanttrend}.

\begin{algorithm}[hbt!] 
   \caption{$\ell^2$ Multiple Change-Point Detection via a MOSUM}
   \label{algo1}
   \KwData{Observations $Y_1,Y_2,\ldots,Y_n$; bandwidth parameter $b$; threshold value $\omega$}
   \KwResult{Estimated number of breaks $\hat K $; estimated break time stamps $\hat\tau_k$, $k=1,\ldots,\hat K $; estimated jump vectors $\hat \gamma_k$; estimated minimum break size over time $\hat\delta_p$}
   $\Q_n\gets \max_{bn+1\le i\le n-bn}(|V_i|_2^2-\bar{c})$\;
   \eIf{$\Q_n<\omega$}{
    $\hat K =0$; STOP\;
   }{
   $k \gets 1$; $\A_1\gets \{bn+1\le \tau\le n-bn: (|V_{\tau}|_2^2-\bar{c})>\omega\}$\;
   \While{$\A_k\neq\emptyset$}{
   $\hat\tau_k \gets \arg\max_{\tau\in\A_k}(|V_{\tau}|_2^2-\bar{c})$; $\hat\gamma_k \gets \hat\mu_{\hat\tau_k-bn}^{(l)} - \hat\mu_{\hat\tau_k+bn-1}^{(r)}$\;
   $\A_{k+1} \gets \A_k\setminus \{t:|t-\hat\tau_k|\le 2bn\}$; $k \gets k+1 $\;
   }
   $\hat K =\max_{k\ge1}\{k:\A_k\neq\emptyset\}$; $\hat\delta_p \gets\min_{1\le k\le \hat K }\big||\Lambda^{-1}\hat\gamma_k|_2^2-\bar{c}\big|^{1/2}$\;
   }
\end{algorithm}

\begin{remark}[Comments on the centering term $\bar{c}$]
\label{rmk_centering}
To implement Algorithm \ref{algo1}, we need to {provide} the centering term $\bar{c}$ defined in (\ref{eq_thm1_c_def}). Due to the weak temporal dependence of $\{\epsilon_t\}$, one can show that 
\begin{equation}
    \label{eq_thm1_center_order}
    \bar{c}=2p/(bn)\big(1+O\{1/(bn)\}\big).
\end{equation}
In practice, we can simply take the centering term to be $\overline{c}\approx 2p/(bn)$. This still ensures the consistency results in Theorem \ref{thm_consistency} when the window size $bn$ is slightly larger than the dimension $p$, since the approximation error of using $2p/(bn)$ is of $O\{p/(bn)^2\}$, which is smaller than the order of $(p\log{(n)})^{1/2}(bn)^{-1}$. 
\end{remark}

\begin{remark}[Selection of bandwidth $b$]
    \label{remark_b}
    Technically, the larger the bandwidth $b$ is, the more data points could be used. Thus, a larger $b$ would reduce the magnitude of noise, and then the signals shall be easier to detect. However, we also need to restrict $b$, since the identification condition $b\ll \kappa_n$ below Definition \ref{def_separation} requires us to have fewer than one break within each window. Therefore, we suggest starting with a small bandwidth, for example $b=1/\sqrt{n}$ to ensure that $bn\rightarrow\infty$. One could continue to increase the bandwidth until the estimated number of breaks decreases. We refer to Appendix \ref{sec_sim} for a simulation study including the influences of different bandwidth parameters, as well as a discussion in Remark \ref{rmk_multiscale} for the potential extension of our method to a multi-scale MOSUM.
\end{remark}

Now we outline the consistency results of estimators for break numbers, break time stamps and break sizes. 
We shall impose a minimum requirement of break sizes to guarantee that all of the breaks can be precisely captured and estimated.
\begin{assumption}[Signal]
    \label{asm_min_breaksize}
    Assume $\min_{0\le k\le K}n(u_{k+1}-u_k)|\Lambda^{-1}\gamma_k|_2^2 \gg \sqrt{p\log(n)}$.
\end{assumption}
We highlight that Assumption \ref{asm_min_breaksize} imposes a moderate condition on the signals, which relates the break separation $(u_{k+1}-u_k)$ to the break signal strength $|\gamma_k|_2^2$, and does not require $p$ series to jump simultaneously. 
A comprehensive discussion on the advantage of our setting can be found following the consistency results presented below.

\begin{theorem}[Temporal consistency]
    \label{thm_consistency}
    Let $q\ge8$. Under Assumptions \ref{asm_finitemoment}--\ref{asm_min_breaksize}, if (\ref{eq_thm11_o1}) hold, $b\ll \kappa_n$, $\delta_p^2 \ge 3\omega$, $\omega\gg (p\log{(n)})^{1/2}(bn)^{-1}$ and $\max_{1\le k\le K}|\Lambda^{-1}\gamma_k|_q/|\Lambda^{-1}\gamma_k|_2=O(1/K^{1/q})$,
    then we have the following results:
    \begin{itemize}
        \item[(i)] (Number of breaks). $\PP(\hat{K }=K )\rightarrow1$.
        \item[(ii)] (Time stamps of breaks). Let $\Gamma_k=p/(bn|\Lambda^{-1}\gamma_k|_2^2)$. Then, we have $\max_{1\le k\le K }|\hat\tau_k-\tau_{k^*}|\cdot|\Lambda^{-1}\gamma_k|_2^2/(1+\Gamma_k)= O_{\PP}\{\log^2(n)\}$, where $k^*=\arg\min_i|\hat\tau_k-\tau_i|$. If in addition, $\delta_p^2/p\gtrsim1/(bn)$, we have
        $$\max_{1\le k\le K }|\hat\tau_k-\tau_{k^*}|\cdot|\Lambda^{-1}\gamma_k|_2^2=O_{\PP}\big\{\log^2(n)\big\}.$$
        \item[(iii)] (Break sizes). $\max_{1\le k\le K }\big||\Lambda^{-1}(\hat\gamma_k-\gamma_{k^*})|_2^2-\bar{c}\big|  = O_{\PP}\big\{(p\log{(n)})^{1/2}(bn)^{-1}\big\}$,  which also implies that $|\hat\delta_p-\delta_p| = O_{\PP}\big\{(p\log{(n)})^{1/4}(bn)^{-1/2}\big\}.$
    \end{itemize}
\end{theorem}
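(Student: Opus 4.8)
The plan is to derive all three parts from a single deterministic event on which the MOSUM surface $i\mapsto |V_i|_2^2-\bar c$ is well-separated near the true breaks and uniformly small away from them. Concretely, I would first show that with probability tending to one the "error surface" $\max_{bn+1\le i\le n-bn}\big||V_i-\EE V_i|_2^2-\bar c\big|$ is of order $(p\log n)^{1/2}(bn)^{-1}$. This is exactly the fluctuation of $\sum_{j=1}^p x_{i,j}$ studied for Theorem \ref{thm1_constanttrend}; under $\H_A$ the same Gaussian-approximation machinery applies to the centered part $V_i-\EE V_i$ (the signal only shifts the mean), so $\max_i\big|\sum_j x_{i,j}\big|=O_\PP\{(p\log n)^{1/2}(bn)^{-1}\}$, and a cross term $\langle d_i, V_i-\EE V_i\rangle$ is controlled by Cauchy–Schwarz plus the same bound together with $|d_i|_2\le\delta_p^{-1}\cdot$(signal scale). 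Combining with the expansion
\[
|V_i|_2^2-\bar c = |d_i|_2^2 + 2\langle d_i, V_i-\EE V_i\rangle + \big(|V_i-\EE V_i|_2^2-\bar c\big),
\]
we get that on a good event $\mathcal G_n$ of probability $1-o(1)$, $|V_i|_2^2-\bar c$ equals $|d_i|_2^2$ up to an additive error $\eta_n:=O\{(p\log n)^{1/2}(bn)^{-1}\}$ uniformly in $i$ (for the cross term one also needs $|d_i|_2\eta_n^{1/2}\lesssim \eta_n + |d_i|_2^2$, i.e. the cross term is absorbed).

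For part (i): on $\mathcal G_n$, at any $i$ with $\min_k|i-\tau_k|>bn$ we have $d_i=0$, hence $|V_i|_2^2-\bar c\le\eta_n<\omega$ since $\omega\gg(p\log n)^{1/2}(bn)^{-1}$; so $\A_1$ is contained in $\bigcup_k\{|i-\tau_k|\le bn\}$, giving at most $K$ detections. Conversely, at $i=\tau_k$, $|d_{\tau_k}|_2^2=|\Lambda^{-1}\gamma_k|_2^2\ge\delta_p^2\ge 3\omega$, while the separation $b\ll\kappa_n$ ensures no other break contributes at $\tau_k$, so $|V_{\tau_k}|_2^2-\bar c\ge 3\omega-\eta_n>\omega$; thus every break is detected, and the exclusion radius $2bn$ in Algorithm \ref{algo1} together with $b\ll\kappa_n$ guarantees distinct breaks land in distinct $\A_k$. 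Hence $\hat K=K$ w.h.p. For part (ii): near $\tau_k$, $d_i$ is the triangular-kernel profile $(1-|i-\tau_k|/(bn))\Lambda^{-1}\gamma_k$, so $|d_i|_2^2$ is a concave tent of height $|\Lambda^{-1}\gamma_k|_2^2$; since $\hat\tau_k$ maximizes a function within $\eta_n$ of this tent, standard "argmax within $\eta$ of a sharp peak" reasoning gives $|d_{\tau_k}|_2^2-|d_{\hat\tau_k}|_2^2\lesssim\eta_n$, and because near the apex $|d_i|_2^2\approx|\Lambda^{-1}\gamma_k|_2^2 - 2|i-\tau_k|/(bn)\cdot|\Lambda^{-1}\gamma_k|_2^2$ up to a quadratic term of order $|i-\tau_k|^2/(bn)^2\cdot|\Lambda^{-1}\gamma_k|_2^2$, inverting yields $|\hat\tau_k-\tau_k|\cdot|\Lambda^{-1}\gamma_k|_2^2\lesssim bn\,\eta_n\asymp(p\log n)^{1/2}$, which one sharpens to $\log^2(n)$ by using the finer Gaussian-approximation rate on the increments of the error surface over a short window of width $|\hat\tau_k-\tau_k|$ (a localized maximal-inequality argument, as is standard for MOSUM, giving the $\log^2 n$ in place of $\log n$ from iterating a peeling bound), with the $(1+\Gamma_k)$ factor coming from whether the error term $\eta_n$ or the quadratic curvature term dominates. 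For part (iii): $\hat\gamma_k=\hat\mu^{(l)}_{\hat\tau_k-bn}-\hat\mu^{(r)}_{\hat\tau_k+bn-1}$ evaluated at the shifted endpoints isolates a clean left/right window; up to the estimation error $|\hat\tau_k-\tau_k|$ (which contributes negligibly under the rate from (ii)) this equals $\gamma_{k^*}$ plus an average of $2bn$ errors, so $|\Lambda^{-1}(\hat\gamma_k-\gamma_{k^*})|_2^2 - \bar c$ is again a centered quadratic form of errors with the stated $(p\log n)^{1/2}(bn)^{-1}$ fluctuation; the bound on $|\hat\delta_p-\delta_p|$ follows from $|a-b|\le|a^2-b^2|^{1/2}$ applied coordinate-wise to the minimum.

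The main obstacle is part (ii), specifically upgrading the crude $(p\log n)^{1/2}$ localization rate to $\log^2(n)$: this requires a uniform (over all $K$ breaks and all candidate shifts) control of the \emph{increments} $\big(|V_i|_2^2-\bar c\big)-\big(|V_{\tau_k}|_2^2-\bar c\big)$ on the scale where the signal tent has only dropped by $O(\log^2(n)/(bn)\cdot|\Lambda^{-1}\gamma_k|_2^2)$, i.e. one needs a Bernstein/Gaussian-approximation bound for maxima of sums of the quadratic variables $x_{i,j}$ restricted to short blocks, with the variance proxy shrinking linearly in the block length — this is where the banded covariance structure $g(\cdot)$ in \eqref{eq_def_g} and the moment assumption $q\ge 8$, together with $\max_k|\Lambda^{-1}\gamma_k|_q/|\Lambda^{-1}\gamma_k|_2=O(K^{-1/q})$ (a union-bound budget over the $K$ breaks), all have to be combined carefully; the condition $\delta_p^2/p\gtrsim 1/(bn)$ in the second display of (ii) is precisely what makes the $\Gamma_k$ factor drop out by ensuring the curvature term dominates the error term.
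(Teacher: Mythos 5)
Your overall architecture matches the paper's: decompose $|V_i|_2^2-\bar c$ into signal, cross term, and centered noise; use the Gaussian approximation to control the noise maximum at rate $(p\log n)^{1/2}(bn)^{-1}$; compare against $\omega$ at and away from breaks for (i); compare the statistic at $\hat\tau_k$ and $\tau_k$ for (ii); and reduce (iii) to a clean-window noise fluctuation. However, there is a genuine gap in your treatment of the cross term in part (i). Cauchy--Schwarz gives $|\langle d_i, V_i-\EE V_i\rangle|\lesssim |d_i|_2\,|V_i-\EE V_i|_2\approx |d_i|_2\sqrt{2p/(bn)}$, not the $|d_i|_2\,\eta_n^{1/2}$ you absorb; after AM--GM this leaves an additive error of order $p/(bn)$, which is \emph{not} dominated by $\omega$ (the theorem only assumes $\omega\gg (p\log n)^{1/2}(bn)^{-1}$, which can be far smaller than $p/(bn)$), so the detection step at $\tau_k$ fails under the stated signal condition $\delta_p^2\ge 3\omega$. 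The fix, and what the paper does, is to exploit that the cross term is a \emph{linear} form in the noise with coefficient vector $d_{\tau_k}$, whose fluctuation is $|d_{\tau_k}|_2\,O_{\PP}\{\log^{1/2}(n)(bn)^{-1/2}\}$ uniformly over the $K$ breaks; this uniformity is obtained via a Nagaev-type inequality in which the assumption $\max_k|\Lambda^{-1}\gamma_k|_q/|\Lambda^{-1}\gamma_k|_2=O(K^{-1/q})$ with $q\ge 8$ is exactly what pays for the union over $k$ (the paper's Lemma \ref{lemma_tail_cross}(i)). In your sketch that assumption is invoked only for part (ii), but it is already indispensable for (i).

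For part (ii), you correctly diagnose that the crude bound $|\hat\tau_k-\tau_k|\,|\Lambda^{-1}\gamma_k|_2^2\lesssim (p\log n)^{1/2}$ must be upgraded by a maximal inequality on \emph{increments} whose variance proxy shrinks linearly in $|t-\tau_k|$, with a dyadic/peeling argument and a union over $K$, and that the $(1+\Gamma_k)$ factor reflects the competition between the signal drop and the error contributions. But this is precisely the heart of the theorem and it is left unexecuted: the paper's proof rests on the four-term decomposition of $\sum_j(V_{\tau_k,j}^2-V_{t,j}^2)$ into $m_{k,t,1}$--$m_{k,t,4}$, with the signal term $\gtrsim |t-\tau_k|\,|\Lambda^{-1}\gamma_k|_2^2/(bn)$, the cross-increment term controlled by Lemma \ref{lemma_tail_cross}(ii) at rate $|t-\tau_k|^{1/2}|\Lambda^{-1}\gamma_k|_2\log(n)/(bn)$ (the dyadic bound (\ref{eq_thm4_max_bd}) plus Nagaev), and the pure-noise quadratic increment $m_{k,t,4}=O_{\PP}\{\sqrt{p}\,|t-\tau_k|^{1/2}\log(n)/(bn)^{3/2}\}$, which is the sole source of $\Gamma_k=p/(bn|\Lambda^{-1}\gamma_k|_2^2)$; the banded covariance $g(\cdot)$ plays no role here (it enters the GA, not the increment lemma). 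Without carrying out this increment analysis, the $\log^2(n)$ rate in (ii) — and hence the claim in (iii) that $|\hat\tau_k-\tau_{k^*}|$ contributes negligibly — remains unproved, so the proposal as written establishes only a weaker localization than the theorem asserts.
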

The results in Theorem \ref{thm_consistency} are in an asymptotic context as $n\wedge p\rightarrow \infty$. To see the advantage of our method, let us consider the simple case with a jump size $\vartheta\in\RR$ for each component series. To achieve the consistency results in (ii), we only require $n\kappa_n\vartheta^2\gg \sqrt{\log(n)/p}$ by Assumption \ref{asm_min_breaksize} and $K=O(p^{q/2-1})$ by the condition $\max_{1\le k\le K}|\Lambda^{-1}\gamma_k|_q/|\Lambda^{-1}\gamma_k|_2=O(1/K^{1/q})$. 
Notably, as discussed following Corollary \ref{cor_power}, our detection lower bound is weaker compared to those reported in Table 1 by \textcite{cho2022two}. 
In (iii), since $\Lambda^{-1}\hat\gamma_k$ contains both the signal part $\EE V_{\hat\tau_k}$ and the error part $V_{\hat\tau_k}-\EE V_{\hat\tau_k}$, we center $|\Lambda^{-1}\hat\gamma_k|_2^2$ under the null by subtracting $\bar{c}=\EE|V_{\hat\tau_k}-\EE V_{\hat\tau_k}|_2^2$. This guarantees that the break sizes can be estimated consistently.
We have $\log(n)$ in the consistency rates in (ii) and (iii) since we take the maximum overall $n-2bn$ moving windows. It is also worth noticing that, for single change-point detection, \textcite{bai_common_2010} achieves $|\hat\tau_k-\tau_k|=O_{\PP}(1)$ by assuming that $\delta_p$ is a constant. Under this condition, by our Theorem \ref{thm_consistency} (ii), we can achieve the same consistency rate up to a logarithm factor. Indeed, $\delta_{p}$ does not need to be a constant for consistent estimators (of a single or finitely many breaks) with an order of $O_{\PP}(1)$; see for instance \textcite{horvath2017asymptotic}.
Intuitively, a larger $\delta_p$ makes the estimation problem easier, and in the same example mentioned earlier, where the break size for each series is denoted as $\vartheta\in\RR$, one can see that our $\delta_p$ can diverge as $p$ grows, leading to improved consistency rates for $\hat\tau_k$.

\section{Testing and Estimation via a Two-Way MOSUM}\label{sec_test_spatial}

The previous section focuses on change-point statistics for data generating processes with dense breaks across all component series. In this section, we propose a novel Two-Way MOSUM to address cases where change points may exist in only a {subset} of time series. In such case, aggregating all component series using the $\ell^2$-norm dilutes the testing power due to the overwhelming aggregated noises compared to signals. To handle this issue, we construct \textit{temporal-spatial windows} (cf. Definition \ref{def_temporal_spatial}) to aggregate series within spatial neighborhoods and maximize over 
neighborhoods and time. Our method allows for the estimation of both the temporal break stamp and the spatial neighborhood that contains the breaks. We extend the Two-Way MOSUM to nonlinear processes and a general spatial space in Section \ref{sec_test_nonlinear}.

\subsection{Two-Way MOSUM}
As \textcite{xie_sequential_2013} suggested, in certain applications, data streams could represent observations from multiple sensors, where breaks only occur in some but not all of them. However, testing procedures aggregating all sensors include noise from unaffected sensors in detection statistics, leading to poor performance. To address this problem, we introduce cross-sectional neighborhoods comprised of spatially adjacent series, and propose the Two-Way MOSUM to account for spatial group structure and detect existence of breaks. This statistic improves test performance for signals dense within clusters but sparse among them. We emphasize that Two-Way MOSUM can work even in the absence of a-prior clustering information. See the discussion below Definition \ref{def_temporal_spatial} for detailed reasoning after introducing the temporal-spatial moving window.

To define the relative spatial locations of component series, we consider a spatial space $\L_0=\L_{0,n}$ for all series. In particular, we start with a simple case in this section by assuming a linear ordering in the coordinates, i.e., $\L_0\subset\{1,\ldots,p\}$. Then in Section \ref{sec_test_nonlinear}, we extend it to a more general space with $\L_0\subset\ZZ^v$ for a general $v$, where the spatial location of each series shall be determined by a $v$-dimensional vector. We define the linear spatial neighborhood and the corresponding neighborhood-norm as follows. 

\begin{definition}[Linear spatial neighborhood]
    \label{def_linear_nbd}
    Let $\L_s\subset\{1,\ldots,p\}$ be the set of coordinates in a spatial neighborhood, $1\le s\le S$, where $S$ is the total number of spatial neighborhoods. We denote the size of each $\L_s$ by $|\L_s|$. In particular, we define 
    \begin{equation}
        \label{eq_min_nbdsize}
        |\L_{\text{max}}|=\max_{1\le s\le S}|\L_s| \quad \text{and} \quad |\L_{\text{min}}|=\min_{1\le s\le S}|\L_s|.
    \end{equation}
\end{definition}
\begin{definition}[Linear nbd-norm]
    \label{def_linear_nbdnorm}
    For a $p$-dimensional vector $v_i=(v_{i1},\ldots,v_{ip})^{\top}$ with a linear ordering in coordinates, we define the linear neighborhood-norm (nbd-norm) as $\vert v_i\vert_{2,s}=\big(\sum_{j=1}^pv_{i,j}^2\One_{j \in \L_s}\big)^{1/2}$, $1\le s\le S$.
\end{definition}
It shall be noted that the spatial neighborhoods defined in Definition \ref{def_linear_nbd} can be overlapped, which means that each component series can belong to multiple different spatial neighborhoods. Therefore, these spatial neighborhoods actually can be viewed as an analogue of the temporal moving windows, but moving in the spatial direction and allowing different window sizes (i.e. neighborhood sizes). All $s=1,2,\ldots,S$ are only the indices of different spatial groups and do not necessarily reflect the spatial order of these groups. The size of each neighborhood could tend to infinity as $p\rightarrow\infty$. 

We highlight that the spatial moving windows can be adapted to different data scenarios. Apart from an identification condition, we do not need more knowledge (e.g. clustered breaks) on the spatial structure of the signals.
It is worth noting that similar definitions of neighborhoods are considered in the literature, see for example, \textcite{arias2008searching,addario2010combinatorial,arias2011detection}. These setups exclusively focus on the topological structure of neighborhood with simple Gaussian or i.i.d. assumptions. Comparably, we are more flexible in modeling spatial temporal dependency of the data. In fact, our spatial windows can be extended to more complicated shapes depending on the demand of applications as long as Assumption \ref{asm_nbd_size_ratio} is satisfied.
Many real-life data streams, such as those in geographical or economic contexts, provide prior knowledge about which spatial groups are likely to include breaks, as demonstrated in the geostatistics data examples in Chapter 4 of \textcite{cressie2015statistics}. Our definition of the temporal-spatial window (cf. Definition \ref{def_temporal_spatial}) does not require this prior knowledge, but if it is available, it can be utilized for the more relevant detection and estimation procedures.

Our goal is to model the breaks occurring on the vector of unknown trend functions. When there potentially exists a group structure, we formulate the trend function in (\ref{eq_model}) as
\begin{equation}
    \label{eq_model_part2}
    \mu(u)=\mu_0+\sum^{R }_{r=1}\gamma_r\One_{u\ge u_r},
\end{equation}
where $R \in\NN$ is an unknown integer represents the number of localized breaks which could go to infinity as $n$ or $S$ increases; $u_1,\ldots,u_{R }$ are the time stamps of the breaks with $0=u_0<u_1<\ldots < u_{R }<u_{R +1}=1$ and $\kappa_n=\min_{0\le r\le R }(u_{r+1}-u_r)$, for some constant $\kappa_n>0$, where $\kappa_n$ is allowed to tend to zero as $n\rightarrow\infty$; $\gamma_r=(\gamma_{r,1},\ldots,\gamma_{r,p})^{\top}\in\RR^p$ is the jump vector at the time stamp $u_r$ with $\gamma_{r,j}=0$ if $j\notin\L_{s_r}$, where $s_r$ is the index of the spatial location of the $r$-th break. We define the break size as $|\gamma_r|_2$. In the rest of this paper, we use $(\tau_r,s_r)$ to denote the temporal-spatial location of the $r$-th break.

To test the existence of spatially localized breaks, it suffices to test the null hypothesis
$$\H_0^{\diamond}: \quad \gamma_r=0, \quad 1\le r\le R , $$
which denotes the case with no breaks, against the alternative that at least one break exists, that is, $\H_A^{\diamond}$: there exists $r\in\{1,\ldots,R \}$, such that $\gamma_r\neq0$.
This enables us to identify both the time stamps and the spatial neighborhoods with significant breaks. Our Two-Way MOSUM statistic aims to adopt more flexible moving windows to efficiently capture both temporal and spatial information of breaks. To achieve this goal, we shall derive a localized test statistic which first aggregates the time series within each spatial neighborhood by an $\ell^2$-norm and then take the maximum over all the neighborhoods and time points. Accordingly, we define \textit{temporal-spatial windows} as follows:
\begin{definition}[Temporal-spatial window]
    \label{def_temporal_spatial}
    For $bn+1\le i\le n-bn$, $1\le s\le S$, define an index set $\V_{i,s}=\big\{(t,l): t=i, \, l\in\L_s\big\}$. Then, define the temporal-spatial moving window as $\S_{i,s}=\{\V_{t,l}:\, i-bn\le t\le i+bn-1,l\in \L_s\}\subset\ZZ^2$.
\end{definition}
Note that the index set $\V_{i,s}$ can be regarded as a vertical line, which is the center of the temporal-spatial moving window $\S_{i,s}$. Specifically, $\S_{i,s}$ spans the neighborhood $\L_s$ in the spatial direction and centered at the time point $i$ with radius $bn$ in the temporal direction. In the rest of this paper, we shall depict the index set $\V_{i,s}$ as a vertical line at time $i$ and neighborhood $\L_s$. We refer to Figure \ref{fig_twoway} in Appendix \ref{subsec_twoway_example} for a more straightforward illustration of this Two-Way moving window. It is worth emphasizing that even without prior knowledge of clusters, the number of potential windows is relatively small, because only the adjacent series can be assigned to the same group, which leads to the number of potential windows at most $O(p^2)$. See a more detailed explanation in Appendix \ref{subsec_twoway_example} and consider Figure \ref{fig_ordering} as an example. In general, for any $p$ time series, there are only at most $O(p^{2v})$ possible windows in a $\mathbb{Z}^v$ space, $v\ge1$. This fact does not diminish the validity of our results obtained through GA (cf. Theorem \ref{thm2_constanttrend}). Consequently, while prior knowledge about the clusters is beneficial for boosting power, it is not a prerequisite.

\begin{definition}[Influenced set]
    \label{def_set}
    We define the set of vertical lines influenced by the break located at $(\tau,s)$ as
\begin{equation}
    \label{eq_influence}
    \W_{\tau,s}=\{\V_{t,l}: 1\le t\le n,\,1\le l\le S,\, \S_{t,l}\cap\V_{\tau,s}\neq \emptyset\}.
\end{equation}
\end{definition}
\begin{assumption}[Neighborhood size]
    \label{asm_nbd_size_ratio}
    Assume that $|\L_{\text{max}}|/|\L_{\text{min}}|\le c$ holds for some constant $c\geq 1$, where $|\L_{\text{max}}|$ and $|\L_{\text{min}}|$ are defined in Definition \ref{def_linear_nbd}.
\end{assumption}
Assumption \ref{asm_nbd_size_ratio} requires that the sizes of all spatial neighborhoods do not differ too much, which still allows the flexibility of different neighborhood sizes but in a reasonable range. This assumption embraces many interesting cases in practice. For example, according to the geographical locations, the Centers for Disease Control and Prevention (CDC) divides the states in the United States (US) into four regions with similar spatial sizes, which is also taken into consideration in our application (cf. Section \ref{sec_data}); based on the patterns of synchronous activity and communication between different brain regions, \textcite{thomas_yeo_organization_2011} identifies and segregates the brain into seven distinct functional networks with similar scales.

Similar to (\ref{eq_thm1_c_def}), we define the centering term of the statistics as 
\begin{equation}
    \label{eq_thm2_c_def}
    \bar{c}_s^{\diamond}=\sum_{j=1}^p c_{s,j}^{\diamond}, \quad \text{where } c_{s,j}^{\diamond}=c_j\One_{j\in\L_s}.
\end{equation}
Following the intuitions that we could adopt temporal-spatial moving windows to account for spatially clustered jumps, we formulate our Two-Way MOSUM test statistic as follows:
\begin{equation}
    \label{teststats2}
    \Q_n^{\diamond} = \max_{1\le s\le S}\max_{bn+1\le i\le n-bn}\frac{1}{\sqrt{|\L_s|}}\big(|V_i|_{2,s}^2 - \bar{c}_s^{\diamond}\big)
\end{equation}
where the nbd-norm $|\cdot|_{2,s}$ is introduced in Definition \ref{def_linear_nbdnorm}. Recall (\ref{eq_thm1_x_def}) for $x_{i,j}$. Let 
\begin{equation}
    \label{eq_thm2_x_def}
   x_{i,s,j}^{\diamond}=x_{i,j}\One_{j\in\L_s} \,\text{and}\,\, X_j^{\diamond}=(x_{bn+1,1,j}^{\diamond},\ldots,x_{n-bn,1,j}^{\diamond},\ldots,x_{bn+1,S,j}^{\diamond},\ldots,x_{n-bn,S,j}^{\diamond})^{\top}.
\end{equation}
Then, under the null hypothesis, we can rewrite $\Q_n^{\diamond}$ into
\begin{equation}
    \label{eq_thm2_GAform}
    \Q_n^{\diamond}=\max_{1\le s\le S}\max_{bn+1\le i\le n-bn} \frac{1}{\sqrt{|\L_s|}}\sum_{j=1}^p x_{i,s,j}^{\diamond}.
\end{equation}
When the time series are cross-sectionally independent, $X_j^{\diamond}$ are independent, $1\leq j\leq p$. Hence, by applying the GA to (\ref{eq_thm2_GAform}), we can properly address the dependence resulting from the overlapped temporal-spatial windows. 
We also note that cluster-based statistics can be seen as a special case of Two-Way MOSUM as in (\ref{eq_thm2_GAform}) when the prior knowledge of clusters is given, which can also be seen as an extension of the statistic introduced by \textcite{jirak2015uniform} where one essentially gets back to his idea by taking each component as its own cluster.

We introduce the centered Gaussian vector
\begin{equation}
    \label{eq_Z_local}
    \Z^{\diamond}=(\Z_{bn+1,1}^{\diamond},\ldots,\Z_{n-bn,1}^{\diamond},\ldots,\Z_{bn+1,S}^{\diamond},\ldots,\Z_{n-bn,S}^{\diamond})^{\top},
\end{equation}
with the covariance matrix 
\begin{equation}
    \label{eq_cov_Z_local}
    \Xi^{\diamond}=(\Xi^{\diamond}_{i,s,i',s'})_{1\le i,i'\le n-2bn,1\le s,s'\le S}.
\end{equation}
Recall the covariance matrix $\Xi$ for the Gaussian vector $\Z$ in (\ref{eq_cov}). By (\ref{eq_thm2_x_def}) and (\ref{eq_thm2_GAform}), we similarly define
\begin{equation}
    \label{eq_cov_local_G}
    \Xi_{i,s,i',s'}^{\diamond}=(|\L_s||\L_{s'}|)^{-1/2}\One_{j\in\L_s\cap\L_{s'}}\Xi_{i,i'}.
\end{equation}
We aim to provide the GA theorem under the null with a Two-Way MOSUM applied (cf. Theorem \ref{thm2_constanttrend}). This result shall enable us to find the critical value of our proposed Two-Way MOSUM test statistic. We denote each element in $\Z^{\diamond}$ by $\Z_{\varphi}^{\diamond}$, where
\begin{equation}
    \label{eq_node_pair}
    \varphi=(i,s)\in\N, \quad \text{for } \N=\{bn+1,\ldots,n-bn\}\times \{1,\ldots,S\}.
\end{equation}
By the GA, the distribution of $\max_{\varphi\in\N}\Z_{\varphi}^{\diamond}$ shall approximate the one of our test statistic $\Q_n^{\diamond}$ under the null with large $p$, that is
\begin{equation}
    \label{eq_GA2}
    \PP(\Q_n^{\diamond}\le u)\approx \PP\big(\max_{\varphi\in\N}\Z_{\varphi}^{\diamond}\le u\big).
\end{equation}

\begin{remark}[Comparison of $\ell^2$ MOSUM and $\ell^2$-clustered Two-Way MOSUM]\label{22}
When the breaks only occur in a {portion} of time series, aggregation of all $p$ dimensions would cause power loss. This situation is frequently encountered when a spatial group structure exists and only a few groups have breaks. Our proposed Two-Way MOSUM {can} account for this situation by taking the $\ell^2$-norm within each spatial group. To explicitly show the differences, we present a simulated example with two different proportions of jumps and compare the testing powers in Figure \ref{fig_sim_compare_cluster}. We simulate $n=100$ observations of $p=10$, $20$, $30$, $50$, $70$ and $100$. The number of spatial groups is $S=5$ and each group size is $0.3p$. Specifically, we let $\L_s=2p(s-1)/10+\{1, 2, \ldots, 0.3p\}$, $1\le s\le 4$ and $\L_5=\{0.7p+1,0.7p+2,\ldots,p\}$. In Figure \ref{fig_sim_compare_cluster}(a), two groups $\L_2$ and $\L_5$ contain breaks at the same time $\tau=50$. In Figure \ref{fig_sim_compare_cluster}(b), breaks only exist in one group $\L_3$ at $\tau=50$. The errors in both two figures are generated from MA($\infty$) models defined in (\ref{eq_epsilon_linear}) with $\eta_t\sim t_9$ and jump sizes are $0.2$ for each dimension. We let the window size $bn=20$. All the reported powers in Figure \ref{fig_sim_compare_cluster} are averaged over $1000$ samples. We defer a more detailed power comparison to Remark \ref{rmk_power_l2cluster}.
\end{remark}

\begin{figure}[!htbp]
\centering
\includegraphics[width=1\linewidth]{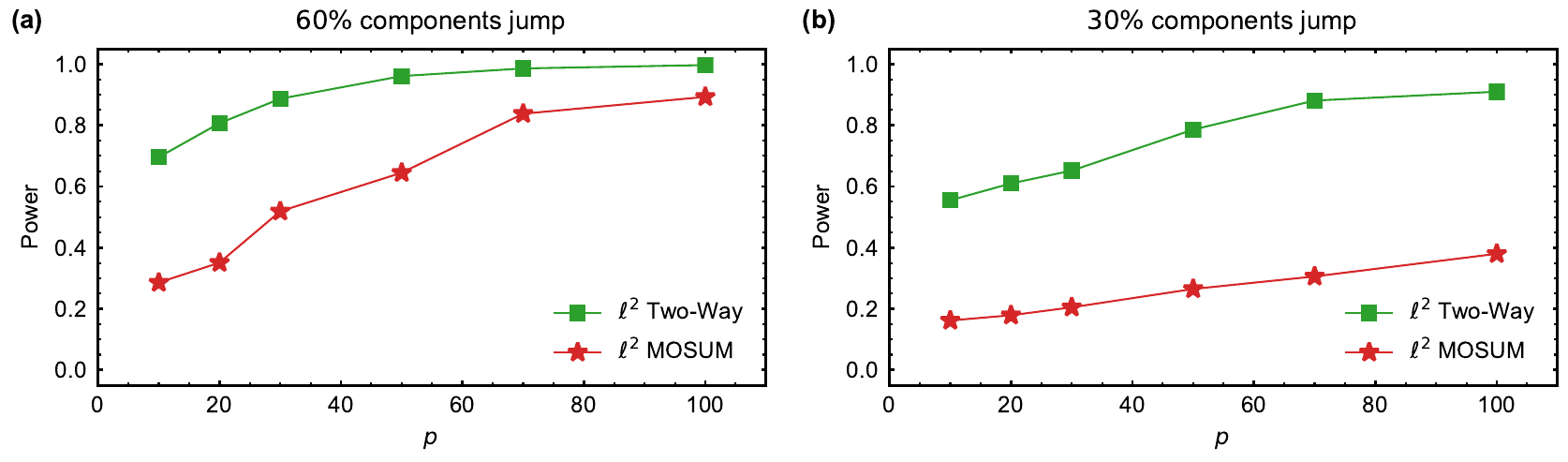}  
\caption{Power comparison of $\ell^2$ MOSUM and Two-Way MOSUM.}
\label{fig_sim_compare_cluster}
\end{figure}

\subsection{Gaussian Approximation for Two-Way MOSUM}
Given the updated statistics for spatially clustered signals, we further formalize our GA theory in this setting. Theoretically, it shall be noted that when signals are dense within the clusters and sparse among clusters, our Two-Way MOSUM improves the one adopted in Section \ref{sec_test}. 
\begin{theorem}[GA for Two-Way MOSUM]
    \label{thm2_constanttrend}
    Suppose that {Assumptions \ref{asm_finitemoment}-- \ref{asm_sec_indep}} and \ref{asm_nbd_size_ratio} are satisfied. Then, under the null hypothesis, for $\Delta_0^{\diamond}=(bn)^{-1/3}\log^{2/3}(nS)$,
    $$\Delta_1^{\diamond} = \Bigg(\frac{(nS)^{4/q}\log^7(pn)}{|\L_{\text{min}}|} \Bigg)^{1/6},  \quad \Delta_2^{\diamond} = \Bigg(\frac{(nS)^{4/q}p^{2/q}\log^3(pn)}{|\L_{\text{min}}|} \Bigg)^{1/3},$$
    we have
    \begin{equation}
        \sup_{u\in\RR}\Big|\PP(\Q_n^{\diamond}\le u)-\PP\big({\max_{\varphi\in\N}\Z_{\varphi}^{\diamond}}\le u\big)\Big|\lesssim \Delta_0^{\diamond}+\Delta_1^{\diamond}+\Delta_2^{\diamond},
    \end{equation}
    where $\N$ is defined in (\ref{eq_node_pair}), and the constant in $\lesssim$ is independent of $n,p,b$. If in addition, $\log(nS)=o\{(bn)^{1/2}\}$ and
    \begin{equation}
        \label{eq_thm21_o1}
        (nS)^4p^2|\L_{\text{min}}|^{-q}\log^{3q}(pn){\rightarrow0},
    \end{equation}
    then we have
     \begin{equation}
        \label{eq_thm21_result}
        \sup_{u\in\RR}\Big|\PP(\Q_n^{\diamond}\le u)-\PP\big({\max_{\varphi\in\N}\Z_{\varphi}^{\diamond}}\le u\big)\Big| \rightarrow 0.
    \end{equation}
\end{theorem}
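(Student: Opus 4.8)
The plan is to mirror the proof of Theorem \ref{thm1_constanttrend}, now viewing the null representation \eqref{eq_thm2_GAform} of $\Q_n^{\diamond}$ as the coordinate-wise maximum of a sum of $p$ independent random vectors in $\RR^{|\N|}$, where $|\N|\le nS$, and invoking the high-dimensional Gaussian approximation of \textcite{chernozhukov_central_2017} (as in Theorem \ref{thm1_constanttrend}, via their Proposition 2.1) with ``sample size'' $p$ and ``dimension'' $nS$. First I would absorb the normalization $1/\sqrt{|\L_s|}$ into the summands, letting $\widetilde X_j^{\diamond}$ be the vector with entries $x_{i,s,j}^{\diamond}/\sqrt{|\L_s|}$, so that $\Q_n^{\diamond}=\max_{\varphi\in\N}\sum_{j=1}^p(\widetilde X_j^{\diamond})_\varphi$; by Assumption \ref{asm_sec_indep} the matrices $A_k$ are diagonal, so $\widetilde X_1^{\diamond},\ldots,\widetilde X_p^{\diamond}$ are independent exactly as $X_1,\ldots,X_p$ in Theorem \ref{thm1_constanttrend}. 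The per-coordinate moment and temporal-dependence bounds are inherited verbatim from Assumptions \ref{asm_finitemoment}--\ref{asm_temp_dep}; since $x_{i,j}$ is quadratic in the innovations, $q\ge8$ supplies the $q/2\ge4$ finite moments that the quadratic summands require.

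The heart of the argument is the second step: identifying $\text{Cov}\big(\sum_{j=1}^p\widetilde X_j^{\diamond}\big)$ with the matrix $\Xi^{\diamond}$ in \eqref{eq_cov_local_G} up to negligible error. This is the spatial analogue of Lemma \ref{lemma_cov_thm1}: for $\varphi=(i,s)$ and $\varphi'=(i',s')$ the covariance equals $(|\L_s||\L_{s'}|)^{-1/2}\sum_{j\in\L_s\cap\L_{s'}}\text{Cov}(x_{i,j},x_{i',j})$, and each per-component covariance is $(bn)^{-2}g(|i-i'|/(bn))$ with an $O\{(bn)^{-3}\}$ remainder by the same window-overlap computation that produced $g$ in \eqref{eq_def_g}. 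The new ingredient is the cross-neighborhood block ($s\ne s'$ with $\L_s\cap\L_{s'}\ne\emptyset$); it is controlled because the overlap ratio $|\L_s\cap\L_{s'}|/\sqrt{|\L_s||\L_{s'}|}$ never exceeds $1$ and, by Assumption \ref{asm_nbd_size_ratio}, all $|\L_s|$ are comparable, so no coordinate of $\Xi^{\diamond}$ degenerates relative to its peers. The stated rates then emerge from tracking (i) the self-normalized third and fourth moments, which scale like $|\L_{\text{min}}|^{-1/2}$ since each coordinate $(i,s)$ aggregates only $|\L_s|\ge|\L_{\text{min}}|$ components --- this is where $p$ from Theorem \ref{thm1_constanttrend} is replaced by $|\L_{\text{min}}|$ --- and (ii) the envelope $\EE\big[\max_{1\le j\le p}\max_{\varphi\in\N}|(\widetilde X_j^{\diamond})_\varphi|^q\big]$, whose maximum over the full set of $p$ components contributes the surviving $p^{2/q}$ factor; feeding these into the bound of \textcite{chernozhukov_central_2017}, with $n\mapsto nS$ for the max over $|\N|$, yields $\Delta_1^{\diamond}$ and $\Delta_2^{\diamond}$. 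The additional term $\Delta_0^{\diamond}=(bn)^{-1/3}\log^{2/3}(nS)$ appears, exactly as in Theorem \ref{thm1_constanttrend}, from the auxiliary comparison of the non-centered (squared-Gaussian) statistics via Lemma \ref{lemma_chen2019}, with $\log(nS)$ in place of $\log(n)$. A final Gaussian comparison inequality (again from \textcite{chernozhukov_central_2017}), together with an anti-concentration bound for $\max_{\varphi\in\N}\Z_\varphi^{\diamond}$, replaces the covariance of the approximating Gaussian vector by exactly $\Xi^{\diamond}$ while keeping the total error within $\Delta_0^{\diamond}+\Delta_1^{\diamond}+\Delta_2^{\diamond}$.

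For the second conclusion, under \eqref{eq_thm21_o1} one checks that each term vanishes: $\Delta_0^{\diamond}\to0$ is precisely $\log(nS)=o\{(bn)^{1/2}\}$, and $(\Delta_2^{\diamond})^3=(nS)^{4/q}p^{2/q}\log^3(pn)/|\L_{\text{min}}|$ is the $1/q$-th power of the left-hand side of \eqref{eq_thm21_o1}, so $(\Delta_2^{\diamond})^3\to0$; that $\Delta_1^{\diamond}\to0$ as well follows along the same lines as the end of the proof of Theorem \ref{thm1_constanttrend}. I expect the main obstacle to be the covariance-and-normalization bookkeeping of the second step. Unlike the purely temporal band of Theorem \ref{thm1_constanttrend}, the index set $\N$ carries a two-dimensional (time $\times$ neighborhood) correlation structure over overlapping, possibly differently sized neighborhoods, and one must verify uniformly over $\varphi\in\N$ that the rescaled summands are neither too small --- so that the non-degeneracy needed to run the GA and the anti-concentration step with $|\L_{\text{min}}|$ in the denominator holds --- nor too heavy in the envelope, and that the $g$-based approximation error does not accumulate across the $O(p^2)$ admissible windows. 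The smallest-eigenvalue-to-zero degeneracy noted after Theorem \ref{thm1_constanttrend} reappears here, so one again cannot use the sharp bound of \textcite{chernozhukov_nearly_2021} and must work with the more robust version of \textcite{chernozhukov_central_2017}, accepting the resulting polynomial-in-$\log$ rates.
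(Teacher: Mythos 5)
Your proposal is correct and follows essentially the same route as the paper: rewrite $\Q_n^{\diamond}$ under the null as the maximum over $\varphi=(i,s)\in\N$ of a sum of $p$ independent (by Assumption \ref{asm_sec_indep}) summands, apply Proposition 2.1 of \textcite{chernozhukov_central_2017} with ``sample size'' $p$ and ``dimension'' $nS$ after verifying the moment bounds in which the neighborhood normalization replaces $p$ by $|\L_{\text{min}}|$ (giving $B_n\lesssim\sqrt{p/|\L_{\text{min}}|}(nS)^{2/q}$, hence $\Delta_1^{\diamond}$, $\Delta_2^{\diamond}$ with the surviving $p^{2/q}$), and handle the comparison to the Gaussian vector with covariance $\Xi^{\diamond}$ via Lemma \ref{lemma_chen2019} together with the covariance computation of Lemma \ref{lemma_cov_thm1}, which produces the extra $\Delta_0^{\diamond}=(bn)^{-1/3}\log^{2/3}(nS)$ term. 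Your deduction of \eqref{eq_thm21_result} from \eqref{eq_thm21_o1} also matches the paper's argument.
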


One shall note that the GA results in Theorem \ref{thm1_constanttrend} is a special case of Theorem \ref{thm2_constanttrend}. Specifically, when $|\L_{\text{min}}|=p$ and $S=1$, which indicates that there does not exist any group structure and all $p$ time series belong to the same group, condition (\ref{eq_thm11_o1}) can be implied by (\ref{eq_thm21_o1}), and the same convergence rate of GA can be achieved. We extend the above theorem to nonlinear processes with general spatial temporal dependency in Theorem \ref{thm3_nonli}.

\begin{remark}[Allowed neighborhood number and size]
In Theorem \ref{thm2_constanttrend}, we can allow the minimum neighborhood size $|\L_{\text{min}}|$ to be of a polynomial order of the sample size $n$, and its order depends on the moment parameter $q$ defined in Assumption \ref{asm_finitemoment}. In particular, let $|\L_{\text{min}}|\asymp n^{\nu_2}$, for some $\nu_2>0$. 
Then, if $\nu_2>4/(q-2)$, expression (\ref{eq_thm21_result}) holds. The larger the moment parameter $q$ is, the larger minimum group size $|\L_{\text{min}}|$ we can allow. In addition, one shall note that the allowed number of neighborhoods $S$ can be as large as $O(p^2)$, while a bigger $S$ keeps more detailed local structure at the expense of time to inspect more windows. 
\end{remark}

Next, we consider the alternative hypothesis that there exists at least a break. Since the temporal-spatial moving windows can be overlapped, for the identification of breaks, we pose the following assumption on the separation of break locations.
\begin{assumption}[Temporal-spatial separation]
    \label{asm_spatial_sep}
    For any two breaks located at $(\tau_r,s_r)$ and $(\tau_{r'},s_{r'})$, $1\le r\neq r'\le R $, assume that there does NOT exist any moving window $\S_{\tau,s}$, $bn+1\le \tau\le n-bn$, $1\le s\le S$, such that $\S_{\tau,s}\cap\W_{\tau_r,s_r}\neq\emptyset$ and $\S_{\tau,s}\cap\W_{\tau_{r'},s_{r'}}\neq\emptyset$, where $\W(\tau_r,s_r)$ is defined in Definition \ref{def_set}.
\end{assumption}
Assumption \ref{asm_spatial_sep} can be viewed as an analogue of the condition $b\ll \kappa_n$ below Definition \ref{def_separation} when we apply a Two-Way MOSUM. To see this, consider the trend function in (\ref{eq_trend}). For any two breaks located at $\tau_k$ and $\tau_{k'}$, $b\ll \kappa_n$ guarantees that there is no moving window $\S_{\tau,:}$ intersects both $\W_{\tau_k,:}$ and $\W_{\tau_{k'},:}$, where $\S_{\tau,:}$ (resp. $\W_{\tau,:}$) is the sliding window (resp. influenced set) spanning all $p$ components. This adheres to the separation requirement in Assumption \ref{asm_spatial_sep}.

To achieve consistent {estimation} of the temporal and spatial break locations, for any given significance level $\alpha\in(0,1)$, we can choose the threshold value $ \omega^{\diamond}$ to be the quantile of the Gaussian limiting distribution indicated by Theorem \ref{thm2_constanttrend}, that is
\begin{equation}
    \label{eq_thm2_critical}
     \omega^{\diamond}=\inf_{r\ge0}\Big\{r:\PP\big(\max_{\varphi\in\N}\Z_{\varphi}^{\diamond}>r\big)\le \alpha\Big\}.
\end{equation}
Hence, we shall reject the null hypothesis if $\Q_n^{\diamond}> \omega^{\diamond}$. To evaluate the power of our localized test, consider the alternative hypothesis that there exists at least a break, i.e., $\underline{d}\neq 0$. We provide the power limit of our localized change-point detection in the following corollary.
\begin{corollary}[Power of a Two-Way MOSUM]
    \label{cor_local_power}
    Under Assumptions \ref{asm_finitemoment}--\ref{asm_sec_indep}, \ref{asm_nbd_size_ratio} and \ref{asm_spatial_sep}, if (\ref{eq_thm21_o1}) holds and
    $$\max_{1\le s\le S}\max_{1\le k\le K}n(u_{k+1}-u_k)|\Lambda^{-1}\gamma_k|_{2,s}^2\gg \sqrt{|\L_{\text{min}}|\log (nS)},$$
    where $|\cdot|_{2,s}$ is defined in Definition \ref{def_linear_nbdnorm}, then the testing power $\PP(\Q_n^{\diamond}> \omega^{\diamond})\rightarrow 1$.
\end{corollary}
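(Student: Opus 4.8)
The plan is to establish the power of the Two-Way MOSUM test by showing that, under the stated signal condition, the test statistic $\Q_n^{\diamond}$ diverges faster than the critical value $\omega^{\diamond}$, so that $\PP(\Q_n^{\diamond}>\omega^{\diamond})\to1$. First I would control the threshold $\omega^{\diamond}$: by Theorem \ref{thm2_constanttrend}, the distribution of $\Q_n^{\diamond}$ under the null is approximated by $\max_{\varphi\in\N}\Z_{\varphi}^{\diamond}$, a maximum of $|\N|=O(nS)$ centered Gaussian variables whose variances are $O(|\L_{\text{min}}|^{-1}\cdot p(bn)^{-2})$ via the diagonal of $\Xi^{\diamond}$ in (\ref{eq_cov_local_G}) together with (\ref{eq_cov}). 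A standard maximal-inequality bound then gives $\omega^{\diamond}=O\big((|\L_{\text{min}}|)^{-1/2}\sqrt{p\log(nS)}/(bn)\big)$, plus the already-controlled approximation error which is $o(1)$ under (\ref{eq_thm21_o1}).

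Next I would lower-bound $\Q_n^{\diamond}$ on a favorable event. Pick the neighborhood $s^\dagger$ and break index $k^\dagger$ achieving the maximum in the hypothesis. Choosing $i$ equal to the break location $\tau_{k^\dagger}$ (which lies in the admissible range by the separation assumptions), the expectation term $d_i$ in (\ref{eq_EV_appr}) restricted to $\L_{s^\dagger}$ has squared nbd-norm $|\Lambda^{-1}\gamma_{k^\dagger}|_{2,s^\dagger}^2$, and more generally summing over a window of width proportional to $n(u_{k^\dagger+1}-u_{k^\dagger})$ the aggregated deterministic signal contributes on the order of $n(u_{k^\dagger+1}-u_{k^\dagger})|\Lambda^{-1}\gamma_{k^\dagger}|_{2,s^\dagger}^2$. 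I would write $|V_i|_{2,s}^2-\bar c_s^{\diamond}=|d_i|_{2,s}^2+2\langle d_i,(V_i-\EE V_i)\rangle_{s}+\big(|V_i-\EE V_i|_{2,s}^2-\bar c_s^{\diamond}\big)$ after dividing by $\sqrt{|\L_s|}$, so that the first term is the signal, the third is the centered null-type statistic handled exactly as in the GA and hence $O_{\PP}(\omega^{\diamond})$, and the middle cross term is mean zero with standard deviation dominated by $|d_i|_{2,s}$ times a $(bn)^{-1/2}$-type factor — i.e. of strictly smaller order than the signal once the signal itself exceeds the threshold. Combining, $\Q_n^{\diamond}\gtrsim |\L_{s^\dagger}|^{-1/2} n(u_{k^\dagger+1}-u_{k^\dagger})|\Lambda^{-1}\gamma_{k^\dagger}|_{2,s^\dagger}^2$ up to lower-order terms, and by Assumption \ref{asm_nbd_size_ratio} $|\L_{s^\dagger}|\asymp|\L_{\text{min}}|$, so the hypothesis $\max_s\max_k n(u_{k+1}-u_k)|\Lambda^{-1}\gamma_k|_{2,s}^2\gg\sqrt{|\L_{\text{min}}|\log(nS)}$ is exactly what forces $\Q_n^{\diamond}/\omega^{\diamond}\to\infty$.

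The main obstacle is the precise bookkeeping of the cross term $\langle d_i,V_i-\EE V_i\rangle_{s}$ and of the fluctuation of the centered quadratic piece uniformly over the maximizing window: I need that these fluctuations, evaluated at the best $(i,s)$, are $o_{\PP}$ of the signal, which requires a Bernstein- or Gaussian-approximation-type tail bound on these aggregated sums that is uniform over $O(nS)$ windows, so the $\sqrt{\log(nS)}$ penalty reappears and must be absorbed by the signal condition — this is why the hypothesis carries $\sqrt{|\L_{\text{min}}|\log(nS)}$ rather than merely $\sqrt{|\L_{\text{min}}|}$. A secondary subtlety is confirming that the chosen $i\approx\tau_{k^\dagger}$ satisfies $bn+1\le i\le n-bn$ and that exactly one break sits in the relevant window, both of which follow from $b\ll\kappa_n$ and Assumption \ref{asm_spatial_sep}. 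Once these are in place the conclusion is immediate. I would structure the write-up as: (1) bound $\omega^{\diamond}$; (2) decompose $\Q_n^{\diamond}$ at the optimal $(i,s)$; (3) bound the cross and centered terms via the uniform tail estimates already developed for the GA proof; (4) assemble and invoke the signal condition.
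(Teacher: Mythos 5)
Your overall architecture---decompose $|V_i|_{2,s}^2-\bar c_s^{\diamond}$ into the signal $|d_i|_{2,s}^2$, a cross term, and the centered quadratic noise, bound the latter two uniformly over the $O(nS)$ windows, and compare the peak signal against the threshold---is exactly the paper's route (its display (\ref{eq_local_power_step1}), with the cross term handled by Lemma \ref{lemma_tail_cross}(i) and the quadratic term by the GA of Theorem \ref{thm2_constanttrend}). But there is a genuine error in your step (1): you read the diagonal of $\Xi^{\diamond}$ as $|\L_{\text{min}}|^{-1}p(bn)^{-2}$ and conclude $\omega^{\diamond}=O\big(|\L_{\text{min}}|^{-1/2}\sqrt{p\log(nS)}/(bn)\big)$. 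In fact only the coordinates $j\in\L_s$ enter the window $(i,s)$: by Lemma \ref{lemma_cov_thm1} each coordinate contributes variance $\asymp(bn)^{-2}$, so after the $|\L_s|^{-1/2}$ normalization the variance of $\Z_{i,s}^{\diamond}$ is $\asymp(bn)^{-2}$, free of both $p$ and $|\L_s|$; this is what (\ref{eq_cov_local_G}) encodes (the indicator restricts the sum over $j$ in $\Xi_{i,i'}$ to $\L_s\cap\L_{s'}$), and it is stated explicitly in the proof of Proposition \ref{prop_consistency}(i), where $\tilde\sigma^2\to 8$. Hence $\omega^{\diamond}=O\big(\sqrt{\log(nS)}/(bn)\big)$, as in (\ref{eq_prop1_step1result}). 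This matters: with your inflated threshold, signal-exceeds-threshold would require $bn\,|\Lambda^{-1}\gamma_k|_{2,s}^2\gg\sqrt{p\log(nS)}$, strictly stronger than the corollary's hypothesis $\gg\sqrt{|\L_{\text{min}}|\log(nS)}$ whenever $|\L_{\text{min}}|\ll p$---precisely the regime the Two-Way MOSUM targets---so the proof as written does not close under the stated condition.

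A secondary inaccuracy compounds this: the statistic is a maximum over windows of half-width $bn$, not a sum over a stretch of length $n(u_{k+1}-u_k)$, so the deterministic contribution at the optimal window is $|\L_s|^{-1/2}|\Lambda^{-1}\gamma_k|_{2,s}^2$ with no extra temporal factor; the factor $n(u_{k+1}-u_k)$ (effectively $bn$, given the separation imposed by Assumption \ref{asm_spatial_sep}) enters only through the $1/(bn)$ scaling of the threshold and the noise rates. Your claimed bound $\Q_n^{\diamond}\gtrsim|\L_{s}|^{-1/2}\,n(u_{k+1}-u_k)\,|\Lambda^{-1}\gamma_k|_{2,s}^2$ therefore overstates the signal by roughly the factor by which you overstate the threshold; the two errors nearly cancel in your heuristic but would not survive a careful write-up. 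Once you correct the variance computation and compare $|\L_{\text{min}}|^{-1/2}|\Lambda^{-1}\gamma_k|_{2,s}^2$ with $\omega^{\diamond}\asymp\sqrt{\log(nS)}/(bn)$, the remaining steps---cross term $O_{\PP}\big(|d_i|_{2,s}\sqrt{\log(nS)/(bn|\L_{\text{min}}|)}\big)$ via a uniform Nagaev-type bound as in Lemma \ref{lemma_tail_cross}(i), quadratic term $O_{\PP}\big(\sqrt{\log(nS)}/(bn)\big)$ via Theorem \ref{thm2_constanttrend}---go through exactly as you outline and coincide with the paper's argument.
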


\begin{remark}[Detailed power comparison of $\ell^2$ MOSUM and $\ell^2$-clustered Two-Way MOSUM]
    \label{rmk_power_l2cluster}
    This comment is complementary to Remark \ref{22}. Here we compare the testing power of the MOSUM aggregating all time series and the Two-Way MOSUM. Specifically, we consider a case where $p$ time series belong to $S$ groups and breaks only occur to one group. Suppose that all the series in this group jump with the same size $\vartheta''$, and we use the (Two-Way) MOSUM with bandwidth $b$ for detection. Then, to ensure the power tending to $1$, by Corollary \ref{cor_local_power}, $\Q_n^{\diamond}$ only requires $\vartheta''\gg \log^{1/4}(nS)|\L_{\text{min}}|^{-1/4}(bn)^{-1/2}$, while $\Q_n$ needs a stronger condition by Corollary \ref{cor_power} that $ \vartheta''\gg (p\log (n))^{1/4}|\L_{\text{min}}|^{-1/2}(bn)^{-1/2}$. 
\end{remark}

\subsection{Estimation of Change Points with Spatial Localization}\label{sec_estimation_nbd}

Providing the GA for the Two-Way MOSUM statistics, we gather the detailed steps of a change-point estimation procedure in Algorithm \ref{algo2}. Specifically, we extend Algorithm \ref{algo1} to the cases with cross-sectional localization via Two-Way MOSUM, and we shall expect to obtain spatial locations of change points besides the temporal ones. One follow-up theorem shows the consistency properties of some break statistics in this setup.

We denote the minimum break size over time and spatial neighborhoods by
\begin{equation}
    \label{eq_min_breaksize_nbd}
    \delta_p^{\diamond} =\min_{1\le r\le R }|\Lambda^{-1}\gamma_r|_2,
\end{equation}
and assume that this minimum break size is lower bounded as follows:
\begin{assumption}[Signal]
\label{asm_local_min_breaksize}
    $\min_{0\le r\le R}n(u_{r+1}-u_r)|\Lambda^{-1}\gamma_r|_2^2 \gg \sqrt{|\L_{\text{min}}|\log (nS)}$.
\end{assumption}
Let us consider the simple example that within any spatial neighborhood $\L_s$, $1\le s\le S$, the jump size of each time series is the same, denoted by $\vartheta\in \RR$. Then, Assumption \ref{asm_local_min_breaksize} means $n\kappa_n\vartheta^2\gg \sqrt{\log (nS)/|\L_{\text{min}}|}$, which is a {weaker} requirement of the signal strength for each series with breaks similar to Assumption \ref{asm_min_breaksize}.

To implement Algorithm \ref{algo2}, by the definition of $\bar{c}_s^{\diamond}$ in (\ref{eq_thm2_c_def}) and the similar arguments in Remark \ref{rmk_centering}, one {can} take $\bar{c}_s^{\diamond}=2|\L_s|/(bn)$, which still {ensures} the consistency. Also, similar to Algorithm \ref{algo1}, the selection of bandwidth parameter $b$ can follow the suggestions in Remark \ref{remark_b}, and the long-run variance can be estimated by a robust M-estimation method. The consistency results of the estimated number and temporal-spatial locations of breaks as well as the break sizes are all provided.

\begin{algorithm}[hbt!]
   \caption{$\ell^2$ Multiple Change-Point Detection via a Two-Way MOSUM}
   \label{algo2}
   \KwData{Observations $Y_1,Y_2,\ldots,Y_n$; spatial neighborhoods $\B_s$, $s=1,\ldots,S$; bandwidth parameter $b$; threshold value $ \omega^{\diamond}$}
   \KwResult{Estimated number of breaks $\hat R $; estimated break locations $(\hat\tau_r,\hat s_r)$, $r=1,\ldots,\hat R $; estimated jump vectors $\hat\gamma_r$; estimated minimum break size $\hat\delta_p^{\diamond}$}
   $\Q_n^{\diamond} \gets \max_{1\le s\le S}\max_{bn+1\le i\le n-bn}|\B_s|^{-1/2}(|V_i|_{2,s}^2-\bar{c}_s^{\diamond})$\;
   \eIf{$\Q_n^{\diamond}< \omega^{\diamond}$}{
    $\hat R =0$; STOP\;
   }{
   $r \gets 1$; $\A_1^{\diamond}\gets\{\V_{\tau,s},\,bn+1\le \tau\le n-bn,1\le s\le S: |\B_s|^{-1/2}(|V_{\tau}|_{2,s}^2-\bar{c}_s^{\diamond})> \omega^{\diamond}\}$\;
    \While{$\A_r^{\diamond}\neq\emptyset$}{
   $(\hat\tau_r,\hat s_r)\gets \arg\max_{\V_{\tau,s}\in\A_r^{\diamond}}|\B_s|^{-1/2}(|V_{\tau}|_{2,s}^2-\bar{c}_s^{\diamond})$; $\hat\gamma_r\gets \hat\mu_{\hat\tau_r-bn}^{(l)}- \hat\mu_{\hat\tau_r+bn-1}^{(r)}$\;

   $\A_{r+1}^{\diamond}\gets\A_r^{\diamond}\setminus \big\{\V_{\tau,s}:\,\text{there exists } bn+1\le i\le n-bn$, $1\le l\le S$, such that  $\S_{i,l}\cap\V_{\hat\tau_r,\hat s_r}\neq\emptyset \,\text{and } \S_{i,l}\cap\V_{\tau,s}\neq\emptyset\big\}$; $r \gets r+1 $\;
   }
   $\hat R \gets \max_{r\ge1}\{r:\A_r^{\diamond}\neq\emptyset\}$; $\hat\delta_p^{\diamond} \gets \min_{1\le r\le \hat R }\big||\Lambda^{-1}\hat\gamma_r|_2^2-\bar{c}\big|^{1/2}$\;
   }
\end{algorithm}

\begin{proposition}[Temporal-spatial consistency]
    \label{prop_consistency}
    Let $q\ge8$. Suppose that Assumptions \ref{asm_finitemoment}--\ref{asm_sec_indep}, \ref{asm_nbd_size_ratio}--\ref{asm_local_min_breaksize} and condition (\ref{eq_thm21_o1}) hold. If $|\L_{\min}|^{-1}\delta_p^{\diamond2}\ge3 \omega^{\diamond}$, $ \omega^{\diamond}\gg \log^{1/2}(n)(bn)^{-1}$ and $\max_{1\le r\le R}|\Lambda^{-1}\gamma_r|_q/|\Lambda^{-1}\gamma_r|_2=O(1/R^{1/q})$, 
    then we have the following results.
    \begin{itemize}
        \item[(i)] (Number of breaks). $\PP(\hat{R }=R )\rightarrow1$.
        \item[(ii)] (Time stamps of breaks).
        $\max_{1\le r\le R }|\hat \tau_r-\tau_{r^*}|\cdot|\Lambda^{-1}\gamma_r|_2^2/(1+\Phi_r) = O_{\PP}\big\{\log^2(nS)\big\}$, where $\Phi_r = |\L_{\text{min}}|/(bn|\Lambda^{-1}\gamma_r|_2^2)$ and $r^*=\arg\min_i\big|(\hat\tau_r,\hat s_r)-(\tau_i,s_i)\big|$.
        If in addition, $\delta_p^{\diamond2}/|\L_{\text{min}}|\gtrsim1/(bn)$, then
        $$\max_{1\le r\le R }|\hat \tau_r-\tau_{r^*}|\cdot|\Lambda^{-1}\gamma_r|_2^2/(bn)= O_{\PP}\big\{\log^2(nS)(bn)^{-1}\big\}.$$ 
        \item[(iii)] (Spatial locations of breaks).
        If there exists a constant $c_{\gamma}>0$ such that $|\gamma_{r,j}|/|\gamma_{r,j'}|\le c_{\gamma}$, for all $1\le r\le R$ and $j,j'\in \L_{s_r}$,
        then
        $$\max_{1\le r\le R }|\L_{\hat s_r}\ominus\L_{s_{r^*}}|\cdot|\Lambda^{-1}\gamma_r|_2^2/|\L_{\text{min}}|= O_{\PP}\big\{\log^2(nS)(bn)^{-1}\big\},$$
        where
        $\L_{\hat s_r}\ominus\L_{s_{r^*}}=(\L_{\hat s_r}\setminus\L_{s_{r^*}})\cup(\L_{s_{r^*}}\setminus\L_{\hat s_r})$ and $r^*=\arg\min_i\big|(\hat\tau_r,\hat s_r)-(\tau_i,s_i)\big|$. 
        \item[(iv)] (Break sizes). $\max_{1\le r\le R }\big||\Lambda^{-1}(\hat\gamma_r-\gamma_{r^*})|_2^2-\bar{c}\big| = O_{\PP}\big\{\big(p\log (nS)\big)^{1/2}(bn)^{-1}\big\}$. This also implies that $|\hat\delta_p^{\diamond}-\delta_p^{\diamond}| = O_{\PP}\big\{\big(p\log (nS)\big)^{1/4}(bn)^{-1/2}\big\}.$
    \end{itemize}
\end{proposition}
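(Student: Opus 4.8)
The plan is to mirror the proof of Theorem \ref{thm_consistency} (the analogous linear, cross-sectionally independent, all-series MOSUM case) and adapt each step from ordinary temporal moving windows to the two-way temporal-spatial windows $\S_{i,s}$, using the GA result of Theorem \ref{thm2_constanttrend} in place of Theorem \ref{thm1_constanttrend}. The key device is a two-way analogue of the weighted break vector: for a break at $(\tau_r,s_r)$ and any window $(\tau,s)$, the signal component of $|V_\tau|_{2,s}^2$ is $|d_\tau|_{2,s}^2$ where $d_\tau$ is given by \eqref{eq_EV_appr} restricted to $\L_s$, which is a "tent" function peaking at $\tau=\tau_r$, $\L_s=\L_{s_r}$ and vanishing outside the influenced set $\W_{\tau_r,s_r}$. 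Assumption \ref{asm_spatial_sep} guarantees that the tents of distinct breaks do not interact within any single window, so the analysis localizes break by break exactly as $b\ll\kappa_n$ does in the one-way case.

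\textbf{Step 1 (consistency of $\hat R$, part (i)).} I would first establish a uniform deviation bound: under the signal scaling of Assumption \ref{asm_local_min_breaksize} and the GA of Theorem \ref{thm2_constanttrend}, the centered error term $|\L_s|^{-1/2}\sum_j x_{i,s,j}^\diamond$ is uniformly $O_\PP\{(|\L_{\min}|\log(nS))^{1/2}(bn)^{-1}\}$ over all $(i,s)\in\N$ (this follows from the Gaussian maximum tail plus the standardization by $|\L_s|^{-1/2}$ and Assumption \ref{asm_nbd_size_ratio}). Combined with the centering correction $\bar c_s^\diamond=2|\L_s|/(bn)(1+O\{1/(bn)\})$ from Remark \ref{rmk_centering}, this shows $\Q_n^\diamond\le\omega^\diamond$ with probability tending to $1$ under the null, and under the alternative that at each true break the two-way statistic exceeds $\omega^\diamond$ (since $|\L_{\min}|^{-1}\delta_p^{\diamond2}\ge3\omega^\diamond$ dominates the noise by a constant factor). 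One then runs the greedy peeling in Algorithm \ref{algo2}: each $\arg\max$ lands within an influenced set $\W_{\tau_r,s_r}$, the removal step $\A^\diamond_{r+1}$ deletes exactly the vertical lines that can ever see that break again, and by Assumption \ref{asm_spatial_sep} no other break's influenced set is touched, so the algorithm cycles through all $R$ breaks and then stops, yielding $\PP(\hat R=R)\to1$.

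\textbf{Steps 2--4 (parts (ii)--(iv)).} For the temporal rate (ii) I would Taylor-expand the tent $|d_\tau|_{2,s}^2$ around its peak: along the temporal direction it decreases at rate $\asymp|\Lambda^{-1}\gamma_r|_2^2/(bn)$ per unit shift, while the stochastic fluctuation near the peak is of order $(|\L_{\min}|\log(nS))^{1/2}(bn)^{-1}$ plus a cross term of order $|\Lambda^{-1}\gamma_r|_2|\L_{\min}|^{1/2}\log^{1/2}(nS)/(bn)$; balancing the deterministic drop against the noise gives $|\hat\tau_r-\tau_{r^*}|\cdot|\Lambda^{-1}\gamma_r|_2^2\lesssim_\PP\log^2(nS)(1+\Phi_r)$, and the sharper form follows when the signal-to-noise condition $\delta_p^{\diamond2}/|\L_{\min}|\gtrsim1/(bn)$ makes $\Phi_r=O(1)$. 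Part (iii) is the genuinely new piece: the spatial "tent" $|d_\tau|_{2,s}^2$ as a function of the neighborhood index $s$ changes by an amount comparable to $\sum_{j\in\L_{\hat s_r}\ominus\L_{s_{r^*}}}(\Lambda^{-1}\gamma_{r})_j^2$, which by the bounded-ratio hypothesis $|\gamma_{r,j}|/|\gamma_{r,j'}|\le c_\gamma$ is $\asymp|\L_{\hat s_r}\ominus\L_{s_{r^*}}|\cdot|\Lambda^{-1}\gamma_r|_2^2/|\L_{s_r}|$; equating this to the uniform noise level $|\L_{\min}|^{1/2}\log^{1/2}(nS)(bn)^{-1}$ (after dividing by the $|\L_s|^{1/2}$ normalization) and invoking Assumption \ref{asm_nbd_size_ratio} to interchange $|\L_{s_r}|$ and $|\L_{\min}|$ yields the stated bound. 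Part (iv) is identical in spirit to Theorem \ref{thm_consistency}(iii): $\Lambda^{-1}\hat\gamma_r=\Lambda^{-1}\gamma_{r^*}+(V_{\hat\tau_r}-\EE V_{\hat\tau_r})$ up to a negligible bias from $|\hat\tau_r-\tau_{r^*}|$ via (ii), so $|\Lambda^{-1}(\hat\gamma_r-\gamma_{r^*})|_2^2-\bar c$ concentrates at rate $(p\log(nS))^{1/2}(bn)^{-1}$ (here the full-$p$ aggregation reappears because $\hat\gamma_r$ is a $p$-vector), and the $\delta$-bound is a one-line consequence of $|\sqrt a-\sqrt b|\le|a-b|/\sqrt b$ together with $\delta_p^{\diamond2}\gtrsim\omega^\diamond$.

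\textbf{The main obstacle} I anticipate is part (iii): controlling the spatial estimation error requires a maximal inequality that is uniform not only over $(i,s)$ but over the \emph{symmetric differences} $\L_{\hat s_r}\ominus\L_{s_{r^*}}$, and the $|\L_s|^{-1/2}$ normalization in $\Q_n^\diamond$ distorts the naive comparison — a neighborhood that captures most of the break's support but is slightly too large or too small must be shown to lose by an amount proportional to the mismatched mass, and this needs the bounded-ratio condition on $\gamma_{r,j}$ together with Assumption \ref{asm_nbd_size_ratio} to prevent a few dominant coordinates from masking a large set-mismatch. Once the right "spatial tent" lower bound is in hand, the balancing argument is routine and parallels the temporal case.
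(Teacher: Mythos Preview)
Your plan is correct and follows essentially the same route as the paper: parts (i) and (iv) mirror the proof of Theorem \ref{thm_consistency} with Theorem \ref{thm2_constanttrend} substituted for Theorem \ref{thm1_constanttrend}, while (ii) and (iii) are obtained by comparing the maximized statistic at $(\hat\tau_r,\hat s_r)$ against its values at $(\tau_r,\hat s_r)$ and $(\hat\tau_r,s_r)$ respectively and balancing the resulting signal drop against cross and noise terms (controlled via Lemma \ref{lemma_tail_cross}-type bounds and the GA). The obstacle you flag in (iii) is resolved in the paper exactly as you anticipate, via an explicit decomposition into $\tilde\A=\L_{s_r}\setminus\L_s$, $\tilde\B=\L_{s_r}\cap\L_s$, $\tilde\C=\L_s\setminus\L_{s_r}$ which, after a preliminary coarse-consistency step establishing $|\tilde\A|,|\tilde\C|=o(|\tilde\B|)$, delivers the spatial-tent lower bound $\gtrsim(|\tilde\A|+\big||\tilde\C|-|\tilde\A|\big|)d_0^2/\sqrt{|\tilde\B|}$ that you need.
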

Proposition \ref{prop_consistency} (i) indicates the consistency of the estimator for the number of significant breaks; (ii) and (iii) show that we can consistently recover both the spatial break neighborhood $\L_{s_r}$ and the temporal break stamp $\tau_r$; (iv) suggests that the sizes of break vector $\gamma_r$ can also be estimated consistently.
Note that in Proposition \ref{prop_consistency} (ii), $|\hat \tau_r-\tau_{r^*}|\cdot|\Lambda^{-1}\gamma_r|_2^2/(bn)$ indicates the temporal precision, and the spatial precision is represented by $|\L_{\hat s_r}\ominus\L_{s_{r^*}}|\cdot|\Lambda^{-1}\gamma_r|_2^2/|\L_{\text{min}}|$ in (iii).
Both results are normalized by their window widths respectively and the two consistency rates are of the same order.

\begin{remark}[Comparison of consistency rates with Theorem \ref{thm_consistency}]
We see that the temporal consistency rate of $|\hat \tau_r-\tau_{r^*}|\cdot|\Lambda^{-1}\gamma_r|_2^2$ in Proposition \ref{prop_consistency} (ii) is similar to that in Theorem \ref{thm_consistency} except for an additional $S$ term in the log factor. For the break size, the convergence rate of $|\hat\delta_p^{\diamond}-\delta_p^{\diamond}|$ similarly admits an additional $S$ term in the log factor compared to $|\hat\delta_p-\delta_p|$ in Theorem \ref{thm_consistency}. Both two $S$ terms result from the maximization over all $S$ spatial neighborhoods in the estimators.
\end{remark}

\section{Nonlinear Time Series with Cross-Sectional Dependence}\label{sec_test_nonlinear}

In this section, we present three generalizations. Firstly, we expand the linear series given in (\ref{eq_epsilon_linear}) to accommodate a nonlinear scenario (see Eq. (\ref{eq_epsilon_nonlinear})) for a broader range of time series models. Secondly, we move beyond the linear ordering in coordinates by introducing a more comprehensive space in the spatial dimension, denoted as $\L_0\subset\ZZ^v$ (where $v\ge1$ is a fixed integer). Lastly, we generalize the GA from earlier sections by allowing weak cross-sectional dependence in the underlying {error process}. We will begin with the definition of the new spatial space and the nonlinear model, {proceed} with the conditions for both temporal and cross-sectional dependence structures, and ultimately, present our primary theoretical findings and the rationale behind the proof strategy.

\subsection{Multi-Dimensional Spatial Space}
To detect breaks in $\L_0$, we shall first provide a generalized notion of spatial window accordingly.
In particular, denote $\B_s$, $1\le s\le S$, as spatial neighborhoods, which is a generalization of $\L_s$ in the previous section. Without loss of generality, we focus on hyper rectangles,
\begin{equation}
    \label{eq_nbd_Zv}
    \B_s=I_{s,1}\times I_{s,2}\times\ldots\times I_{s,v},
\end{equation}
where $I_{s,r}=I_{s,r,n} = [n_{s,r}^-,n_{s,r}^+]$ is some interval on $\ZZ$ whose end points $n_{s,r}^-$ and $n_{s,r}^+$ can depend on $n$. Different $\B_s$ are allowed to be overlapped and $S$ can go to infinity as $p\rightarrow\infty$. We define $I_r = I_{r,n} = [\min_s n_{s,r}^-,\max_s n_{s,r}^+]$ and let $\B_0 = I_1\times I_2\times\ldots\times I_v$, which implies
\begin{equation}
    \label{eq_nbd_all_Zv}
    \cup_{1\le s\le S}\B_s\subset\B_0.
\end{equation}
Suppose that the total number of locations in $\B_0\cap\L_0$ denoted as $|\B_0\cap\L_0|$ is $p=p_n$, which can go to infinity as $n$ increases. 
We consider the time series model
\begin{equation}
    \label{eq_nonli_model}
    Y_t(\bbell)=\mu_{\bbell}(t/n)+\epsilon_t(\bbell), \qquad t=1,...,n, \quad  \bbell\in\B_0\cap\L_0.
\end{equation}
Our main objective is to identify possible change points in the trend functions 
\begin{equation}
    \label{eq_nonli_trend}
    \mu_{\bbell}(u)=\mu_{\bbell,0} + \sum_{k=1}^{K }\gamma_{k,\bbell}\One_{\{u\ge u_k, \bbell\in\B_{s_k}\cap\L_0\}},
\end{equation}
where $K$ and $u_1,\ldots,u_{K }$ are defined similarly to those in (\ref{eq_trend}); $\mu_{\bbell,0}\in\RR$ represents the {benchmark level} when no break occurs, and $\gamma_{k,\bbell}\in\RR$ denotes the jump at time point $u_k$ and location $\bbell$ in the neighborhood $\B_{s_k}$, the $k$-th spatial neighborhood containing breaks.

We then introduce definitions to characterize the mass and volume of the spatial neighborhood $\B_s$.
By working with the spatial location $\bbell$, we can bypass the linear ordering presented in Section \ref{sec_test_spatial}. This notion of spatial location is similar to the general definition of spatial change-points in a $v$-dimensional spatial lattice used in studies such as \textcite{yu2022optimal} and \textcite{madrid2021lattice}.
\begin{definition}[Spatial neighborhood] \label{def_sp_nbd}
    (i) (Mass).
    Define the mass of the spatial neighborhood $\B_s$ by the number of series in $\B_s$, i.e. $|\B_s\cap\L_0|$, where $|\cdot|$ is the number of elements in a Borel set. Denoted by $B_{\text{min}}$ and $B_{\text{max}}$ the sizes of the smallest and biggest spatial neighborhoods, respectively, i.e.,
    $$B_{\text{min}}=\min_{1\le s\le S}|\B_s\cap\L_0|,\quad B_{\text{max}}=\max_{1\le s\le S}|\B_s\cap\L_0|,$$
    which satisfy $B_{\text{max}}/B_{\text{min}} \le c$, for some constant $c\ge 1$.
    (ii) (Volume).
    Define the volume of the spatial neighborhood $\B_s$ as $\lambda(\B_s)$, where $\lambda(\cdot)$ is the Lebesgue measure of a Borel set.
\end{definition}
Following \textcite{matsuda_fourier_2009}, we introduce the following density assumption on the spatial space $\L_0$ that makes it possible to extend the asymptotic properties in regular space in $\ZZ^v$ to ones in irregular space. 
\begin{assumption}[Density of spatial space $\L_0$]
    \label{asm_density}
    Let $\bbell_j$, $j=1,\ldots,p$, be the spatial locations in $\L_0\subset\ZZ^v$ on which $Y_t(\bbell_j)$ is observed, $t=1,\ldots,n$. Assume that each $\bbell_j$ can be written as $\bbell_j=(A_1u_{j,1},\ldots,A_vu_{j,v})^{\top}.$ 
    Here, $\mathbf{u}_j=(u_{j,1},\ldots,u_{j,v})^{\top}$ is a sequence of i.i.d. random vectors with a density function $g(\mathbf{x})$ with a compact support in $[0,1]^v$. We assume that $A_r\rightarrow\infty$ as $p\to \infty$, for all $r=1,\ldots,v$. Also, for all $\mathbf{x}\in[0,1]^v$, 
    $c_1\le g(\mathbf{x}) \le c_2,$
    for some constants $c_1,c_2>0$.
\end{assumption}
Here we only require the density function $g(\mathbf{x})$ to be uniformly bounded from both sides, which is a weaker condition compared to Assumption 1 in \textcite{matsuda_fourier_2009}, where they aim to perform Fourier analysis for irregularly spaced data on $\RR^v$ and more restrict assumptions such as the existence of higher-order derivatives of $g(\mathbf{x})$ are therefore desired. Differently, our goal is to perform block approximation in the spatial direction to deal with the cross-sectional dependence (cf. Remark \ref{rmk_block}), which in fact only requires that, for any hyper-rectangle $\A\subset\ZZ^d$ satisfying $\lambda(\A)\rightarrow\infty$, there exist constants $c_1,c_2>0$ such that
$c_1\le |\A\cap\L_0|/\lambda(\A)\le c_2.$
One can view this condition as a special case of Assumption \ref{asm_density}. Also, when it breaks down to a simple space with linear ordering, the linear spatial neighborhood $\L_s$ defined in Definition \ref{def_linear_nbd} can be represented by $\L_s=\B_s\cap\L_0$ and we have $|\L_s|=|\B_s\cap\L_0|=\lambda(\B_s)$, $1\le s\le S$, that is $g(\mathbf{x})\equiv 1$ for all $\mathbf{x}\in[0,1]^v$ with $v=1$.
Concerning the shape of spatial neighborhoods, we pose the following assumption to eliminate the degenerate case which holds little relevance in the context of spatial statistics.
\begin{assumption}[Neighborhood shape]
    \label{asm_nbd_shape}
    There exists a constant {$c\geq 1$, such that for each neighborhood $\B_s$, $\max_{1\le r\le v}(n_{s,r}^+ - n_{s,r}^-)\le c \min_{1\le r\le v}(n_{s,r}^+ - n_{s,r}^-)$.}
\end{assumption}
It is worth noting that our approach is not limited to hyper-rectangles, provided that Assumptions \ref{asm_density} and \ref{asm_nbd_shape} are satisfied. In this context, we can have a general concept of aggregation for our statistics within the spatial space.


\subsection{Generalized Two-Way MOSUM}
We now introduce a generalized Two-Way MOSUM, designed to accommodate the multi-dimensional spatial space constructed in the previous section. Let $\epsilon_t = (\epsilon_t(\bbell))_{\bbell\in\B_0\cap\L_0}^\top$, $t\in\ZZ$. We denote the long-run covariance matrix of $\{\underline\epsilon_t\}_{t\in\ZZ}$ and the corresponding diagonal matrix by
\begin{equation}
    \label{eq_nonli_longrun}
    \Sigma = \big(\sigma(\bbell_1,\bbell_2)\big)_{\bbell_1,\bbell_2\in\B_0\cap\L_0},
    \quad \text{and} \quad \Lambda = \text{diag}\big(\sigma(\bbell)\big)_{\bbell\in\B_0\cap\L_0},
\end{equation}
respectively, where $\sigma(\bbell)=\sigma(\bbell,\bbell)$ representing the long-run variance of the component $\epsilon_t(\bbell)$. To test for the existence of breaks, we denote $\hat\mu_i^{(l)}(\bbell) = \sum_{t=i-bn}^{i-1}Y_t(\bbell)/(bn), \,\, \hat\mu_i^{(r)}(\bbell) = \sum_{t=i}^{i+bn-1}Y_t(\bbell)/(bn)$, and evaluate a jump statistic defined by
\begin{equation}
    \label{eq_nonli_V}
    V_i(\bbell)= \sigma^{-1}(\bbell)\big(\hat\mu_i^{(l)}(\bbell) - \hat\mu_i^{(r)}(\bbell)\big).
\end{equation}
Note that $V_i(\bbell)$ comprises the signal part $\EE[V_i(\bbell)]$ and the noise part $V_i(\bbell)-\EE[V_i(\bbell)]$. 
Under the null hypothesis, where no break exists and $\EE[V_i(\bbell)]=0$, we define the centering term of the $\ell^2$-aggregation of $V_i(\bbell)$ within the neighborhood $\B_s$ as
\begin{equation}
    \label{eq_center_nbd}
    c_{\B_s}= \sum_{\bbell\in\B_s\cap\L_0}c(\bbell), \quad \text{where }c(\bbell) = \text{Var}[V_i(\bbell)].
\end{equation}
Subsequently, we propose the following test statistic
\begin{equation}
    \label{eq_teststats_nbd}
    \tilde\Q_n  = \max_{1\le s\le S} \max_{bn+1\le i\le n-bn}Q_{i,\B_s},\quad\textrm{where}
    \quad Q_{i,\B_s}=\frac{1}{\sqrt{|\B_s\cap \L_0|}}\Big(\sum_{\bbell\in\B_s\cap \L_0}V_i^2(\bbell)-c_{\B_s}\Big).
\end{equation}
Under the null hypothesis $\mathcal H_0$, since $\EE [V_i(\bbell)]=0$, we can rewrite $Q_{i,\B_s}$ into
\begin{equation}
    \label{eq_test_nonli}
    Q_{i,\B_s} = \frac{1}{\sqrt{|\B_s\cap \L_0|}}\sum_{\bbell\in\B_s\cap \L_0}x_i(\bbell), \quad \text{where }x_i(\bbell) = \big(V_i(\bbell)-\EE[V_i(\bbell)]\big)^2-c(\bbell).
\end{equation}
It shall be noted that when $v=1$, the test statistic $\tilde \Q_n$ reduces to $\Q_n^{\diamond}$ in Section \ref{sec_test_spatial}.

\subsection{Dependence Structure}
Although it is quite convenient to assume that the errors are {cross-sectionally} i.i.d., it is  unrealistic to ignore the spatial dependence.
The assumption on cross-sectional independence in previous sections can be relaxed accordingly to allow for a weak spatial dependence case.  In this section, we extend the GA in Section \ref{sec_test_spatial} to the cases where the underlying errors are allowed to be cross-sectionally weakly dependent. This will allow us to  evaluate the critical values of the test statistics $\tilde\Q_n$ accordingly. 

Suppose that the stationary noise process $\{\epsilon_t(\bbell)\}_{t\in\ZZ}$ in (\ref{eq_model}) is of the form:
\begin{equation}
    \label{eq_epsilon_nonlinear}
    \epsilon_t(\bbell) = f\big(\eta_{t-k,\bbell-\bbell'}; \, k\ge 0,\, \bbell'\in \ZZ^v\big).
\end{equation}
Here $\eta_{i,\mathbf{s}}$, $i\in\ZZ$, $\mathbf{s}\in\ZZ^v$, are i.i.d. random variables, and $f(\cdot)$ is an $\RR$-valued measurable function such that $\epsilon_t(\bbell)$ is well-defined. We assume throughout the paper that $\EE[\epsilon_t(\bbell)]=0$ and $\max_{\bbell\in\B_0\cap\L_0}\|\epsilon_t(\bbell)\|_q<\infty$, for some $q\ge 4$. 
Next, we introduce the functional dependence measures to characterize the temporal and spatial dependence structure of $\epsilon_t(\bbell)$. Let {$(\eta_{i,\mathbf{s}}')_{i\in\ZZ,\,\mathbf{s}\in\ZZ^v}$} be an i.i.d. copy of  $(\eta_{i,\mathbf{s}})_{i\in\ZZ,\,\mathbf{s}\in\ZZ^v}$. Specifically, we consider the \textit{temporal} and \textit{temporal-spatial} coupled versions of $\epsilon_t(\bbell)$ defined respectively by
\begin{equation*}
    \epsilon_t^*(\bbell) = f\big(\eta_{t-k,\bbell-\bbell'}^*; k\ge0,\,\bbell'\in\ZZ\big), \quad \text{and}\quad \epsilon_t^{**}(\bbell) = f\big(\eta_{t-k,\bbell-\bbell'}^{**}; k\ge0,\,\bbell'\in\ZZ\big),
\end{equation*}
where for any $i\ge0$ and $\mathbf{s}\in\ZZ^v$,
\begin{equation*}
    \eta_{i,\mathbf{s}}^* = \begin{cases}
\eta_{i,\mathbf{s}}, \quad \text{if } i\neq0, \\
\eta_{i,\mathbf{s}}', \quad \text{if } i=0.
\end{cases}, \quad \text{and} \quad \eta_{i,\mathbf{s}}^{**} = \begin{cases}
\eta_{i,\mathbf{s}}, \quad \text{if } i\neq0 \text{ and }\mathbf{s}\neq0, \\
\eta_{i,\mathbf{s}}', \quad \text{if } i=0 \text{ or }\mathbf{s}=0.
\end{cases}
\end{equation*}
Following \textcite{wu_nonlinear_2005}, we generalize the functional dependence measures as follows
\begin{align}
    \label{eq_func_dep}
    \theta_{t,\bbell,q} =  \big\|\epsilon_t(\bbell) - \epsilon_t^*(\bbell)\big\|_q, \quad \delta_{t,\bbell,q} = \big\|\epsilon_t(\bbell) - \epsilon_t^{**}(\bbell)\big\|_q.
\end{align}
Note that $\theta_{t,\bbell,q} $ represents the change measure of dependence by perturbing solely in the temporal direction, while $\delta_{t,\bbell,q} $ denotes the  counterpart which perturbs in both the temporal and spatial directions. 

To account for the temporal and cross-sectional dependence structure of $\{\epsilon_t(\bbell)\}_{t\in\ZZ}$, we shall impose the following assumptions on $\theta_{t,\bbell,q}$ and $\delta_{t,\bbell,q}$. The assumptions essentially require the algebraic decay of dependence both in the temporal and the spatial directions and are controlling the tail behavior of the noise terms.

\begin{assumption}[Finite moment]
    \label{asm_nonli_finitemoment}
    Assume $\max_{\bbell\in\B_0\cap\L_0}\|\epsilon_t(\bbell)\|_q<\infty$, for $q\ge 8$.
\end{assumption}
\begin{assumption}[Temporal dependence]
    \label{asm_nonli_temp_dep}
    There exist some constants $C>0$ and $\beta>0$, such that, for all $h\ge0$, $\max_{\bbell\in\B_0\cap\L_0}\sum_{k\ge h}\theta_{k,\bbell,q}/\sigma(\bbell)\le C(1\vee h)^{-\beta},$ for $q\ge 8$. 
\end{assumption}

\begin{assumption}[Weak cross-sectional dependence]
    \label{asm_nonli_sec_dep}
    Assume that there exist some constants $C'>0$ and $\xi>1$, such that, for all $m\ge0$, 
    \begin{equation}
        \label{eq_nonli_sec_dep}
        \sum_{k\ge 0}\Big(\sum_{\{\bbell\in\B_0\cap\L_0:\, |\bbell|_2\ge m\}}\delta_{k,\bbell,q}^2/\sigma^2(\bbell)\Big)^{1/2} \le C'(1\vee m)^{-\xi}.
    \end{equation}
\end{assumption}
It shall be noted that we can also switch the temporal and spatial aggregation in the above assumption. Specifically, (\ref{eq_nonli_sec_dep}) also implies 
\begin{equation}
    \label{eq_nonli_sec_dep_11}
    \Big(\sum_{\{\bbell\in\B_0\cap\L_0:\, |\bbell|_2\ge m\}}\Delta_{0,\bbell,q}^2/\sigma^2(\bbell)\Big)^{1/2}
        \le C'(1\vee m)^{-\xi}, \quad \text{where } \Delta_{0,\bbell,q} = \sum_{k\ge 0}\delta_{k,\bbell,q}.
\end{equation}
To see this, for any $n\in\NN$, we denote the partial sum $\Delta_{n,\bbell,q}=\sum_{k= 0}^n\delta_{k,\bbell,q}$. Let $\underline{\Delta}_{n,q}=(\Delta_{n,\bbell,q})^{\top}_{\{\bbell\in\ZZ^v:\, |\bbell|_2\ge m\}}$, and $\underline{\delta}_{k,q}=(\delta_{k,\bbell,q})^{\top}_{\{\bbell\in\ZZ^v:\, |\bbell|_2\ge m\}}$. By the triangle inequality, $|\underline{\Delta}_{n,q}|_2 = |\sum_{k=0}^n\underline{\delta}_{k,q}|_2\le\sum_{k=0}^n|\underline{\delta}_{k,q}|_2$. Since $\sum_{k\ge0}|\underline{\delta}_{k,q}|_2<\infty$, we let $n\rightarrow\infty$ and achieve the desired result. Furthermore, (\ref{eq_nonli_sec_dep_11}) also indicates the decay of cross-sectional long-run correlation as depicted in Lemma \ref{lemma_nonli_longrun_corr}.

\begin{lemma}[Decay of long-run correlation]
\label{lemma_nonli_longrun_corr}
Assume that condition (\ref{eq_nonli_sec_dep_11}) holds. Then, for any $\bbell_1,\bbell_2\in\B_0\cap\L_0$, the long-run correlation between $\epsilon_t(\bbell_1)$ and $\epsilon_t(\bbell_2)$, denoted by $\tilde\rho(\bbell_1,\bbell_2)$, decays at a polynomial rate as $|\bbell_1-\bbell_2|_2$ increases, that is
\begin{equation}
    \label{eq_nonli_sec_dep2}
    \tilde\rho(\bbell_1,\bbell_2)=\sigma(\bbell_1,\bbell_2)/\big(\sigma(\bbell_1)\sigma(\bbell_2)\big)= O\big\{|\bbell_1-\bbell_2|_2^{-2\xi}\big\}.
\end{equation}
\end{lemma}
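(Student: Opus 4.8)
The plan is to bound the long-run covariance $\sigma(\bbell_1,\bbell_2)=\sum_{h\in\ZZ}\EE[\epsilon_0(\bbell_1)\epsilon_h(\bbell_2)]$ by a quantity that is explicitly small in the spatial separation $m:=|\bbell_1-\bbell_2|_2$, and then divide by $\sigma(\bbell_1)\sigma(\bbell_2)$. Since the innovations $\eta_{i,\mathbf s}$ in (\ref{eq_epsilon_nonlinear}) are i.i.d.\ over $(i,\mathbf s)$ and $f$ is shift-invariant, the field $(\epsilon_t(\bbell))$ is stationary in both time and space; in particular the long-run variance $\sigma^2(\bbell)\equiv\sigma^2$ is the same at every location, so the denominator is just $\sigma^2$, and $\sigma(\bbell_1,\bbell_2)$ depends only on $\bbell_1-\bbell_2$. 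One may therefore assume $\bbell_1=\mathbf 0$, $\bbell_2=\bbell$ with $|\bbell|_2=m$, and for $m$ bounded the bound is trivial because $|\tilde\rho|\le1$, so $m$ may be taken large.

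First I would pass to the one-sided projection representation of the long-run covariance. With $\F_k=\sigma(\eta_{i,\mathbf s}:i\le k,\ \mathbf s\in\ZZ^v)$ and $\P_k\cdot=\EE[\cdot\mid\F_k]-\EE[\cdot\mid\F_{k-1}]$, one has $\sigma(\bbell_1,\bbell_2)=\EE[W(\bbell_1)W(\bbell_2)]$ where $W(\bbell)=\sum_{j\ge0}\P_0\epsilon_j(\bbell)$; the series converges in $L^2$ because $\|\P_0\epsilon_j(\bbell)\|_q\le\theta_{j,\bbell,q}$ is summable by Assumption \ref{asm_nonli_temp_dep}, and $W(\bbell)$ is a function of the time-$0$ innovation slice $\{\eta_{0,\mathbf s}:\mathbf s\in\ZZ^v\}$ alone. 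On top of this I would superpose a second, spatial martingale decomposition of that slice: fixing a lexicographic order on $\ZZ^v$ and letting $\Q_{\mathbf s}$ project onto $\eta_{0,\mathbf s}$ given the strictly earlier coordinates, $W(\bbell)=\sum_{\mathbf s\in\ZZ^v}\Q_{\mathbf s}W(\bbell)$ with pairwise orthogonal summands. By orthogonality $\sigma(\mathbf 0,\bbell)=\sum_{\mathbf s}\EE[\Q_{\mathbf s}W(\mathbf 0)\,\Q_{\mathbf s}W(\bbell)]$, and Cauchy--Schwarz turns this into the convolution-type bound $|\sigma(\mathbf 0,\bbell)|\le\sum_{\mathbf s}\rho_{\mathbf s}\,\rho_{\mathbf s-\bbell}$, where $\rho_{\mathbf u}:=\|\Q_{\mathbf u}W(\mathbf 0)\|_2$ depends only on the lag by spatial stationarity.

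The bridge to the hypothesis is the domination $\rho_{\mathbf u}\le\sum_{j\ge0}\|\Q_{\mathbf u}\P_0\epsilon_j(\mathbf 0)\|_2\le\sum_{j\ge0}\delta_{j,\mathbf u,2}\le\Delta_{0,\mathbf u,q}$ (a single-coordinate projection of $\epsilon_j$ is bounded by the corresponding spatial coupling difference, and $\|\cdot\|_2\le\|\cdot\|_q$), so (\ref{eq_nonli_sec_dep_11}) gives both $\sum_{\mathbf u}\rho_{\mathbf u}^2\lesssim\sigma^2$ and the tail bound $\sum_{|\mathbf u|_2\ge t}\rho_{\mathbf u}^2\lesssim\sigma^2(1\vee t)^{-2\xi}$. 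Because $|\mathbf s|_2+|\mathbf s-\bbell|_2\ge m$, every $\mathbf s$ is at least $m/2$ away from $\mathbf 0$ or from $\bbell$; I would therefore first split off the near-center parts of \emph{both} arguments — replacing $W(\mathbf 0)$ by its component depending only on innovations within distance $m/2$ of $\mathbf 0$, and $W(\bbell)$ by its analogue around $\bbell$ — noting that these two near-center fields are functions of disjoint innovations and hence have zero cross-expectation, so that every surviving term in $\EE[W(\mathbf 0)W(\bbell)]$ carries a far-field factor from $\epsilon(\mathbf 0)$ and one from $\epsilon(\bbell)$, each bounded by $\big(\sum_{|\mathbf u|_2\ge m/2}\rho_{\mathbf u}^2\big)^{1/2}\lesssim\sigma\,m^{-\xi}$. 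Multiplying the two factors and dividing by $\sigma^2$ produces $\tilde\rho(\bbell_1,\bbell_2)=O(m^{-2\xi})$.

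The step I expect to be the main obstacle is exactly this last one: arranging the decomposition so that the convolution genuinely loses the full power $m^{-2\xi}$ rather than only $m^{-\xi}$, which requires pairing the two far-field tails correctly and controlling the mixed remainder terms $\EE[(\text{near }\mathbf 0)(\text{far }\bbell)]$ and $\EE[(\text{far }\mathbf 0)(\text{near }\bbell)]$ — each a covariance between a field supported near one center and the far residual of the other — by exploiting that innovations near one center are automatically far from the other, so these too are products of two $m^{-\xi}$-tails. A more routine technical point is the rigorous setup of the spatial martingale decomposition over the infinite index set $\ZZ^v$, and the pointwise estimate $\|\Q_{\mathbf u}\P_0\epsilon_j(\mathbf 0)\|_2\le\delta_{j,\mathbf u,q}$; both go through once the nested conditional expectations are in place, and the assumption $\xi>1$ together with Assumption \ref{asm_nonli_temp_dep} keeps all the occurring series summable.
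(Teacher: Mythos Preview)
Your route is essentially the paper's: both decompose $\epsilon_t(\bbell)$ via martingale projections in time \emph{and} space, arrive at a convolution-type bound $|\sigma(\bbell_1,\bbell_2)|\le\sum_{\mathbf s}\rho_{\mathbf s}\rho_{\mathbf s-\bbell}$ (the paper writes this as $\sum_{l'}\Psi_{0,l_1-l',2}\Psi_{0,l_2-l',2}$ using the single joint projection $\P_{k,l'}$ instead of your two-step $\P_k$ then $\Q_{\mathbf s}$), split the sum according to which center the running index is closer to, and apply Cauchy--Schwarz together with the tail condition (\ref{eq_nonli_sec_dep_11}). One small slip in your outline: $\P_0\epsilon_j(\bbell)$ is $\F_0$-measurable but not a function of the time-$0$ slice alone --- it depends on all $\eta_{i,\cdot}$ with $i\le 0$. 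This is harmless (just run the spatial decomposition over the full half-space of innovations, which is exactly what the paper's $\P_{k,l'}$ does), but it is worth being precise about.

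The substantive gap is the one you yourself flag. Your claim that the mixed term $\EE[W_{\mathrm{near}}(\mathbf 0)\,W_{\mathrm{far}}(\bbell)]$ is ``a product of two $m^{-\xi}$-tails'' does not follow from the observation that innovations near $\mathbf 0$ are far from $\bbell$. In the convolution picture that term is $\sum_{|\mathbf s|<m/2}\rho_{\mathbf s}\,\rho_{\mathbf s-\bbell}$; yes, every $\rho_{\mathbf s-\bbell}$ here is a tail coefficient (since $|\mathbf s-\bbell|>m/2$), but $\rho_{\mathbf s}$ is not, and Cauchy--Schwarz with (\ref{eq_nonli_sec_dep_11}) only yields
\[
\Big(\sum_{|\mathbf s|<m/2}\rho_{\mathbf s}^2\Big)^{1/2}\Big(\sum_{|\mathbf s-\bbell|>m/2}\rho_{\mathbf s-\bbell}^2\Big)^{1/2}\ \lesssim\ \sigma\cdot\sigma\,m^{-\xi},
\]
so a single factor of $m^{-\xi}$, not two. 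The ``far--far'' term does give $m^{-2\xi}$, but the mixed terms dominate. In fact the paper's own proof performs exactly this Cauchy--Schwarz split into $\M_1+\M_2$ and --- read carefully --- also extracts only one tail factor per piece; the displayed bound $O\{(m/2)^{-2\xi}\}$ there does not follow from the preceding line, and the parallel linear computation around (\ref{eq:rhojjpandbdd}) in the same paper produces $O(|j-j'|^{-\xi})$, not $-2\xi$. So your instinct that this is the delicate step is correct; under the hypothesis (\ref{eq_nonli_sec_dep_11}) alone the argument as sketched (yours and the paper's) delivers $O(m^{-\xi})$, and the sharper exponent in the statement would require an additional argument or a different hypothesis.
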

It is worth noticing that the decay assumption regarding spatial correlation is widely prevalent in spatial statistics. See for example, \textcite{stein1999interpolation} and \textcite{rasmussen_gaussian_2006} follow a similar pattern that the covariance between variables decreases as their corresponding distance in the input space increases.

\subsection{Gaussian Approximation with Weak Cross-Sectional Dependence}

This subsection is devoted to the GA in the high-dimensional setting under the above-mentioned general framework. In the case when $p$ is fixed, we provide an invariance principle in Appendix \ref{subsec_fixed_p}. When $p$ grows to infinity as $n$ increases, to derive the limiting distribution of the test statistic $\tilde\Q_n$ under the null, we introduce the centered Gaussian random vector 
\begin{equation}
    \label{eq_Z_nonli}
    \tilde\Z=(\tilde\Z_{bn+1,1},\ldots,\tilde\Z_{n-bn,1},\ldots,\tilde\Z_{bn+1,S},\ldots,\tilde\Z_{n-bn,S})^{\top},
\end{equation}
with the covariance matrix
\begin{equation}
    \label{eq_cov_Z_nonli}
    \tilde\Xi=(\tilde\Xi_{i,s,i',s'})_{1\le i,i'\le n-2bn,1\le s,s'\le S}.
\end{equation}
Let $\pi_{s,s',\bbell_1,\bbell_2}=(|\B_s\cap\L_0||\B_{s'}\cap\L_0|)^{-1/2}\One_{\bbell_1,\bbell_2\in\B_s\cap\B_{s'}\cap\L_0}$, and $\tilde\Xi_{i,s,i+\zeta bn,s'}$ equals to
\begin{equation}
    \label{eq_nonli_cov}(bn)^{-2}\sum_{\bbell_1,\bbell_2\in\B_0\cap\L_0}\pi_{s,s',\bbell_1,\bbell_2}
    \begin{cases}
        (15\zeta^2-20\zeta+8)\tilde\rho^2(\bbell_1,\bbell_2) + 3\zeta^2-4\zeta, & 0<\zeta \le 1, \\
        (3\zeta^2-12\zeta+12)\tilde\rho^2(\bbell_1,\bbell_2) -\zeta^2+4\zeta -4, & 1< \zeta\le 2,\\
        0, & \zeta>2.
    \end{cases}
\end{equation}
We defer the detailed evaluation of (\ref{eq_nonli_cov}) to Lemma \ref{lemma_cov_thm3}. Note that, if for all $\bbell,\bbell_1,\bbell_2\in\B_0\cap\L_0$, $\tilde\rho(\bbell,\bbell)=1$ and $\tilde\rho(\bbell_1,\bbell_2)=0$,  $\bbell_1\neq\bbell_2$, which denotes the case with no spatial dependence, then (\ref{eq_nonli_cov}) is the same as (\ref{eq_cov_local_G}). Recall $\N$ defined in (\ref{eq_node_pair}). We denote each element in $\tilde\Z$ by $\tilde\Z_{\varphi}$, where $\varphi=(i,s)\in\N$. Similar to the cross-sectionally independent case, we can approximate the limiting distribution of $\tilde\Q_n$ under the null by the one of $\max_{\varphi}\tilde\Z_{\varphi}$. We refer to Remark \ref{rmk_block} in Appendix \ref{sec_proofs} for the proof strategies based on the block approximation. 

\begin{theorem}[GA with weak cross-sectional dependence]
    \label{thm3_nonli}
    Suppose that Assumptions \ref{asm_density}--\ref{asm_nonli_sec_dep} hold. Then, under the null hypothesis, for $\tilde\Delta_0=(bn)^{-1/3}\log^{2/3}(nS)$, $\tilde\Delta_1 = c_{p,n}^{-(q-4)/(3q)}$, $\tilde\Delta_2 = c_{p,n}^{-1/(8v)}\log(pn)$, where
    \begin{equation}
        \label{eq_cpn}
        c_{p,n}=p^{\frac{-2}{q-4}}B_{\text{min}}^{\frac{q}{q-4}}(nS)^{-(\frac{4}{q-4}+\frac{2v}{q\xi})}\big(\log(pn)\big)^{-(\frac{(2+q)v}{2q\xi}+\frac{3q}{q-4})},
    \end{equation}
    we have
    $$\sup_{u\in\RR}\Big|\PP(\tilde\Q_n\le u)-\PP\big(\max_{\varphi\in\N} \tilde\Z_{\varphi}\le u\big)\Big| \lesssim \tilde\Delta_0 + \tilde\Delta_1 + \tilde\Delta_2,$$
    where $\N$ is defined in (\ref{eq_node_pair}), and the constant in $\lesssim$ is independent of $n,p,b$ and $S$.
    If in addition, $\log(nS)=o\{(bn)^{1/2}\}$ and $\log^{8v}(pn)=o(c_{p,n})$, then $$\sup_{u\in\RR}\Big|\PP(\tilde\Q_n\le u)-\PP\big(\max_{\varphi\in\N} \tilde\Z_{\varphi}\le u\big)\Big|\rightarrow0.$$
\end{theorem}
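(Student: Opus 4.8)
The plan is to reduce the claim to the high-dimensional Gaussian approximation of \textcite{chernozhukov_central_2017}, with the (approximately independent) spatial blocks playing the role of their independent summands and the index set $\N$ of temporal--spatial windows playing the role of their high-dimensional coordinate vector --- the same device used for Theorem \ref{thm2_constanttrend}, but preceded by a block approximation that removes the cross-sectional dependence (cf. Remark \ref{rmk_block}). Under $\H_0$ one writes $\tilde\Q_n=\max_{\varphi\in\N}\sum_{\bbell\in\B_0\cap\L_0}\tilde x_\varphi(\bbell)$, where for $\varphi=(i,s)$ we put $\tilde x_{(i,s)}(\bbell)=|\B_s\cap\L_0|^{-1/2}x_i(\bbell)\One_{\bbell\in\B_s\cap\L_0}$ and $x_i(\bbell)$ is the centered square in (\ref{eq_test_nonli}). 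Since $x_i(\bbell)$ is a chi-square--type quadratic functional of the errors $\epsilon_t(\bbell)$, Assumption \ref{asm_nonli_finitemoment} with $q\ge8$ gives it $q/2\ge4$ finite moments, while Assumption \ref{asm_nonli_temp_dep} controls the within-window temporal averages and fixes the long-run-variance normalization $\sigma(\bbell)$, so the summands are of the type \textcite{chernozhukov_central_2017} can handle.

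First I would carry out the spatial block approximation. Fix a block side length $\ell=\ell_{p,n}$ and replace $\epsilon_t(\bbell)$ by a truncated version $\epsilon_t^{(\ell)}(\bbell)$ that conditions only on the innovations $\eta_{t-k,\bbell-\bbell'}$ with $k\ge0$ and $|\bbell'|_\infty\le\ell$; by Assumption \ref{asm_nonli_sec_dep} and (\ref{eq_nonli_sec_dep_11}) this costs $\|\epsilon_t(\bbell)-\epsilon_t^{(\ell)}(\bbell)\|_q/\sigma(\bbell)\lesssim\ell^{-\xi}$ uniformly. I would propagate this through the linear map $V_i(\bbell)\mapsto V_i^{(\ell)}(\bbell)$ and then the quadratic $x_i(\bbell)\mapsto x_i^{(\ell)}(\bbell)$ (Cauchy--Schwarz on the cross term, using $q\ge8$), take a union bound over $\N$, and bound the resulting change in $\tilde\Q_n$ in probability by a polynomial in $\ell^{-\xi}$, $p$, and $nS$. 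Partitioning $\B_0\cap\L_0$ into cubes of side $3\ell$ then makes the truncated series on non-adjacent cubes independent; Assumption \ref{asm_density} forces each such cube to carry $\asymp\ell^v$ observed series, and Assumption \ref{asm_nbd_shape} together with the mass bound in Definition \ref{def_sp_nbd} guarantees that each $\B_s$ decomposes into $\asymp B_{\text{min}}/\ell^v$ essentially independent block-contributions, uniformly in $s$.

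Next I would apply the Gaussian approximation to the block-independent array and optimize over $\ell$. After grouping, $\sum_\bbell\tilde x_\varphi(\bbell)$ is a sum of $\asymp B_{\text{min}}/\ell^v$ independent mean-zero summands, uniformly over $\varphi\in\N$; I would invoke Proposition 2.1 of \textcite{chernozhukov_central_2017} with their number of summands equal to the number of blocks and their dimension equal to $|\N|\asymp(n-2bn)S$, with envelope constant of order $(nS)^{2/q}$ (the $q/2$-th moment of $x_i(\bbell)$ is bounded) and the lower variance bound supplied by the $0<\zeta\le1$ branch of (\ref{eq_nonli_cov}) (the leading $8/(bn)^2$ term once $\tilde\rho(\bbell,\bbell)=1$). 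This produces a Gaussian vector whose covariance --- by Lemma \ref{lemma_cov_thm3} and the long-run-correlation decay of Lemma \ref{lemma_nonli_longrun_corr} --- equals $\tilde\Xi$ up to the truncation perturbation. Balancing the resulting approximation error, of order $(\ell^v(nS)^{4/q}\log^7(pn)/B_{\text{min}})^{1/6}$ plus its $\tilde\Delta_2$-analogue, against the Step-1 truncation error produces the effective ``sample size'' $c_{p,n}$ in (\ref{eq_cpn}) and hence the terms $\tilde\Delta_1=c_{p,n}^{-(q-4)/(3q)}$ and $\tilde\Delta_2=c_{p,n}^{-1/(8v)}\log(pn)$.

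Finally I would run the Gaussian-to-Gaussian comparison and assemble. The Gaussian vector from the previous step has coordinate means of order $(bn)^{-1}$ (the $-c(\bbell)$ correction is only asymptotically exact), so replacing its maximum by $\max_{\varphi\in\N}\tilde\Z_\varphi$ with the exactly centered covariance $\tilde\Xi$ uses the anti-concentration comparison of Lemma \ref{lemma_chen2019}, at cost $\tilde\Delta_0=(bn)^{-1/3}\log^{2/3}(nS)$ --- the precise analogue of $\Delta_0$ in Theorem \ref{thm1_constanttrend}. Combining the three couplings in Kolmogorov distance and converting each into a uniform-in-$u$ bound via Gaussian anti-concentration gives $\lesssim\tilde\Delta_0+\tilde\Delta_1+\tilde\Delta_2$, and under $\log(nS)=o\{(bn)^{1/2}\}$ and $\log^{8v}(pn)=o(c_{p,n})$ all three terms vanish. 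The hard part will be the interface between Steps 1 and 2: controlling the chi-square--type quadratic form under spatial truncation while keeping the coordinatewise variances bounded away from zero so that \textcite{chernozhukov_central_2017} applies --- the same degeneracy flagged after Theorem \ref{thm1_constanttrend}, where the smallest window-covariance eigenvalue vanishes as $b^2n\to\infty$, which forces the non-degenerate bound of \textcite{chernozhukov_central_2017} rather than the sharper rate of \textcite{chernozhukov_nearly_2021} --- together with the bookkeeping in the $\ell$-optimization that yields the exact exponents in (\ref{eq_cpn}).
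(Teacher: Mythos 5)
Your overall architecture matches the paper's: spatial truncation to an $m$-dependent field, blocking, the \textcite{chernozhukov_central_2017} Gaussian approximation with the temporal--spatial windows $\N$ as coordinates and spatial blocks as the independent summands, Nazarov-type anti-concentration, and the Gaussian-to-Gaussian comparison of Lemma \ref{lemma_chen2019} producing $\tilde\Delta_0$. However, there is a genuine gap at the center of your Step 2. After truncating at radius $\ell$, contiguous cubes of side $3\ell$ give block sums that are only $1$-dependent across the cube lattice: adjacent cubes share innovations and are \emph{not} independent, and Proposition 2.1 of \textcite{chernozhukov_central_2017} requires exact independence of the summands. Calling the array ``essentially independent'' does not close this; you need the Bernstein device the paper actually uses (cf. Remark \ref{rmk_block}): big hyper-blocks of side $Lm$ separated by buffer (small) blocks of side $m$, so that the big-block sums $G_i(\underline{k})$ are exactly independent (Remark \ref{rmk_ind_block}), \emph{plus} a separate argument that the discarded buffer regions contribute only $O_{\PP}\{(L^{-1}\log(nS))^{1/2}\}$ to the maximum over all $(i,s)$ --- this is Lemma \ref{lemma_nonli_block}, which is itself a nontrivial Gaussian-approximation/maximal-inequality step absent from your plan. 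Without the buffers and their negligibility proof, the GA step in your proposal fails as written.

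A secondary, quantitative issue: your control of the truncation error by per-window Cauchy--Schwarz on the cross term loses a factor of order $\sqrt{B_{\text{max}}}$. Since $x_i(\bbell)-x_i^{(m)}(\bbell)$ is centered and its correlations decay across $\bbell$ under Assumption \ref{asm_nonli_sec_dep}, the paper's Lemma \ref{lemma_nonli_m_approx} bounds the \emph{aggregated} difference $\lVert bn(\I_{i,\B_s}-\I_{i,\B_s}^{(m)})\rVert_{q/2}\lesssim m^{-\xi}$ via a martingale (Burkholder) decomposition in $(k,l')$ exploiting this decorrelation, whereas Cauchy--Schwarz ignores the centering and yields only $\sqrt{|\B_s\cap\L_0|}\,m^{-\xi}$. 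With the weaker bound, the optimization over $(m,L)$ does not reproduce the exponents in $c_{p,n}$ of (\ref{eq_cpn}); so the refinement you defer to ``bookkeeping'' is in fact where the stated rate is won, and it requires the martingale-level argument rather than a coordinatewise bound.
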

\begin{remark}[Comparison with Theorem \ref{thm2_constanttrend}]
    Note that when $v=1$, we have $|\L_{\text{min}}|=B_{\text{min}}$, and if $\xi\rightarrow\infty$, it indicates the cross-sectional independence setting. Hence, Theorem \ref{thm2_constanttrend} can be viewed as a special case of Theorem \ref{thm3_nonli}. Specifically, $c_{p,n}\rightarrow\infty$ in (\ref{eq_cpn}) boils down to the condition (\ref{eq_thm21_o1}), which implies $c_{p,n}^{q-4}=p^{-2}B_{\text{min}}^q(nS)^{-4}\log^{-3q}(pn)\rightarrow\infty$, and we can achieve the same approximation rate up to a logarithm term.
\end{remark}

Moving on to the alternative hypothesis, we can set the detection threshold $\tilde\omega$ as the critical value of $\max_{\varphi\in\N} \tilde\Z_{\varphi}$ determined by the Gaussian limiting distribution presented in Theorem \ref{thm3_nonli}. Specifically, we set $\tilde\omega$ as $\inf_{r\ge0}\{r:\PP(\max_{\varphi\in\N} \tilde\Z_{\varphi}>r)\le \alpha\}$, for significant level $\alpha\in(0,1)$. We reject the null hypothesis if $\tilde\Q_n>\tilde\omega$. For any time point $i$ that satisfies $|i-\tau_k|\le bn$, we define the weighted break as $d_i(\bbell)=(1-|i-\tau_k|/(bn))\sigma^{-1}(\bbell)\gamma_{k,\bbell}$. Under the alternative, where there exists $i$ and $\bbell$ such that $d_i(\bbell)\neq0$, we refer to Corollary \ref{cor_nonli_power} in Appendix \ref{sec_power_append} for the power limit of our test. Further algorithm for detecting and identifying breaks can be developed accordingly.

\section{Application}\label{sec_data}
This section is devoted to the real-data analysis to illustrate our proposed method for multiple change-point detection. We apply Algorithm \ref{algo1} to a stock-return dataset and use Algorithms \ref{algo1} and \ref{algo2} to a COVID-19 dataset. Due to the space limit, we defer the results of the stock-return data to Appendix \ref{subsec_stock_append}.

Analyzing 812 days of daily COVID-19 case numbers in the US, we identified three significant breaks (Figure \ref{fig_covid_main} top): March 2020 (first outbreak), October 2020 (Delta variant), and December 2021 (Omicron variant). Further, we consider four geographic regions of the US as per the guidelines of the CDC: Northeast, Midwest, South, and West. A map of these four regions is available in Figure \ref{fig_region_us} in Appendix \ref{subsec_covid_append}. By our algorithm, each region exhibited different break time stamps, with the Northeast and West experiencing early outbreaks due to major international airports. The Midwest was the first to encounter the Delta variant, while the Northeast initially faced the Omicron variant. Our detection algorithm effectively captured these variations (Figure \ref{fig_covid_main} bottom), demonstrating the efficacy of our proposed testing procedures in identifying breaks over time and across diverse locations. For more detailed information, please refer to Appendix \ref{subsec_covid_append}.

\vspace{-0.3cm}
\begin{figure}[!htbp]
\centering
\includegraphics[width=.95\linewidth]{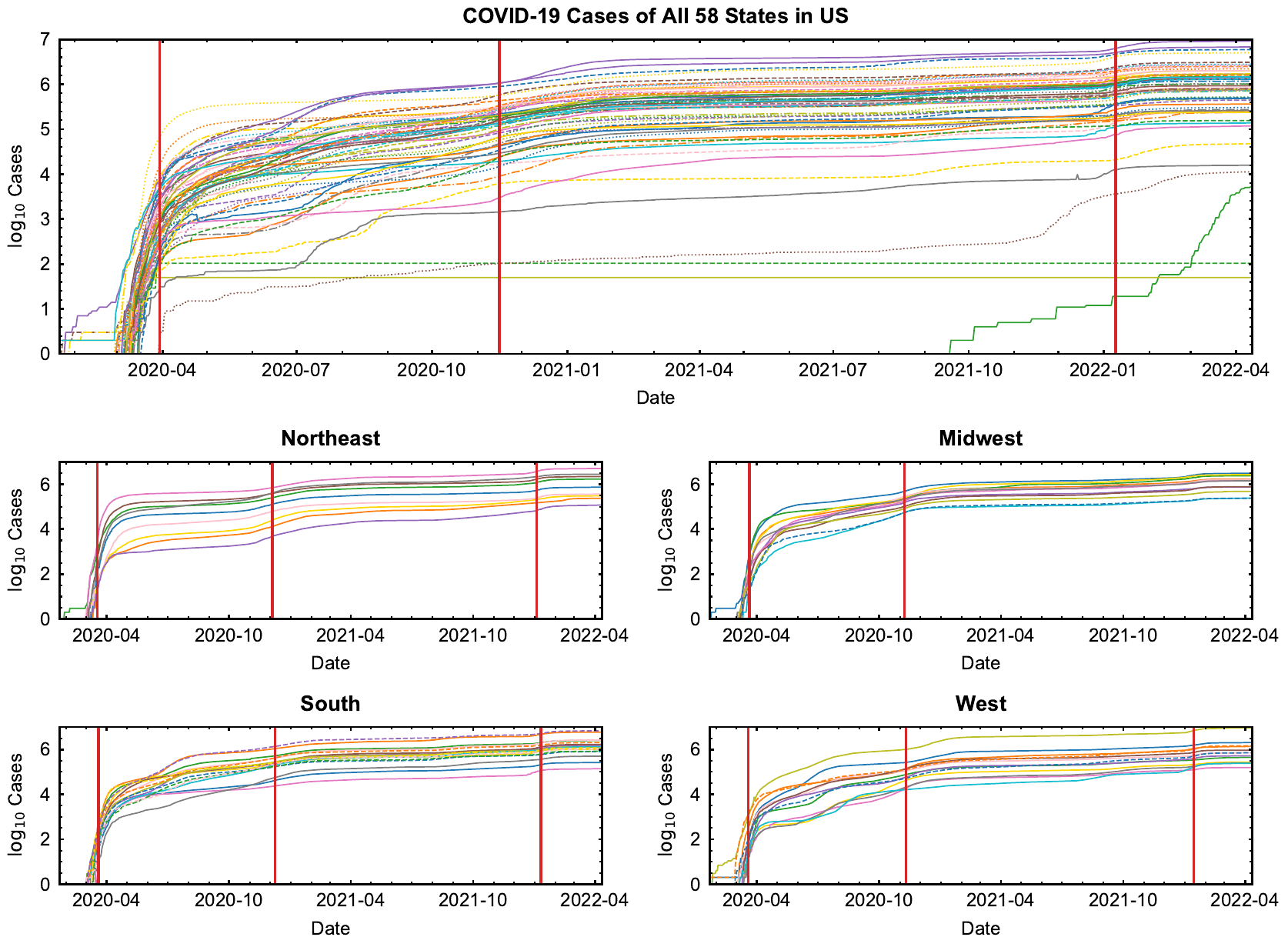}
\vspace{-0.3cm}
\caption{\textbf{Top:} Algorithm \ref{algo1} detected three change points which are 2020.03.30, 2020.11.16 and 2022.01.09. \textbf{Bottom:} Algorithm \ref{algo2} found different change points for four regions: Northeast (2020.03.18, 2020.12.05, 2022.01.04); Midwest (2020.03.21, 2020.11.08); South (2020.03.20, 2020.12.09, 2022.01.10); West (2020.3.19, 2020.11.10, 2022.01.14).}
\label{fig_covid_main}
\end{figure}

\clearpage

\printbibliography

@article{nagaev_large_1979,
	author = {Nagaev, S. V.},
	journal = {The Annals of Probability},
	number = {5},
	pages = {745--789},
	title = {Large {Deviations} of {Sums} of {Independent} {Random} {Variables}},
	volume = {7},
	year = {1979}}

@book{rasmussen_gaussian_2006,
	address = {Cambridge, Mass},
	author = {Rasmussen, Carl Edward and Williams, Christopher K. I.},
	isbn = {978-0-262-18253-9},
	publisher = {MIT Press},
	series = {Adaptive computation and machine learning},
	title = {Gaussian processes for machine learning},
	year = {2006}}

@article{matsuda_fourier_2009,
	author = {Matsuda, Yasumasa and Yajima, Yoshihiro},
	journal = {Journal of the Royal Statistical Society: Series B (Statistical Methodology)},
	number = {1},
	pages = {191--217},
	title = {Fourier analysis of irregularly spaced data on {Rd}},
	volume = {71},
	year = {2009}}

@article{arias2008searching,
	author = {Arias-Castro, Ery and Cand{\`e}s, Emmanuel J. and Helgason, Hannes and Zeitouni, Ofer},
	journal = {The Annals of Statistics},
    volume = {36},
	issue = {4},
	pages = {1726--1757},
	title = {Searching for a trail of evidence in a maze},
	year = {2008}}

@article{horvath2012change,
	author = {Horv{\'a}th, Lajos and Hu{\v s}kov{\'a}, Marie},
	journal = {Journal of Time Series Analysis},
	number = {4},
	pages = {631--648},
	title = {Change-point detection in panel data},
	volume = {33},
	year = {2012}}

@article{messer2014multiple,
	author = {Messer, Michael and Kirchner, Marietta and Schiemann, Julia and Roeper, Jochen and Neininger, Ralph and Schneider, Gaby},
	journal = {The Annals of Applied Statistics},
	number = {4},
	pages = {2027--2067},
	title = {A multiple filter test for the detection of rate changes in renewal processes with varying variance},
	volume = {8},
	year = {2014}}

@article{horvath2017asymptotic,
  title={Asymptotic properties of the CUSUM estimator for the time of change in linear panel data models},
  author={Horv{\'a}th, Lajos and Hu{\v s}kov{\'a}, Marie and Rice, Gregory and Wang, Jia},
  journal={Econometric Theory},
  volume={33},
  number={2},
  pages={366--412},
  year={2017}
}

@book{cressie2015statistics,
  title={Statistics for spatial data},
  author={Cressie, Noel},
  year={2015},
  publisher={John Wiley \& Sons}
}

@article{chan2013darling,
	author = {Chan, Julian and Horv{\'a}th, Lajos and Hu{\v s}kov{\'a}, Marie},
	journal = {Journal of Statistical Planning and Inference},
	keywords = {L2\_Change\_Point, Time\_Series},
	number = {5},
	pages = {955--970},
	title = {Darling--{Erd{\altH o}s} limit results for change-point detection in panel data},
	volume = {143},
	year = {2013}}

@article{karmakar_optimal_2020,
	author = {Karmakar, Sayar and Wu, Wei Biao},
	journal = {Statistica Sinica},
	number = {3},
	pages = {1399--1417},
	title = {Optimal {Gaussian} {Approximation} for {Multiple} {Time} {Series}},
	volume = {30},
	year = {2020}}

@article{wu_nonlinear_2005,
	author = {Wu, Wei Biao},
	journal = {Proceedings of the National Academy of Sciences},
	keywords = {Main, Time\_Series, SCR\_Semipara, Cited, Factor\_Change\_Point, L2\_Change\_Point, Asymptotics\_SGD},
	number = {40},
	pages = {14150--14154},
	pmid = {16179388},
	shorttitle = {Nonlinear system theory},
	title = {Nonlinear system theory: {Another} look at dependence},
	volume = {102},
	year = {2005}}

@inproceedings{yu2022optimal,
  title={Optimal partition recovery in general graphs},
  author={Yu, Yi and Madrid, Oscar and Rinaldo, Alessandro},
  booktitle={International Conference on Artificial Intelligence and Statistics},
  pages={4339--4358},
  year={2022},
  organization={PMLR}
}

@book{stein1999interpolation,
  title={Interpolation of spatial data: some theory for kriging},
  author={Stein, Michael L},
  year={1999},
  publisher={Springer Science \& Business Media}
}

@article{madrid2021lattice,
title={Lattice partition recovery with dyadic CART},
author={Madrid Padilla, Oscar Hernan and Yu, Yi and Rinaldo, Alessandro},
journal={Advances in Neural Information Processing Systems},
volume={34},
pages={26143--26155},
year={2021}
}

@article{arias2011detection,
  title={Detection of an anomalous cluster in a network},
  author={Arias-Castro, Ery and Candes, Emmanuel J and Durand, Arnaud},
  journal={The Annals of Statistics},
  volume={39},
  issue={1},
  pages={278--304},
  year={2011},
  publisher={JSTOR}
}

@article{addario2010combinatorial,
  title={On combinatorial testing problems},
  author={Addario-Berry, Louigi and Broutin, Nicolas and Devroye, Luc and Lugosi, G{\'a}bor},
  journal = {The Annals of Statistics},
  volume = {38},
  issue = {5},
  pages = {3063--3092},
  year={2010}
}

@article{wang2022optimal,
  title={Optimal change-point testing for high-dimensional linear models with temporal dependence},
  author={Wang, Daren and Zhao, Zifeng},
  journal={arXiv preprint arXiv:2205.03880},
  year={2022}
}

@article{xu2022change,
  title={Change point inference in high-dimensional regression models under temporal dependence},
  author={Xu, Haotian and Wang, Daren and Zhao, Zifeng and Yu, Yi},
  journal={arXiv preprint arXiv:2207.12453},
  year={2022}
}

@article{kuchibhotla2021uniform,
  title={Uniform-in-submodel bounds for linear regression in a model-free framework},
  author={Kuchibhotla, Arun K and Brown, Lawrence D and Buja, Andreas and George, Edward I and Zhao, Linda},
  journal={To appear in Econometric Theory},
  year={2021},
  publisher={Cambridge University Press}
}

@article{bai2020estimation,
  title={Estimation and inference of change points in high-dimensional factor models},
  author={Bai, Jushan and Han, Xu and Shi, Yutang},
  journal={Journal of Econometrics},
  volume={219},
  number={1},
  pages={66--100},
  year={2020},
  publisher={Elsevier}
}

@article{liu2022change,
  title={Change Point Detection for High-dimensional Linear Models: A General Tail-adaptive Approach},
  author={Liu, Bin and Qi, Zhengling and Zhang, Xinsheng and Liu, Yufeng},
  journal={arXiv preprint arXiv:2207.11532},
  year={2022}
}

@article{yu2020review,
  title={A review on minimax rates in change point detection and localisation},
  author={Yu, Yi},
  journal={arXiv preprint arXiv:2011.01857},
  year={2020}
}

@article{chen_monitoring_2022,
	author = {Chen, Cathy Yi-Hsuan and Okhrin, Yarema and Wang, Tengyao},
	journal = {To appear in Journal of Business \& Economic Statistics},
	title = {Monitoring network changes in social media},
	year = {2022}}

@book{Nazarov2003OnTM,
  title={On the maximal perimeter of a convex set in $R^n$ with respect to a Gaussian measure},
  author={Fedor Nazarov},
  book={Geometric aspects of functional analysis},
  subtitle={1807 of Lecture Notes in Math.},
  pages={169–187},
  publisher ={Springer, Berlin},
  year={2003},
  options  = {uniquename=false}
}

@article{rio_moment_2009,
	abstract = {We obtain precise constants in the Marcinkiewicz-Zygmund inequality for martingales in \${\textbackslash}mathbb\{L\}{\textasciicircum}\{p\}\$for p{\textgreater}2 and a new Rosenthal type inequality for stationary martingale differences for p in ]2,3]. The Rosenthal inequality is then extended to stationary and adapted sequences. As in Peligrad et al. (Proc. Am. Math. Soc. 135:541--550, [2007]), the bounds are expressed in terms of \${\textbackslash}mathbb\{L\}{\textasciicircum}\{p\}\$-norms of conditional expectations with respect to an increasing field of sigma algebras. Some applications to a particular Markov chain are given.},
	author = {Rio, Emmanuel},
	options   = {uniquename=false},
	journal = {J Theor Probab},
	keywords = {Change\_Point, Cited, Time\_Series},
	number = {1},
	pages = {146--163},
	title = {Moment inequalities for sums of dependent random variables under projective conditions},
	volume = {22},
	year = {2009}}

@incollection{burkholder1988,
     author = {Burkholder, Donald L.},
     options   = {uniquename=false},
     title = {Sharp inequalities for martingales and stochastic integrals},
     booktitle = {Colloque Paul L\'{e}vy sur les processus stochastiques},
     series = {Ast\'erisque},
     publisher = {Soci\'{e}t\'{e} math\'{e}matique de France},
     number = {157-158},
     pages = {75-94},
     year = {1988},
     zbl = {0656.60055}
}

@article{thomas_yeo_organization_2011,
	abstract = {Information processing in the cerebral cortex involves interactions among distributed areas. Anatomical connectivity suggests that certain areas form local hierarchical relations such as within the visual system. Other connectivity patterns, particularly among association areas, suggest the presence of large-scale circuits without clear hierarchical relations. In this study the organization of networks in the human cerebrum was explored using resting-state functional connectivity MRI. Data from 1,000 subjects were registered using surface-based alignment. A clustering approach was employed to identify and replicate networks of functionally coupled regions across the cerebral cortex. The results revealed local networks confined to sensory and motor cortices as well as distributed networks of association regions. Within the sensory and motor cortices, functional connectivity followed topographic representations across adjacent areas. In association cortex, the connectivity patterns often showed abrupt transitions between network boundaries. Focused analyses were performed to better understand properties of network connectivity. A canonical sensory-motor pathway involving primary visual area, putative middle temporal area complex (MT+), lateral intraparietal area, and frontal eye field was analyzed to explore how interactions might arise within and between networks. Results showed that adjacent regions of the MT+ complex demonstrate differential connectivity consistent with a hierarchical pathway that spans networks. The functional connectivity of parietal and prefrontal association cortices was next explored. Distinct connectivity profiles of neighboring regions suggest they participate in distributed networks that, while showing evidence for interactions, are embedded within largely parallel, interdigitated circuits. We conclude by discussing the organization of these large-scale cerebral networks in relation to monkey anatomy and their potential evolutionary expansion in humans to support cognition.},
	author = {Yeo, B. T. Thomas and Krienen, Fenna M. and Sepulcre, Jorge and Sabuncu, Mert R. and Lashkari, Danial and Hollinshead, Marisa and Roffman, Joshua L. and Smoller, Jordan W. and Z{\"o}llei, Lilla and Polimeni, Jonathan R. and Fischl, Bruce and Liu, Hesheng and Buckner, Randy L.},
	options   = {uniquename=false},
	journal = {Journal of Neurophysiology},
	keywords = {Change\_Point, Cited, Time\_Series},
	number = {3},
	pages = {1125--1165},
	pmcid = {PMC3174820},
	pmid = {21653723},
	title = {The organization of the human cerebral cortex estimated by intrinsic functional connectivity},
	volume = {106},
	year = {2011}}

@article{de_jong_block_1997,
	author = {de Jong, Robert M.},
	options   = {uniquename=false},
	journal = {Econometric Theory},
	number = {3},
	pages = {353--367},
	title = {Central limit theorems for dependent heterogeneous random variables},
	volume = {13},
	year = {1997}}

@article{wang2020dating,
  title={Dating the break in high-dimensional data},
  options  = {uniquename=false},
  author={Wang, Runmin and Shao, Xiaofeng},
  journal={arXiv preprint arXiv:2002.04115},
  year={2020}
}

@article{wang2020hypothesis,
  title={Hypothesis testing for high-dimensional time series via self-normalization},
  author={Wang, Runmin and Shao, X.},
  options   = {uniquename=false},
  journal={The Annals of Statistics},
  volume={48},
  number={5},
  pages={2728--2758},
  year={2020},
  publisher={Institute of Mathematical Statistics}
}

@article{kirch_moving_2021,
	Author = {Kirch, Claudia and Klein, Philipp},
	options   = {uniquename=false},
	journal = {arXiv preprint arXiv:2101.04651},
	Title = {Moving sum data segmentation for stochastics processes based on invariance},
	Year = {2021}}

@article{zhang_detecting_2010,
	Abstract = {We discuss the detection of local signals that occur at the same location in multiple one-dimensional noisy sequences, with particular attention to relatively weak signals that may occur in only a fraction of the sequences. We propose simple scan and segmentation algorithms based on the sum of the chi-squared statistics for each individual sample, which is equivalent to the generalized likelihood ratio for a model where the errors in each sample are independent. The simple geometry of the statistic allows us to derive accurate analytic approximations to the significance level of such scans. The formulation of the model is motivated by the biological problem of detecting recurrent DNA copy number variants in multiple samples. We show using replicates and parent-child comparisons that pooling data across samples results in more accurate detection of copy number variants. We also apply the multisample segmentation algorithm to the analysis of a cohort of tumour samples containing complex nested and overlapping copy number aberrations, for which our method gives a sparse and intuitive cross-sample summary.},
	Author = {Zhang, Nancy R. and Siegmund, David O. and Ji, Hanlee and Li, Jun Z.},
	options   = {uniquename=false},
	Journal = {Biometrika},
	Keywords = {Change\_Point, Relevant, Time\_Series},
	Number = {3},
	Pages = {631--645},
	Title = {Detecting simultaneous changepoints in multiple sequences},
	Volume = {97},
	Year = {2010}}

@article{cho2022two,
  title={Two-stage data segmentation permitting multiscale change points, heavy tails and dependence},
  author={Cho, Haeran and Kirch, Claudia},
  journal={Annals of the Institute of Statistical Mathematics},
  volume={74},
  number={4},
  pages={653--684},
  year={2022},
  publisher={Springer}
}

@article{cho_change-point_2016,
	Author = {Cho, Haeran},
	options   = {uniquename=false},
	Journal = {Electronic Journal of Statistics},
	Number = {2},
	Pages = {2000--2038},
	Title = {Change-point detection in panel data via double {CUSUM} statistic},
	Volume = {10},
	Year = {2016}}

@article{wang_samworth_2018,
	Abstract = {Change points are a very common feature of `big data' that arrive in the form of a data stream. We study high dimensional time series in which, at certain time points, the mean structure changes in a sparse subset of the co-ordinates. The challenge is to borrow strength across the co-ordinates to detect smaller changes than could be observed in any individual component series. We propose a two-stage procedure called inspect for estimation of the change points: first, we argue that a good projection direction can be obtained as the leading left singular vector of the matrix that solves a convex optimization problem derived from the cumulative sum transformation of the time series. We then apply an existing univariate change point estimation algorithm to the projected series. Our theory provides strong guarantees on both the number of estimated change points and the rates of convergence of their locations, and our numerical studies validate its highly competitive empirical performance for a wide range of data-generating mechanisms. Software implementing the methodology is available in the R package InspectChangepoint.},
	Author = {Wang, Tengyao and Samworth, Richard J.},
	options   = {uniquename=false},
	Copyright = {{\copyright} 2017 The Authors Journal of the Royal Statistical Society: Series B (Statistical Methodology) Published by John Wiley \& Sons Ltd on behalf of the Royal Statistical Society.},
	Journal = {Journal of the Royal Statistical Society: Series B (Statistical Methodology)},
	Keywords = {Change\_Point, Relevant, Time\_Series},
	Number = {1},
	Pages = {57--83},
	Title = {High dimensional change point estimation via sparse projection},
	Volume = {80},
	Year = {2018}}

@article{onnela_dynamics_2003,
	Author = {Onnela, J.-P. and Chakraborti, A. and Kaski, K. and Kert{\'e}sz, J. and Kanto, A.},
	options   = {uniquename=false},
	Journal = {Physical Review E},
	Number = {5},
	Pages = {056110},
	Shorttitle = {Dynamics of market correlations},
	Title = {Dynamics of market correlations: {Taxonomy} and portfolio analysis},
	Volume = {68},
	Year = {2003}}

@article{barnett_change_2016,
	Author = {Barnett, Ian and Onnela, Jukka-Pekka},
	title = {Change point detection in correlation networks},
	options   = {uniquename=false},
	Journal = {Scientific Reports},
	Number = {1},
	Pages = {18893},
	Volume = {6},
	Year = {2016}}

@article{yu_finite_2021,
	Abstract = {Cumulative sum (CUSUM) statistics are widely used in the change point inference and identification. For the problem of testing for existence of a change point in an independent sample generated from the mean-shift model, we introduce a Gaussian multiplier bootstrap to calibrate critical values of the CUSUM test statistics in high dimensions. The proposed bootstrap CUSUM test is fully data dependent and it has strong theoretical guarantees under arbitrary dependence structures and mild moment conditions. Specifically, we show that with a boundary removal parameter the bootstrap CUSUM test enjoys the uniform validity in size under the null and it achieves the minimax separation rate under the sparse alternatives when the dimension p can be larger than the sample size n. Once a change point is detected, we estimate the change point location by maximising the ℓ∞-norm of the generalised CUSUM statistics at two different weighting scales corresponding to covariance stationary and non-stationary CUSUM statistics. For both estimators, we derive their rates of convergence and show that dimension impacts the rates only through logarithmic factors, which implies that consistency of the CUSUM estimators is possible when p is much larger than n. In the presence of multiple change points, we propose a principled bootstrap-assisted binary segmentation (BABS) algorithm to dynamically adjust the change point detection rule and recursively estimate their locations. We derive its rate of convergence under suitable signal separation and strength conditions. The results derived in this paper are non-asymptotic and we provide extensive simulation studies to assess the finite sample performance. The empirical evidence shows an encouraging agreement with our theoretical results.},
	Author = {Yu, Mengjia and Chen, Xiaohui},
	options   = {uniquename=false},
	Journal = {Journal of the Royal Statistical Society: Series B (Statistical Methodology)},
	Keywords = {Change\_Point, Time\_Series},
	Number = {2},
	Pages = {247--270},
	Title = {Finite sample change point inference and identification for high-dimensional mean vectors},
	Volume = {83},
	Year = {2021}}

@article{xie_sequential_2013,
	Author = {Xie, Yao and Siegmund, David},
	options   = {uniquename=false},
	Journal = {The Annals of Statistics},
	Number = {2},
	Pages = {670--692},
	Title = {Sequential multi-sensor change-point detection},
	Volume = {41},
	Year = {2013}}

@article{shao_self-normalized_2010,
	author = {Shao, Xiaofeng},
	journal = {Journal of the Royal Statistical Society. Series B (Statistical Methodology)},
    options  = {uniquename=false},
	number = {3},
	pages = {343--366},
	title = {A self-normalized approach to confidence interval construction in time series},
	volume = {72},
	year = {2010}}

@article{scott_cluster_1974,
	Author = {Scott, A. J. and Knott, M.},
	options   = {uniquename=false},
	Journal = {Biometrics},
	Number = {3},
	Pages = {507--512},
	Title = {A cluster analysis method for grouping means in the analysis of variance},
	Volume = {30},
	Year = {1974}}

@article{olshen_circular_2004,
	Abstract = {DNA sequence copy number is the number of copies of DNA at a region of a genome. Cancer progression often involves alterations in DNA copy number. Newly developed microarray technologies enable simultaneous measurement of copy number at thousands of sites in a genome. We have developed a modification of binary segmentation, which we call circular binary segmentation, to translate noisy intensity measurements into regions of equal copy number. The method is evaluated by simulation and is demonstrated on cell line data with known copy number alterations and on a breast cancer cell line data set.},
	Author = {Olshen, Adam B. and Venkatraman, E. S. and Lucito, Robert and Wigler, Michael},
	options   = {uniquename=false},
	Journal = {Biostatistics},
	Keywords = {Change\_Point, Time\_Series},
	Number = {4},
	Pages = {557--572},
	Pmid = {15475419},
	Title = {Circular binary segmentation for the analysis of array-based {DNA} copy number data},
	Volume = {5},
	Year = {2004}}

@article{killick_optimal_2012,
	Author = {Killick, Rebecca and Fearnhead, Paul and Eckley, I.A.},
	options   = {uniquename=false},
	Journal = {Journal of the American Statistical Association},
	Pages = {1590--1598},
	Title = {Optimal detection of changepoints with a linear computational cost},
	Volume = {107},
	Year = {2012}}

@article{enikeeva_high-dimensional_2019,
	author = {Farida Enikeeva and Zaid Harchaoui},
	journal = {The Annals of Statistics},
	number = {4},
	pages = {2051 -- 2079},
	publisher = {Institute of Mathematical Statistics},
	title = {{High-dimensional change-point detection under sparse alternatives}},
	volume = {47},
	year = {2019}}

@article{bai_estimating_1998,
	Author = {Bai, Jushan and Perron, Pierre},
	options   = {uniquename=false},
	Journal = {Econometrica},
	Number = {1},
	Pages = {47--78},
	Title = {Estimating and testing linear models with multiple structural changes},
	Volume = {66},
	Year = {1998}}

@article{fryzlewicz2014wild,
  title={Wild binary segmentation for multiple change-point detection},
  author={Fryzlewicz, Piotr},
  options   = {uniquename=false},
  journal={The Annals of Statistics},
  volume={42},
  number={6},
  pages={2243--2281},
  year={2014},
  publisher={Institute of Mathematical Statistics}
}

@article{wang2022inference,
  title={Inference for change points in high-dimensional data via selfnormalization},
  author={Wang, Runmin and Zhu, Changbo and Volgushev, Stanislav and Shao, Xiaofeng},
  options   = {uniquename=false},
  journal={The Annals of Statistics},
  volume={50},
  number={2},
  pages={781--806},
  year={2022},
  publisher={Institute of Mathematical Statistics}
}

@article{jirak2015uniform,
  title={Uniform change point tests in high dimension},
  author={Jirak, Moritz},
  options   = {uniquename=false},
  journal={The Annals of Statistics},
  volume={43},
  number={6},
  pages={2451--2483},
  year={2015},
  publisher={Institute of Mathematical Statistics}
}

@article{cho2015multiple,
  title={Multiple-change-point detection for high dimensional time series via sparsified binary segmentation},
  author={Cho, Haeran and Fryzlewicz, Piotr},
  options   = {uniquename=false},
  journal={Journal of the Royal Statistical Society: Series B (Statistical Methodology)},
  volume={77},
  number={2},
  pages={475--507},
  year={2015},
  publisher={Wiley Online Library}
}

@article{chen_inference_2022,
	title = {Inference of breakpoints in high-dimensional time series},
	journal = {Journal of the American Statistical Association},
	author = {Chen, Likai and Wang, Weining and Wu, Wei Biao},
	options   = {uniquename=false},
	year = {2022},
    volume = {117},
	number = {540},
	pages = {1951-1963}
}

@article{chernozhukov_central_2017,
	title = {Central limit theorems and bootstrap in high dimensions},
	volume = {45},
	number = {4},
	journal = {The Annals of Probability},
	author = {Chernozhukov, Victor and Chetverikov, Denis and Kato, Kengo},
	options   = {uniquename=false},
	year = {2017},
	keywords = {Main, Change\_Point, Time\_Series},
	pages = {2309--2352}}

@article{bai_common_2010,
	series = {Nonlinear and {Nonparametric} {Methods} in {Econometrics}},
	title = {Common breaks in means and variances for panel data},
	volume = {157},
	number = {1},
	urldate = {2021-03-03},
	journal = {Journal of Econometrics},
	author = {Bai, Jushan},
	options   = {uniquename=false},
	year = {2010},
	keywords = {Change\_Point, Technique, Target, Time\_Series},
	pages = {78--92}}

@article{chernozhukov_nearly_2021,
	title = {Nearly optimal central limit theorem and bootstrap approximations in high dimensions},
	author = {Chernozhukov, Victor and Chetverikov, Denis and Koike, Yuta},
	options   = {uniquename=false},
	year = {2021},
	journal = {arXiv preprint arXiv: 2012.09513}
}

@article{chen_high-dimensional_2020,
	title = {High-dimensional, multiscale online changepoint detection},
	journal = {arXiv preprint arXiv:2003.03668},
	author = {Chen, Yudong and Wang, Tengyao and Samworth, Richard J.},
	options   = {uniquename=false},
	year = {2020}}

@article{zamani_esfahlani_high-amplitude_nodate,
	author = {Farnaz Zamani Esfahlani and Youngheun Jo and Joshua Faskowitz and Lisa Byrge and Daniel P. Kennedy and Olaf Sporns and Richard F. Betzel},
	journal = {Proceedings of the National Academy of Sciences},
	number = {45},
	pages = {28393-28401},
	title = {High-amplitude cofluctuations in cortical activity drive functional connectivity},
	volume = {117},
	year = {2020}}

@article{faskowitz_edge-centric_2020,
	Author = {Faskowitz, Joshua and Esfahlani, Farnaz Zamani and Jo, Youngheun and Sporns, Olaf and Betzel, Richard F.},
	options   = {uniquename=false},
	Journal = {Nature Neuroscience},
	Number = {12},
	Pages = {1644--1654},
	Title = {Edge-centric functional network representations of human cerebral cortex reveal overlapping system-level architecture},
	Volume = {23},
	Year = {2020}}

@article{catoni_challenging_2012,
	Author = {Catoni, Olivier},
	options   = {uniquename=false},
	Journal = {Annales de l'Institut Henri Poincar{\'e}, Probabilit{\'e}s et Statistiques},
	Keywords = {Change\_Point, Relevant, Time\_Series},
	Number = {4},
	Pages = {1148--1185},
	Shorttitle = {Challenging the empirical mean and empirical variance},
	Title = {Challenging the empirical mean and empirical variance: {A} deviation study},
	Volume = {48},
	Year = {2012}}

@article{chen_dynamic_2021,
	Author = {Chen, LiKai and Wang, Weining and Wu, Wei Biao},
	options   = {uniquename=false},
	Journal = {Journal of Business \& Economic Statistics},
	Keywords = {Change\_Point, Relevant, Time\_Series},
	Pages = {757--771},
	Title = {Dynamic semiparametric factor model with structural breaks},
	Volume = {39},
	Year = {2021}}

@article{levy-leduc_detection_2009,
	Author = {L{\'e}vy-Leduc, C{\'e}line and Roueff, Fran{\c c}ois},
	options   = {uniquename=false},
	Journal = {Annals of Applied Statistics},
	Number = {2},
	Title = {Detection and localization of change-points in high-dimensional network traffic data},
	Volume = {3},
    pages = {637 -- 662},
	Year = {2009}}

@article{tibshirani_spatial_2008,
	Abstract = {We apply the "fused lasso" regression method of (TSRZ2004) to the problem of "hot- spot detection", in particular, detection of regions of gain or loss in comparative genomic hybridization (CGH) data. The fused lasso criterion leads to a convex optimization problem, and we provide a fast algorithm for its solution. Estimates of false-discovery rate are also provided. Our studies show that the new method generally outperforms competing methods for calling gains and losses in CGH data.},
	Author = {Tibshirani, Robert and Wang, Pei},
	options   = {uniquename=false},
	Journal = {Biostatistics},
	Keywords = {Change\_Point, Relevant, Time\_Series},
	Number = {1},
	Pages = {18--29},
	Pmid = {17513312},
	Title = {Spatial smoothing and hot spot detection for {CGH} data using the fused lasso},
	Volume = {9},
	Year = {2008}}

@article{li_panel_2016,
	Author = {Li, Degui and Qian, Junhui and Su, Liangjun},
	options   = {uniquename=false},
	Journal = {Journal of the American Statistical Association},
	Keywords = {Change\_Point, Relevant, Time\_Series},
	Number = {516},
	Pages = {1804--1819},
	Title = {Panel data models with interactive fixed effects and multiple structural breaks},
	Volume = {111},
	Year = {2016}}

@article{lee_lasso_2016,
	Author = {Lee, Sokbae and Seo, Myung Hwan and Shin, Youngki},
	options   = {uniquename=false},
	Journal = {Journal of the Royal Statistical Society: Series B (Statistical Methodology)},
	Keywords = {Change\_Point, Relevant, Time\_Series},
	Number = {1},
	Pages = {193--210},
	Title = {The lasso for high dimensional regression with a possible change point},
	Volume = {78},
	Year = {2016}}

@article{wu_inference_2007,
	Abstract = {Summary. We consider statistical inference of trends in mean non-stationary models. A test statistic is proposed for the existence of structural breaks in trends. On the basis of a strong invariance principle of stationary processes, we construct simultaneous confidence bands with asymptotically correct nominal coverage probabilities. The results are applied to global warming temperature data and Nile river flow data. Our confidence band of the trend of the global warming temperature series supports the claim that the trend is increasing over the last 150 years.},
	Author = {Wu, Wei Biao and Zhao, Zhibiao},
	options   = {uniquename=false},
	Journal = {Journal of the Royal Statistical Society: Series B (Statistical Methodology)},
	Keywords = {Change\_Point},
	Number = {3},
	Pages = {391--410},
	Title = {Inference of trends in time series},
	Volume = {69},
	Year = {2007}}

@article{huskova_permutation_2001,
	Author = {Hu{\v s}kov{\'a}, Marie and Slab{\'y}, Ale{\v s}},
	options   = {uniquename=false},
	Journal = {Kybernetika},
	Number = {5},
	Pages = {605--622},
	Title = {Permutation tests for multiple changes},
	Volume = {37},
	Year = {2001}}

@article{eichinger_mosum_2018,
	Author = {Eichinger, Birte and Kirch, Claudia},
	options   = {uniquename=false},
	Journal = {Bernoulli},
	Number = {1},
	Pages = {526--564},
	Title = {A {MOSUM} procedure for the estimation of multiple random change points},
	Volume = {24},
	Year = {2018}}

\newpage

\begin{appendices}
\section{Simulation and Application}\label{sec_sim}

\subsection{Estimation of Long-Run Covariance {Matrices}}\label{subsec_longrun}

Recall that in previous sections, we assumed that the long-run covariance matrix $\Sigma$ is known, which, however, is hardly realistic in practice. Therefore, we utilize a robust M-estimation introduced by \textcite{catoni_challenging_2012} and extended by \textcite{chen_inference_2022} for the long-run covariance matrix to ensure our Gaussian approximation theory can still work even when the long-run covariance matrix is not given. {Note that the classical covariance matrix estimation procedure is not directly applicable in our setting due to possible breaks.}

First, we group $n$ observations into {$N_0=\lfloor n/m\rfloor-1$} blocks with each block {having same size} $m\in\NN$. We denote the index set of the observations in the $k$-th block by $\M_k=\{t\in\NN: km+1\le t\le (k+1)m\}$ and define the average of the observations in the block $\M_k$ as
$$\psi_k=\sum_{t\in\M_k}Y_t/m.$$
Under the classic setting without change points, we {can} estimate the long-run covariance matrix by
$$\sum_{k=1}^{N_0}(m/2)(\psi_k-\psi_{k-1})(\psi_k-\psi_{k-1})^{\top}/N_0,$$
where the difference between $\psi_k-\psi_{k-1}$ is aimed to cancel out the trends, since our trend function $\mu(\cdot)$ is piece-wise constant. But when there exist break points which cannot be canceled out by simply taking the difference, this estimator would fail as it is contaminated by the breaks. Hence, we consider a robust version of M-estimator (\cite{chen_inference_2022}) that fits well to our interest to estimate the long-run covariance matrix.

To this end, we {let} $\psi_k=(\psi_{k,1},\psi_{k,2},\ldots,\psi_{k,p})^{\top}$, and for $k=1,\ldots,N_0$, we define 
\begin{equation}
    \label{eq_longrun_element}
    \hat\sigma_{i,j,k}^2=(m/2)(\psi_{k,j}-\psi_{k-1,j})(\psi_{k,j}-\psi_{k-1,j}).
\end{equation}
Now, we introduce the M-estimation zero function of the long-run covariance matrix to be, for some $\alpha_{i,j}>0$,
\begin{equation}
    \label{eq_longrun_zero}
    h_{i,j}(u)=\sum_{k=1}^{N_0}\frac{1}{N_0}\phi_{\alpha_{i,j}}(\hat\sigma_{i,j,k}^2-u),
\end{equation}
where $\phi_{\alpha}(x)=\alpha^{-1}\phi(\alpha x)$ and 
\begin{equation}
    \label{eq_longrun_phi}
    \phi(x)= \begin{cases}
        \log(2), \qquad & x\ge 1,\\
        -\log(1-x+x^2/2), \qquad & 0\le x< 1,\\
    {\log(1+x+x^2/2)}, \qquad & -1\le x< 0,\\
        -\log(2), \qquad & x< -1.\\
    \end{cases}
\end{equation}
As indicated by \textcite{catoni_challenging_2012}, {the function $|\phi(x)|$} is bounded by $\log(2)$, and it is also Lipschitz continuous with the Lipschitz constant bounded by 1. Additionally, the non-decreasing influence function $\phi(x)$ has neat envelopes in the form that
\begin{equation}
    -\log(1-x+x^2/2) \le \phi(x) \le \log(1+x+x^2/2).
\end{equation}

To {obtain} a consistent estimator of the long-run covariance matrix $\Sigma$, we solve the equation $h_{i,j}(u) = 0$ for each $1\le i,j\le p$, and use the root to be the estimator of the element with indices $(i,j)$ in the long-run covariance matrix. We denote this estimator by $\hat\sigma_{i,j}^2$. Since the equation $h_{i,j}(u) = 0$ may have more than one solution, in which case any of them can be used to define $\hat\sigma_{i,j}^2$. Finally, we collect the estimates of all the elements and combine them into the long-run covariance matrix,
\begin{equation}
    \label{eq_longrun_est}
    \hat\Sigma=(\hat\sigma_{i,j}^2)_{1\le i,j\le p},\quad \text{and } \hat\Lambda=\text{diag}(\hat\sigma_{1,1},\ldots,\hat\sigma_{p,p}).
\end{equation}
In particular, we define $\bar\sigma_i^2=2\sum_{N_0/4\le k\le 3N_0/4}\hat\sigma_{i,i,k}/N_0$ and set $\alpha_{i,j}$ in expression (\ref{eq_longrun_zero}) to be $\bar\sigma_i\bar\sigma_j\sqrt{m/n}$.
Here we present the consistency result of the estimated long-run covariance matrix.

\begin{theorem}[Consistency of the estimated long-run covariance matrix]
    \label{thm_cov_precision}
    Suppose that conditions in Theorem \ref{thm1_constanttrend} hold and the number of breaks $K=o\{(n\log(np))^{1/4}\}$. We take $m=\sqrt{n/\log(np)}$. {Then}
    $$|\Lambda^{-1}(\hat\Sigma-\Sigma)\Lambda^{-1}|_{\max}=O_{\PP}\big\{(n^{-1}\log(np))^{\frac{\beta}{\beta+1}} + n^{-1/4}\log^{1/2}(np) + K(n\log(np))^{-1/4}\big\},$$
    where $|A|_{\max}=\max_{1\le i,j\le n}|a_{i,j}|$, for matrix $A=(a_{i,j})_{1\le i,j\le n}$.
\end{theorem}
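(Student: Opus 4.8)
The plan is to control the normalized entrywise error $\widehat r_{i,j}:=(\hat\sigma_{i,j}^2-\sigma_{i,j})/(\sigma_i\sigma_j)$ uniformly over $1\le i,j\le p$ and then take a union bound over the $p^2$ coordinates. The key structural fact is that $\phi$ in (\ref{eq_longrun_phi}) is non-decreasing, so the M-estimating function $u\mapsto h_{i,j}(u)$ in (\ref{eq_longrun_zero}) is non-increasing; hence every root $\hat\sigma_{i,j}^2$ lies in $(\sigma_{i,j}-\sigma_i\sigma_j t_n,\ \sigma_{i,j}+\sigma_i\sigma_j t_n)$ as soon as $h_{i,j}(\sigma_{i,j}+\sigma_i\sigma_j t_n)<0$ and $h_{i,j}(\sigma_{i,j}-\sigma_i\sigma_j t_n)>0$. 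It is therefore enough to show, with $t_n$ a large constant multiple of the claimed rate, that both inequalities hold for every $(i,j)$ simultaneously with probability tending to one; the two tails are treated identically, so I would only carry out the upper one. As a preliminary I would check that the pilot quantities $\bar\sigma_i$ entering the tuning $\alpha_{i,j}=\bar\sigma_i\bar\sigma_j\sqrt{m/n}$ satisfy $\bar\sigma_i/\sigma_i=1+o_{\PP}(1)$ uniformly in $i$ (a strictly easier instance of the main estimate, exploiting that averaging over the central indices $N_0/4\le k\le 3N_0/4$ discards the $O(K)$ break-contaminated blocks), so that $\alpha_{i,j}\asymp\sigma_i\sigma_j\sqrt{m/n}$ throughout.

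The next step is to decompose the block statistics (\ref{eq_longrun_element}) as $\hat\sigma_{i,j,k}^2-\sigma_{i,j}=Z_{i,j,k}+b_{i,j,k}^{\mathrm{tmp}}+b_{i,j,k}^{\mathrm{brk}}$, where $Z_{i,j,k}$ is the centered stochastic part of the block-difference statistic computed as if no break were present, $b_{i,j,k}^{\mathrm{tmp}}=\EE[\hat\sigma_{i,j,k}^2\mid\text{no break}]-\sigma_{i,j}$ is the deterministic error of approximating the long-run covariance $\sigma_{i,j}=\sum_{h}\mathrm{Cov}(\epsilon_{0,i},\epsilon_{h,j})$ by a length-$m$ window, and $b_{i,j,k}^{\mathrm{brk}}$ is nonzero only for the $O(K)$ blocks $\M_k$ whose index window straddles a change point $\tau_\ell$. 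For the temporal bias I would invoke Assumption \ref{asm_temp_dep} to bound both the truncated autocovariance mass $\sum_{|h|\ge m}|\mathrm{Cov}(\epsilon_{0,i},\epsilon_{h,j})|$ and the Bartlett-type weight correction $m^{-1}\sum_{|h|<m}|h|\,|\mathrm{Cov}(\epsilon_{0,i},\epsilon_{h,j})|$ by quantities polynomially small in $m$, uniformly in $(i,j)$; with the prescribed $m=\sqrt{n/\log(np)}$ this produces the first term $(n^{-1}\log(np))^{\beta/(\beta+1)}$. The break contribution I would control by \emph{boundedness} of the influence function rather than by size: since $|\phi_{\alpha_{i,j}}|\le\alpha_{i,j}^{-1}\log 2$, the $O(K)$ contaminated summands move $h_{i,j}(u)$ by at most $O\big(K/(N_0\alpha_{i,j})\big)$ in absolute value, which with $N_0\asymp n/m$ and $\alpha_{i,j}\asymp\sigma_i\sigma_j\sqrt{m/n}$ equals $O\big(K\sqrt{m/n}/(\sigma_i\sigma_j)\big)=O\big(K(n\log(np))^{-1/4}/(\sigma_i\sigma_j)\big)$ on the normalized scale, i.e.\ the third term, which is $o(1)$ precisely under the assumed $K=o\{(n\log(np))^{1/4}\}$.

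The heart of the argument is the stochastic part: after the $\phi$-truncation I need a Bernstein-type deviation bound for $N_0^{-1}\sum_k\phi_{\alpha}(\hat\sigma_{i,j,k}^2-u)$ about its mean, uniform in $(i,j)$. Two difficulties compound here — the block statistics are quadratic in $\epsilon_t$, hence genuinely heavy-tailed with only $q/2\ge4$ moments available under Assumption \ref{asm_finitemoment}, and they are temporally dependent in $k$ (adjacent blocks share a boundary, non-adjacent ones are decoupled only through the polynomial decay of Assumption \ref{asm_temp_dep}). The heavy tails I would absorb via Catoni's envelope $-\log(1-x+x^2/2)\le\phi(x)\le\log(1+x+x^2/2)$: with Jensen this bounds $\EE[\phi_{\alpha}(\hat\sigma_{i,j,k}^2-u)]$ by $\alpha^{-1}\log\big(1+\alpha(\EE[\hat\sigma_{i,j,k}^2]-u)+\tfrac12\alpha^2\EE[(\hat\sigma_{i,j,k}^2-u)^2]\big)$, which is strictly negative once $u=\sigma_{i,j}+\sigma_i\sigma_j t_n$ with $t_n$ a suitable multiple of $(v_{i,j}/N_0)^{1/2}$ plus the other two rate terms, $v_{i,j}=O(\sigma_i^2\sigma_j^2)$ being the variance proxy of a single block statistic. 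The dependence I would remove by splitting $\{1,\dots,N_0\}$ into even- and odd-indexed blocks to obtain two near-independent subcollections, coupling each to an i.i.d.\ sequence via the functional-dependence decay of Assumption \ref{asm_temp_dep}, and then applying a Bernstein inequality to the bounded, essentially independent summands $\phi_{\alpha}(\hat\sigma_{i,j,k}^2-u)$; calibrating the tail level to $(np)^{-c}$ with $N_0\asymp\sqrt{n\log(np)}$ then produces a fluctuation of order $n^{-1/4}\log^{1/2}(np)$, the second term, uniformly over $(i,j)$. Since $h_{i,j}$ has slope $\asymp-1$ near its root (as $\phi'(0)=1$ and $\alpha Z_{i,j,k}=o_{\PP}(1)$), combining the strictly negative expectation with this uniform fluctuation and the break perturbation gives the one-sided bound on $\widehat r_{i,j}$; the lower tail is symmetric, and a union bound over the $\le p^2$ coordinates finishes the proof.

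The hard part will be this uniform stochastic control: it requires pushing the robust M-estimation deviation machinery of \textcite{catoni_challenging_2012} and \textcite{chen_inference_2022} through a regime in which the summands are at once heavy-tailed and temporally dependent, while tracking precisely how the deviation depends on $N_0$, $\alpha_{i,j}$, the envelope of $\phi$, and $\log(np)$, so that the prescribed $m=\sqrt{n/\log(np)}$ splits the overall error into exactly the three stated terms; the bookkeeping connecting the unknown normalization $\sigma_i\sigma_j$ to its pilot $\bar\sigma_i\bar\sigma_j$ inside $\alpha_{i,j}$ must also be carried along throughout.
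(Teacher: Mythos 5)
Your plan is essentially the paper's own argument: the paper likewise splits blocks into the $O(K)$ break-contaminated ones (handled purely through the boundedness of $\phi$, giving the $K/(\alpha_{i,j}N_0^*)\asymp K(n\log(np))^{-1/4}$ term) and the clean ones (handled via Catoni's envelope bounds for $\EE h_{i,j}^*$, a uniform concentration bound for $h_{i,j}^*$ around its mean imported from \textcite{chen_inference_2022}, and the $m^{-\beta/(\beta+1)}$-type bias from Assumption \ref{asm_temp_dep}), before bracketing the root and using $\bar\sigma_i\asymp\sigma_i$. Your sign-bracketing via monotonicity of $\phi$ and your even/odd-block Bernstein concentration are only cosmetic variants of the paper's envelope-root and cited concentration steps, so the proposal is correct and follows the same route.
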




\begin{proof}[Proof of Theorem \ref{thm_cov_precision}]
The proof strategies essentially follow Proposition 2.4 in \textcite{catoni_challenging_2012} and Theorem 5 in \textcite{chen_inference_2022}. The main difference is that we {relax} the assumption which restricts the number of breaks to be finite in \textcite{chen_inference_2022}. In other words, we allow {$K\rightarrow\infty$.} First, we consider the set
\begin{equation}
    \T = \{k \mid \M_k \text{ or }\M_{k-1} \text{ contains change breaks}\}.
\end{equation}
One can see that $|\T|\le 2K_0$. {Let} $N_0^* = N_0 - |\S|$. For the block estimators without change points, we define the corresponding M-estimation zero function as
\begin{equation}
    h_{i,j}^*(u) = \frac{1}{N_0^*}\sum_{k\notin\T}\phi_{\alpha_{i,j}}(\hat \sigma_{i,j,k}-u).
\end{equation}
For $\hat\sigma_{i,j,k}$, we define the block-mean and block-variance respectively by
\begin{equation}
    \tilde \sigma_{i,j} = \frac{1}{N_0^*}\sum_{k\notin\T}\EE\hat\sigma_{i,j,k}, \quad v_{i,j}^2 = \frac{1}{N_0^*}\sum_{k\notin\T}\EE\hat\sigma_{i,j,k}^2 - \tilde \sigma_{i,j}^2.
\end{equation}
For $1\le i, j\le p$, we define the envelope bound functions
\begin{align}
    B_{i,j}^+(u,x) & = \tilde \sigma_{i,j} - u +\alpha_{i,j}\big[(\tilde \sigma_{i,j}-u)^2 + v_{i,j}^2\big]/2 + x, \nonumber \\
    B_{i,j}^-(u,x) & = \tilde \sigma_{i,j} - u -\alpha_{i,j}\big[(\tilde \sigma_{i,j}-u)^2 + v_{i,j}^2\big]/2 - x.
\end{align}
By applying the result of Step 1 in Theorem 5 \textcite{chen_inference_2022}, we can bound the expected influence functions without change points in the following way:
\begin{equation}
    \label{eq_thm5_step1}
    B_{i,j}^-(u,0) \le \EE h_{i,j}^*(u) \le B_{i,j}^+(u,0), \quad \text{for all }1\le i, j\le p.
\end{equation}
Further, following Step 2 in Theorem 5 \textcite{chen_inference_2022}, we can show that the estimated influence function $h_{i,j}^*(u)$ is concentrated around its mean $\tilde \sigma_{i,j}$, that is, for some constant $c_0>0$ and $x\gtrsim \sqrt{N_0^*\log(N_0^*)}$, we have the tail probability
\begin{align}
    \label{eq_thm5_step2}
    & \quad \sum_{i,j=1}^p\PP\Big(\sup_{|u-\sigma_{i,j}|\le c_0}|h_{i,j}^*(u) - \EE h_{i,j}^*(u)| \ge x\sigma_{i,i}^{1/2}\sigma_{j,j}^{1/2}/N_0^*\Big) \nonumber \\
    & \lesssim p^2\big[N_0^*\log^{q/4}(n)x^{-q/2} + e^{-x^2/(cN_0^*)}\big],
\end{align}
where $c$ and the constant in $\lesssim$ are independent of $n$ and $p$. Next, we shall bound the difference between the estimator $\tilde\sigma_{i,j}$ and the true long-run covariance $\sigma_{i,j}$. Since $\tilde\sigma_{i,j}$ is defined on the blocks without change points, we can similarly follow Step 3 in Theorem 5 in \textcite{chen_inference_2022}. The only difference is that in our setting, we have assumed that the trend function is piece-wise constant while they assumed a smooth trend. Therefore, we can obtain that
\begin{equation}
    \label{eq_thm5_step3}
    \max_{1\le i,j\le p}|\tilde \sigma_{i,j} - \sigma_{i,j}| = O(m^{-\beta/(\beta+1)}\sigma_{i,i}^{1/2}\sigma_{j,j}^{1/2}), \quad \text{and } v_{i,j}^2=O(\sigma_{i,i}\sigma_{i,i}).
\end{equation}
Lastly, we consider the blocks with change points. Since $|\T|\le 2K$ and $|\phi(\cdot)|_{\infty}\le\log(2)$, for any $1\le i,j\le p$, it follows that
\begin{equation}
    |N_0h_{i,j}(u)/N_0^* - h_{i,j}^*(u)| \le 2\log(2)K/(\alpha_{i,j}N_0^*).
\end{equation}
This, together with expressions (\ref{eq_thm5_step1}) and (\ref{eq_thm5_step2}) with $x=\sqrt{N_0^*\log(np)}$, yields, with probability tending to 1, for all $1\le i,j\le p$, and $|u-\sigma_{i,j}|\le c_0$,
\begin{equation}
    \label{eq_thm5_B_bounds}
    B_{i,j}^-(u,\Delta) \le N_0N_0^*h_{i,j}(u) \le B_{i,j}^+(u,\Delta),
\end{equation}
where
\begin{equation}
    \Delta = (x/N_0^*)\sigma_{i,i}^{1/2}\sigma_{j,j}^{1/2} = \sigma_{i,i}^{1/2}\sigma_{j,j}^{1/2}\sqrt{\log(np)/N_0^*}.
\end{equation}
It shall be noted that to ensure the existence of real roots for $B_{i,j}^+(u,\Delta)$, we need to have
\begin{equation}
    \label{eq_thm5_delta}
    \alpha_{i,j}^2v_{i,j}^2 + 2\alpha_{i,j}\Delta \le 1.
\end{equation}
Suppose that expression (\ref{eq_thm5_delta}) holds, and we denote the smaller real root by $u^+$, then we have
\begin{equation}
    u^+ \le \tilde\sigma_{i,j} +\alpha_{i,j}v_{i,j}^2 + 2\Delta.
\end{equation}
We take $\alpha_{i,j}^*=\alpha_{i,j}\sigma_{i,i}^{1/2}\sigma_{j,j}^{1/2}$. Since $\sigma_{i,i}$ is lower bounded for any $i$, it follows from expression (\ref{eq_thm5_step3}) that 
\begin{equation}
    \sigma_{i,i}^{-1/2}\sigma_{j,j}^{-1/2}(u^+-\sigma_{i,j}) = O\Big\{m^{-\beta/(\beta+1)} + \alpha_{i,j}^* + \sqrt{\log(np)/N_0^*} + mK/(\alpha_{i,j}^*n)\Big\}.
\end{equation}
We can obtain a similar bound for the larger real root $u^-$. Then, when expression (\ref{eq_thm5_B_bounds}) is satisfied, we can bound the long-run estimator by $u^-\le \hat\sigma_{i,j}\le u^+$. We let $\alpha_{i,j}^* = (m/n)^{1/2}$. Then, with probability tending to 1, we have
\begin{equation}
    \sigma_{i,i}^{-1/2}\sigma_{j,j}^{-1/2}(\hat\sigma_{i,j}-\sigma_{i,j}) \lesssim m^{-\beta/(\beta+1)} + \alpha_{i,j}^* + \sqrt{\log(np)/N_0^*} + mK/(\alpha_{i,j}^*n),
\end{equation}
uniformly over $1\le i,j\le p$, where $N_0^*\ge \lfloor (n-m)/m\rfloor -2K$. Also note that there exist some constants $c_1,c_2>0$ such that $c_1\le \bar\sigma_{j,j}/\sigma_{j,j}\le c_2$ for any $i,j$ with probability tending to 1, which completes the proof.
\end{proof}

\subsection{Simulation Under the Null}\label{subsec_sim_null}

First, we show the Gaussian approximation results under the null. Three different models are considered to generate the errors $\epsilon_t\in\RR^p$, which are 
(i) i.i.d., (ii) AR(1), and (iii) MA($\infty$). Specifically, we consider the AR(1) model $\epsilon_{t,j}=\phi_j\epsilon_{t-1,j} + \eta_t$, where $\phi_1,\ldots,\phi_p$ uniformly range from 0.6 to 0.9, and $\eta_i$ follow two different types of distributions, which are $N(0,1)$ and $t_9$. {For each component series $\epsilon_{t,j}$,} we assume that it takes the form
$$\epsilon_{t,j}=\sum\limits_{k=0}^Ta_{k,j}\eta_{t-k,j}.$$
Throughout all the simulation study in this paper, we let $T=300$ which is larger than the number of observations $n=200$. Furthermore, we denote $a_{\cdot,j}=(a_{1,j},\ldots,a_{T,j})^{\top}$ and generate $a_{\cdot,j}$ by computing $a_{\cdot,j} = \psi_j(1,2,\ldots,T)^{-\beta}$, where $(\psi_1,\ldots,\psi_p)$ uniformly range from 0.5 to 0.9. We set $\beta$ to be 2 which determines the decay speed of the temporal dependence.


First, we intuitively present the distributions of our test statistic $\Q_n$ and the corresponding Gaussian counterparts based on 1000 Monte-Carlo replicates, respectively. Note that in practice, when a dataset is given, one only needs to calculate the test statistics $\Q_n$ once, and calculate the threshold value by generating Gaussian counterparts via 1000 Monte-Carlo replicates and computing the critical value. For each new data scenario, we need to simulate a new critical value as the detection threshold. Here, we generate the distributions of $\Q_n$ (i.e. "Tn" in Figure \ref{fig_sim_null}) and Gaussian counterparts (i.e. "GS" in Figure \ref{fig_sim_null}). We let the sample size $n=200$, $p=50$ and window size $bn=30$. As shown in Figure \ref{fig_sim_null}, for all three different models and two different types of tails, the null distribution of the test statistic $\Q_n$ (\textit{purple}) coincides with the one of its Gaussian counterparts (\textit{orange}). The \textit{pink} area is the overlapped part of two distributions. {The histograms show} that the two distributions are in general similar to the large overlapped areas, which supports the the Gaussian approximation theorem for our test statistics. 

\begin{figure}[!htbp]
\centering
\begin{subfigure}{.45\textwidth}
  \centering
  \includegraphics[width=.8\linewidth]{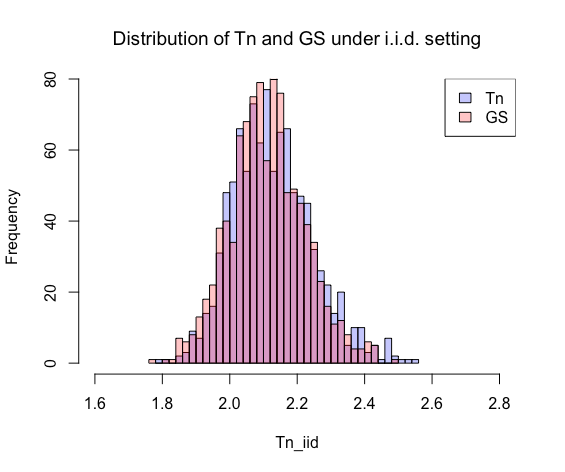}  
  \caption{i.i.d. errors $\sim N(0,1)$.}
  \label{fig_iid_normal}
\end{subfigure}
\begin{subfigure}{.45\textwidth}
  \centering
  \includegraphics[width=.8\linewidth]{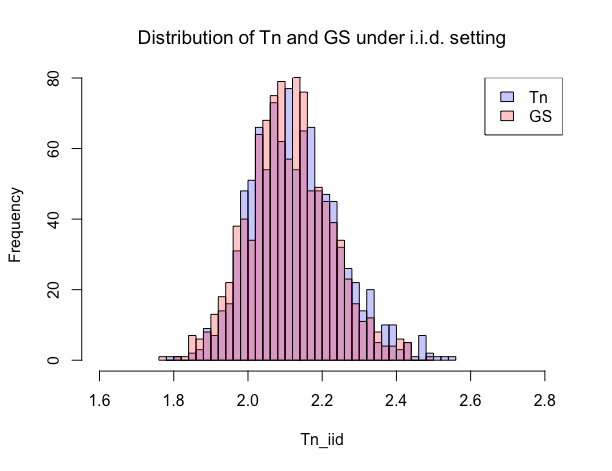}  
  \caption{i.i.d. errors $\sim t_9$.}
  \label{fig_iid_t5}
\end{subfigure}

\begin{subfigure}{.45\textwidth}
  \centering
  \includegraphics[width=.8\linewidth]{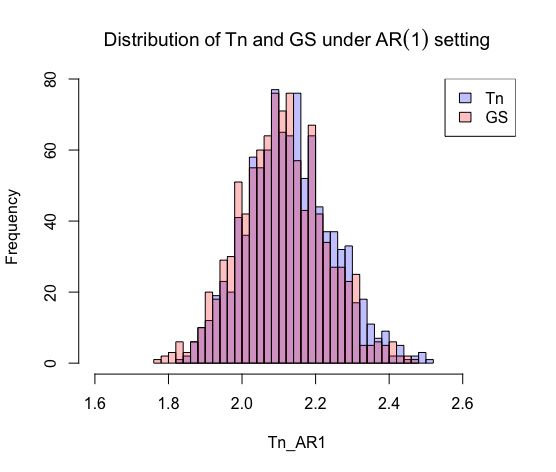}  
  \caption{AR(1) errors $\sim N(0,1)$.}
  \label{fig_AR1_normal}
\end{subfigure}
\begin{subfigure}{.45\textwidth}
  \centering
  \includegraphics[width=.8\linewidth]{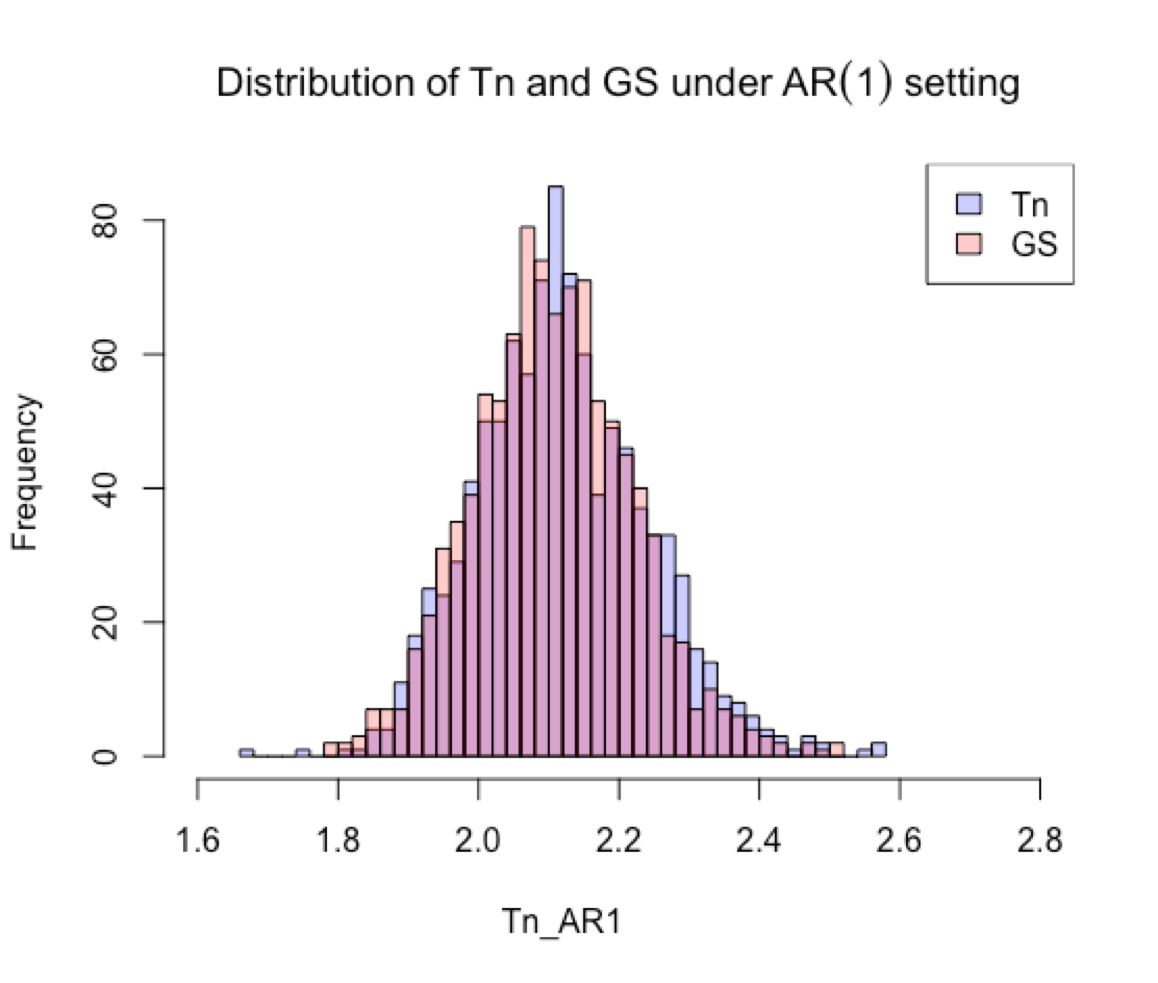}  
  \caption{AR(1) errors $\sim t_9$.}
  \label{fig_AR1_t5}
\end{subfigure}

\begin{subfigure}{.45\textwidth}
  \centering
  \includegraphics[width=.8\linewidth]{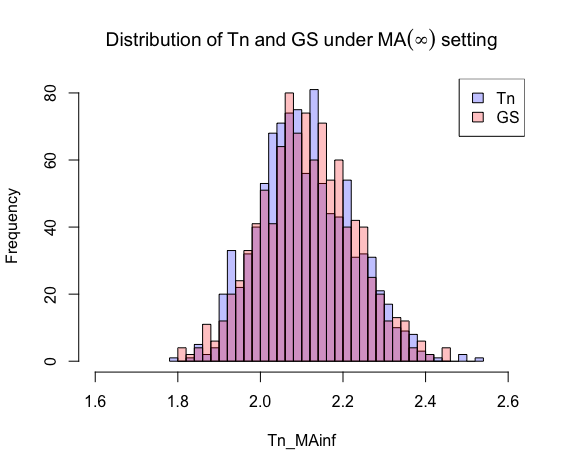}  
  \caption{MA($\infty$) errors $\sim N(0,1)$.}
  \label{fig_MA_normal}
\end{subfigure}
\begin{subfigure}{.45\textwidth}
  \centering
  \includegraphics[width=.8\linewidth]{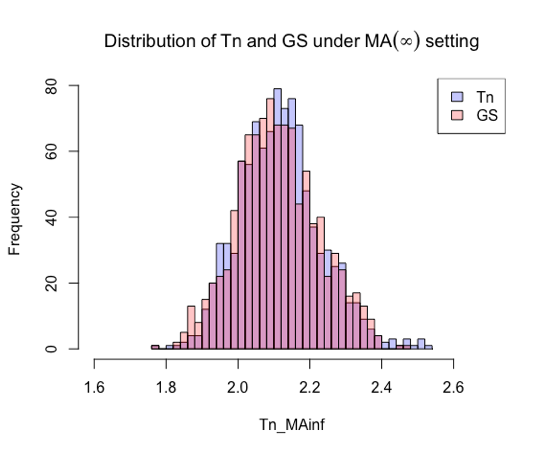}  
  \caption{MA($\infty$) errors $\sim t_9$.}
  \label{fig_MA_t5}
\end{subfigure}
\caption{Distributions of test statistics $\Q_n$ and Gaussian counterpart $G_n$ under the null in different settings ($n=200,p=50,bn=30$).}
\label{fig_sim_null}
\end{figure}

Further, we report the empirical sizes for the distributions of the test statistics $\Q_n$ based on $95\%$ quantiles of the Gaussian counterparts. We again consider the three different models and two different types of tails that we used for Figure \ref{fig_sim_null}. Let the sample size $n=200$ and the window size $bn=30$. We consider three different numbers of components, that is $p=50$, 200 and 400. As shown in Table \ref{table_sim_null_size}, the empirical sizes are close to 0.05 under different scenarios. These results, again, demonstrate the validity of our proposed Gaussian approximation theorem.

\begin{table}[!htbp]
\begin{center}
  \caption{Averaged empirical sizes under the null over 1000 Monte-Carlo replicates ($n=200,bn=30$).}
  \label{table_sim_null_size}
  \renewcommand{\arraystretch}{2}
  \begin{tabular}{c | c  c || c | c  c || c | c  c } 
  \hline\hline
   $p=50$ & $N(0,1)$ & $t_9$ & $p=200$ & $N(0,1)$ & $t_9$ & $p=400$ & $N(0,1)$ & $t_9$  \\ [1ex] 
  \hline
  i.i.d. & 0.0501 & 0.0503 & i.i.d. & 0.0498 & 0.0494 & i.i.d. & 0.0502 & 0.0507 \\ [1ex]
  \hline
  AR(1) & 0.0507 & 0.0495 & AR(1) & 0.0505 & 0.0506 & AR(1) & 0.0493 & 0.0510 \\ [1ex]
  \hline
  MA($\infty$) & 0.0489 & 0.0483 & MA($\infty$) & 0.0486 & 0.0477 & MA($\infty$) & 0.0481 & 0.0471 \\ [1ex]
  \hline\hline
  \end{tabular}
\end{center}
\end{table}

\subsection{Simulation with Change Points}\label{subsec_sim_alter}

Now we present the simulation study of our proposed change-point detection method. We shall start with the single change-point case to show the precision of break-time estimation. Figure \ref{fig_sim_thm1} illustrates the estimation of a single break located at the time point $\tau=50$ which is the center of the x-axis in each histogram. The number of observations is $n=200$. We consider three different numbers of components $p=50$, $200$ and $400$ (where $p=400$ is larger than the sample size $n=200$, and this case can be considered as a high-dimensional setting), two different window sizes $bn=20$ or $30$, three jump sizes 1, 0.6 and 0.3, and the decay rate of the dependence of the time series is $\beta=2$. All the errors are MA($\infty$) with $t_9$ tails. Throughout this section we consider $1000$ simulation samples.

\begin{figure}[!htbp]
\centering
\begin{subfigure}{1\textwidth}
  \centering
  \includegraphics[width=1\linewidth]{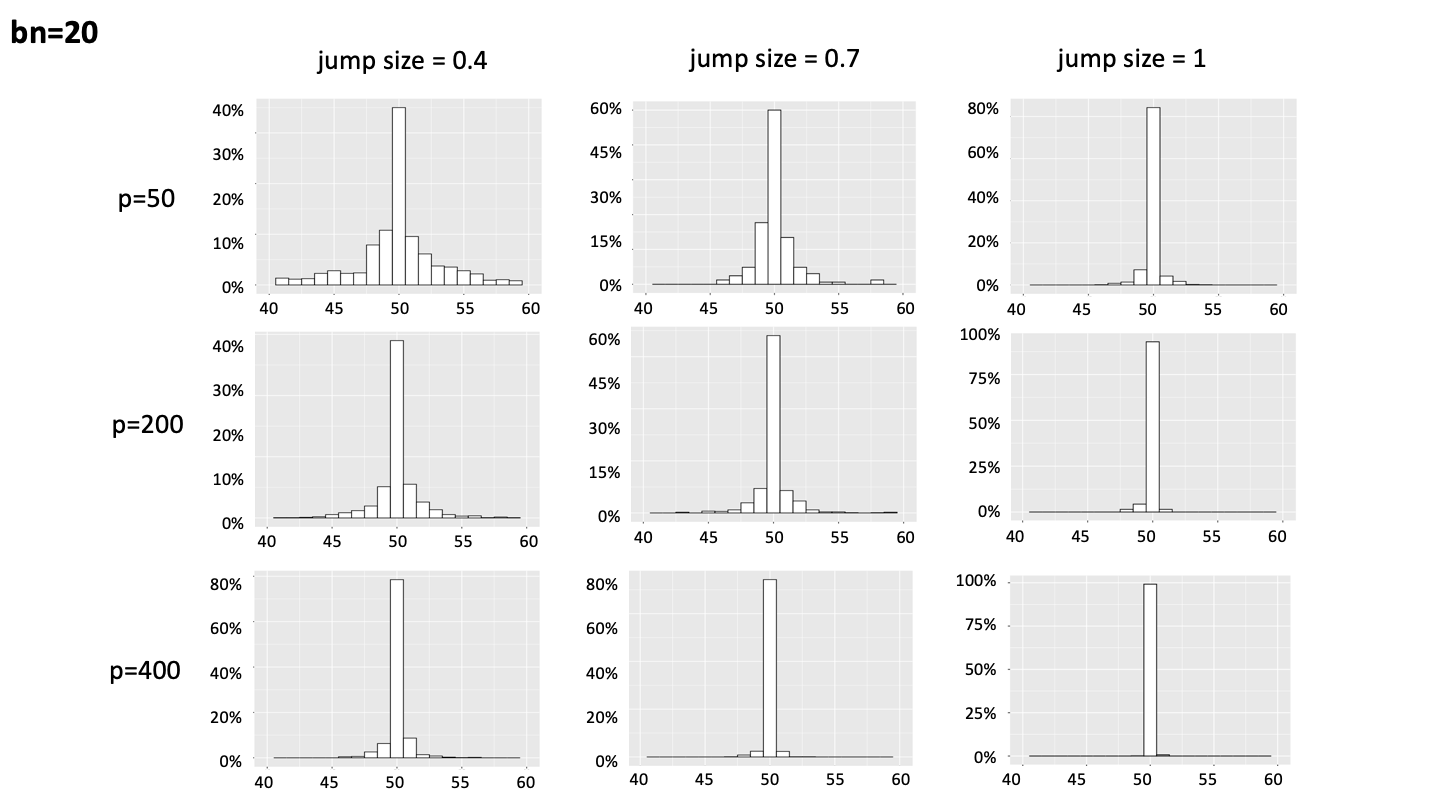}  
  \caption{$bn=20$.}
  \label{fig_thm1_bn20}
\end{subfigure}
\begin{subfigure}{1\textwidth}
  \centering
  \includegraphics[width=1\linewidth]{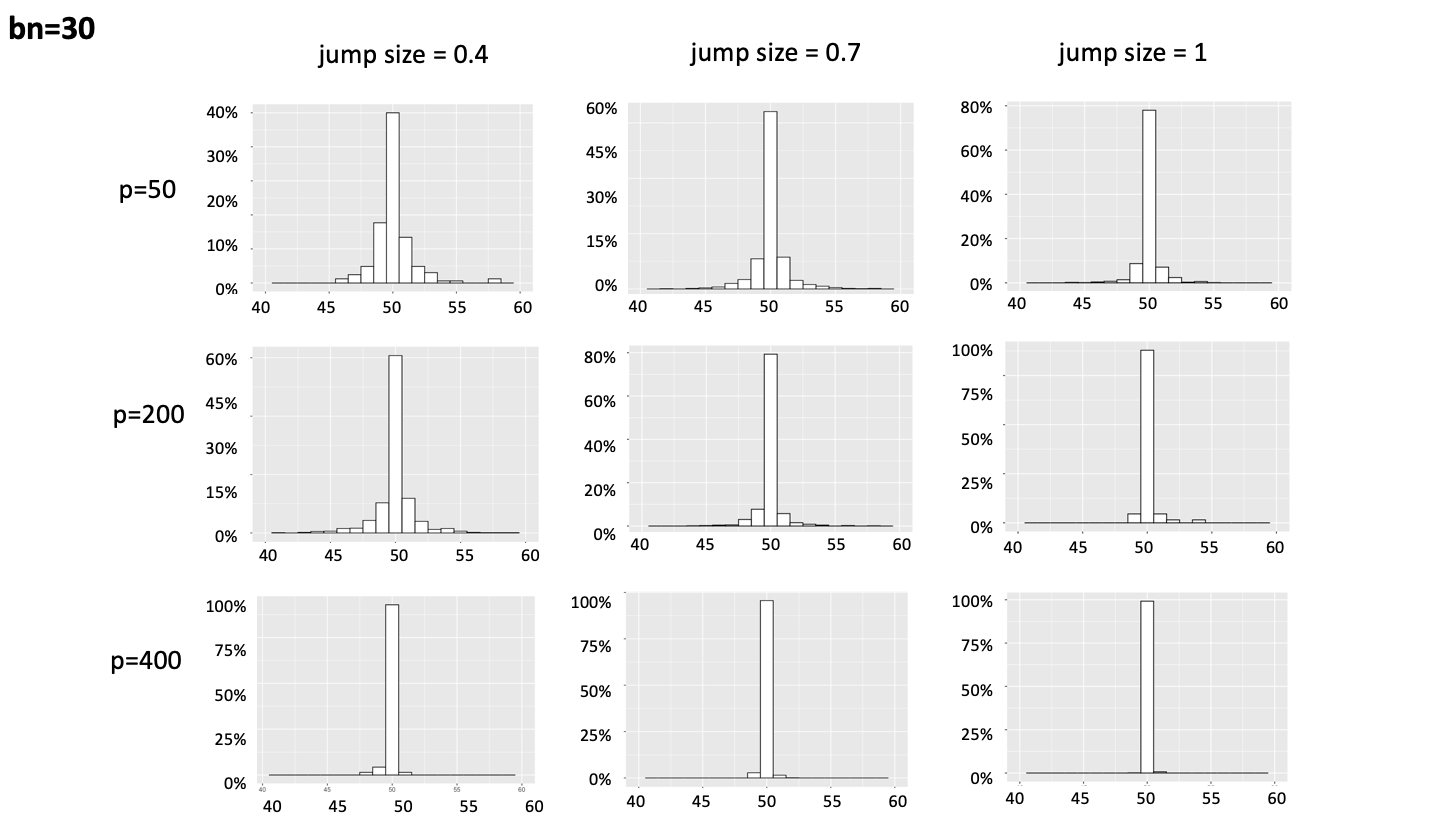}  
  \caption{$bn=30$.}
  \label{fig_thm1_bn30}
\end{subfigure}
\caption{Distributions of estimated break time stamps using Algorithm \ref{algo1}. MA($\infty$) errors $\sim t_9$, $\tau=50$ and all $p$ dimensions jump.}
\label{fig_sim_thm1}
\end{figure}

To demonstrate the robustness of Algorithm \ref{algo1}, we set the number of breaks $K =3$ and break-time stamps $\tau_1=40$, $\tau_2=100$, $\tau_3=160$ for all cases, Moreover, we consider two window sizes $bn=20$ and 30, and three two different jump sizes 1, 0.7 and 0.4. Note that for a simpler setting, in each sample, the jump sizes of all $p$ dimensions are set to be the same. Two different decay rates of the moving average model are shown, $\beta=3$ and 1.5.  We report the averaged difference between the estimated number of change points and the true number of breaks $|\hat K -K |$ (AN) in Table \ref{table_sim_multiple_AN}. The averaged distances between the estimated and true break temporal locations $\sum_{k=1}^{\hat K }|\hat\tau_k-\tau_k^*|$ (AT) under different scenarios are shown in Table \ref{table_sim_multiple_AT}.
The algorithm produces accurate break estimation in terms of all measures in different cases. For example, the estimation precision improves with increasing $p$ and jump sizes. 
It is worth noting that our proposed algorithm is computationally efficient as one only needs to estimate breaks once. Also, we do not need a second step aggregation after estimating the break statistics.

\begin{table}[!htbp]
\begin{center}
    \caption{Averaged Number of breaks (AN) over 1000 samples under different scenarios by Algorithm \ref{algo1}.}
    \label{table_sim_multiple_AN}
    \renewcommand{\arraystretch}{2}
    \begin{tabular}{ p{1.5cm}|p{1.5cm}p{1.5cm}p{1.5cm}|p{1.5cm}|p{1.5cm}p{1.5cm}p{1.5cm}  }
    \hline\hline
    \multicolumn{4}{c|}{jump size = 2, $n=200$} & \multicolumn{4}{c}{jump size = 1, $n=200$} \\
    \hline
     & $p=50$ & $p=200$ & $p=400$ &  & $p=50$ & $p=200$ & $p=400$ \\
    \hline
    $bn=20$   & 0.059  &  0.034 &  0.006 & $bn=20$  & 0.388  & 0.302 &  0.254 \\
    $bn=30$   & 0.021  & 0.013 &  0 & $bn=30$   & 0.259  &  0.187 &  0.099 \\
    \hline
    \multicolumn{4}{c|}{jump size = 0.7, $n=200$} & \multicolumn{4}{c}{jump size = 0.4, $n=200$} \\
    \hline
     & $p=50$ & $p=200$ & $p=400$ &  & $p=50$ & $p=200$ & $p=400$ \\
    \hline
    $bn=20$   & 0.844  &  0.759 &  0.530 & $bn=20$  & 0.998  & 0.806 &  0.629 \\
    $bn=30$   & 0.734  & 0.683 &  0.348 & $bn=30$   & 0.887  &  0.723 &  0.451 \\
    \hline\hline
    \end{tabular}
\end{center}
\end{table}

\begin{table}[!htbp]
\begin{center}
    \caption{Averaged Temporal error by $n$ (AT$/n$) over 1000 samples under different scenarios by Algorithm \ref{algo1}.}
    \label{table_sim_multiple_AT}
    \renewcommand{\arraystretch}{2}
    \begin{tabular}{ p{1.5cm}|p{1.5cm}p{1.5cm}p{1.5cm}|p{1.5cm}|p{1.5cm}p{1.5cm}p{1.5cm}  }
    \hline\hline
    \multicolumn{4}{c|}{jump size = 2, $n=200$} & \multicolumn{4}{c}{jump size = 1, $n=200$} \\
    \hline
     & $p=50$ & $p=200$ & $p=400$ &  & $p=50$ & $p=200$ & $p=400$ \\
    \hline
    $bn=20$   & 5.26e-03  & 4.49e-03 &  8.11e-04 & $bn=20$  & 5.27e-02  & 4.36e-02 &  7.97e-03 \\
    $bn=30$   & 4.19e-03  &  3.05e-03  &  0 & $bn=30$   & 3.42e-02  &  2.76e-02 &  6.28e-03 \\
    \hline
    \multicolumn{4}{c|}{jump size = 0.7, $n=200$} & \multicolumn{4}{c}{jump size = 0.4, $n=200$} \\
    \hline
     & $p=50$ & $p=200$ & $p=400$ &  & $p=50$ & $p=200$ & $p=400$ \\
    \hline
    $bn=20$   & 9.11e-02  & 7.45e-02 &  3.93e-02 & $bn=20$  & 6.23e-01  & 3.74e-01 &  9.35e-02 \\
    $bn=30$   & 8.37e-02  &  5.26e-02  &  1.44e-02 & $bn=30$   & 4.97e-01  &  1.73e-01 &  7.98e-02 \\
    \hline\hline
    \end{tabular}
\end{center}
\end{table}

Similarly, we present the simulation results by Algorithm \ref{algo2} where our proposed Two-Way MOSUM is used to account for cross-sectionally clustered jumps. We investigate two proportions of jumps, i.e. 60\% and 30\% and prefix two corresponding group structures with each spatial neighborhood size around $60\%p$ and $30\%p$, respectively. Specifically, we consider three different dimensions and the corresponding group structures with the number of spatial neighborhoods $S=4$ as follows: For the 60\% case, we choose 1) $p=30$ with the group structure $\{(1:18),(5:22),(9:26),(13:30)\}$, $B_{\text{min}}=18$, 2) $p=70$,  $\{(1:42),(10:51),(19:60),(29:70)\}$, $B_{\text{min}}=42$ and 3) $p=200$, $\{(1:120),(27:147),(54:175),(81:200)\}$, $B_{\text{min}}=120$. For the 30\% case, we consider 1) $p=30$, $\{(1:9),(8:16),(15:23),(22:30)\}$, $B_{\text{min}}=9$, 2) $p=70$,  $\{(1:21),(17:37),(33:53),(50:70)\}$, $B_{\text{min}}=21$ and 3) $p=200$, $\{(1:60),(41:100),(81:140),(121:200)\}$, $B_{\text{min}}=60$. For all the aforementioned cases, we let the number of observations $n=200$, total number of breaks $R =3$, temporal-spatial break locations $(\tau_1,s_1)=(40,2)$, $(\tau_2,s_2)=(100,4)$, $(\tau_3,s_3)=(160,2)$, and window size $bn=30$. We consider two different jump sizes, i.e. 2 and 1 for each dimension. Two different decay rates of the moving average model are checked, i.e. $\beta=3$ and $1.5$. 
Table \ref{table_sim_multiple_nbd_AN} shows the averaged difference between the estimated number of change points and the true number of breaks $|\hat R -R |$ (nbd-AN). Table \ref{table_sim_multiple_nbd_AT} provides the averaged distances between the estimated and true break temporal locations $\sum_{r=1}^{\hat R }|\hat\tau_r-\tau_{r^*}|$ (nbd-AT) under different scenarios. For the precision of the spatial break locations, we report the averaged spatial uncovered rate $\sum_{r=1}^{\hat R }\big|(\B_{\hat s_r}\setminus\B_{s_{r^*}})\cup(\B_{s_{r^*}}\setminus\B_{\hat s_r})\big|$ (nbd-AS) in Table \ref{table_sim_multiple_nbd_AS} for different cases. Overall our algorithm performs well across all the cases. We can see that the estimation accuracy improves over increasing $p$ and jump sizes. Moreover, the cross-sectionally clustered jump would also affect the estimation accuracy.

\begin{table}[!htbp]
\begin{center}
    \caption{nbd-AN over 1000 samples under different scenarios by Algorithm \ref{algo2}.}
    \label{table_sim_multiple_nbd_AN}
    \renewcommand{\arraystretch}{2}
    \begin{tabular}{ p{1.5cm}|p{1.5cm}p{1.5cm}p{1.5cm}|p{1.5cm}|p{1.5cm}p{1.5cm}p{1.5cm}  }
    \hline\hline
    \multicolumn{4}{c|}{jump size = 2, $n=200$} & \multicolumn{4}{c}{jump size = 1, $n=200$} \\
    \hline
    $bn=30$ & $p=50$ & $p=200$ & $p=400$ & $bn=30$ & $p=50$ & $p=200$ & $p=400$ \\
    \hline
    60\%   & 0.109  & 0.085 &  0 & 60\%   & 0.304  & 0.276 &  0.138 \\
    30\%   & 0.217  & 0.110 &  0.032 & 30\%   & 0.422  & 0.368 &  0.196 \\
    \hline
    $bn=20$ & $p=50$ & $p=200$ & $p=400$ & $bn=20$ & $p=50$ & $p=200$ & $p=400$ \\
    \hline
    60\%   & 0.142  & 0.105 &  0.029 & 60\%   & 0.313  & 0.288 &  0.143 \\
    30\%   & 0.236  & 0.128 &  0.051 & 30\%   & 0.456  & 0.397 &  0.216 \\
    \hline\hline
    \multicolumn{4}{c|}{jump size = 0.7, $n=200$} & \multicolumn{4}{c}{jump size = 0.4, $n=200$} \\
    \hline
    $bn=30$ & $p=50$ & $p=200$ & $p=400$ & $bn=30$ & $p=50$ & $p=200$ & $p=400$ \\
    \hline
    60\%   & 0.678  & 0.561 &  0.147 & 60\%   & 0.857  & 0.710 &  0.423 \\
    30\%   & 0.876  & 0.738 &  0.213 & 30\%   & 0.989  & 0.806 &  0.472 \\
    \hline
    $bn=20$ & $p=50$ & $p=200$ & $p=400$ & $bn=20$ & $p=50$ & $p=200$ & $p=400$ \\
    \hline
    60\%   & 0.725  & 0.642 &  0.195 & 60\%   & 0.857  & 0.710 &  0.464 \\
    30\%   & 0.944  & 0.864 &  0.301 & 30\%   & 1.171  & 0.862 &  0.519 \\
    \hline\hline
\end{tabular}
\end{center}
\end{table}

\begin{table}[!htbp]
\begin{center}
    \caption{nbd-AT$/n$ over 1000 samples under different scenarios by Algorithm \ref{algo2}.}
    \label{table_sim_multiple_nbd_AT}
    \renewcommand{\arraystretch}{2}
    \begin{tabular}{ p{1.5cm}|p{1.5cm}p{1.5cm}p{1.5cm}|p{1.5cm}|p{1.5cm}p{1.5cm}p{1.5cm}  }
    \hline\hline
    \multicolumn{4}{c|}{jump size = 2, $n=200$} & \multicolumn{4}{c}{jump size = 1, $n=200$} \\
    \hline
    $bn=30$ & $p=50$ & $p=200$ & $p=400$ & $bn=30$ & $p=50$ & $p=200$ & $p=400$ \\
    \hline
    60\%   & 9.47e-05  & 7.96e-05 &  0 & 60\%   & 6.55e-04  & 4.02e-04 &  8.74e-05 \\
    30\%   & 7.62e-04  & 4.51e-04 &  8.00e-05 & 30\%   & 8.37e-03  & 6.14e-03 &  1.01e-03 \\
    \hline
    $bn=20$ & $p=50$ & $p=200$ & $p=400$ & $bn=20$ & $p=50$ & $p=200$ & $p=400$ \\
    \hline
    60\%   & 1.10e-04  & 9.78e-05 &  0 & 60\%   & 7.63e-04  & 4.89e-04 &  9.16e-05 \\
    30\%   & 9.54e-04  & 2.49e-04 &  9.05e-05 & 30\%   & 9.21e-03  & 7.03e-03 &  1.43e-03 \\
    \hline\hline
    \multicolumn{4}{c|}{jump size = 0.7, $n=200$} & \multicolumn{4}{c}{jump size = 0.4, $n=200$} \\
    \hline
    $bn=30$ & $p=50$ & $p=200$ & $p=400$ & $bn=30$ & $p=50$ & $p=200$ & $p=400$ \\
    \hline
    60\%   & 5.78e-03  & 3.19e-03 &  7.50e-04 & 60\%   & 3.69e-02  & 2.87e-02 &  9.94e-03 \\
    30\%   & 1.36e-02  & 9.42e-03 &  3.52e-03 & 30\%   & 2.71e-01  & 9.98e-02 &  5.51e-02 \\
    \hline
    $bn=20$ & $p=50$ & $p=200$ & $p=400$ & $bn=20$ & $p=50$ & $p=200$ & $p=400$ \\
    \hline
    60\%   & 6.53e-03  & 4.72e-03 &  8.84e-04 & 60\%   & 4.72e-02  & 3.95e-02 &  1.01e-02 \\
    30\%   & 2.66e-02  & 1.21e-02 &  5.30e-03 & 30\%   & 3.28e-01  & 1.03e-01 &  6.96e-02 \\
    \hline\hline
\end{tabular}
\end{center}
\end{table}

\begin{table}[!htbp]
\begin{center}
    \caption{nbd-AS$/B_{\text{min}}$ over 1000 samples under different scenarios by Algorithm \ref{algo2}.}
    \label{table_sim_multiple_nbd_AS}
    \renewcommand{\arraystretch}{2}
    \begin{tabular}{ p{1.5cm}|p{1.5cm}p{1.5cm}p{1.5cm}|p{1.5cm}|p{1.5cm}p{1.5cm}p{1.5cm}  }
    \hline\hline
    \multicolumn{4}{c|}{jump size = 2, $n=200$} & \multicolumn{4}{c}{jump size = 1, $n=200$} \\
    \hline
    $bn=30$ & $p=50$ & $p=200$ & $p=400$ & $bn=30$ & $p=50$ & $p=200$ & $p=400$ \\
    \hline
    60\%   & 6.72e-05  & 5.03e-05 &  0 & 60\%   & 3.89e-04  & 2.90e-04 &  7.13e-05 \\
    30\%   & 5.18e-04  & 6.47e-04 &  8.92e-05 & 30\%   & 4.41e-03  & 1.02e-03 &  7.79e-04 \\
    \hline
    $bn=20$ & $p=50$ & $p=200$ & $p=400$ & $bn=20$ & $p=50$ & $p=200$ & $p=400$ \\
    \hline
    60\%   & 8.01e-05  & 7.31e-05 &  1.99e-05 & 60\%   & 5.43e-04  & 4.68e-04 &  8.79e-05 \\
    30\%   & 6.24e-04  & 7.92e-04 &  9.34e-05 & 30\%   & 5.18e-03  & 2.20e-03 &  8.66e-04 \\
    \hline\hline
    \multicolumn{4}{c|}{jump size = 0.7, $n=200$} & \multicolumn{4}{c}{jump size = 0.4, $n=200$} \\
    \hline
    $bn=30$ & $p=50$ & $p=200$ & $p=400$ & $bn=30$ & $p=50$ & $p=200$ & $p=400$ \\
    \hline
    60\%   & 1.76e-03  & 1.19e-03 &  5.04e-04 & 60\%   & 1.28e-02  & 9.32e-03 &  4.21e-03 \\
    30\%   & 1.13e-02  & 8.64e-03 &  3.22e-03 & 30\%   & 9.38e-02  & 2.59e-02 &  3.97e-02 \\
    \hline
    $bn=20$ & $p=50$ & $p=200$ & $p=400$ & $bn=20$ & $p=50$ & $p=200$ & $p=400$ \\
    \hline
    60\%   & 2.11e-03  & 1.34e-03 &  9.55e-04 & 60\%   & 1.96e-02  & 1.63e-02 &  9.28e-03 \\
    30\%   & 2.45e-02  & 9.34e-03 &  4.50e-03 & 30\%   & 1.03e-01  & 4.10e-02 &  5.05e-02 \\
    \hline\hline
\end{tabular}
\end{center}
\end{table}

\begin{remark}[Extension to a multi-scale MOSUM]\label{rmk_multiscale}
There is an existing literature on the use of the MOSUM methodology for adaptive bandwidth selection in change-point detection with respect to different break locations, such as the studies by \textcite{messer2014multiple} and \textcite{cho2022two}. \textcite{messer2014multiple} employs a bottom-up approach, which is similar to our approach to bandwidth selection. However, our method differs from both of these approaches in that we use a single-scale MOSUM with a fixed bandwidth $b$ per test. Nonetheless, our algorithm could potentially benefit from combining with a multi-scale MOSUM algorithm, such as the one proposed by \textcite{chen_inference_2022}, which adjusts $b$ based on the relative break locations. We leave the rigorous investigation of such an extension to future research.
\end{remark}

We have added a power comparison on test power with respect to different choices of window sizes as shown in Table \ref{tbl_power}. We use the model $MA(\infty)$ with tails following $t_9$ distribution, and we consider the jump size 0.4 to focus our analysis in the more challenging territory, since the larger the jump size, the more prominent the change points. We have observed in Table \ref{tbl_power} that the power of the tests varies significantly based on the window sizes used. In general, when the minimum gap condition is not violated, that is, $b\ll \kappa_n$ with $\kappa_n$ defined in Definition \ref{def_separation}, the larger the bandwidth $b$ used, the better the performance of our proposed algorithm.

\begin{table}[!htbp]
    \centering
    \caption{Power comparison with different $n,p$ and window sizes $bn$.}
    \label{tbl_power}
    \renewcommand{\arraystretch}{2}
    \begin{tabular}{p{1cm} p{1cm}|p{1.5cm}|p{1.5cm}|p{1.5cm}|p{1.5cm}}    
    \hline\hline
    $n$ & $p$  & $bn=25$  & $bn=30$ & $bn=35$ & $bn=40$   \\ 
    \hline
    200 &  70  &0.64&0.93& 0.99&1  \\ 
    \hline
    200 &  90 & 0.26&0.79&1&1  \\
    \hline
    200 &  110 & 0.17&0.47&0.95&0.99\\
    \hline\hline
    250 &  70  & 0.56&0.88&1&1\\
    \hline
    250 &  90  & 0.29&0.8&0.95&1\\
    \hline
    250 & 110&   0.11&0.55&0.92&1\\
    \hline\hline
    300 &  70 &  0.45&0.97&1&1\\
    \hline
    300 &  90  & 0.33&0.79&0.98&1\\
    \hline
    300 &  110  & 0.09&0.43&0.92&1\\
    \hline\hline
    \end{tabular}
\end{table}

\subsection{Application: Stock Return Data}\label{subsec_stock_append}

We obtain a panel of 300 monthly stock returns for $20$ years from January 2000 - December 2020. The data source is the CRSP US Stock Databases. We apply our proposed Algorithm \ref{algo1} to this dataset for change-point testing. Figure \ref{fig:app_stock} presents 10 stock returns from the original data where two change points were detected by our Algorithm \ref{algo1}. From Figure \ref{fig:app_stock}, one could hardly tell from the original time series whether any breaks exist as the signals could be very weak across cross-sectional dimensions. However, by our proposed $\ell^2$ type test statistics, we successfully detected two change points.
The first one was in Jun, 2009, and the second one was in Feb, 2020, which corresponds to the stock rebound after the subprime crisis in 2008 and the stock crash due to the COVID-19 crisis respectively. These results demonstrate that we can effectively identify the critical economic dates which cause turbulence in the financial market.
\begin{figure}[htbp!]
    \centering
    \includegraphics[width=1\textwidth]{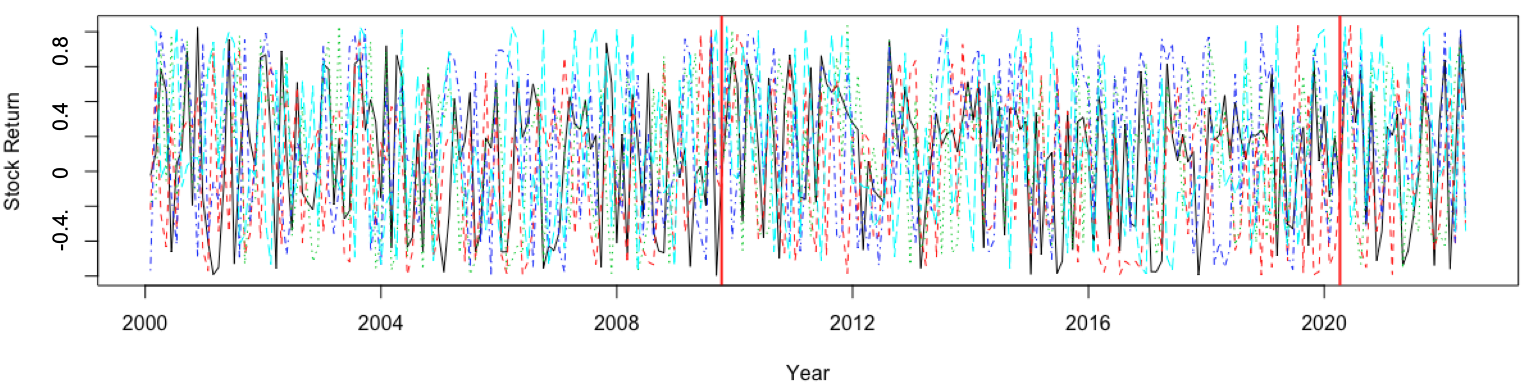}
    \caption{300 monthly stock returns from 2000.01 to 2020.12 with two change points detected by Algorithm \ref{algo1}, which are 2009.06 and 2020.01.}
    \label{fig:app_stock}
\end{figure}

\subsection{Application: COVID-19 Data}\label{subsec_covid_append}

In Figure \ref{fig_covid_stage}, we randomly picked four dates to show the COVID-19 maps reflecting the numbers of cases across the country, from which, one can see that break time stamps for each area could be different. Therefore, a change-point algorithm designed for spatially clustered signals is on demand. We apply the Two-Way MOSUM to identify and characterize breaks at different time stamps and spatial locations.

\begin{figure}[!htbp]
\centering
\begin{subfigure}{0.45\textwidth}
  \centering
  \includegraphics[width=1\linewidth]{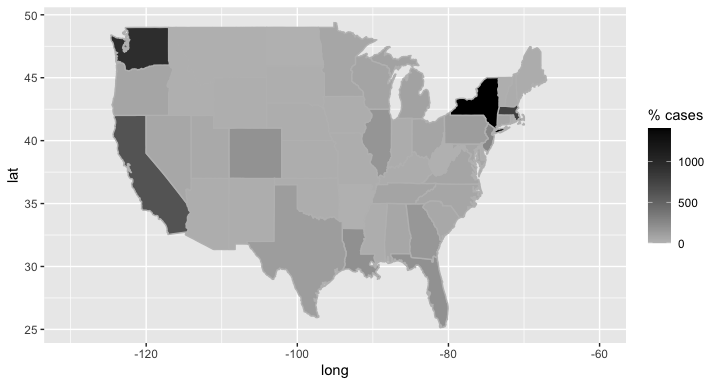}  
  \caption{2020.03.17}
  \label{fig_covid_stage1}
\end{subfigure}
\begin{subfigure}{0.45\textwidth}
  \centering
  \includegraphics[width=1\linewidth]{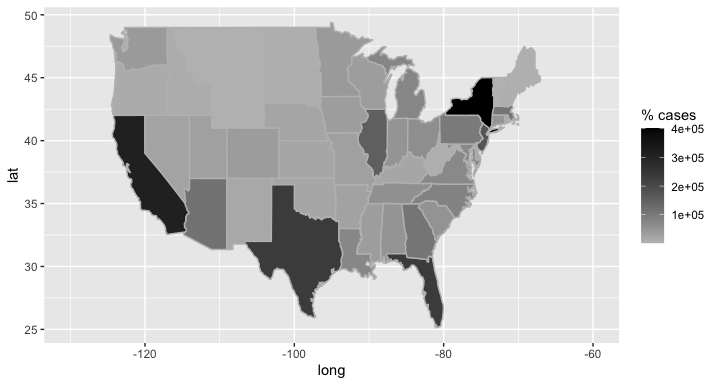}  
  \caption{2020.07.09}
  \label{fig_covid_stage2}
\end{subfigure}

\begin{subfigure}{0.45\textwidth}
  \centering
  \includegraphics[width=1\linewidth]{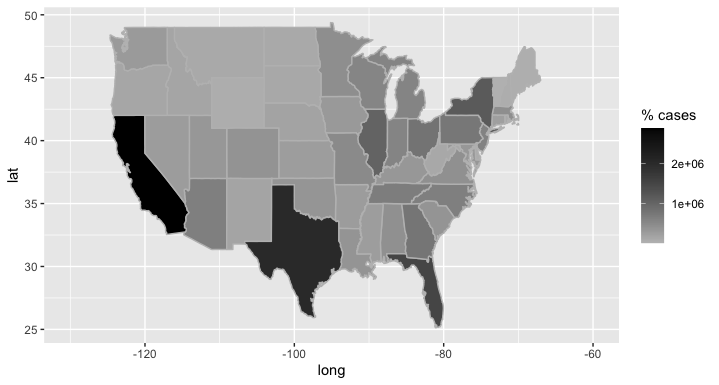}  
  \caption{2021.01.12}
  \label{fig_covid_stage3}
\end{subfigure}
\hskip 0.1cm
\begin{subfigure}{0.45\textwidth}
  \centering
  \includegraphics[width=1\linewidth]{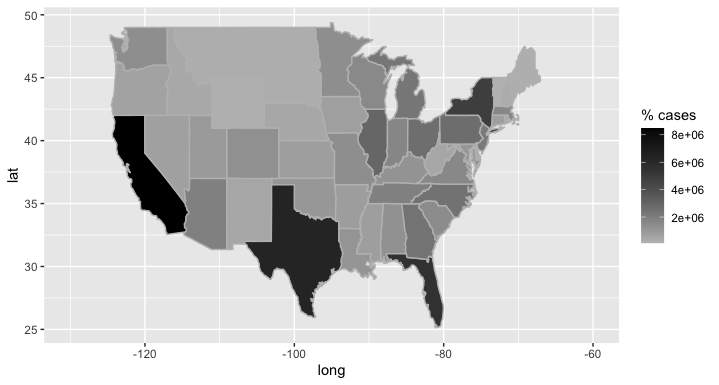}  
  \caption{2022.02.01.}
  \label{fig_covid_stage4}
\end{subfigure}

\caption{Maps of COVID-19 cases in the US on four different dates. 
}
\label{fig_covid_stage}
\end{figure}

In addition, we present the map of four geographic regions of the US by CDC (\url{https://www.cdc.gov/nchs/hus/sources-definitions/geographic-region.htm}) in Figure \ref{fig_region_us}, which were used in our application in Section \ref{sec_data}. We list the areas in each four regions.

\begin{itemize}
    \item[1] Region 1 (\textit{Northeast}): 
    Connecticut, Maine, Massachusetts, New Hampshire, Rhode Island, Vermont, New Jersey, New York, and Pennsylvania
    \item[2] Region 2 (\textit{Midwest}):
    Illinois, Indiana, Michigan, Ohio, Wisconsin, Iowa, Kansas, Minnesota, Missouri, Nebraska, North Dakota, and South Dakota
    \item[3] Region 3 (\textit{South}):
    Delaware, Florida, Georgia, Maryland, North Carolina, South Carolina, Virginia, District of Columbia, West Virginia, Alabama, Kentucky, Mississippi, Tennessee, Arkansas, Louisiana, Oklahoma, and Texas
    \item[4] Region 4 (\textit{West}):
    Arizona, Colorado, Idaho, Montana, Nevada, New Mexico, Utah, Wyoming, Alaska, California, Hawaii, Oregon, and Washington
\end{itemize}

\begin{figure}[!htbp]
  \centering
  \includegraphics[width=1\linewidth]{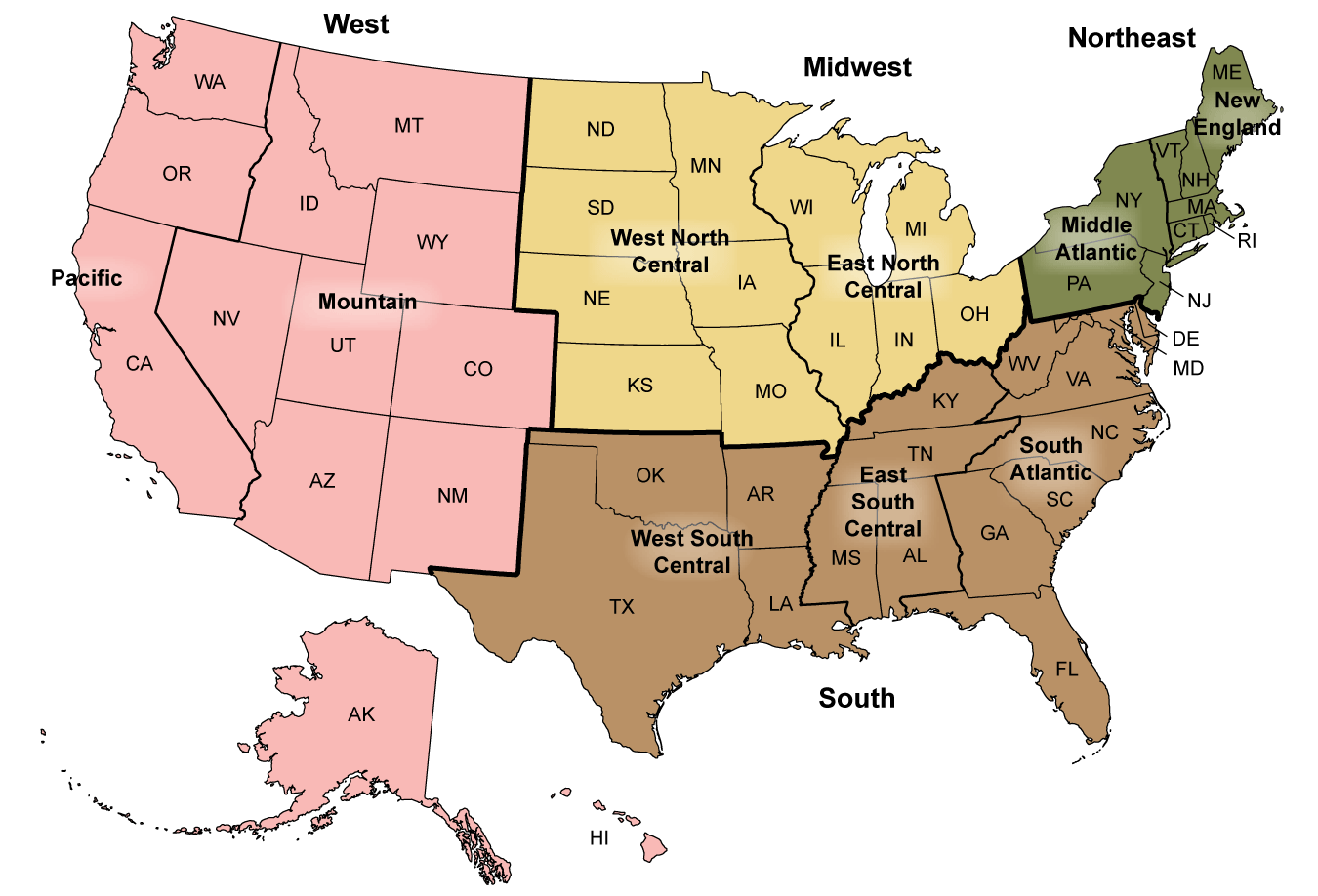}  
  \caption{Four geographic regions in the United States guided by CDC.}
  \label{fig_region_us}
\end{figure}

\clearpage

\section{Some Concrete Examples}\label{sec_example}

\subsection{Short-Range Dependent Linear Processes}\label{subsec_ex_temp_Dep}
Here we provide a specific example that fulfills Assumption \ref{asm_temp_dep}.

\begin{example}[Linear processes]
    \label{example_temp_dep}    
    Let $\eta_t\in\RR^p$ be i.i.d. random vectors with zero mean and identity covariance matrices. {Consider the linear process 
    \begin{equation}
        \label{eq_example_model}
        \mathcal{E}_t=\sum_{k\ge 0}\Lambda_k\eta_{t-k},
    \end{equation}
    where $\Lambda_k=\text{diag}(\lambda_{k,1},\ldots,\lambda_{k,p})$.} Set $\mathcal{E}_{t,j}$ to be the $j$-th component of $\mathcal{E}_t$, and $\mathcal{E}_{t,j}$ are independent for different $j$. Then, the long-run variance for $\mathcal{E}_{t,j}$ is $\sigma_{\mathcal{E},j}^2 = (\sum_{k\ge0}\lambda_{k,j})^2$. If for all $1\le j\le p$ and some constant $\alpha>1$,
    \begin{equation}
        \label{eq_example_condi}
    |\lambda_{k,j}|/\sigma_{\mathcal{E},j}\lesssim k^{-\alpha},
    \end{equation}
    where the constant in $\lesssim$ is independent of $k$ and $j$, then Assumption \ref{asm_temp_dep} holds for $\mathcal{E}_t$ with $\beta=\alpha-1$. To see this, one shall note that $|A_{k,j,\cdot}|_2=\lambda_{k,j}$ and $\sigma_j^2 = \sigma_{\mathcal{E},j}^2$, which indicates
    \begin{equation*}
        \sum_{k\ge i}|A_{k,j,\cdot}|_2 = \sum_{k\ge i}|\lambda_{k,j}| \lesssim i^{-\alpha+1} \sigma_{\mathcal{E},j} = i^{-\alpha+1}\sigma_j.
    \end{equation*}
\end{example}
To establish the Gaussian approximation for the $\ell^2$-based test statistic $\Q_n$, however, the cross-sectional independence is not necessary. Assumption \ref{asm_sec_indep} which requires the cross-sectional independence of the error process $\{\epsilon_t\}_{t\in\ZZ}$ has been relaxed to allow for weak cross-sectional dependence in Assumption \ref{asm_nonli_sec_dep}.

\subsection{Multiple Time Series}\label{subsec_fixed_p}
In Section \ref{sec_test_nonlinear}, we presented the Gaussian approximation results for the test statistic $\tilde Q_n$ defined in (\ref{eq_test_nonli}) in the high-dimensional setting. However, our analysis is not limited to large $p$. In this section, we provide an invariance principle for multiple time series, where the dimension $p$ remains fixed as $n$ diverges. 

Recall that the test statistic $\tilde \Q_n=\max_{1\le s\le S}\max_{bn+1\le i\le n-bn}Q_{i,\B_s}$, and under the null hypothesis, we can write $Q_{i,\B_s}$ into
\begin{align}
    Q_{i,\B_s} = \frac{1}{\sqrt{|\B_s\cap\L_0|}}\sum_{\bbell\in\B_0\cap\L_0}\Big[(bn\sigma(\bbell))^{-2}\Big(\sum_{t=i-bn}^{i-1}\epsilon_t(\bbell) - \sum_{t=i}^{i+bn-1}\epsilon_t(\bbell)\Big)^2 - c(\bbell)\Big]\One_{\bbell\in\B_s\cap\L_0},
\end{align}
where $\epsilon_t(\bbell)$ is defined in (\ref{eq_epsilon_nonlinear}) and $c(\bbell)$ is a centering term defined in (\ref{eq_center_nbd}). We aim to provide an invariance principle for the process $(Q_{i,\B_s})_{bn+1\le i\le n-bn, 1\le s\le S}$. Since $Q_{i,\B_s}$ is a quadratic form which is quite involved, we shall first establish a preliminary result for the $p$-dimensional partial sum process $(S_i)_{i\in\ZZ}$, where $S_i=\sum_{t=1}^i\epsilon_t$ with $\epsilon_t = (\epsilon_t(\bbell))_{\bbell\in\B_0\cap\L_0}^\top$. Recall the long-run covariance matrix of $\epsilon_t$ which is denoted by $\Sigma$ in (\ref{eq_nonli_longrun}). To facilitate identification for multiple time series, we first state an assumption on $\Sigma$ below.
\begin{assumption}[Non-degeneracy of long-run covariance matrix]
    \label{asm_multi_eigen}
    Denote the smallest eigenvalue of the long-run covariance matrix $\Sigma$ by {$\lambda_*$. Assume that $\lambda_*>0$.}
\end{assumption}
Next, we follow \textcite{karmakar_optimal_2020} to provide an invariance principle for $(S_i)_{i\ge1}$.
\begin{lemma}[Invariance principle for $(S_i)_{i\ge1}$]
    \label{lemma_finite_p}
    Suppose that Assumption \ref{asm_multi_eigen} is satisfied, Assumption \ref{asm_nonli_finitemoment} holds for some $q>2$, and Assumption \ref{asm_nonli_temp_dep} holds for
    \begin{equation}
        \label{eq_beta}
        \beta>\begin{cases}
            2, &\text{if }2<q<4,\\
            1+\frac{q^2-4+(q-2)\sqrt{q^2+20q+4}}{8q}, &\text{if }q\ge4.
        \end{cases}
    \end{equation}
    \noindent Then, there exists a probability space $(\Omega_c,\A_c,\PP_c)$ on which we can define a partial sum process $S_i^c=\sum_{t=1}^i\epsilon_t^c$, such that $(S_i^c)_{1\le i\le n}\overset{d}{=}(S_i)_{1\le i\le n}$, and for $q>2$,
    \begin{equation}
        \label{eq_prop_rate}
        \max_{i\le n}\big|S_i^c-\Sigma^{1/2}\BB(i)\big|_2 = o_{\PP}(n^{1/q}), \quad \text{in }(\Omega_c,\A_c,\PP_c),
    \end{equation}
    where $\BB(\cdot)$ is the $p$-dimensional standard Brownian motion.
\end{lemma}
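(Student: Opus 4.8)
The plan is to obtain Lemma~\ref{lemma_finite_p} as an application of the optimal Gaussian strong-approximation theorem for multiple time series of \textcite{karmakar_optimal_2020}, the only work on our side being to translate our coordinatewise hypotheses into their vector ones. Since $p$ is fixed, the per-coordinate functional dependence measures $\theta_{t,\bbell,q}$ of~(\ref{eq_func_dep}) control the vector dependence measure $\theta_{t,q}:=\big\lVert\,|\epsilon_t-\epsilon_t^*|_2\,\big\rVert_q$: by the triangle inequality $|\epsilon_t-\epsilon_t^*|_2\le\sum_{\bbell\in\B_0\cap\L_0}|\epsilon_t(\bbell)-\epsilon_t^*(\bbell)|$, hence $\theta_{t,q}\le\sum_{\bbell}\theta_{t,\bbell,q}$. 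Summing over the temporal lag and invoking Assumption~\ref{asm_nonli_temp_dep} gives $\sum_{k\ge m}\theta_{k,q}\le\sum_{\bbell}\sum_{k\ge m}\theta_{k,\bbell,q}\le C(1\vee m)^{-\beta}\sum_{\bbell}\sigma(\bbell)$, and $\sum_{\bbell\in\B_0\cap\L_0}\sigma(\bbell)$ is a finite constant because $p$ is fixed and each long-run standard deviation is finite under Assumption~\ref{asm_nonli_temp_dep}; so the vector process has its dependence adapted with the \emph{same} exponent $\beta$. Likewise $\epsilon_t\in\L^q$ follows from Assumption~\ref{asm_nonli_finitemoment} and finiteness of $p$, and Assumption~\ref{asm_multi_eigen} makes $\Sigma$ positive definite, so $\Sigma^{1/2}$ is a genuine invertible square root.

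Before invoking the cited theorem I would recall its internal architecture, so the matching of constants is transparent. One first replaces $\epsilon_t$ by the $m$-dependent truncation $\tilde\epsilon_t^{(m)}=\EE\big[\epsilon_t\mid(\eta_{s,\bbell'})_{t-m\le s\le t,\,\bbell'\in\ZZ^v}\big]$, whose partial-sum approximation error over a block of length $\ell$ is controlled by $\ell\cdot\sum_{k>m}\theta_{k,q}$ via Burkholder/Rosenthal-type inequalities. One then splits $\{1,\dots,n\}$ into alternating big blocks and small blocks, the small ones of length at least $m$ so that, after the small blocks are removed, the big-block sums of the $m$-dependent process are exactly independent, with covariance matrices $(\text{block length})\cdot\Sigma$ up to an error decaying in the block length. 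A multivariate KMT/Sakhanenko--Zaitsev strong approximation is applied to this independent sequence, the small-block contributions are absorbed by a maximal moment inequality, and the resulting Gaussian partial sums are interpolated by $\Sigma^{1/2}\BB(\cdot)$. Positive definiteness of $\Sigma$ enters precisely here, guaranteeing that the big-block covariance matrices are uniformly non-degenerate, which the multivariate embedding requires. The accumulated errors telescope, and choosing the block lengths and $m=m_n$ as suitable powers of $n$ and optimizing the exponents yields the rate $o(n^{1/q})$ exactly when $\beta$ exceeds the threshold in~(\ref{eq_beta}) --- which is the condition we are assuming.

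With the hypotheses verified, I would then apply the theorem to obtain a probability space $(\Omega_c,\A_c,\PP_c)$ carrying a copy $S_i^c=\sum_{t=1}^i\epsilon_t^c$ with $(S_i^c)_{1\le i\le n}\overset{d}{=}(S_i)_{1\le i\le n}$ and a standard $p$-dimensional Brownian motion $\BB(\cdot)$ such that $\max_{i\le n}\big|S_i^c-\Sigma^{1/2}\BB(i)\big|_2=o(n^{1/q})$ almost surely; this in particular gives the $o_{\PP}(n^{1/q})$ statement of Lemma~\ref{lemma_finite_p}, since almost-sure convergence implies convergence in probability.

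The hard part will not be any new probabilistic content --- the $m$-dependence truncation, the big-block/small-block scheme, the multivariate strong approximation, and the exponent optimization behind~(\ref{eq_beta}) are all supplied by \textcite{karmakar_optimal_2020}. The only delicate points are bookkeeping ones: confirming that $\sum_{\bbell}\sigma(\bbell)<\infty$ so that Assumption~\ref{asm_nonli_temp_dep}, stated for the normalized coordinates, transfers to a decay bound for the unnormalized vector dependence measure; and checking that the dimension-dependent constants in the cited bound are harmless because $p$ is held fixed, so that the $\beta$-threshold we quote in~(\ref{eq_beta}) is literally theirs. If one wanted a self-contained argument rather than a citation, the genuine obstacle would be carrying out that exponent balancing to recover the sharp $n^{1/q}$ rate instead of a suboptimal polynomial one.
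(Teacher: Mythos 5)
Your proposal is correct and follows essentially the same route as the paper: the paper gives no separate proof of Lemma \ref{lemma_finite_p}, but simply states it as an application of the optimal multivariate Gaussian approximation of \textcite{karmakar_optimal_2020}, with the threshold in (\ref{eq_beta}) taken directly from that result. Your additional bookkeeping (transferring the coordinatewise dependence bound to the vector dependence measure via $|\cdot|_2\le|\cdot|_1$ and finiteness of $p$, and using Assumption \ref{asm_multi_eigen} for non-degeneracy of $\Sigma$) is exactly the verification the citation implicitly relies on.
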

Next, we shall proceed to provide an invariance principle for the random process $(Q_{i,\B_s})_{bn+1\le i\le n-bn,1\le s\le S}$. First, we introduce some necessary notation. Recall the probability space $(\Omega_c,\A_c,\PP_c)$ and the standard Gaussian process $\BB(i)$ in Lemma \ref{lemma_finite_p}. Then, on this probability space, we define
\begin{align}
    G_i^{c^-} & = (bn)^{-1}\Lambda^{-1}\Sigma^{1/2}(\BB(i-1)- \BB(i-bn-1)),\nonumber \\
    G_i^{c^+} & = (bn)^{-1}\Lambda^{-1}\Sigma^{1/2}(\BB(i+bn-1)- \BB(i-1)).
\end{align}
Further, recall the centering term $c_{\B_s}$ defined in (\ref{eq_center_nbd}), and we denote
\begin{equation}
    \G_{i,\B_s}^c = \frac{1}{\sqrt{|\B_s\cap\L_0|}}\Big(\big|G_i^{c^-} - G_i^{c^+}\big|_{2,\B_s}^2 - c_{\B_s}\Big),
\end{equation}
where $|\cdot|_{2,\B_s}$ is an nbd-norm similar to the one defined in Definition \ref{def_linear_nbdnorm}, that is, for a random vector $X=(X(\bbell))_{\bbell\in\B_0\cap\L_0}$,  $|X|_{2,\B_s}^2 = \sum_{\bbell\in\B_s\cap\L_0}X^2(\bbell)$. Then, we can approximate the joint distribution of the vector process $(Q_{i,\B_s})_{i,s}$ by a Gaussian process $(\G_{i,\B_s}^c)_{i,s}$ with an optimal approximation rate. We state this result in Proposition \ref{prop_finite_p} below.

\begin{proposition}[Gaussian approximation with fixed $p$]
    \label{prop_finite_p}
    Suppose that the conditions in Lemma \ref{lemma_finite_p} all hold. Then, there exists a probability space $(\Omega_c,\A_c,\PP_c)$ on which we can define a quadratic partial sum process $Q_{i,\B_s}^c$ and a Gaussian process $\G_{i,\B_s}^c$, such that $(Q_{i,\B_s}^c)_{i,s}\overset{\mathbb{D}}{=}(Q_{i,\B_s})_{i,s}$, and for $q>2$,
    \begin{equation}
        \label{eq_prop_rate2}
        \max_{1\le s\le S}\max_{bn+1\le i\le n-bn}bn\big|Q_{i,\B_s}^c-\G_{i,\B_s}^c\big| = o_{\PP}(n^{1/q}), \quad \text{in }(\Omega_c,\A_c,\PP_c).
    \end{equation}
\end{proposition}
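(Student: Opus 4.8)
The plan is to lift the strong invariance principle of Lemma \ref{lemma_finite_p} through the quadratic map that defines $Q_{i,\B_s}$. I would work throughout on the probability space $(\Omega_c,\A_c,\PP_c)$ supplied by Lemma \ref{lemma_finite_p}, which carries both the coupled partial sums $S_i^c$ and the Brownian motion $\BB$ with $\max_{i\le n}|S_i^c-\Sigma^{1/2}\BB(i)|_2=o_{\PP}(n^{1/q})$. Because $Q_{i,\B_s}$ depends on the data only through $(S_t)_{0\le t\le n-1}$ — writing $\sum_{t=i-bn}^{i-1}\epsilon_t(\bbell)=S_{i-1}(\bbell)-S_{i-bn-1}(\bbell)$ and $\sum_{t=i}^{i+bn-1}\epsilon_t(\bbell)=S_{i+bn-1}(\bbell)-S_{i-1}(\bbell)$, and all these indices lie in $\{0,\dots,n-1\}$ for $bn+1\le i\le n-bn$ — I define $Q_{i,\B_s}^c$ by the very same formula with $S^c$ replacing $S$. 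The identity $(Q_{i,\B_s}^c)_{i,s}\overset{\mathbb D}{=}(Q_{i,\B_s})_{i,s}$ is then immediate from $(S_i^c)_{0\le i\le n-1}\overset{d}{=}(S_i)_{0\le i\le n-1}$.

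Next I set $W_i=(bn)^{-1}\Lambda^{-1}(2S_{i-1}^c-S_{i-bn-1}^c-S_{i+bn-1}^c)$, the vector whose $\bbell$-th coordinate is the version of $V_i(\bbell)$ built from $S^c$, and $\tilde W_i=G_i^{c^-}-G_i^{c^+}=(bn)^{-1}\Lambda^{-1}\Sigma^{1/2}(2\BB(i-1)-\BB(i-bn-1)-\BB(i+bn-1))$, so that $Q_{i,\B_s}^c=|\B_s\cap\L_0|^{-1/2}\big(|W_i|_{2,\B_s}^2-c_{\B_s}\big)$ and $\G_{i,\B_s}^c=|\B_s\cap\L_0|^{-1/2}\big(|\tilde W_i|_{2,\B_s}^2-c_{\B_s}\big)$. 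The centering terms cancel, and the difference of squares factorizes:
\[
bn\big(Q_{i,\B_s}^c-\G_{i,\B_s}^c\big)=\frac{bn}{\sqrt{|\B_s\cap\L_0|}}\,\big\langle W_i-\tilde W_i,\,W_i+\tilde W_i\big\rangle_{\B_s},
\]
where $\langle\cdot,\cdot\rangle_{\B_s}$ is the inner product restricted to coordinates in $\B_s\cap\L_0$. Using $|\B_s\cap\L_0|^{-1/2}\le 1$ (as $p$ is fixed) and Cauchy--Schwarz, it suffices to bound $bn\,|W_i-\tilde W_i|_2\,\big(|W_i|_2+|\tilde W_i|_2\big)$ uniformly over $1\le s\le S$ and $bn+1\le i\le n-bn$.

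I would then control the two factors separately. Since $\Sigma^{1/2}$ has operator norm $O(1)$ (fixed $p$) and $\Lambda^{-1}$ has bounded operator norm — by Assumption \ref{asm_multi_eigen}, $\sigma(\bbell)^2=\Sigma_{\bbell,\bbell}\ge\lambda_*>0$ — the triangle inequality and Lemma \ref{lemma_finite_p} give $\max_i|W_i-\tilde W_i|_2\le 4(bn)^{-1}|\Lambda^{-1}|_{\mathrm{op}}\max_{i\le n}|S_i^c-\Sigma^{1/2}\BB(i)|_2=(bn)^{-1}o_{\PP}(n^{1/q})$. For the second factor I would write $2\BB(i-1)-\BB(i-bn-1)-\BB(i+bn-1)$ as a difference of two Brownian increments over windows of length $bn$ and apply Lévy's modulus of continuity (rescaled to $[0,n]$) to get $\max_i|\tilde W_i|_2=O_{\PP}\big((bn)^{-1}\sqrt{bn\log n}\big)=O_{\PP}\big(\sqrt{\log n/(bn)}\big)$, hence also $\max_i|W_i|_2=O_{\PP}(\sqrt{\log n/(bn)})$. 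Multiplying, $bn\,|W_i-\tilde W_i|_2(|W_i|_2+|\tilde W_i|_2)\le o_{\PP}(n^{1/q})\,O_{\PP}\big(\sqrt{\log n/(bn)}\big)+(bn)^{-1}o_{\PP}(n^{2/q})$, and both terms are $o_{\PP}(n^{1/q})$ under the standing bandwidth conditions (the first since $\log n\lesssim bn$; the second since $n^{2/q}/(bn)=o(n^{1/q})$ for $q\ge 8$ and $bn\to\infty$). Taking the maximum over the $S$ neighborhoods costs nothing beyond a constant.

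I expect the main obstacle to be getting a sharp enough uniform-in-$i$ maximal inequality for the length-$bn$ Brownian increments, rather than any new idea: the substantive step — the vector strong approximation $\max_{i\le n}|S_i^c-\Sigma^{1/2}\BB(i)|_2=o_{\PP}(n^{1/q})$ — is already delivered by Lemma \ref{lemma_finite_p}. The point is that the crude bound $\max_t|\BB(t)|_2=O_{\PP}(\sqrt n)$ is useless here, since it would force $bn\gg\sqrt{n\log n}$, whereas the Lévy modulus bound $O_{\PP}(\sqrt{bn\log n})$ for increments over windows of length $bn$ is exactly what keeps the cross term at order $o_{\PP}(n^{1/q})$ under the mild requirement $\log n\lesssim bn$. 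A secondary nuisance is verifying that the purely quadratic remainder $(bn)^{-1}o_{\PP}(n^{2/q})$ is of smaller order, which relies on $q\ge 8$ together with $bn\to\infty$.
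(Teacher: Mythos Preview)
Your approach is essentially the paper's: define $Q_{i,\B_s}^c$ through the coupled partial sums $S^c$, factor the difference of quadratics as an inner product, and apply the invariance principle of Lemma \ref{lemma_finite_p} to the linear factor $W_i-\tilde W_i$. The paper's proof is actually terser than yours --- it writes $bn|\Delta H_i^{c\top}\Delta H_i^c-\Delta G_i^{c\top}\Delta G_i^c|\le bn|\Delta H_i^{c\top}(\Delta H_i^c-\Delta G_i^c)|+bn|(\Delta H_i^c-\Delta G_i^c)^\top\Delta G_i^c|$ and asserts $o_{\PP}(n^{1/q})$ without explicitly bounding $\max_i|\Delta H_i^c|_2$ or $\max_i|\Delta G_i^c|_2$ --- so your use of L\'evy's modulus of continuity to control the second factor is a genuine refinement rather than a detour.

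One small point: your claim that $(bn)^{-1}o_{\PP}(n^{2/q})=o_{\PP}(n^{1/q})$ ``for $q\ge 8$ and $bn\to\infty$'' actually requires $n^{1/q}=o(bn)$, not merely $bn\to\infty$. This is harmless under the paper's standing bandwidth conditions, but it does not follow from $bn\to\infty$ alone. Alternatively you can bound $\max_i|W_i|_2$ directly (via a maximal inequality for the increments of $S^c$) rather than through the triangle inequality $|W_i|_2\le|\tilde W_i|_2+|W_i-\tilde W_i|_2$, which avoids the quadratic remainder entirely.
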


\begin{proof}[Proof of Proposition \ref{prop_finite_p}]
Recall the partial sum process $S_i=\sum_{t=1}^i\epsilon_t$ and its counterpart $S_i^c$ on the probability space $(\Omega_c,\A_c,\PP_c)$. We define
\begin{equation}
    H_i^{c^-} = (bn)^{-1}\Lambda^{-1}(S_{i-1} - S_{i-bn-1}), \quad H_i^{c^+} = (bn)^{-1}\Lambda^{-1}(S_{i+bn-1} - S_{i-1}).
\end{equation}
Then, under the null hypothesis, we can write $Q_{i,\B_s}$ into
\begin{equation}
    Q_{i,\B_s} = \frac{1}{\sqrt{|\B_s\cap\L_0|}}\Big(\big|H_i^{c^-} -  H_i^{c^+}\big|_{2,\B_s}^2 - c_{\B_s}\Big).
\end{equation}
For the notation simplicity, we let
\begin{equation}
    \Delta H_i^c = H_i^{c^-} -  H_i^{c^+}, \quad \Delta G_i^c = G_i^{c^-} -  G_i^{c^+}.
\end{equation}
Since the dimension $p$ is fixed, it follows from Lemma \ref{lemma_finite_p} that
\begin{align}
    & \quad \max_{bn+1\le i\le n-bn}bn\big|\Delta H_i^{c\top}\Delta H_i^c - \Delta G_i^{c\top}\Delta G_i^c\big| \nonumber \\
    & \le \max_i bn\big|\Delta H_i^{c\top}(\Delta H_i^c - \Delta G_i^c)\big| + \max_i bn\big|(\Delta H_i^c - \Delta G_i^c)^{\top}\Delta G_i^{c}\big| \nonumber \\
    & = o_{\PP}(n^{1/q}).
\end{align}
{By applying similar arguments to the nbd-norm $|\cdot|_{2,\B_s}$,} one can achieve the desired result.
\end{proof}

The invariance principle in Proposition \ref{prop_finite_p} enables us to derive the threshold $\omega^*$ for change-point detection by implementing a multiplier bootstrap study on the Gaussian random process $(\G_{i,\B_s}^c)_{i,s}$. In particular, we can choose $\omega^*$ to be the critical value of $\max_{i,s}\G_{i,\B_s}^c$, that is, for some significant level $\alpha\in(0,1)$, we let
\begin{equation}
    \tilde\omega = \inf_{r\ge0}\big\{r: \, \PP\big(\max_{i,s}\G_{i,\B_s}^c > r\big)\le \alpha \big\}.
\end{equation}
We reject the null hypothesis if the test statistic $\tilde Q_n$ exceeds the threshold, i.e., $\tilde Q_n>\omega^*$.

\subsection{Two-Way MOSUM in One-Dimensional Spatial Space}\label{subsec_twoway_example}

To better illustrate the newly proposed Two-Way MOSUM, we focus on the simplest setting in this section, where the cross-sectional space is one-dimensional, and we provide two compelling examples.

First, we explain why prior knowledge of groups is not necessary for the Two-Way MOSUM. Consider $p$ component time series in a one-dimensional spatial space with a linear ordering, as depicted in Figure \ref{fig_ordering}. The larger the spatial distance between two component series, the further apart they are listed. In our grouping mechanism, only adjacent series can belong to the same spatial neighborhood. In other words, a series cannot skip its closest component series and group with another series further away in the same spatial neighborhood. Figure \ref{fig_ordering} illustrates that the grouping method in the top row is allowed, while the one in the bottom row is not. Due to this grouping mechanism naturally determined by the relative spatial distance between each pair of component series, the number of possible spatial neighborhoods is, at most, on the order of $p^2$, regardless of whether prior grouping knowledge is available. To see this, we can first consider the case when the group size is 1. Then, we can have at most $p$ groups. Similarly, for the group size $m$, $1\le m\le p$, the number of groups is at most $p-(m-1)$. Therefore, the total number of groups $S\le \sum_{m=1}^p[p-(m-1)]=p(p+1)/2 = O(p^2)$. 

When extending to a more general spatial space in $\ZZ^v$, for $v\ge1$, the largest group number $S$ in each dimension is $O(p^2)$, resulting in a total number of possible spatial groups $S= O(p^{2v})$. Note that the term $S$ appears in the logarithm in the Gaussian approximation rate in Theorem \ref{thm2_constanttrend} and consistency rate in Proposition \ref{prop_consistency}. This indicates that as long as $S$ is in the order of a polynomial of $p$, the asymptotic properties will not differ significantly in situations with or without prior grouping knowledge.

\begin{figure}[!htbp]
\centering
\includegraphics[width=0.8\linewidth]{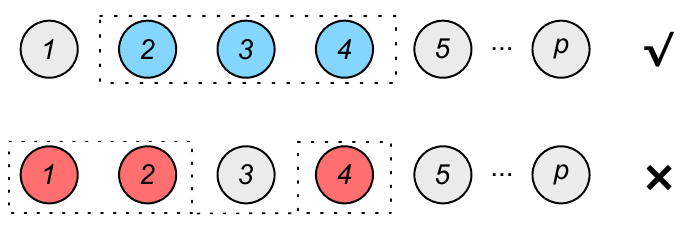}
\caption{Two-Way MOSUM: Only adjacent component series are allowed to be assigned to the same group. Blue circles represent an example group allowed in our setting, while red circles showcase an example group that fails our requirement. Gray circles are the component series to be assigned into other groups (not necessarily the same).}
\label{fig_ordering}
\end{figure}

Additionally, we present the temporal-spatial Two-Way moving window in Figure \ref{fig_twoway} for the simple case where the time series are in one-dimensional spatial space. Assume that the sizes of the spatial neighborhoods are $p_1, p_2, \ldots, p_S$ respectively, for the sizes defined as Definition \ref{def_linear_nbd}. As illustrated in Figure \ref{fig_twoway}, change points can be detected by moving temporal-spatial windows with a uniform temporal width of $2bn$ and varying spatial widths $p_1, \ldots, p_S$. Keep in mind that sliding windows in a one-way MOSUM move only in the temporal direction, favoring simultaneous jumps for all entities $1 \le j \le p$. In contrast, the more adaptable Two-Way MOSUM increases testing power when breaks occur in only a portion of the component series.
\begin{figure}[!htbp]
    \centering
    \includegraphics[width=0.7\linewidth]{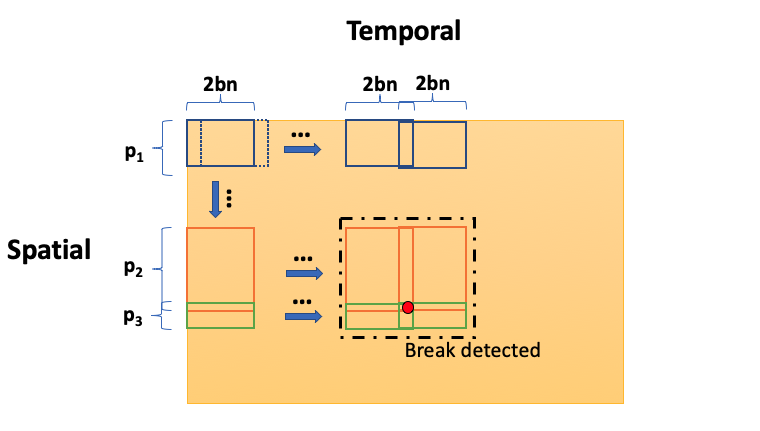}
    \caption{Two-Way MOSUM: windows move in both temporal and spatial directions.}
    \label{fig_twoway}
\end{figure}

\subsection{Linear Processes with Cross-Sectional Dependence}\label{subsec_linear_sec_dep}

As the cross-sectional dependence can be intricate, especially when using the newly proposed Two-Way MOSUM, we provide a concrete example in this section using a conventional $\ell^2$-based MOSUM for a linear process described in equation (\ref{eq_epsilon_linear}). Consequently, we only need to consider a one-dimensional spatial space. We present the asymptotic properties of the test statistic $\Q_n$, as defined in expression (\ref{eq_teststats}), allowing weak cross-sectional dependence. Throughout this section, we assume that $\{\epsilon_t\}_{t\in\ZZ}$ follows a linear process as defined in equation (\ref{eq_epsilon_linear}).

As a special case of Assumption \ref{asm_nonli_sec_dep}, we start with a condition on the strength of the cross-sectional dependence.
\begin{assumption}[Cross-sectional dependence]
    \label{asm_sec_dep}
    Assume that there exist constants $C'>0$ and $\xi>1$, such that, for all $r\ge0$,
    $$\max_{1\le j\le p}\sum_{k\ge0}\Big(\sum_{|s-j|\ge r}A_{k,j,s}^2/\sigma_j^2\Big)^{1/2}\le C'(r\vee1)^{-\xi}.$$
\end{assumption}
It is worth noticing that by a similar argument below Assumption \ref{asm_nonli_sec_dep}, for all $1\le j\le p$,
$$\big(\sum_{|s-j|\ge r}\tilde A_{0,j,s}^2/\sigma_j^2\big)^{1/2} \le \sum_{k\ge0}\big(\sum_{|s-j|\ge r}A_{k,j,s}^2/\sigma_j^2\big)^{1/2}.$$
Then, one can see that Assumption \ref{asm_sec_dep} also implies
\begin{equation}
    \label{eq_sec_dep2}
    \max_{1\le j\le p}\Big(\sum_{|s-j|\ge r}\tilde A_{0,j,s}^2/\sigma_j^2\Big)^{1/2}\le C'(r\vee1)^{-\xi}, \quad \text{where } \tilde A_{0,j,s}=\sum_{k\ge0}A_{k,j,s},
\end{equation}
which requires an algebraic decay rate on each row of the long-run covariance matrix $\tilde A_0$. This condition further indicates that, for any $1\le j,j'\le p$, the long-run correlation between the errors $\epsilon_{t,j}$ and $\epsilon_{t,j'}$ decays at a polynomial rate as $|j-j'|$ increases. Namely, for the long-run correlation denoted by $\rho_{j,j'}$, we have 
\begin{align}
\label{eq:rhojjpandbdd}
\rho_{j,j'}=\tilde A_{0,j,\cdot}^{\top}\tilde A_{0,j',\cdot}/(\sigma_j\sigma_{j'}) =O(|j-j'|^{-\xi}), 
\end{align}
where the constant in $O(\cdot)$ is independent of $j$ and $j'.$
To see this, we denote $$a_{1,j}^*=\sigma_j^{-1}\tilde A_{0,j, 1:(j+j')/2)}, \quad b_{1,j'}^*=\sigma_{j'}^{-1}\tilde A_{0,j',1:(j+j')/2},$$
and
$$a_{2,j}^*=\sigma_j^{-1}\tilde A_{0,j,(j+j')/2+1:p}, \quad b_{2,j'}^*=\sigma_{j'}^{-1}\tilde A_{0,j',(j+j')/2+1:p},$$ 
where $\tilde A_{0,j,1:k}$ denotes the 1 to $k$-th elements in the vector $\tilde A_{0,j,\cdot}$. For any $1\leq j<j'\leq p$, by Assumption \ref{asm_sec_dep},
$$|b_{1,j'}^*|_2=\Big(\sum_{s =1}^{(j+j')/2}\tilde A_{0,j,s}^2/\sigma_j^2\Big)^{1/2}
\leq \Big(\sum_{|s-j'|\leq |j'-j|/2}\tilde A_{0,j',s}^2/\sigma_j^2\Big)^{1/2}
=O(|j-j'|^{-\xi}),$$
and similarly, we have $|a_{2,j}^*|_2=O(|j-j'|^{-\xi})$. Then, it follows that
\begin{align}
    \label{eq_cov_dep_decay}
    & |\tilde A_{0,j,\cdot}^{\top}\tilde A_{0,j',\cdot}|/(\sigma_j\sigma_{j'})\lesssim |a_{1,j}^*|_2|b_{1,j'}^*|_2+ |a_{2,j}^*|_2|b_{2,j'}^*|_2 =O(|j-j'|^{-\xi}).
\end{align}
In fact, Assumption \ref{asm_sec_dep} is a very general condition on the spatial dependence. For example, it allows cross-sectionally $m$-dependent sequences. It is also worth noticing that too strong spatial dependence is not allowed in our condition, such as a factor structure. 

To achieve the limit distribution of the test statistic $\Q_n$ in expression (\ref{eq_teststats}) under the null hypothesis, 
we introduce a centered Gaussian random vector $\Z'=( \Z_{bn+1}',\ldots,\Z_{n-bn}')^{\top}\in\RR^{n-2bn}$ with covariance matrix $\Xi'=\EE(\Z'\Z'^{\top})\in\RR^{(n-2bn)\times (n-2bn)}.$ To be more specific, we define $\Xi'=(\Xi_{i,i'}')_{1\le i,i'\le n-2bn}\in\RR^{(n-2bn)\times (n-2bn)}$, where
\begin{equation}
    \label{eq_cov_dependent}
    \Xi_{i,i+\zeta bn}' = (bn)^{-2}\sum_{1\le j,j'\le p}
    \begin{cases}
        (15\zeta^2-20\zeta+8)\rho_{j,j'}^2 + 3\zeta^2-4\zeta, & 0<\zeta \le 1, \\
        (3\zeta^2-12\zeta+12)\rho_{j,j'}^2 -\zeta^2+4\zeta -4, & 1< \zeta\le 2,\\
        0, & \zeta>2,
    \end{cases}
\end{equation}
and $\rho_{j,j'}$ is defined in \eqref{eq:rhojjpandbdd}. The covariance matrix $\Xi'$ is asymptotically equal to the one of $(bn)^{-1}\sum_{j=1}^pX_j$. 
We defer the detailed evaluation of (\ref{eq_cov_dependent}) to Lemma \ref{lemma_cov_thm3}. One shall note that, if $\rho_{j,j}=1$ and $\rho_{j,j'}=0$, for all $1\le j\neq j'\le p$,  which denotes the case that no spatial dependency exists, then (\ref{eq_cov_dependent}) is same as (\ref{eq_cov}).

In the following theorem, we show that we can achieve a similar Gaussian approximation result for our test statistics $\Q_n$ as the one in the cross-sectionally independent case.
\begin{proposition}[Gaussian approximation with cross-sectional dependence]
    \label{thm3_constanttrend}
    Suppose that Assumptions \ref{asm_finitemoment}, \ref{asm_temp_dep} and \ref{asm_sec_dep} are satisfied.
    Let $c_{q,\xi}=4q\xi+q-2\xi-2$. 
    Then under the null hypothesis, for $\Delta_0'=(bn)^{-1/3}\log^{2/3}(n)$,
    $$\Delta_1' = \Big(\frac{n^{2\xi-4(\xi+1)/(3q)}}{p^{q(2\xi-1)/4}}\Big)^{1/c_{q,\xi}}\log^{7/6}(pn), \quad \Delta_2' = \Big(\frac{n^{4\xi}}{p^{(q-2)(2\xi-1)/2}}\Big)^{1/c_{q,\xi}}\log(pn),$$
    and
    $$\Delta_3'=\Big(\frac{n^{2\xi}}{p^{(q-2)(2\xi-1)/4}}\Big)^{1/c_{q,\xi}}\log^2(pn),$$
    we have
    \begin{equation}
        \sup_{u\in\RR}\Big|\PP(\Q_n\le u)-\PP\big(\max_{bn+1\le i\le n-bn}\Z'_i\le u\big)\Big|\lesssim \Delta_0'+\Delta_1'+\Delta_2'+\Delta_3',
    \end{equation}
    where the constant in $\lesssim$ is independent of $n,p,b$. 
    If in addition, $\log(n)=o\{(bn)^{1/2}\}$ and
    \begin{equation}
        \label{eq_thm31_o1}
        n^{8\xi}p^{(2-q)(2\xi-1)}\log^{2c_{q,\xi}}(pn)=o(1),
    \end{equation}
    then we have
    \begin{equation}
        \label{eq_thm31_result}
        \sup_{u\in\RR}\Big|\PP(\Q_n\le u)-\PP\big(\max_{bn+1\le i\le n-bn}\Z'_i\le u\big)\Big|\rightarrow 0.
    \end{equation}
\end{proposition}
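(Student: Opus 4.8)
The plan is to establish Proposition~\ref{thm3_constanttrend} by reducing the cross-sectionally dependent case to a structure close enough to Theorem~\ref{thm1_constanttrend} that the same high-dimensional Gaussian approximation machinery of \textcite{chernozhukov_central_2017} applies, with the decay rate $\xi$ entering only through how finely we must block the cross-sectional coordinates. Recall from \eqref{eq_thm1_GAform}--\eqref{eq_thm1_x_def} that under the null $\Q_n = \max_{bn+1\le i\le n-bn}\sum_{j=1}^p x_{i,j}$, where $X_j = (x_{bn+1,j},\dots,x_{n-bn,j})^\top$. In the independent case these $X_j$ are independent and one applies the GA for maxima of sums of independent high-dimensional vectors directly. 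Here the $X_j$ are only weakly dependent across $j$; the first step is therefore a \emph{cross-sectional block approximation}: partition $\{1,\dots,p\}$ into consecutive blocks of a length $L = L_{p,n}$ to be chosen, replace each $X_j$ within a block by a version depending only on the innovations $\eta_{\cdot,s}$ with $s$ in an enlarged block, and control the replacement error using Assumption~\ref{asm_sec_dep}. The polynomial decay $(r\vee 1)^{-\xi}$ in Assumption~\ref{asm_sec_dep}, together with \eqref{eq:rhojjpandbdd}, bounds the $L^{q/2}$-norm of the difference $x_{i,j} - \tilde x_{i,j}$ by something like $L^{-\xi}$ uniformly in $i$, so that after summing over the $\le n$ coordinates and taking the max the total perturbation is $o_\PP$ of the target provided $L$ grows fast enough — this is where a term of the form $p^{-(\text{power of }\xi)}$ is traded against $L$, and why the exponents in $\Delta_1',\Delta_2',\Delta_3'$ carry the compound constant $c_{q,\xi} = 4q\xi + q - 2\xi - 2$.

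Once the blocked sums $\sum_j \tilde x_{i,j}$ are expressed as a sum over $\lceil p/L\rceil$ \emph{independent} (block-indexed) vectors in $\RR^{n-2bn}$, I would apply Proposition~2.1 of \textcite{chernozhukov_central_2017} exactly as in the proof of Theorem~\ref{thm1_constanttrend}, but now with the ``sample size'' being the number of independent blocks $\asymp p/L$ rather than $p$, and with the per-block envelope inflated by a factor $\asymp L$ (since a block aggregates $L$ coordinates), i.e. $B_n \asymp L \cdot (p/L)^{?}n^{2/q}$ in the notation there; balancing these gives the stated $\Delta_1'$ and $\Delta_2'$ after substituting the optimal $L$. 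The extra term $\Delta_3'$ (absent from Theorem~\ref{thm1_constanttrend}) arises from the second-moment matching step: the covariance of $\sum_j \tilde x_{i,j}$ differs from that of $\sum_j x_{i,j}$, and the limiting covariance must be the matrix $\Xi'$ in \eqref{eq_cov_dependent}, which genuinely involves the long-run cross-correlations $\rho_{j,j'}$; I would invoke Lemma~\ref{lemma_cov_thm3} to identify the covariance of $\sum_j X_j$ with $(bn)^2\Xi'$ up to negligible error, and then use a Gaussian comparison (Lemma~\ref{lemma_chen2019}-type bound on maxima of Gaussian vectors with close covariances, plus the anti-concentration of \textcite{chernozhukov_central_2017}) to pass from the blocked Gaussian limit to $\max_i \Z_i'$. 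The term $\Delta_0' = (bn)^{-1/3}\log^{2/3}(n)$ is identical in origin to $\Delta_0$ in Theorem~\ref{thm1_constanttrend}: it is the price of comparing the non-centered quadratic statistics to centered Gaussians via Lemma~\ref{lemma_chen2019}, and is unaffected by cross-sectional dependence.

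Assembling the pieces, the bound $\sup_u|\PP(\Q_n\le u) - \PP(\max_i \Z_i'\le u)| \lesssim \Delta_0' + \Delta_1' + \Delta_2' + \Delta_3'$ follows by the triangle inequality over the three approximations (block replacement, GA for independent blocks, Gaussian covariance comparison). For the second, qualitative statement, I would check that condition \eqref{eq_thm31_o1}, namely $n^{8\xi}p^{(2-q)(2\xi-1)}\log^{2c_{q,\xi}}(pn)=o(1)$, together with $\log(n) = o\{(bn)^{1/2}\}$, forces each of $\Delta_0',\Delta_1',\Delta_2',\Delta_3' \to 0$: indeed $\Delta_1'$ and $\Delta_3'$ are (up to logs) positive powers of $n^{8\xi}/p^{(q-2)(2\xi-1)}$ raised to $1/(2c_{q,\xi})$ or a larger fractional power, so \eqref{eq_thm31_o1} is exactly what drives them to zero, while $\Delta_0'\to 0$ follows from $bn\to\infty$ and $\log(n) = o\{(bn)^{1/2}\}$; and $\Delta_2'$ is likewise a positive power of $n^{8\xi}p^{(2-q)(2\xi-1)}$ times logs.

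The main obstacle I anticipate is the cross-sectional block-replacement step and the bookkeeping of the resulting exponents. Unlike the temporal dependence in Theorem~\ref{thm1_constanttrend}, which is absorbed entirely into the covariance function $g(\cdot)$ of the limit because each $X_j$ is itself a stationary-in-$i$ sequence, here the dependence is \emph{across} the summation index $j$, so one cannot simply apply the independent-vectors GA; one must physically decouple by blocking, and the block length interacts with \emph{both} the envelope growth (hurting the GA rate, since a block of $L$ coordinates has an $L^{1/2}$-larger fluctuation and an $L$-larger fourth moment contribution) \emph{and} the replacement error (helped by large $L$, via $L^{-\xi}$). Getting the optimal trade-off — and verifying that the resulting composite exponent is precisely $c_{q,\xi} = 4q\xi + q - 2\xi - 2$ and that the logarithmic powers come out as $7/6$, $1$, and $2$ — is the delicate part; a secondary subtlety is ensuring that the $x_{i,j}$, being \emph{quadratic} in $\epsilon_{t,j}$, still satisfy the needed $L^{q/2}$ functional-dependence bounds after blocking (this is why Assumption~\ref{asm_finitemoment} requires $q\ge 8$ rather than $q\ge 4$), which I would handle by a product-difference decomposition $ab - a'b' = (a-a')b + a'(b-b')$ applied inside the square and Cauchy--Schwarz, exactly mirroring the argument already sketched below Assumption~\ref{asm_nonli_sec_dep} for the partial sums $\underline\Delta_{n,q}$.
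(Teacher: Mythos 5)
Your proposal follows essentially the same route as the paper's proof: a finite-range ($l_p$-dependent) truncation of the errors justified by Assumption \ref{asm_sec_dep}, Bernstein-type blocking of the cross-sectional index to obtain independent block sums, Proposition 2.1 of \textcite{chernozhukov_central_2017} applied to those blocks, covariance identification via Lemma \ref{lemma_cov_thm3}, the Gaussian comparison of Lemma \ref{lemma_chen2019} (the source of $\Delta_0'$), and Nazarov-type anti-concentration. The only notable discrepancies are cosmetic: the paper works with two separate scales (truncation range $l_p$ and big-block size $b_p$, dropping small blocks of length $l_p$ to enforce exact independence, via Lemmas \ref{lemma_thm31_lp_dependence}--\ref{lemma_thm31_GS}) rather than your single scale $L$, and $\Delta_3'$ in fact comes from the anti-concentration bound paired with the truncation/blocking error threshold $\alpha$ (the terms $\III_1'$ and $\III_3'$), not from second-moment matching, which is handled inside the Gaussian-to-Gaussian comparison and absorbed into the other rates.
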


\begin{remark}[Comparison with Theorem \ref{thm1_constanttrend}]
It is worth noticing that in Proposition \ref{thm3_constanttrend}, the rate of Gaussian approximation is affected by the spatial dependence rate $\xi$. In the extreme case, if $\xi\rightarrow\infty$ which indicates a drastic decay rate of the cross-sectional dependence, then according to (\ref{eq_thm31_o1}), the Gaussian approximation error tends to $0$ when $n^4p^{2-q}=o(1)$ (up to a logarithm factor). This condition adheres to the one shown in (\ref{eq_thm11_o1}) for Theorem \ref{thm1_constanttrend} where we have assumed the cross-sectional independence. {When $\xi$ is smaller,we can see that the condition has more demanding requirement on the rate of $p$ relative to $n$ to compensate for the dependency.}
\end{remark}

By the Gaussian approximation result in Proposition \ref{thm3_constanttrend}, we can reject the null hypothesis at the significant level $\alpha$, if the test statistic exceed the threshold $\omega$, i.e. $\Q_n>\omega$. Now we consider the alternative hypothesis that $\underline{d}\neq 0$ and give the following corollary to illustrate the power limit of our test.
\begin{corollary}[Power with cross-sectional dependence]
    \label{cor_sec_dep_power}
    Under Assumptions \ref{asm_finitemoment}, \ref{asm_temp_dep} and \ref{asm_sec_dep}, if (\ref{eq_thm31_o1}) holds, $b\ll \kappa_n$, and
    $$\max_{1\le k\le K}n(u_{k+1}-u_k)|\Lambda^{-1}\gamma_k|_2^2\gg \sqrt{p\log (n)},$$
    then the testing power $\PP(\Q_n>\omega)\rightarrow1$, as $n\wedge p\rightarrow\infty$.
\end{corollary}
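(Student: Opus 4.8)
\textbf{Proof proposal for Corollary~\ref{cor_sec_dep_power}.}
The plan is to mirror the proof of Corollary~\ref{cor_power}, replacing the independent-error variance bookkeeping by the cross-sectionally dependent one encoded in $\Xi'$ (cf.\ (\ref{eq_cov_dependent})) and invoking Proposition~\ref{thm3_constanttrend} in place of Theorem~\ref{thm1_constanttrend} for the calibration of the threshold $\omega$. Under $\H_A$ I would first write $V_i=d_i+(V_i-\EE V_i)$ with the signal $d_i=\EE V_i$ as in (\ref{eq_EV_appr}), pick a break $k^*$ attaining the maximum in the stated signal condition, and set $i^*=\tau_{k^*}$; since $b\ll\kappa_n$, the window centred at $i^*$ contains no other break, so $d_{i^*}=\Lambda^{-1}\gamma_{k^*}$ and $|d_{i^*}|_2^2=|\Lambda^{-1}\gamma_{k^*}|_2^2$. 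Then
\[
\Q_n \;\ge\; |V_{i^*}|_2^2-\bar c \;=\; |\Lambda^{-1}\gamma_{k^*}|_2^2
\;+\; 2\big\langle \Lambda^{-1}\gamma_{k^*},\,V_{i^*}-\EE V_{i^*}\big\rangle
\;+\; \big(|V_{i^*}-\EE V_{i^*}|_2^2-\bar c\big),
\]
and the task reduces to showing the signal term dominates both the cross term and the centred-error term, and also dominates $\omega$.

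For the centred-error term, note it equals $\sum_{j=1}^p x_{i^*,j}$ (see (\ref{eq_thm1_x_def})), has mean zero, and variance $\Xi'_{i^*,i^*}$ in the sense of (\ref{eq_cov_dependent}). Here I would use the polynomial decay of the long-run correlations, $\rho_{j,j'}=O(|j-j'|^{-\xi})$ with $\xi>1$ (eq.\ (\ref{eq:rhojjpandbdd}), a consequence of Assumption~\ref{asm_sec_dep}), to get $\sum_{j'}\rho_{j,j'}^2=O(1)$ uniformly in $j$, hence $\sum_{j,j'}\rho_{j,j'}^2=O(p)$ and $\Xi'_{i^*,i^*}=O\{p/(bn)^2\}$; Markov's inequality then gives that this term is $O_{\PP}\{\sqrt p/(bn)\}$. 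For the cross term, Cauchy--Schwarz together with the same long-run covariance bound shows its variance is $O\{|\Lambda^{-1}\gamma_{k^*}|_2^2/(bn)\}$, so it is $o_{\PP}(|\Lambda^{-1}\gamma_{k^*}|_2^2)$ once $bn|\Lambda^{-1}\gamma_{k^*}|_2^2\to\infty$, which the signal condition supplies. Consequently $\Q_n\ge |\Lambda^{-1}\gamma_{k^*}|_2^2\{1+o_{\PP}(1)\}+O_{\PP}\{\sqrt p/(bn)\}$.

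Next I would bound the threshold. By Proposition~\ref{thm3_constanttrend} and its calibration, $\omega$ is asymptotically the upper $\alpha$-quantile of $\max_{bn+1\le i\le n-bn}\Z'_i$; since every $\Z'_i$ has standard deviation $O\{\sqrt p/(bn)\}$ (the same computation as above) and there are fewer than $n$ of them, a standard Gaussian maximal inequality gives $\omega=O\{\sqrt{p\log n}/(bn)\}$. The assumed lower bound $\max_k n(u_{k+1}-u_k)|\Lambda^{-1}\gamma_k|_2^2\gg\sqrt{p\log n}$ (interpreted, as after Corollary~\ref{cor_power}, with a bandwidth of order comparable to the break spacing) then yields $|\Lambda^{-1}\gamma_{k^*}|_2^2\gg\sqrt{p\log n}/(bn)\gtrsim\omega$, so $\PP(\Q_n>\omega)\to1$; specialised to a common coordinatewise jump $\vartheta$ this is the condition $n\kappa_n\vartheta^2\gg\sqrt{\log n/p}$ discussed there.

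The main obstacle is the control of the $\ell^2$-aggregated noise under cross-sectional dependence: one must verify that dependence does not inflate $\Xi'_{i,i}$ (and the cross-term variance) beyond the $p/(bn)^2$ order of the independent case. This is exactly where $\xi>1$ in Assumption~\ref{asm_sec_dep} is essential --- it makes $\sum_{j'}\rho_{j,j'}^2$ uniformly summable, so that the centred quadratic term stays $O_{\PP}\{\sqrt p/(bn)\}$ and the threshold $\omega$ stays $O\{\sqrt{p\log n}/(bn)\}$; without such summability $\omega$ could grow like $\sqrt{p^2\log n}/(bn)$ and no power guarantee would follow. Once this summability is in hand, the remaining $O_{\PP}$ bounds on the centred quadratic and cross terms are the same moment/Markov estimates as in the proof of Corollary~\ref{cor_power}, carried out verbatim with $\Xi$ replaced by $\Xi'$.
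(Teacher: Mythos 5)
Your proposal is correct in substance, and its skeleton is the same as the paper's: the paper omits a detailed proof and states that Corollary \ref{cor_sec_dep_power} is proved exactly as Corollary \ref{cor_power} (decompose $|V_i|_2^2-\bar c$ into signal, cross term and centred quadratic noise, bound $\omega$ by a Gaussian maximal inequality applied to $\Z'$), except that the tail control of the noise component under cross-sectional dependence is to be carried out through the Gaussian approximation with the block-approximation device of Proposition \ref{thm3_constanttrend} and a Nagaev-type bound \`a la Lemma \ref{lemma_tail_cross}. Where you genuinely deviate is in that last step: since the lower bound $\Q_n\ge |V_{i^*}|_2^2-\bar c$ only requires control at the single time $i^*=\tau_{k^*}$, you replace the uniform-over-$i$ tail bounds by plain Chebyshev estimates, using the summability $\sum_{j'}\rho_{j,j'}^2=O(1)$ and $\sum_{j'}|\rho_{j,j'}|=O(1)$ guaranteed by $\xi>1$ in Assumption \ref{asm_sec_dep} (via \eqref{eq:rhojjpandbdd} and Lemma \ref{lemma_cov_thm3}) to show the centred quadratic term is $O_{\PP}\{\sqrt p/(bn)\}$ and the cross term is $o_{\PP}(|\Lambda^{-1}\gamma_{k^*}|_2^2)$; the Gaussian machinery is then needed only to size $\omega=O\{\sqrt{p\log n}/(bn)\}$, exactly as in \eqref{eq_thm4_step1result}. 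This is more elementary than the paper's prescribed route and avoids the block-approximation Nagaev argument altogether, at the cost of not giving the uniform-over-windows noise bound (which is not needed for power). Two minor remarks: when you invoke Lemma \ref{lemma_cov_thm3} for the variance of $\sum_j x_{i^*,j}$, the $O(1/(bn))$ remainders summed over $p^2$ pairs should be checked to stay below $p/(bn)^2$ (they do, because the remainder also inherits the cross-sectional decay, just as in the paper's own comparison of the true covariance with $\Xi'$); and your translation of the condition $\max_k n(u_{k+1}-u_k)|\Lambda^{-1}\gamma_k|_2^2\gg\sqrt{p\log n}$ into $bn|\Lambda^{-1}\gamma_{k^*}|_2^2\gg\sqrt{p\log n}$ involves the same implicit identification of the bandwidth with the break spacing that the paper itself makes in the proof of Corollary \ref{cor_power}, so it is an inherited looseness rather than a new gap.
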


\subsection{Power Limit in a Generalized Setting}\label{sec_power_append}
In Section \ref{sec_test_nonlinear}, we have extended the Two-Way MOSUM statistics to nonlinear time series and a general spatial space in $\ZZ^v$, for $v\ge1$. Weak cross-sectional dependence can also be allowed. Here, we provide the following corollary to demonstrate the power limit of our test proposed in Section \ref{sec_test_nonlinear} for such a generalized Two-Way MOSUM.
\begin{corollary}[Power of a generalized Two-Way MOSUM]
    \label{cor_nonli_power}
    Under Assumptions \ref{asm_density}--\ref{asm_nonli_sec_dep}, if $c_{p,n}\rightarrow0$, $b\ll \kappa_n$, and
    $$\max_{1\le s\le S}\max_{1\le k\le K}n(u_{k+1}-u_k)\sum_{\bbell\in\B_s\cap\L_0}\gamma_k^2(\bbell)/\sigma^2(\bbell) \gg \sqrt{B_{\text{min}}\log (nS)},$$
    then the testing power $\PP(\tilde\Q_n>\omega)\rightarrow1$, as $n\wedge p\rightarrow\infty$.
\end{corollary}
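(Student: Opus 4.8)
The plan is to mirror the power arguments behind Corollaries \ref{cor_power} and \ref{cor_local_power}, now in the nonlinear, $\ZZ^v$-valued, weakly cross-sectionally dependent setting. Recall that $\tilde\omega$ is the $(1-\alpha)$-quantile of $\max_{\varphi\in\N}\tilde\Z_{\varphi}$ and that, under the conditions that make Theorem \ref{thm3_nonli} applicable, the test $\{\tilde\Q_n>\tilde\omega\}$ has asymptotic size $\alpha$; it therefore remains to show $\PP(\tilde\Q_n>\tilde\omega)\to1$ under the alternative. The strategy has three parts: (a) bound $\tilde\omega$ from above by a constant multiple of $(bn)^{-1}\sqrt{\log(nS)}$; (b) lower-bound $\tilde\Q_n$ by the value $Q_{i^{\ast},\B_{s^{\ast}}}$ of a single temporal-spatial window anchored at the break attaining the maximum in the signal hypothesis; (c) via a bias--variance split of $Q_{i^{\ast},\B_{s^{\ast}}}$, show that this value exceeds $\tilde\omega$ with probability tending to one.

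For part (a), I would evaluate the diagonal of $\tilde\Xi$ in (\ref{eq_nonli_cov}) at $\zeta=0$, obtaining $\tilde\Xi_{i,s,i,s}=8(bn)^{-2}|\B_s\cap\L_0|^{-1}\sum_{\bbell_1,\bbell_2\in\B_s\cap\L_0}\tilde\rho^2(\bbell_1,\bbell_2)$. By Lemma \ref{lemma_nonli_longrun_corr} the inner double sum is $O(|\B_s\cap\L_0|)=O(B_{\text{max}})$, so $\max_{\varphi}\text{Var}(\tilde\Z_{\varphi})=O((bn)^{-2})$. Since $|\N|\le(n-2bn)S\lesssim nS$, a Gaussian maximal inequality together with Borell--TIS concentration of $\max_{\varphi}\tilde\Z_{\varphi}$ about its mean gives $\tilde\omega\lesssim(bn)^{-1}\sqrt{\log(nS)}$.

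For parts (b)--(c), pick $(k^{\ast},s^{\ast})$ with $s^{\ast}=s_{k^{\ast}}$ attaining $\max_{1\le s\le S}\max_{1\le k\le K}n(u_{k+1}-u_k)\sum_{\bbell\in\B_s\cap\L_0}\gamma_k^2(\bbell)/\sigma^2(\bbell)$, and set $i^{\ast}=\tau_{k^{\ast}}$. Because $b\ll\kappa_n$, the window $\S_{i^{\ast},s^{\ast}}$ meets only the break $k^{\ast}$, so $\EE[V_{i^{\ast}}(\bbell)]=d_{i^{\ast}}(\bbell)=\sigma^{-1}(\bbell)\gamma_{k^{\ast},\bbell}\One_{\bbell\in\B_{s^{\ast}}\cap\L_0}$; moreover $\text{Var}(V_i(\bbell))=c(\bbell)$ for \emph{every} $i$, since the centered part of the MOSUM increment is a functional of the stationary error process only and does not see the trend. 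Hence $\EE[Q_{i^{\ast},\B_{s^{\ast}}}]=|\B_{s^{\ast}}\cap\L_0|^{-1/2}\sum_{\bbell\in\B_{s^{\ast}}\cap\L_0}d_{i^{\ast}}^2(\bbell)\gtrsim B_{\text{min}}^{-1/2}\sum_{\bbell\in\B_{s^{\ast}}\cap\L_0}\gamma_{k^{\ast},\bbell}^2/\sigma^2(\bbell)$, using $B_{\text{max}}/B_{\text{min}}\le c$. Writing $V_{i^{\ast}}(\bbell)=d_{i^{\ast}}(\bbell)+W(\bbell)$ with $W(\bbell)=V_{i^{\ast}}(\bbell)-d_{i^{\ast}}(\bbell)$, split $Q_{i^{\ast},\B_{s^{\ast}}}-\EE[Q_{i^{\ast},\B_{s^{\ast}}}]$ into the cross term $2|\B_{s^{\ast}}\cap\L_0|^{-1/2}\sum_{\bbell}d_{i^{\ast}}(\bbell)W(\bbell)$ and the pure-noise term $|\B_{s^{\ast}}\cap\L_0|^{-1/2}\sum_{\bbell}(W^2(\bbell)-c(\bbell))$. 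Using $\text{Var}(W(\bbell))=c(\bbell)\asymp(bn)^{-1}$ together with the $|\bbell_1-\bbell_2|_2^{-2\xi}$ decay of the long-run cross-correlation from Lemma \ref{lemma_nonli_longrun_corr} (Cauchy--Schwarz and summability of $\tilde\rho(\bbell_1,\cdot)$), the variance of the cross term is $O\big((bn)^{-1}\EE[Q_{i^{\ast},\B_{s^{\ast}}}]\big)$ and that of the pure-noise term is $O((bn)^{-2})$, the latter handled exactly as in the null analysis underlying Theorem \ref{thm3_nonli}. Chebyshev then gives $Q_{i^{\ast},\B_{s^{\ast}}}=\EE[Q_{i^{\ast},\B_{s^{\ast}}}]\,(1+o_{\PP}(1))+O_{\PP}((bn)^{-1})$, and the signal hypothesis forces $\EE[Q_{i^{\ast},\B_{s^{\ast}}}]$ to be of strictly larger order than both $(bn)^{-1}$ and $\tilde\omega\lesssim(bn)^{-1}\sqrt{\log(nS)}$; therefore $\PP(\tilde\Q_n>\tilde\omega)\ge\PP(Q_{i^{\ast},\B_{s^{\ast}}}>\tilde\omega)\to1$.

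I expect the main obstacle to be the variance bound for the cross term $\sum_{\bbell}d_{i^{\ast}}(\bbell)W(\bbell)$ under weak spatial dependence: one must establish $\sum_{\bbell_1,\bbell_2}d_{i^{\ast}}(\bbell_1)d_{i^{\ast}}(\bbell_2)\,\text{Cov}(W(\bbell_1),W(\bbell_2))\lesssim(bn)^{-1}\sum_{\bbell}d_{i^{\ast}}^2(\bbell)$, which requires the covariance kernel of the MOSUM increments to inherit the polynomial spatial decay of $\{\epsilon_t(\bbell)\}$ (an intermediate estimate already present in the proof of Theorem \ref{thm3_nonli}) and the fourth-moment bounds on $W(\bbell)$ supplied by Assumptions \ref{asm_nonli_finitemoment}--\ref{asm_nonli_temp_dep} through the functional-dependence (Burkholder-type) inequalities used there. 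The remaining ingredients---uniform control of the noise needed to pin down $\tilde\omega$, and the localization fact that $b\ll\kappa_n$ forces every width-$2bn$ window to meet at most one break---are immediate consequences of Theorem \ref{thm3_nonli}, a Gaussian maximal inequality, and the one-dimensional arguments already used in Corollaries \ref{cor_power} and \ref{cor_local_power}, and require no new work.
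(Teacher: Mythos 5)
Your proposal is correct, but it follows a genuinely different route from the one the paper sketches. The paper omits this proof and states that it should mirror the proof of Corollary \ref{cor_local_power}: lower-bound the maximized statistic by a uniform signal term minus a uniform cross term minus a uniform noise term, handling the cross term with a Nagaev-type inequality (the analogue of Lemma \ref{lemma_tail_cross}) and the noise term with the Gaussian approximation, where the cross-sectional dependence is absorbed through the block-approximation device of Theorem \ref{thm3_nonli}. You instead anchor the statistic at the single window $(i^{\ast},\B_{s^{\ast}})$ located at the strongest break and control both the cross term and the pure-noise term there by second-moment (Chebyshev) bounds, pushing all uniformity into the purely Gaussian computation of the threshold $\tilde\omega$ via a maximal inequality on $\max_{\varphi}\tilde\Z_{\varphi}$ — which is legitimate because $\tilde\omega$ is defined through the Gaussian proxy and requires no data-side uniform control. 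This buys a more elementary argument that sidesteps exactly the two technical tools the paper flags (the Nagaev-type tail bound under spatial dependence and the uniform-in-window GA/block approximation for the noise under the alternative); what it gives up is uniformity over windows, which is not needed for the power statement but is needed elsewhere (e.g., for the localization results of Proposition \ref{prop_consistency}). Two caveats, neither specific to you: the row-summability of the long-run correlation kernel that your cross-term and variance bounds rely on (via Lemma \ref{lemma_nonli_longrun_corr} and Assumption \ref{asm_nonli_sec_dep}) is of exactly the same nature as the estimates the paper itself asserts in the proofs of Lemmas \ref{lemma_nonli_block} and \ref{lemma_nonli_GA}, so citing that intermediate step is consistent with the paper's own standards; and your final comparison of $\EE[Q_{i^{\ast},\B_{s^{\ast}}}]$ with $\tilde\omega$ converts the stated gap condition involving $n(u_{k+1}-u_k)$ into a bound at scale $bn$, which is the same implicit reading (through $b\ll\kappa_n$) that the paper uses in its own proofs of Corollaries \ref{cor_power} and \ref{cor_local_power}.
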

Corollary \ref{cor_nonli_power} ensures that the testing power tends to 1 as long as the minimum signal requirement is satisfied. One shall note that, similar to Corollaries \ref{cor_power} and \ref{cor_local_power}, this detection lower bound is sharp compared to the other methods listed in Table 1 in \textcite{cho2022two}. Accordingly, an algorithm similar to Algorithm \ref{algo2} for detecting and identifying breaks can be developed, and we shall omit this part here due to the similarity.

\clearpage

\section{Technical Proofs}\label{sec_proofs}

\subsection{Some Useful Lemmas}

\begin{lemma}[\cite{burkholder1988,rio_moment_2009}]
    \label{lemma_burkholder}
    Let $q>1,q'=\min\{q,2\}$, and $M_T=\sum_{t=1}^{T}\xi_t$, where $\xi_t\in\L^q$ are martingale differences. Then 
    $$\lVert M_T\rVert_q^{q'}\le K_{q}^{q'}\sum_{t=1}^{T}\lVert \xi_T\rVert_q^{q'},\quad\text{where } K_q=\max\{(q-1)^{-1},\sqrt{q-1}\}.$$
\end{lemma}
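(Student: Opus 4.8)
The plan is to prove the two regimes $1<q\le 2$ and $q>2$ separately, in each case reducing the martingale moment bound to a \emph{deterministic} (pointwise) inequality for the function $x\mapsto|x|^q$, combined with the defining martingale property $\EE[\xi_t\mid\F_{t-1}]=0$ with respect to the underlying filtration $(\F_t)$, and with the integrability $\xi_t\in\L^q$. Throughout, note $M_{t}\in\L^q$ as a finite sum of $\L^q$ variables, and $\operatorname{sgn}$ denotes the sign.

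\textbf{Case $1<q\le 2$ (so $q'=q$ and $K_q=(q-1)^{-1}$).} Here I would first record the pointwise inequality
$$|x+y|^q\le |x|^q+q\,|x|^{q-1}\operatorname{sgn}(x)\,y+(q-1)^{-q}|y|^q,\qquad x,y\in\RR,$$
which by homogeneity reduces to maximizing $x\mapsto|x+1|^q-|x|^q-q|x|^{q-1}\operatorname{sgn}(x)$ (its value at $x=0$ is $1\le(q-1)^{-q}$, and it decays at $\pm\infty$); this is the sharp convexity estimate underlying \textcite{rio_moment_2009}. Applying it with $x=M_{t-1}$, $y=\xi_t$, taking expectations and summing over $t=1,\dots,T$ telescopes the $|x|^q$ terms, while each cross term $\EE[q|M_{t-1}|^{q-1}\operatorname{sgn}(M_{t-1})\xi_t]$ vanishes after conditioning on $\F_{t-1}$ (legitimate since $|M_{t-1}|^{q-1}\in\L^{q/(q-1)}$ and $\xi_t\in\L^q$ are conjugate, so the product is in $\L^1$). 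This yields $\EE|M_T|^q\le(q-1)^{-q}\sum_{t=1}^T\EE|\xi_t|^q$, i.e.\ exactly $\lVert M_T\rVert_q^q\le K_q^q\sum_{t=1}^T\lVert\xi_t\rVert_q^q$.

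\textbf{Case $q>2$ (so $q'=2$ and $K_q=\sqrt{q-1}$).} Here I would argue by induction on $T$ that $\EE|M_T|^q\le\bigl((q-1)\sum_{t=1}^T\lVert\xi_t\rVert_q^2\bigr)^{q/2}$; the base case $T=1$ holds since $q-1\ge 1$. For the step, write $M_T=M_{T-1}+\xi_T$ and use the second-order Taylor bound
$$|a+b|^q\le |a|^q+q\,|a|^{q-1}\operatorname{sgn}(a)\,b+\tfrac{q(q-1)}{2}\bigl(|a|\vee|a+b|\bigr)^{q-2}b^2,$$
valid because $|a+sb|\le|a|\vee|a+b|$ for $s\in[0,1]$ and $\tfrac{d^2}{du^2}|u|^q=q(q-1)|u|^{q-2}$. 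Taking $\EE[\cdot\mid\F_{T-1}]$ kills the linear term; bounding the quadratic remainder by Hölder with exponents $q/(q-2)$ and $q/2$, together with $\EE|M_{T-1}|^q\le\EE|M_T|^q$ (Jensen for the conditional expectation $M_{T-1}=\EE[M_T\mid\F_{T-1}]$), reduces the step to a scalar recursion in $\EE|M_T|^q$. A crude execution of this scheme already closes the induction with \emph{some} constant $C_q$; obtaining exactly $\sqrt{q-1}$ requires the sharper splitting of the remainder in \textcite{burkholder1988,rio_moment_2009}, which I would invoke rather than reprove.

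\textbf{Main obstacle.} The martingale machinery itself is routine and in both ranges delivers a finite, explicit constant; the genuinely delicate point is pinning down the \emph{optimal} constant $K_q=\max\{(q-1)^{-1},\sqrt{q-1}\}$, which rests on sharp pointwise convexity inequalities for $|\cdot|^q$ and extremal (martingale-transform) examples. This optimization is precisely the content of \textcite{rio_moment_2009} (refining \textcite{burkholder1988}), and in the write-up I would cite these for the constant rather than carry out the extremal analysis.
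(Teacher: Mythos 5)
First, a point of reference: the paper does not prove this lemma at all — it is imported verbatim, constant included, from Burkholder (1988) and Rio (2009) — so there is no internal argument to match yours against; the only question is whether your sketch would actually deliver the stated bound. For $1<q\le 2$ your route is essentially Rio's: the pointwise inequality, the vanishing of the conditional cross term (your integrability bookkeeping with conjugate exponents is correct), and telescoping do give $\lVert M_T\rVert_q^q\le (q-1)^{-q}\sum_t\lVert\xi_t\rVert_q^q$. The weak point is your justification of the pointwise inequality itself: checking the value at $x=0$ and the decay at $\pm\infty$ does not bound the supremum of $g(x)=|x+1|^q-|x|^q-q|x|^{q-1}\operatorname{sgn}(x)$, whose maximum is attained at an interior negative $x$ and strictly exceeds $g(0)=1$ (for $q=1.5$ it is about $1.31$, attained near $x\approx-0.15$). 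The inequality $\sup_x g(x)\le (q-1)^{-q}$ is true, and it is exactly the estimate that becomes tight as $q\to 2$, but as written this step is asserted rather than proved; you need an actual analysis of the interior critical point (or a cited sharp bound) to close it.

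For $q>2$ your Taylor-plus-induction scheme is sound as far as it goes — the second-order bound with remainder $(|a|\vee|a+b|)^{q-2}b^2$, killing the linear term by conditioning, H\"older with exponents $q/(q-2)$ and $q/2$, and $\EE|M_{T-1}|^q\le\EE|M_T|^q$ by conditional Jensen are all fine — but, as you concede, this only yields \emph{some} finite constant $C_q$, not $\sqrt{q-1}$: the resulting recursion involves the ratio of consecutive moments and does not close at the sharp rate. For the stated constant you simply re-cite Burkholder and Rio. So, taken as a standalone proof of the lemma as stated, the $q>2$ half is incomplete (the sharp constant, which is the substantive content of Rio's refinement, is assumed rather than derived), and the $1<q\le2$ half has the gap in the pointwise inequality noted above. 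Since the paper itself treats the lemma purely as a quoted result, invoking the references for the sharp constant is consistent with how it is used here, but you should either state clearly that you are proving the lemma only up to a non-sharp constant or supply the missing extremal analysis.
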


\begin{lemma}[\cite{chen_inference_2022}]
    \label{lemma_chen2019}
    Suppose that $X=(X_1,X_2,\ldots,X_v)^{\top}$ and $Y=(Y_1,Y_2,\ldots,Y_v)^{\top}$ are two centered Gaussian vectors in $\RR^n$. Let $d=(d_1,d_2,\ldots,d_v)^{\top}\in\RR^v$, and $\Delta=\max_{1\le i,j\le v}|\sigma_{i,j}^X-\sigma_{i,j}^Y|$, where $\sigma_{i,j}^X=\EE (X_iX_j)$ and $\sigma_{i,j}^Y=\EE (Y_iY_j)$. Assume that $Y_i$'s have the same variance $\sigma^2>0$. Then,
    $$\sup_{x\in\RR}\Big|\PP\big(|X+d|_{\infty}\le x) - \PP\big(|Y+d|_{\infty}\le x)\Big|\lesssim \Delta^{1/3}\log^{2/3}(v),$$
    where the constant in $\lesssim$ only depends on $\sigma$.
\end{lemma}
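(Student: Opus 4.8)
The plan is to prove this as a Gaussian comparison (Slepian-type) bound paired with an anti-concentration estimate for the maximum of a Gaussian vector. First I would reduce the two-sided $\ell^\infty$ event to a one-sided maximum over $2v$ coordinates: since $|X+d|_\infty\le x$ is equivalent to $-x\le X_i+d_i\le x$ for every $i$, I define the $2v$-dimensional Gaussian vectors $W^X$ and $W^Y$ whose coordinates are $\pm(X_i+d_i)$ and $\pm(Y_i+d_i)$, respectively, so that $\PP(|X+d|_\infty\le x)=\PP(\max_{1\le a\le 2v}W^X_a\le x)$ and likewise for $Y$. Crucially, $W^X$ and $W^Y$ share the same mean vector $(\pm d_i)$, so the shift $d$ plays no role in the interpolation and only enters as a fixed offset; all $2v$ coordinates of $W^Y$ still have variance $\sigma^2$; and the entrywise difference of the two covariance matrices remains at most $\Delta$, because flipping a sign changes a covariance entry only by a sign.

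Next I would run the standard Gaussian smoothing argument. Replace the hard maximum by the soft-max $F_\beta(w)=\beta^{-1}\log\sum_a e^{\beta w_a}$, which satisfies $\max_a w_a\le F_\beta(w)\le\max_a w_a+\beta^{-1}\log(2v)$, and replace the indicator $\One\{t\le x\}$ by a twice continuously differentiable $g=g_{x,\eta}$ equal to $1$ below $x$ and $0$ above $x+\eta$, with $\|g'\|_\infty\lesssim\eta^{-1}$ and $\|g''\|_\infty\lesssim\eta^{-2}$. Realizing $W^X,W^Y$ as independent on a common space and interpolating $Z(\lambda)=\sqrt\lambda\,G^X+\sqrt{1-\lambda}\,G^Y+\mu$ (with $G^\bullet$ the centered parts and $\mu$ the common mean), Gaussian integration by parts yields
$$\Big|\EE[g(F_\beta(W^X))]-\EE[g(F_\beta(W^Y))]\Big|\le\tfrac12\Delta\int_0^1\sum_{a,b}\EE\big|\partial_a\partial_b(g\circ F_\beta)(Z(\lambda))\big|\,d\lambda.$$
The key deterministic estimate is that for the soft-max one has $\sum_{a,b}|\partial_aF_\beta||\partial_bF_\beta|=1$ and $\sum_{a,b}|\partial_a\partial_bF_\beta|\le 2\beta$, since the first derivatives are the soft-max weights $p_a\ge 0$ summing to $1$ and the Hessian is $\beta(\mathrm{diag}(p)-pp^\top)$. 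Hence $\sum_{a,b}|\partial_a\partial_b(g\circ F_\beta)|\le\|g''\|_\infty+2\beta\|g'\|_\infty\lesssim\eta^{-2}+\beta\eta^{-1}$, and the comparison error is $\lesssim\Delta(\eta^{-2}+\beta\eta^{-1})$.

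Finally I would undo the two smoothings to pass to Kolmogorov distance. Sandwiching the events through $F_\beta$ and $g$ turns the bound into
$$\PP(\max_a W^X_a\le x)-\PP(\max_a W^Y_a\le x)\lesssim\Delta(\eta^{-2}+\beta\eta^{-1})+\PP\big(x<\max_a W^Y_a\le x+\beta^{-1}\log(2v)+\eta\big),$$
and the last probability is controlled by the anti-concentration inequality of \textcite{chernozhukov_central_2017} for the maximum of a Gaussian vector with common coordinate variance $\sigma^2$, giving $\lesssim\sigma^{-1}(\beta^{-1}\log(2v)+\eta)\sqrt{\log(2v)}$; this is exactly where the equal-variance hypothesis on $Y$ and the $\sigma$-dependence of the constant are used. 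Choosing $\eta\asymp\beta^{-1}\log v$ and then optimizing $\beta$ balances $\Delta\beta^2/\log v$ against $\beta^{-1}\log^{3/2}v$ at $\beta\asymp(\log^{5/2}v/\Delta)^{1/3}$, producing the rate $\Delta^{1/3}\log^{2/3}(v)$ (using $\log(2v)\asymp\log v$). The same chain with the roles of $X$ and $Y$ exchanged bounds the other direction, and the supremum over $x$ is immediate since every constant is uniform in $x$. The main obstacle is the interpolation step: establishing the entrywise $\ell^1$ Hessian bound for the soft-max and pairing it with a sharp anti-concentration estimate is what forces the $\Delta^{1/3}$ exponent, and one must check that the reduction to $2v$ coordinates preserves both the common-variance structure and the covariance gap $\Delta$.
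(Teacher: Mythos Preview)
The paper does not supply its own proof of this lemma; it is stated as a cited auxiliary result from \textcite{chen_inference_2022} and used as a black box. Your reconstruction via soft-max smoothing, Slepian--Stein interpolation, and Nazarov-type anti-concentration is the standard route to such Gaussian comparison bounds (this is essentially the argument in Chernozhukov--Chetverikov--Kato), and the balancing you carry out indeed yields the $\Delta^{1/3}\log^{2/3}(v)$ rate. One small point worth making explicit: in the reverse inequality you do not need anti-concentration for $X$; both directions can be closed using anti-concentration for $Y$ alone by shifting the smoothing window, which is why only the equal-variance hypothesis on $Y$ is assumed.
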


\begin{lemma}[\cite{Nazarov2003OnTM}]
    \label{lemma_nazarov}
    Let $X=(X_1,\ldots,X_s)^{\top}$ be a centered Gaussian vector in $\RR^s$. Assume $\EE(X_i^2)\ge b$, for some $b>0$, $1\le i\le s$. Then for any $e>0$ and $d\in\RR^s$, 
    \begin{equation}
        \label{eq_nazarov}
        \sup_{x\in \mathbb{R}}\PP\Big(\big||X+d|_{\infty}-x\big|\le e\Big) \le ce(\sqrt{2\log(s)}+2),
    \end{equation}
    where $c$ is a constant depending only on $b$.
\end{lemma}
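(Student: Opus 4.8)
The statement is Nazarov's anti-concentration inequality for the $\ell^{\infty}$-norm of a Gaussian vector, so my plan is to first reduce it to the more familiar anti-concentration for the \emph{maximum} of a Gaussian vector, and then invoke (or reprove) Nazarov's bound. First I would absorb the absolute value by doubling the dimension: set $W=(X_1,\ldots,X_s,-X_1,\ldots,-X_s)^{\top}\in\RR^{2s}$ and $\nu=(d_1,\ldots,d_s,-d_1,\ldots,-d_s)^{\top}$. Then $W$ is again centered Gaussian, each coordinate satisfies $\EE(W_k^2)\ge b$, and
$$|X+d|_{\infty}=\max_{1\le i\le s}|X_i+d_i|=\max_{1\le k\le 2s}(W_k+\nu_k).$$
Consequently the event $\{\,\big||X+d|_{\infty}-x\big|\le e\,\}$ is exactly $\{\max_k(W_k+\nu_k)\in[x-e,x+e]\}$, an interval of length $2e$, so it suffices to show $\sup_{x}\PP\big(\max_k(W_k+\nu_k)\in[x,x+2e]\big)\lesssim e(\sqrt{2\log(2s)}+1)/\sqrt b$, from which the claimed bound $c\,e(\sqrt{2\log s}+2)$ follows because $\sqrt{2\log(2s)}\le\sqrt{2\log s}+\sqrt{2\log 2}$; doubling the dimension only changes the additive constant.

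After this reduction the target is the classical anti-concentration bound for the maximum, which is Nazarov's inequality, and the cleanest route is to cite \textcite{Nazarov2003OnTM} (or its restatement in \textcite{chernozhukov_central_2017}) directly. To make the argument self-contained I would instead bound the density $f$ of $M:=\max_k(W_k+\nu_k)$ and establish $\sup_x f(x)\lesssim\big(\EE[\max_k W_k/\sigma_k]+1\big)/\sqrt b$, where $\sigma_k^2=\EE(W_k^2)$. The guiding identity is
$$f(x)=\sum_{k}f_{W_k}(x-\nu_k)\,\PP\big(W_j+\nu_j<x\ \forall j\ne k\mid W_k=x-\nu_k\big),$$
in which each marginal density is bounded by $(2\pi b)^{-1/2}$ since $\EE(W_k^2)\ge b$, and the summation over the ``active'' coordinate is governed by the expected maximum. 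Combined with the standard Gaussian maximal inequality $\EE[\max_k W_k/\sigma_k]\le\sqrt{2\log(2s)}$, this produces the factor $\sqrt{2\log s}$ and the dependence on $b$ only through $1/\sqrt b$, matching the asserted constant.

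The hard part is making this density decomposition rigorous under \emph{dependence}: the conditional probability above involves the law of $W_k$ given the remaining coordinates, whose conditional variance may degenerate even though every \emph{marginal} variance is at least $b$. This is precisely the difficulty Nazarov resolves, and I would handle it by the smoothing device of \textcite{chernozhukov_central_2017}: replace $\max_k$ by the soft-max $F_\rho(w)=\rho^{-1}\log\sum_k \exp\big(\rho(w_k+\nu_k)\big)$, which satisfies $0\le F_\rho(w)-\max_k(w_k+\nu_k)\le\rho^{-1}\log(2s)$ and whose gradient $\nabla F_\rho(w)$ is a probability vector concentrated on the near-maximal coordinates. A Gaussian integration by parts (Stein's identity) then expresses the density of $F_\rho(W)$ through this gradient; the naive version, which weights all coordinates equally, yields a linear functional of $W$ of variance up to order $s$ and is far too lossy, so the crux is to exploit the concentration of $\nabla F_\rho$ on the active coordinates so that only the $\sqrt{2\log(2s)}$ factor survives. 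Finally, sending $\rho\to\infty$ with $\rho\asymp \log(2s)/e$ balances the smoothing bias $\rho^{-1}\log(2s)$ against the density estimate, and since only the marginal lower bound $\EE(W_k^2)\ge b$ and the maximal inequality enter, the resulting constant depends on $b$ alone, as required.
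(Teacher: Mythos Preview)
The paper does not prove this lemma at all: it is stated with a citation to \textcite{Nazarov2003OnTM} and used as a black box, so there is no ``paper's own proof'' to compare against. Your reduction by doubling the dimension to pass from $|X+d|_\infty$ to $\max_k(W_k+\nu_k)$ is correct and standard, and at that point simply citing Nazarov (or the restatement in \textcite{chernozhukov_central_2017}) is exactly what the paper does.

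Where your proposal overreaches is the ``self-contained'' part. The density decomposition you write down for $f(x)$ is only heuristic once the covariance of $W$ is degenerate or highly dependent, and you correctly flag that the conditional variances can collapse. Your soft-max smoothing plus Stein's identity sketch is the right circle of ideas, but the sentence ``the crux is to exploit the concentration of $\nabla F_\rho$ on the active coordinates so that only the $\sqrt{2\log(2s)}$ factor survives'' is a restatement of the goal, not an argument: you have not said which functional of $W$ you integrate by parts, nor why the resulting expectation is bounded by $\EE[\max_k W_k/\sigma_k]$ rather than something growing with $s$. That step is genuinely the content of Nazarov's proof, and your proposal does not supply it. Since the paper itself is content to cite the result, the cleanest fix is to drop the self-contained attempt and keep only the doubling reduction followed by the citation.
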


\begin{lemma}[Bounds of $L^s$-norm of an infinite sum]
    \label{lemma_snorm}
    For matrices $A,B\in\RR^{\tilde p \times p}$ and two independent centered random vectors $x,y\in\RR^p$ whose elements are also independent. Let $A_j$ and $B_j$ be the $j$-th rows of $A$ and $B$, respectively. If $\mu_s'\le \lVert| xy^{\top}|_{\min}\rVert_s\le \lVert| xy^{\top}|_{\max}\rVert_s\le \mu_s$, then we have 
    
    \noindent(i) the upper bound,
    $$\Big\lVert\sum_{j=1}^{\tilde p}A_j^{\top}xy^{\top}B_j\Big\rVert_s^2 \lesssim \Big\lVert\sum_{j=1}^{\tilde p} B_jA_j^{\top}\Big\rVert_F^2 = \sum_{j_1,j_2=1}^{\tilde p}\Big(A_{j_2}^{\top}A_{j_1}B_{j_1}^{\top}B_{j_2}\Big),$$
    
    \noindent(ii) the lower bound,
    $$\Big\lVert\sum_{j=1}^{\tilde p}A_j^{\top}xy^{\top}B_j\Big\rVert_s^2 \gtrsim \Big\lVert\sum_{j=1}^{\tilde p} B_jA_j^{\top}\Big\rVert_F^2 = \sum_{j_1,j_2=1}^{\tilde p}\Big(A_{j_2}^{\top}A_{j_1}B_{j_1}^{\top}B_{j_2}\Big),$$
    where the constants in $\lesssim$ and $\gtrsim$ are independent of $p$ and $\tilde p$.
\end{lemma}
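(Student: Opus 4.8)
The plan is to first collapse the $\tilde p$-fold sum into a single bilinear form. Each summand $A_j^\top x\, y^\top B_j$ is a scalar, so $\sum_{j=1}^{\tilde p} A_j^\top x\, y^\top B_j = \sum_j (x^\top A_j)(B_j^\top y) = x^\top M y$, where $M := \sum_{j=1}^{\tilde p} A_j B_j^\top\in\RR^{p\times p}$ has entries $M_{kl}=\sum_j (A_j)_k(B_j)_l$. Since the Frobenius norm is transpose-invariant, $\lVert M\rVert_F^2 = \lVert\sum_j B_j A_j^\top\rVert_F^2 = \sum_{j_1,j_2}(A_{j_2}^\top A_{j_1})(B_{j_1}^\top B_{j_2})$, which is exactly the right-hand side of both displays. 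Thus the lemma reduces to proving $\lVert x^\top M y\rVert_s^2 \asymp \lVert M\rVert_F^2$ with the implied constants depending only on $s$, $\mu_s$ and $\mu_s'$.

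For the upper bound (i) I would iterate conditioning. Conditioning on $x$, the quantity $x^\top M y = \sum_{l}(M^\top x)_l\, y_l$ is a sum over $l$ of mean-zero, mutually independent (hence martingale-difference) summands, so Lemma \ref{lemma_burkholder} with $q=s$ gives $\EE\big[|x^\top M y|^s \mid x\big]^{2/s}\le K_s^2\sum_l (M^\top x)_l^2\,\lVert y_l\rVert_s^2 \le K_s^2\big(\max_l\lVert y_l\rVert_s\big)^2 |M^\top x|_2^2$. Taking expectations in $x$ reduces matters to bounding $\lVert |M^\top x|_2\rVert_s$; writing $|M^\top x|_2^2 = \sum_l\big(\sum_k M_{kl}x_k\big)^2$, the triangle inequality in $L^{s/2}$ across $l$ together with Lemma \ref{lemma_burkholder} across $k$ yields $\lVert |M^\top x|_2\rVert_s^2 \le K_s^2\big(\max_k\lVert x_k\rVert_s\big)^2\lVert M\rVert_F^2$. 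Combining the two steps and using $\big(\max_k\lVert x_k\rVert_s\big)\big(\max_l\lVert y_l\rVert_s\big)\le \lVert\max_k|x_k|\rVert_s\,\lVert\max_l|y_l|\rVert_s = \lVert |xy^\top|_{\max}\rVert_s\le \mu_s$ (independence of $x$ and $y$, together with $\max_{k,l}|x_ky_l| = (\max_k|x_k|)(\max_l|y_l|)$) gives $\lVert x^\top M y\rVert_s^2\le K_s^4\mu_s^2\lVert M\rVert_F^2$, which is part (i).

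For the lower bound (ii) I would pass through the $L^2$ norm, which is legitimate in the regime $s\ge 2$ relevant to our applications: $\lVert x^\top M y\rVert_s\ge\lVert x^\top M y\rVert_2$, and since $x,y$ are centered with mutually independent coordinates, only the "diagonal" terms survive, $\lVert x^\top M y\rVert_2^2 = \sum_{k,l}M_{kl}^2\,\EE[x_k^2]\,\EE[y_l^2]$. It then remains to bound $\EE[x_k^2]\EE[y_l^2]$ from below, uniformly in $k,l$, by a constant depending only on $s,\mu_s,\mu_s'$: this is where $\mu_s'\le\lVert |xy^\top|_{\min}\rVert_s$ enters — pointwise $|x_k y_l|\ge |xy^\top|_{\min}$, so $\EE[x_k^2]\EE[y_l^2] = \lVert x_ky_l\rVert_2^2\ge\lVert |xy^\top|_{\min}\rVert_2^2$, and combining this with the two-sided control $\mu_s'\le\lVert x_{k'}\rVert_s\lVert y_{l'}\rVert_s\le\mu_s$ (valid for every pair, which pins the marginal $L^s$-scales of all coordinates to a common order) forces $\EE[x_k^2]\EE[y_l^2]\ge c(s,\mu_s,\mu_s')>0$. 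Hence $\lVert x^\top M y\rVert_2^2\ge c\,\lVert M\rVert_F^2$, giving part (ii).

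The bilinear reduction and the Burkholder applications are routine; I expect the genuine obstacle to be the last point — converting the $L^s$-scale information about the extreme entries of $xy^\top$ into a uniform-in-$(k,l)$ two-sided bound on $\EE[x_k^2]\EE[y_l^2]$, a quantity that also governs the $\max_k\lVert x_k\rVert_s$ factor appearing in the upper bound. The handle is that $|xy^\top|_{\min}$ is an entrywise minimum of products of \emph{independent} coordinates, so its $L^s$-positivity cannot be localized to a single coordinate and therefore transfers, via independence, to a second-moment lower bound for every coordinate; the delicate part is carrying out this accounting so that the resulting constant stays free of $p$ and $\tilde p$.
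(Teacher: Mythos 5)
Your reduction to the bilinear form $x^\top M y$ with $M=\sum_j A_jB_j^{\top}$ and your proof of the upper bound (i) are correct, and in substance this is the paper's own route: the paper writes the sum as $\mathrm{tr}(CZ)$ with $C=\sum_j B_jA_j^{\top}$, $Z=xy^{\top}$, and applies Lemma \ref{lemma_burkholder} in nested fashion, bounding each entry by $\|x_ky_l\|_s\le\lVert |xy^{\top}|_{\max}\rVert_s\le\mu_s$; your conditional-Burkholder version is the same estimate organized slightly differently (and, like the paper's argument, it implicitly uses $s\ge2$ in the $L^{s/2}$ triangle inequality, which is harmless for the exponents at which the lemma is invoked).

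The lower bound (ii) has a genuine gap, exactly at the point you flagged. Passing from $\|x^\top My\|_s$ to $\|x^\top My\|_2=\big(\sum_{k,l}M_{kl}^2\,\EE[x_k^2]\EE[y_l^2]\big)^{1/2}$ requires $\EE[x_k^2]\EE[y_l^2]\ge c(s,\mu_s,\mu_s')>0$ uniformly in $k,l$, and for $s>2$ this does not follow from the hypotheses, which only constrain $L^s$-norms: an $L^s$-norm bounded below does not bound a second moment below. Concretely, take $p=1$ and $x_1,y_1$ independent and symmetric with $|x_1|=|y_1|=N$ with probability $N^{-s}$ and $0$ otherwise; then $\lVert |xy^{\top}|_{\min}\rVert_s=\lVert |xy^{\top}|_{\max}\rVert_s=\|x_1\|_s\|y_1\|_s=1$, so the hypotheses hold with $\mu_s'=\mu_s=1$, yet $\EE[x_1^2]\EE[y_1^2]=N^{2(2-s)}\to0$. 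Hence no constant $c(s,\mu_s,\mu_s')$ of the kind you invoke exists, and your chain only yields $\|x^\top My\|_s^2\ge N^{2(2-s)}\|M\|_F^2$, which is vacuous as $N\to\infty$ — even though (ii) itself is true in this example, precisely because $\|x_1y_1\|_s\ge\mu_s'$. The heuristic you offer (that the $L^s$-positivity of $|xy^{\top}|_{\min}$ transfers via independence to second moments of every coordinate) is what fails: the $L^s$ mass of the minimum entry can sit on a rare, very large value, which contributes to $\|\cdot\|_s$ but not to $\|\cdot\|_2$. The paper avoids this by never leaving $L^s$: its lower bound is carried out directly in the $\|\cdot\|_s$ norm on the martingale part of $\mathrm{tr}(CZ)$, using the entrywise bound $\|x_ky_l\|_s\ge\lVert |xy^{\top}|_{\min}\rVert_s\ge\mu_s'$. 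To repair your argument you would need either to add a uniform second-moment lower bound as an assumption (or take $s=2$), or to replace the $L^2$ detour by a lower bound on the $L^s$-norm of the conditional martingale differences, in the spirit of the paper's proof.
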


\begin{proof}
First, note that
\begin{equation}
    \Big\lVert\sum_{j=1}^{\tilde p}A_j^{\top}xy^{\top}B_j\Big\rVert_s^2 = \Big\lVert\text{tr}\Big(\big(\sum_{j=1}^{\tilde p}B_jA_j^{\top}\big)xy^{\top}\Big)\Big\rVert_s^2. \nonumber 
\end{equation}
Then, let $C=\sum_{j=1}^{\tilde p}B_jA_j^{\top}$, and $Z=xy^{\top}$, which along with Lemma \ref{lemma_burkholder} gives
\begin{align}
    \Big\lVert\sum_{j=1}^{\tilde p}A_j^{\top}xy^{\top}B_j\Big\rVert_s^2 & = \big\lVert \text{tr}(CZ)\big\rVert_s^2 = \Big\lVert 2\sum_{i}\sum_{j<i}C_{i,j}Z_{i,j} + \sum_{i}C_{i,i}Z_{i,i}\Big\rVert_s^2 
    \nonumber \\
    & \le 2\Big\lVert 2\sum_{i}\sum_{j<i}C_{i,j}Z_{i,j}\Big\rVert_s^2 + 2\Big\lVert\sum_{i}C_{i,i}Z_{i,i}\Big\rVert_s^2 \nonumber \\ 
    & \lesssim \sum_{i}\sum_{j<i}C_{i,j}^2\lVert Z_{i,j}\rVert_s^2 + \sum_{i}C_{i,i}^2\lVert Z_{i,i}\rVert_s^2 \lesssim \lVert C\rVert_F^2, \nonumber
\end{align}
where the constants in $\lesssim$ here and the rest of the proof are independent of $p$ and $\tilde p$. Note that
\begin{align}
    & \Big\lVert\sum_{j=1}^{\tilde p} B_jA_j^{\top}\Big\rVert_F^2 = \text{tr}\Big[\Big(\sum_{j=1}^{\tilde p}A_jB_j^{\top}\Big)\Big(\sum_{j=1}^{\tilde p}B_jA_j^{\top}\Big)\Big] =\sum_{j_1,j_2=1}^{\tilde p}\text{tr}\Big(A_{j_1}B_{j_1}^{\top}B_{j_2}A_{j_2}^{\top}\Big) \nonumber \\
    = & \sum_{j_1,j_2=1}^{\tilde p}\Big(A_{j_2}^{\top}A_{j_1}B_{j_1}^{\top}B_{j_2}\Big).
\end{align}
The desired upper bound is obtained. For the lower bound, we have
\begin{align}
    & \Big\lVert\sum_{j=1}^{\tilde p}A_j^{\top}xy^{\top}B_j\Big\rVert_s^2 
    \ge 2\Big\lVert 2\sum_{i}\sum_{j<i}C_{i,j}Z_{i,j}\Big\rVert_s^2
    \ge \sum_{i}\sum_{j<i}C_{i,j}^2\lVert Z_{i,j}\rVert_s^2 \gtrsim \lVert C\rVert_F^2, \nonumber
\end{align}
which completes the proof.
\end{proof}

\subsection{Proof of Theorem \ref{thm1_constanttrend}}\label{subsec_thm1proof}

We first provide an outline of the proof and then fulfill the details afterwards to enhance readability. To investigate the null limiting distribution of our test statistic $\Q_n$, we define $I_{\epsilon}$ as $\Q_n$ under the null, that is
\begin{equation}
    I_{\epsilon} := \max_{bn+1\le i \le n-bn}\Bigg(\Big|\sum_{t=i-bn}^{i-1}(bn)^{-1}\Lambda^{-1}\epsilon_t-\sum_{t=i}^{i+bn-1}(bn)^{-1}\Lambda^{-1}\epsilon_t\Big|_2^2-\bar{c}\Bigg).
\end{equation}
Therefore, it suffices to study $I_{\epsilon}$, which can be rewritten into the maximum of a sum over $j$, that is,
\begin{align}
    \label{eq_thm1_Iepsilon2}
    & I_{\epsilon} =\max_{bn+1\le i \le n-bn}\Bigg(\sum_{j=1}^{p}(bn)^{-2}\sigma_j^{-2}\Big(\sum_{t=i-bn}^{i-1}\epsilon_{t,j}-\sum_{t=i}^{i+bn-1}\epsilon_{t,j}\Big)^2-\bar{c}\Bigg),
\end{align}
where the centering term $\bar{c}$ of the $\ell^2$-norm in $I_{\epsilon}$ can be evaluated as follows:
\begin{align}
    \label{eq_thm1_center}
    & \bar{c}= \sum_{j=1}^p(bn)^{-2}\sigma_j^{-2}\EE\Big(\sum_{t=i-bn}^{i-1}\epsilon_{t,j}-\sum_{t=i}^{i+bn-1}\epsilon_{t,j}\Big)^2 =2p/(bn)+O\big\{p/(bn)^2\big\}.
\end{align}
Thus, after we scale $I_{\epsilon}$ by multiplying $bn/\sqrt{p}$, the rescaled $I_{\epsilon}$ can be written into the maximum of a sum of centered independent random variables $x_{i,j}$ defined in expression (\ref{eq_thm1_x_def}), that is,
\begin{equation}
    \label{eq_thm1_prime_Iepsilon2}
    \frac{bn}{\sqrt{p}} I_{\epsilon}=\max_{bn+1\le i \le n-bn}\frac{bn}{\sqrt{p}}\sum_{j=1}^{p}x_{i,j},
\end{equation}
where $x_{i,j}$ can also be written into
\begin{equation}
    \label{eq_thm1_x}
    x_{i,j}=\frac{1}{(bn\sigma_j)^2}\Bigg[\Big(\sum_{t=i-bn}^{i-1}\epsilon_{t,j}-\sum_{t=i}^{i+bn-1}\epsilon_{t,j}\Big)^2 - \EE\Big(\sum_{t=i-bn}^{i-1}\epsilon_{t,j}-\sum_{t=i}^{i+bn-1}\epsilon_{t,j}\Big)^2\Bigg].
\end{equation}
Motivated by the Gaussian approximation theorem in \textcite{chernozhukov_central_2017}, for each $x_{i,j}$, we consider its Gaussian counterpart $z_{i,j}$. Specifically, we define $Z_j=(z_{bn+1,j},\ldots,z_{n-bn,j})^{\top}\in\RR^{n-2bn}$, $1\le j\le p$, as independent centered Gaussian random vectors with covariance matrix $\EE(X_jX_j^{\top})\in\RR^{(n-2bn)\times(n-2bn)}$, where $X_j=(x_{bn+1,j},\ldots,x_{n-bn,j})^{\top}$. The analysis of the covariance matrix is deferred to Lemma \ref{lemma_cov_thm1}. We let
\begin{equation}
    \bar{z}_i=\sum_{j=1}^pz_{i,j}, \quad bn+1\le i\le n-bn.
\end{equation}
By the Gaussian approximation, we have the distribution of $I_{\epsilon}$ approached by the one of $\max_i\bar{z}_i$. Finally, we show that the maximum of a non-centered Gaussian distribution is continuous to complete our proof.

\begin{proof}[Proof of Theorem \ref{thm1_constanttrend}]
Now we provide the detailed proof of Theorem \ref{thm1_constanttrend}. Our main goal is to bound the left-hand side of expression (\ref{eq_thm11_result}). To this end, since $\Q_n=I_{\epsilon}$, it follows from expression (\ref{eq_thm1_prime_Iepsilon2}) and Lemma \ref{lemma_chen2019} that
\begin{align}
    & \quad \sup_{u\in\mathbb{R}}\big|\PP\big(\Q_n \le u\big)-\PP\big(\max_{bn+1\le i\le n-bn}\Z_i\le u\big)\big|\nonumber \\
    & \le \sup_{u\in \mathbb{R}}\big\vert\PP(I_{\epsilon}\le u)-\PP\big(\max_i\bar{z}_i\le u\big)\big\vert +\sup_{u\in \mathbb{R}}\big\vert\PP\big(\max_i\bar{z}_i\le u\big)-\PP\big(\max_i\Z_i\le u\big)\big\vert \nonumber \\
    & \lesssim \sup_{u\in \mathbb{R}}\Big|\PP\Big(\max_i\sum_{j=1}^{p}x_{i,j}\le u\Big)-\PP\Big(\max_i\sum_{j=1}^{p}z_{i,j}\le u\Big)\Big|+(bn)^{-1/3}\log^{2/3}(n), \nonumber 
\end{align}
where the last inequality follows from Lemma \ref{lemma_chen2019} and Lemma \ref{lemma_cov_thm3}. In view of expression (\ref{eq_epsilon_linear}) and the cross-sectional independence
of errors,  $\epsilon_{t,j}=\sum_{l\le  t}A_{t-l,j,j}\eta_{l,j}$. Then, we can rewrite $x_{i,j}$ into 
\begin{align}
    \label{eq_thm1_x_linear}
    & x_{i,j} = \frac{1}{(bn\sigma_j)^2}\Bigg[\Big(\sum_{l\le i+bn-1} a_{i,l,j}\eta_{l,j}\Big)^2 - \EE\Big(\sum_{l\le i+bn-1} a_{i,l,j}\eta_{l,j}\Big)^2\Bigg],
\end{align}
where
\begin{equation}
    \label{eq_thm11_defa}
    a_{i,l,j}=
    \begin{cases}
        \sum_{t=(i-bn)\vee l}^{i-1}A_{t-l,j,j}-\sum_{t=i\vee l}^{i+bn-1}A_{t-l,j,j} , & \quad \text{if } l\le i-1,\\
        \sum_{t=i\vee l}^{i+bn-1}A_{t-l,j,j}, & \quad \text{if } i\le l\le i+bn-1.
    \end{cases}
\end{equation}
Now we are ready to verify the conditions with regard to $bnx_{i,j}$ to apply the Gaussian approximation and it shall be noted that our sum is over $j$. First, for all $bn+1\le i\le n-bn$ and $s=6,8$ , we aim to bound $p^{-1}\sum_{j=1}^p\EE |bnx_{i,j}|^{s/2}$ from above. Let $\{D_{i,l,j}\}_l$ be a sequence of martingale differences with respect to $(\ldots,\eta_{l-2},\eta_{l-1},\eta_l)$, that is,
\begin{equation}
    \label{eq_thm11_md}
    D_{i,l,j}= 2a_{i,l,j}\eta_{l,j}\sum_{r<l} a_{i,r,j}\eta_{r,j} + a_{i,l,j}^2(\eta_{l,j}^2 - \EE\eta_{l,j}^2).
\end{equation}
This, along with the definition of $x_{i,j}$ in expression (\ref{eq_thm1_x_linear}) yields
\begin{align}
    \label{eq_thm11_m2step1}
    bnx_{i,j} & = \frac{1}{bn\sigma_j^2}\sum_{l\le i+bn-1} \Big(2a_{i,l,j}\eta_{l,j}\sum_{r<l} a_{i,r,j}\eta_{r,j} + a_{i,l,j}^2(\eta_{l,j}^2 - \EE\eta_{l,j}^2)\Big) \nonumber \\
    & = \frac{1}{bn\sigma_j^2}\sum_{l\le i+bn-1}D_{i,l,j}.
\end{align}
By applying Lemma \ref{lemma_burkholder}, we have, for $s=6,8$,
\begin{align}
    \label{eq_thm11_m2step2}
    \Big\lVert\sum_{l\le i+bn-1}D_{i,l,j} \Big\rVert_{s/2} & \lesssim \Big(\sum_{l\le i+bn-1}\lVert D_{i,l,j}\rVert_{s/2}^2 \Big)^{1/2} \nonumber \\
    & \le \Big[\sum_{l\le i+bn-1}\Big(2\lVert a_{i,l,j}\eta_{l,j}\rVert_{s/2}\Big\lVert\sum_{r<l}a_{i,r,j}\eta_{r,j}\Big\rVert_{s/2} \nonumber \\
    & \quad + a_{i,l,j}^2\big\lVert\eta_{l,j}^2 - \EE\eta_{l,j}^2\big\rVert_{s/2}\Big)^2\Big]^{1/2},
\end{align}
where the constants in $\lesssim$ here and in the rest of the proof are independent of $n,p$ and $b$, so are the ones in $\gtrsim$ and $O(\cdot)$. Then, given the i.i.d. sequence $\{\eta_{l,j}\}_l$, it follows from Lemma \ref{lemma_burkholder} and Assumption \ref{asm_finitemoment} that 
\begin{equation}
    \label{eq_thm11_m2step3part1}
    \Big\lVert\sum_{r<l}a_{i,r,j}\eta_{r,j}\Big\rVert_{s/2} \le \Big(\sum_{r<l}a_{i,r,j}^2\lVert\eta_{r,j}\rVert_{s/2}^2\Big)^{1/2} = \mu_{s/2}\Big(\sum_{r<l}a_{i,r,j}^2\Big)^{1/2}.
\end{equation}
The combination of expressions (\ref{eq_thm11_m2step1})--(\ref{eq_thm11_m2step3part1}) leads to, for the constant $s=6,8$,
\begin{equation}
    \label{eq_thm11_m2expression}
    \EE |bnx_{i,j}|^{s/2} \lesssim \frac{1}{(bn)^{s/2}\sigma_j^s}\Big(\sum_{l\le i+bn}a_{i,l,j}^2\sum_{r\le l}a_{i,r,j}^2 \Big)^s.
\end{equation}
Let $\tilde A_{0,j,j}= \sum_{t\ge 0}A_{t,j,j}$. By the definition of $a_{i,l,j}$ in expression (\ref{eq_thm11_defa}), when $i\le l\le i+bn$, we have
\begin{align}
    \label{eq_thm11_m2step4}
    \sum_{l\le i+bn}a_{i,l,j}^2 & = \sum_{l\le i+bn}\Big( \sum_{t=(i+1)\vee l}^{i+bn}A_{t-l,j,j}\Big)^2 \nonumber \\
    & \le \Big(\sum_{l\le i+bn}\Big|\sum_{t=(i+1)\vee l}^{i+bn}A_{t-l,j,j}\Big|\Big) \max_{l\le i+bn}\Big|\sum_{t=(i+1)\vee l}^{i+bn}A_{t-l,j,j}\Big| \le bn\tilde A_{0,j,j}^2,
\end{align}
where the last inequality is due to
$$\sum_{l\le i+bn}\Big|\sum_{t=(i+1)\vee l}^{i+bn}A_{t-l,j,j}\Big| = \sum_{t=i+1}^{i+bn}\Big|\sum_{l\le t}A_{t-l,j,j}\Big|=bn|\tilde A_{0,j,j}|.$$ Therefore, in view of expressions (\ref{eq_thm11_m2expression}) and (\ref{eq_thm11_m2step4}) as well as a similar argument for the case with $l\le i-1$, we have
\begin{align}
    \label{eq_thm11_m2result}
    p^{-1}\sum_{j=1}^p\EE |bnx_{i,j}|^{s/2} \lesssim p^{-1}\sum_{j=1}^p\frac{1}{(bn)^{s/2}\sigma_j^s} (bn\tilde A_{0,j,j}^2)^{s/2} = O(1).
\end{align}
Similarly, we can also give an upper bound for $\EE\big(\max_{bn+1\le i\le n-bn}|bnx_{i,j}|^{q/2}\big)$. Specifically, for all $1\le j\le p$, 
\begin{align}
    \label{eq_thm11_e2result}
    &\quad \EE\big(\max_{bn+1\le i\le n-bn}|bnx_{i,j}|^{q/2}\big) \le \sum_{i=bn+1}^{n-bn}\EE |bnx_{i,j}|^{q/2} \lesssim n.
\end{align}
Now we shall prove $p^{-1}\sum_{j=1}^p\EE (bnx_{i,j})^2$ bounded away from zero. By expression (\ref{eq_thm11_m2step1}), we have
\begin{align}
    \label{eq_thm11_m1step1}
    & \EE (bnx_{i,j})^2 = \frac{1}{(bn\sigma_j^2)^2}\sum_{l\le i+bn}\EE D_{i,l,j}^2,
\end{align}
which along with expression (\ref{eq_thm11_md}) yields
\begin{equation}
    \label{eq_thm11_m1part1and2}
    \EE (bnx_{i,j})^2 \ge \frac{4}{(bn\sigma_j^2)^2}\sum_{l\le i+bn}\sum_{r<l}a_{i,l,j}^2a_{i,r,j}^2\EE\eta_{l,j}^2\EE\eta_{r,j}^2 \gtrsim \frac{1}{(bn\sigma_j^2)^2}\sum_{l\le i+bn}\sum_{r<l}a_{i,l,j}^2a_{i,r,j}^2.
\end{equation}
It follows from the definition of $a_{i,l,j}$ in expression (\ref{eq_thm11_defa}) that
\begin{align}
    \label{eq_thm11_m1step2}
    & \sum_{l\le i+bn}\sum_{r<l}a_{i,l,j}^2a_{i,r,j}^2
    \ge \sum_{i-bn\le l\le i+bn}\sum_{r<l}a_{i,l,j}^2a_{i,r,j}^2=(bn\tilde A_{0,j,j}^2)^2 +o\big\{(bn)^2\big\}.
\end{align}
Thus, in view of expressions (\ref{eq_thm11_m1part1and2}) and (\ref{eq_thm11_m1step2}), $p^{-1}\sum_{j=1}^p\EE (bnx_{i,j})^2$ is lower bounded away from zero, i.e., for some constant $c>0$,
\begin{equation}
    \label{eq_thm11_m1result}
    p^{-1}\sum_{j=1}^p\EE (bnx_{i,j})^2 \ge cp^{-1}\sum_{j=1}^p\frac{1}{\big(bn\sigma_j^2\big)^2}(bn\tilde A_{0,j,j}^2)^2 =c.
\end{equation}

For any $r\le q/2$, we define
$$M_r:=\max_{bn+1\le i\le n-bn }\Big(p^{-1}\sum_{j=1}^p\lVert bnx_{i,j}\rVert_r^r\Big)^{1/r}\quad \textrm{and}\quad
\tilde M_r:=\max_{1\le j\le p}\Big\lVert\max_{bn+1\le i\le n-bn}bnx_{i,j}\Big\rVert_r.$$
The two definitions above along with expressions (\ref{eq_thm11_m2result}) and (\ref{eq_thm11_e2result}) yield
\begin{align}
    B_n & :=\max\Big\{M_3^3,M_4^2, \tilde M_{q/2} \Big\}\lesssim  n^{2/q}. \nonumber
\end{align}
Finally, we apply Proposition 2.1 in \textcite{chernozhukov_central_2017} to get the desired result.
\end{proof}

\subsection{Proof of Theorem \ref{thm_consistency}}\label{subsec_thm4proof}

\begin{proof}[Proof of Theorem \ref{thm_consistency} (i)]
Note that $1-\Phi(x)\le (2\pi)^{-1}x^{-1}e^{-x^2/2}$, for $x>0$, where $\Phi(\cdot)$ is the cumulative distribution function of a standard normal distribution. Recall that $\bar{z}_i=\sum_{j=1}^pz_{i,j}$ is a centered Gaussian variable with covariance matrix $\sum_{j=1}^p\EE(X_jX_j^{\top})$, which asymptotically takes the form of expression (\ref{eq_cov}). This gives
\begin{align}
    \label{eq_thm4_step1}
    &\PP\Big(\frac{bn}{\sqrt{p}}\max_{bn+1\le i\le n-bn}\bar{z}_i\ge u\Big) \le \sum_{i=bn+1}^{n-bn} \PP\Big(\frac{1}{\sqrt{p}}\sum_{j=1}^pbnz_{i,j} \ge u\Big) \nonumber \\
    \le & n(2\pi)^{-1/2}(\tilde\sigma/u)e^{-u^2/(2\tilde\sigma^2)},
\end{align}
where $\tilde\sigma^2=p^{-1}\sum_{j=1}^p\EE (bnz_{i,j})^2$ which converges to 8 by expression (\ref{eq_cov}). Therefore,
\begin{equation}
    \label{eq_thm4_step1result}
    \PP\Big(\max_{bn+1\le i\le n-bn}\bar{z}_i\ge  4(p\log{(n)})^{1/2}(bn)^{-1}\Big)\rightarrow0. 
\end{equation}
Define $\I := \{1\le i\le n: |i-\tau_k|>bn, \text{ for all }1\le k\le K \}$. We note that for any $i\in\I$, $\EE V_i=0$. It follows from Theorem \ref{thm1_constanttrend} that 
\begin{equation}
    \label{eq_thm4_outofset}
    \sup_{u\in\RR}\Big|\PP\big(\max_{i\in\I}\big(|V_i|_2^2-\bar{c}\big)\ge u\big) - \PP\big(\max_{i\in\I}\bar{z}_i\ge u\big)\Big|\rightarrow0,
\end{equation}
which along with expression (\ref{eq_thm4_step1result}) and the fact that $\max_{i\in\I}\bar{z}_i\le \max_{bn+1\le i\le n-bn}\bar{z}_i$ yields
$\PP\big(\max_{i\in\I}\big(|V_i|_2-\bar{c}\big)\ge \omega\big)\rightarrow0$, for all $\omega\ge 4(p\log{(n)})^{1/2}(bn)^{-1}$. Thus, we have
\begin{equation}
    \label{eq_thm4_set1}
    \PP\big(\forall t\in\A_1,\, \text{there exists } 1\le k\le K ,\, |t-\tau_k|\le bn\big)\rightarrow1,
\end{equation}
as $n\rightarrow\infty$. Recall that the weighted break size $d_{\tau_k}=\Lambda^{-1}\gamma_k$ under the alternative. We denote the left-side and right-side weighted sums of errors as following
\begin{equation}
\label{eq_U}
    U_{i,j}^{(l)}= \sum_{t=i-bn}^{i-1}\epsilon_{t,j}/(bn), \quad U_{i,j}^{(r)}=\sum_{t=i}^{i+bn-1}\epsilon_{t,j}/(bn).
\end{equation} 
Let $\bar x_i=\sum_{j=1}^px_{i,j}$ with $x_{i,j}$ in the form of expression (\ref{eq_thm1_x}). Then, we have
\begin{align}
    \label{eq_thm4_z_alter}
    & \min_{1\le k\le K }\big(|V_{\tau_k}|_2^2 -\bar{c}\big)= \min_{1\le k\le K }\Big(\sigma_j^{-2}\sum_{j=1}^p\big(U_{\tau_k,j}^{(l)}-U_{\tau_k,j}^{(r)}+\sigma_jd_{\tau_k,j}\big)^2-\bar{c}\Big) \nonumber \\
    \ge & \min_{1\le k\le K }|d_{\tau_k}|_2^2 - 2\max_{1\le k\le K }\Big|\sum_{j=1}^pd_{\tau_k,j}(U_{\tau_k,j}^{(l)}-U_{\tau_k,j}^{(r)})/\sigma_j\Big| \nonumber \\
    &\quad - \max_{1\le k\le K }\Big|\bar{c}-\sum_{j=1}^p\big((U_{\tau_k,j}^{(l)}-U_{i,j}^{(r)})/\sigma_j\big)^2\Big|\nonumber \\
    \ge & \delta_p^2 - 2\max_{1\le k\le K }\Big|\sum_{j=1}^pd_{\tau_k,j}(U_{\tau_k,j}^{(l)}-U_{\tau_k,j}^{(r)})/\sigma_j\Big| -\max_{1\le k\le K }|\bar x_{\tau_k}|.
\end{align}
Regarding the second term in expression (\ref{eq_thm4_z_alter}), as a direct consequence of Lemma \ref{lemma_tail_cross} (i), we can obtain the tail probability
\begin{align}
    \label{eq_thm4_GA}
    \PP\Big(\max_{1\le k\le K }\Big|\sum_{j=1}^pd_{\tau_k,j}(U_{\tau_k,j}^{(l)}-U_{\tau_k,j}^{(r)})/\sigma_j\Big|/|\Lambda^{-1}\gamma_k|_2 \ge e^q\log^{1/2}(n)(bn)^{-1/2}\Big)\rightarrow0.
\end{align}
For the last term in expression (\ref{eq_thm4_z_alter}), under the similar lines of the proof for Theorem \ref{thm1_constanttrend}, we can show that, under the null,
$$\sup_{u\in\RR}\Big|\PP\big(\max_{bn+1\le i\le n-bn}|\bar x_i|\le u\big)-\PP\big(\max_{bn+1\le i\le n-bn}|\bar{z}_i|\le u\big)\Big| \rightarrow0,$$
which together with expressions (\ref{eq_thm4_step1result}), (\ref{eq_thm4_z_alter}), (\ref{eq_thm4_GA}) and the conditions in Theorem \ref{thm_consistency} that $\delta_p^2\ge 3\omega$ and $\omega\gg  4(p\log{(n)})^{1/2}(bn)^{-1}$ gives
\begin{equation}
    \label{eq_thm4_exceed}
    \PP\Big(\min_{1\le k\le K }\big(|V_{\tau_k}|_2^2-\bar{c}\big)\ge \omega\Big)\rightarrow1.
\end{equation}
Namely, the test statistics shall exceed the threshold at the break points with probability approaching 1. Hence, we have
\begin{equation}
    \label{eq_thm4_inside}
    \PP(\tau_k\in\A_1,1\le k\le K )\rightarrow1.
\end{equation}
Let $\D(\tau,r)=\{t:\,|t-\tau|\le r\}$. It follows from expressions (\ref{eq_thm4_set1}) and (\ref{eq_thm4_inside}) that 
$$\PP\Big(\{\tau_1,\tau_2,\ldots,\tau_{K }\}\subset\A_1\subset\cup_{1\le k\le K }\D(\tau_k,bn)\Big)\rightarrow1,$$
as $n\rightarrow\infty$. Since for $k_1\neq k_2$, $|\tau_{k_1}-\tau_{k_2}|\gg bn$, it follows that for $\forall t\in\D(\tau_{k_1},bn)$ and $k_1\neq k_2$, for all large $n$, $\D(t,2bn)\cap\D(\tau_{k_2},2bn)=\emptyset$. We obtain the desired result.
\end{proof}

\begin{lemma}[Cross-terms of signals and noises]
    \label{lemma_tail_cross}
    Consider the break locations $\tau_k$ and the break sizes $\gamma_k$, $1\le k\le K$, in model (\ref{eq_model}), and the weighted noise sums $U_{i,j}^{(l)}$, $U_{i,j}^{(r)}$ in (\ref{eq_U}). Under Assumptions \ref{asm_finitemoment}--\ref{asm_sec_indep}, if $\max_{1\le k\le K}|\Lambda^{-1}\gamma_k|_q/|\Lambda^{-1}\gamma_k|_2=O(1/K^{1/q})$ for some $q\ge8$, we have the following results:\\
    (i) 
    $$\PP\Bigg(\max_{1\le k\le K }\frac{\big|\sum_{j=1}^p(\sigma_j^{-1}\gamma_{k,j})(U_{\tau_k,j}^{(l)}-U_{\tau_k,j}^{(r)})/\sigma_j\big|}{|\Lambda^{-1}\gamma_k|_2} \ge e^q\log^{1/2}(n)(bn)^{-1/2}\Bigg)\rightarrow0.$$
    (ii)  
    $$\PP\Bigg(\max_{1\le k\le K,1\le|i-\tau_k|\le bn}\frac{\big|\sum_{j=1}^p(\sigma_j^{-1}\gamma_{k,j})(\Delta U_{i,j}-\Delta U_{\tau_k,j})/\sigma_j\big|}{|i-\tau_k|^{1/2}|\Lambda^{-1}\gamma_k|_2} \ge e^q\log(n)(bn)^{-1}\Bigg)\rightarrow0,$$
    where $\Delta U_{i,j}=U_{i,j}^{(l)} - U_{i,j}^{(r)}$. 
\end{lemma}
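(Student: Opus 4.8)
The plan is to recognize both random quantities as centered \emph{linear} forms in the i.i.d.\ innovations $\{\eta_{l,j}\}$, with weights determined by the jump vectors, and to bound their suprema over $k$ (and, in (ii), over $i$) by combining a Fuk--Nagaev/Rosenthal-type tail bound \emph{across} the cross-sectional index $j$ with a union bound. Recalling $\epsilon_{t,j}=\sum_{l\le t}A_{t-l,j,j}\eta_{l,j}$ under Assumption \ref{asm_sec_indep}, the numerator in (i) equals $\sum_{j}(\sigma_j^{-1}\gamma_{k,j})(U_{\tau_k,j}^{(l)}-U_{\tau_k,j}^{(r)})/\sigma_j=\sum_{j}\xi_{k,j}$, where $\xi_{k,j}=(bn)^{-1}\sigma_j^{-2}\gamma_{k,j}\sum_{l}a_{\tau_k,l,j}\eta_{l,j}$ and $a_{i,l,j}$ is the coefficient array of \eqref{eq_thm11_defa}. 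I would first import, for each fixed $j$, the two bounds already established in the proof of Theorem \ref{thm1_constanttrend}: under Assumption \ref{asm_temp_dep}, $\sum_{l}a_{\tau_k,l,j}^2\asymp bn\,\sigma_j^2$ (using the diagonal identity $\tilde A_{0,j,j}^2=\sigma_j^2$, cf.\ \eqref{eq_thm11_m2step4}) and $\max_{l}|a_{\tau_k,l,j}|\lesssim\sigma_j$.

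For part (i) the key structural point is that the $\xi_{k,j}$ are \emph{independent across $j$} by Assumption \ref{asm_sec_indep}, so $\sum_j\xi_{k,j}$ is a sum of independent centered summands. Burkholder's inequality (Lemma \ref{lemma_burkholder}), together with the two coefficient bounds, would give $\EE\xi_{k,j}^2\asymp\gamma_{k,j}^2/(bn\,\sigma_j^2)$ and $\EE|\xi_{k,j}|^q\lesssim\mu_q^q\,|\gamma_{k,j}|^q/\big((bn)^{q/2}\sigma_j^q\big)$, so that, after dividing by $|\Lambda^{-1}\gamma_k|_2$, the rescaled sum has variance $\asymp(bn)^{-1}$ and $q$-th absolute-moment sum $\lesssim\mu_q^q(bn)^{-q/2}\,|\Lambda^{-1}\gamma_k|_q^q/|\Lambda^{-1}\gamma_k|_2^q$. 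A Fuk--Nagaev inequality would then yield, for $t=e^q\log^{1/2}(n)(bn)^{-1/2}$,
\[
\PP\!\left(\frac{\big|\sum_j\xi_{k,j}\big|}{|\Lambda^{-1}\gamma_k|_2}\ge t\right)\ \lesssim\ \frac{\mu_q^q}{e^{q^2}\log^{q/2}(n)}\cdot\frac{|\Lambda^{-1}\gamma_k|_q^q}{|\Lambda^{-1}\gamma_k|_2^q}\ +\ n^{-c\,e^{2q}},
\]
the first term coming from $t^{-q}$ times the $q$-th moment sum and the second from $\exp\{-c\,t^2(bn)\}$. Invoking the hypothesis $|\Lambda^{-1}\gamma_k|_q/|\Lambda^{-1}\gamma_k|_2=O(K^{-1/q})$ turns the first term into $O\big(\mu_q^q e^{-q^2}K^{-1}\log^{-q/2}(n)\big)$; summing over $k=1,\ldots,K$ and using $K\le 1/\kappa_n\lesssim n$ (Definition \ref{def_separation}), the polynomial parts total $O\big(\mu_q^q e^{-q^2}\log^{-q/2}(n)\big)\to0$ and the Gaussian parts $O\big(n^{1-c\,e^{2q}}\big)\to0$. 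The universal constant $e^q$ is exactly what makes $e^{-q^2}$ decaying and $c\,e^{2q}>1$ simultaneously true for every $q\ge8$, giving (i).

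Part (ii) runs along the same lines applied to the \emph{increments}: for $i=\tau_k+h$ with $1\le h\le bn$ (and symmetrically for $i<\tau_k$), $\Delta U_{i,j}-\Delta U_{\tau_k,j}$ is a $(bn)^{-1}$-scaled sum of $O(h)$ consecutive errors, so the cross term rescaled by $h^{1/2}|\Lambda^{-1}\gamma_k|_2$ has variance $\asymp(bn)^{-2}$ uniformly in $h$ and $k$. The new ingredient is the extra maximum over $h$: I would split $h\in[1,bn]$ into $O(\log n)$ dyadic blocks, on each block $[2^m,2^{m+1})$ control $\max_h\big|\sum_j(\text{increment})_j\big|$ by a maximal inequality (Doob/Ottaviani, again via Lemma \ref{lemma_burkholder}) at a level proportional to $2^{m/2}$, divide by $h^{1/2}\ge 2^{m/2}$, and apply the Fuk--Nagaev estimate on each block, then union over the $O(\log n)$ blocks and over $k$. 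This dyadic union is precisely what upgrades the rate from $\log^{1/2}(n)$ to $\log(n)$: with $t\asymp\log(n)(bn)^{-1}$ the Gaussian exponent becomes $e^{2q}\log^2(n)$, which annihilates the $O(n^2)$-size index set $\{(k,i)\}$ while leaving the polynomial remainder summable as in (i).

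The main obstacle is the first half of this scheme. Because the innovations possess only a finite $q$-th moment (Assumption \ref{asm_finitemoment}), a purely moment/Markov union bound over the polynomially-many-in-$n$ breaks cannot reach the $\log^{1/2}(n)$ (resp.\ $\log(n)$) rate; one must exploit the aggregation across the $p$ \emph{independent} cross-sectional coordinates to obtain the near-Gaussian tail $\exp\{-c\,t^2(bn)\}$ on $\sum_j\xi_{k,j}$, and at the same time keep the Fuk--Nagaev polynomial remainder summable over $k$ --- which is exactly where the spikiness control $|\Lambda^{-1}\gamma_k|_q/|\Lambda^{-1}\gamma_k|_2=O(K^{-1/q})$ is indispensable. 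Verifying that $t$ stays in the regime where the Gaussian branch of Fuk--Nagaev is effective, and that all constants can be folded into the single factor $e^q$ for every $q\ge8$, are the remaining points requiring care.
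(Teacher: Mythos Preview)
Your proposal is correct and follows essentially the same route as the paper: a Fuk--Nagaev/Nagaev inequality applied across the independent cross-sectional coordinates $j$, with Burkholder (Lemma~\ref{lemma_burkholder}) supplying the moment bounds $\|\mathcal U_{k,j}\|_q\lesssim(bn)^{-1/2}$, the ratio condition $|\Lambda^{-1}\gamma_k|_q/|\Lambda^{-1}\gamma_k|_2=O(K^{-1/q})$ absorbing the polynomial remainder after summing over $k$, and a dyadic decomposition handling the maximum over $|i-\tau_k|$ in (ii).

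The one organizational difference worth noting is in part (ii). You place the dyadic split \emph{outside}: fix a block $[2^m,2^{m+1})$, control $\max_h\big|\sum_j\vartheta_{k,j}(\Delta U_{\tau_k+h,j}-\Delta U_{\tau_k,j})/\sigma_j\big|$ by a maximal inequality, apply Fuk--Nagaev on that block, and union over the $O(\log n)$ blocks. The paper instead pushes the maximum \emph{inside}: for each fixed $j$ it bounds $\EE\big[\max_{1\le|i-\tau_k|\le bn}|S_{k,i,j}|^q/|i-\tau_k|^{q/2}\big]\lesssim\log(bn)/(bn)^q$ via dyadic blocks, sets $\mathcal S_{k,j}=\max_i|S_{k,i,j}|/|i-\tau_k|^{1/2}$, and then applies Nagaev once to $\sum_j\vartheta_{k,j}\mathcal S_{k,j}$. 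The paper's arrangement is slightly cleaner because the maximal inequality is only needed coordinate-wise on genuine partial sums $S_{k,i,j}$, whereas your version must control the maximum of the full $j$-aggregated process in $h$, which is a linear combination of three partial sums with shifted index ranges and hence not literally a martingale in $h$; it still works, but requires either decomposing into the three pieces first or invoking a maximal inequality for stationary sums rather than Doob directly. The paper also applies Nagaev a second, nested time to handle the ``big-jump'' term $\mathrm I_1=\sum_j\PP(|\vartheta_{k,j}\mathcal U_{k,j}|\ge y_j)$ in (i), rather than Markov alone, but your direct $q$-th-moment bound there gives the same order.
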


\begin{proof}[Proof of Lemma \ref{lemma_tail_cross} (i)]
For notation simplicity, we denote the normalized break size by
\begin{equation}
    \label{eq_thm4_std_breaksize}
    \vartheta_{k,j} := (\sigma_j^{-1}\gamma_{k,j})/|\Lambda^{-1}\gamma_k|_2,
\end{equation}
for $1\le k\le K$ and $1\le j\le p$, which indicates that $\sum_{j=1}^p\vartheta_{k,j}^2=1$ for all $k$. In addition, we denote the difference of the noise sums by
\begin{equation}
    \label{eq_thm4_std_diff}
    \U_{k,j} := (U_{\tau_k,j}^{(l)}-U_{\tau_k,j}^{(r)})/\sigma_j = \Delta U_{\tau_k,j}/\sigma_j.
\end{equation}
Recall $a_{i,l,j}$ defined in expression (\ref{eq_thm11_defa}), and we rewrite $\U_{k,j}$ into a sum of martingale differences, that is
\begin{equation}
    \U_{k,j} = \sum_{l\le \tau_k+bn-1}a_{\tau_k,l,j}\eta_{l,j}/(bn\sigma_j).
\end{equation}
Thus, by Assumptions \ref{asm_finitemoment} and \ref{asm_temp_dep}, it follows from Lemma \ref{lemma_burkholder} and the similar arguments in expression (\ref{eq_thm11_m2step4}) that
\begin{align}
    \label{eq_thm4_qnorm}
    \|\U_{k,j}\|_q & \lesssim \Big(\sum_{l\le \tau_k+bn-1}\big\|a_{\tau_k,l,j}\eta_{l,j}/(bn\sigma_j)\big\|_q^2\Big)^{1/2} \nonumber \\
    & = \mu_q\Big(\sum_{l\le \tau_k+bn-1}a_{\tau_k,l,j}^2/(bn\sigma_j)^2\Big)^{1/2} \lesssim 1/\sqrt{bn}.
\end{align}
uniformly over $k$ and $j$, where the constant in $\lesssim$ only depends on $q$. Let $q\ge8$. Since the noises $\epsilon_{t,j}$ are independent over $j$ by Assumption \ref{asm_sec_indep}, it follows from Corollary 1.7 in \textcite{nagaev_large_1979} that, for all $x>y>0$, and $y_1,\ldots,y_p>0$ with $y=\max_jy_j$,
\begin{align}
    \label{eq_thm4_nagaev1}
    & \quad \PP\Big(\max_{1\le k\le K }\Big|\sum_{j=1}^p\vartheta_{k,j}\U_{k,j}\Big| \ge x\Big) \nonumber \\
    & \le K\max_{1\le k\le K}\Bigg[\sum_{j=1}^p\PP\big(|\vartheta_{k,j}\U_{k,j}| \ge y_j\big) + \exp\Bigg\{\frac{-(1-c_q)^2x^2}{2e^q\sum_{j=1}^p\vartheta_{k,j}^2\EE(\U_{k,j}^2\One_{\U_{k,j}\le y_j})}\Bigg\} \nonumber \\
    & \quad + \Bigg(\frac{\sum_{j=1}^p|\vartheta_{k,j}|^q\EE(\U_{k,j}^q\One_{0\le\U_{k,j}\le y_j})}{c_q xy^{q-1}}\Bigg)^{c_q x/y}\Bigg]=:\III_1 + \III_2 + \III_3,
\end{align}
where $c_q=q/(q+2)$. We let $x=e^q\log^{1/2}(n)(bn)^{-1/2}$, and $y_j=c_qx$ for each $j$. We shall bound the three terms $\III_1$--$\III_3$ separately. For the term $\III_2$, since $K<1/b$, by expression (\ref{eq_thm4_qnorm}), we have
\begin{align}
    \III_2 \lesssim K\max_{1\le k\le K} \exp\Bigg\{\frac{-(1-c_q)^2e^{2q}\log(n)/(bn)}{2e^q/(bn)}\Bigg\},
\end{align}
which converges to 0 as $n$ diverges. 

Next, for the term $\III_3$, we denote the ratio $\varpi_{k,q}=|\Lambda^{-1}\gamma_k|_q/|\Lambda^{-1}\gamma_k|_2$. It can be shown that for all $1\le k\le K$ and $q>2$, $\varpi_{k,q}\le 1$. Note that $\sum_{j=1}^p|\vartheta_{k,j}|^q = \varpi_{k,q}^q$, which along with expression (\ref{eq_thm4_qnorm})  yields
\begin{align}
    \III_3 \lesssim K\max_{1\le k\le K}\Bigg(\frac{\varpi_{k,q}^q/(bn)^{q/2}}{c_q^qe^{q^2}\log^{q/2}(n)/(bn)^{q/2}}\Bigg),
\end{align}
which also converges to 0 as $n$ diverges due to the condition that $\max_{1\le k\le K}\varpi_{k,q}^q=O(1/K)$.

Lastly, we bound the term $\III_1$. Since $\eta_{i,j}$ are i.i.d. random variables, we can apply the Nagaev-type inequality in \textcite{nagaev_large_1979} again. Specifically, by Assumptions \ref{asm_finitemoment}, \ref{asm_temp_dep}, and expression (\ref{eq_thm4_qnorm}), we have
\begin{align}
    \label{eq_thm4_nagaev2}
    \PP\big(|\vartheta_{k,j}\U_{k,j}| \ge y_j\big) 
    & \le \sum_{l\le \tau_k+bn-1}\PP\big(|\vartheta_{k,j}a_{\tau_k,l,j}\eta_{l,j}|/(bn\sigma_j) \ge z_{j,l}\big) \nonumber \\
    & \quad + \exp\Bigg\{\frac{-(1-c_q)^2y_j^2}{2e^q\vartheta_{k,j}^2\sum_{l\le\tau_k+bn-1}a_{\tau_k,l,j}^2/(bn\sigma_j)^2}\Bigg\} \nonumber \\
    & \quad + \Bigg(\frac{|\vartheta_{k,j}|^q\sum_{l\le\tau_k+bn-1}|a_{\tau_k,l,j}|^q/(bn\sigma_j)^q}{c_q y_jz_j^{q-1}}\Bigg)^{c_q y_j/z_j} \nonumber \\
    & =:\III_{k,j,1} + \III_{k,j,2}+\III_{k,j,3},
\end{align}
where $a_{i,l,j}$ is defined in expression (\ref{eq_thm11_defa}), $z_{j,l}>0$ and $z_j=\max_lz_{j,l}$. Recall that $y_j=c_qe^q\log^{1/2}(n)(bn)^{-1/2}$ for all $j$. We let $z_{j,l}=c_qy_j$ for all $l$. Note that, by Markov's inequality and Assumption \ref{asm_finitemoment}, we have
\begin{align}
    \PP\big(|\vartheta_{k,j}a_{\tau_k,l,j}\eta_{l,j}|/(bn\sigma_j) \ge z_{j,l}\big) & \le (bn)^{q/2}\log^{-q/2}(n)\EE\big(|\vartheta_{k,j}a_{\tau_k,l,j}\eta_{l,j}|/(bn\sigma_j)\big)^q \nonumber \\
    & \lesssim (bn)^{-q/2}\log^{-q/2}(n)|\vartheta_{k,j}|^q|a_{\tau_k,l,j}/\sigma_j|^q.
\end{align}
This, together with Assumption \ref{asm_temp_dep} and the similar arguments in expression (\ref{eq_thm11_m2step4}) gives
\begin{align}
    K\max_{1\le k\le K}\sum_{1\le j\le p}\III_{k,j,1} & \lesssim K\max_{1\le k\le K}\sum_{1\le j\le p}|\vartheta_{k,j}|^q\sum_{l\le \tau_k+bn-1}(bn)^{-q/2}\log^{-q/2}(n)|a_{\tau_k,l,j}/\sigma_j|^q \nonumber \\
    & \le K\max_{1\le k\le K}\varpi_{k,q}^q\max_{1\le j\le p}\sum_{l\le \tau_k+bn-1}(bn)^{-q/2}\log^{-q/2}(n)|a_{\tau_k,l,j}/\sigma_j|^q \nonumber \\
    & \lesssim K(bn)^{1-q/2}\log^{-q/2}(n),
\end{align}
which converges to 0 as $n$ diverges. For the term $\III_{k,j,2}$, we have
\begin{align}
    K\max_{1\le k\le K}\sum_{1\le j\le p}\III_{k,j,2} & \le K\max_{1\le k\le K}\sum_{1\le j\le p}\exp\Bigg\{\frac{-(1-c_q)^2c_q^2e^{2q}\log(n)/(bn)}{2e^q\vartheta_{k,j}^2/(bn)}\Bigg\} \nonumber \\
    & \le K\max_{1\le k\le K}\exp\Bigg\{\frac{-(1-c_q)^2c_q^2e^{2q}\log(n)}{2e^q}\Bigg\},
\end{align}
and it tends to 0 as $n\rightarrow\infty$. For the last term $\III_{k,j,3}$, it follows from Assumption \ref{asm_temp_dep} and the similar arguments in expression (\ref{eq_thm11_m2step4}) that
\begin{align}
    K\max_{1\le k\le K}\sum_{1\le j\le p}\III_{k,j,3} & \lesssim  K\max_{1\le k\le K}\sum_{1\le j\le p}\Bigg(\frac{|\vartheta_{k,j}|^q/(bn)^{q-1}}{c_q^{2q}e^{q^2} \log^{q/2}(n)/(bn)^{q/2}}\Bigg) \nonumber \\
    & \le K\max_{1\le k\le K}\Bigg(\frac{\varpi_{k,q}^q}{c_q^{2q}e^{q^2} \log^{q/2}(n)(bn)^{q/2-1}}\Bigg),
\end{align}
which converges to 0 as $n\rightarrow\infty$. The desired result is achieved.
\end{proof}

\begin{proof}[Proof of Lemma \ref{lemma_tail_cross} (ii)]
We first assume that $i< \tau_k$ and the other side can be similarly dealt with. Note that
\begin{align}
    (\Delta U_{i,j}-\Delta U_{\tau_k,j})/\sigma_j = (bn\sigma_j)^{-1}\Big(\sum_{t=i-bn}^{\tau_k-bn-1}\epsilon_{t,j} - 2\sum_{t=i}^{\tau_k-1}\epsilon_{t,j} + \sum_{t=i+bn}^{\tau_k+bn-1}\epsilon_{t,j}\Big).
\end{align}
For each $1\le j\le p$, $1\le k\le K$ and $1\le |i-\tau_k|\le bn$, we denote the partial sum
\begin{equation}
    S_{k,i,j} = \sum_{t=i-bn}^{\tau_k-bn-1}\epsilon_{t,j}/(bn\sigma_j).
\end{equation}
Let $d=\lceil \log(bn)/\log(2)\rceil$. Then, by Assumptions \ref{asm_finitemoment}, \ref{asm_temp_dep} and the similar argument in expression (\ref{eq_thm11_m2step4}), we have, for some $q\ge8$,
\begin{align}
    \label{eq_thm4_max_bd}
    \EE\Big(\max_{\{i:\,1\le|i-\tau_k|\le bn}\}\frac{|S_{k,i,j}|^q}{|i-\tau_k|^{q/2}}\Big)
    & \le \sum_{g=1}^d \EE\Big(\max_{\{i:\,2^{g-1}\le |i-\tau_k|\le 2^g-1}\}\frac{|S_{k,i,j}|^q}{|i-\tau_k|^{q/2}}\Big) \nonumber \\
    & \le \sum_{g=1}^d \frac{1}{(2^{g-1})^{q/2}}\EE\big(\max_{\{i:\,1\le|i-\tau_k|\le 2^g-1}\}|S_{k,i,j}|^q\big) \nonumber \\
    & \lesssim \sum_{g=1}^d \frac{1}{(2^{g-1})^{q/2}} (2^g)^{q/2}/(bn)^q \lesssim \log(bn)/(bn)^q.
\end{align}
We denote $\S_{k,j}=\max_{\{i:\,|i-\tau_k|\le bn\}}|S_{k,i,j}|/|i-\tau_k|^{1/2}$. Then, it follows from the Nagaev-type inequality in Corollary 1.7 in \textcite{nagaev_large_1979} that, for any $x>y>0$, and $y_1,\ldots,y_p>0$ with $y=\max_jy_j$,
\begin{align}
    & \quad \PP\Big(\max_{1\le k\le K }\Big|\sum_{j=1}^p\vartheta_{k,j}\S_{k,j}\Big| \ge x\Big) \nonumber \\
    & \le K\max_{1\le k\le K}\Bigg[\sum_{j=1}^p\PP\big(|\vartheta_{k,j}\S_{k,j}| \ge y_j\big) + \exp\Bigg\{\frac{-(1-c_q)^2x^2}{2e^q\sum_{j=1}^p\vartheta_{k,j}^2\EE(\S_{k,j}^2)}\Bigg\} \nonumber \\
    & \quad + \Bigg(\frac{\sum_{j=1}^p\vartheta_{k,j}^q\EE(\S_{k,j}^q)}{c_q xy^{q-1}}\Bigg)^{c_q x/y}\Bigg]=:\III_1' + \III_2' + \III_3',
\end{align}
where $c_q=q/(q+2)$. Let $x=e^q\log(n)(bn)^{-1}$ and $y_j=c_qx$ for all $j$. Recall the normalized break size $\vartheta_{k,j}$ defined in expression (\ref{eq_thm4_std_breaksize}) and the ratio $\varpi_{k,q}=|\Lambda^{-1}\gamma_k|_q/|\Lambda^{-1}\gamma_k|_2$. Note that by Markov's inequality and expression (\ref{eq_thm4_max_bd}), we have
\begin{align}
    \sum_{j=1}^p\PP\big(|\vartheta_{k,j}\S_{k,j}| \ge y_j\big) & \le \sum_{j=1}^py_j^{-q}|\vartheta_{k,j}|^q\EE(\S_{k,j}^q)\nonumber \\
    & \lesssim \varpi_{k,q}^q \log^{-q}(n)\log(bn).
\end{align}
As a direct consequence, we obtain
\begin{align}
    \III_1' \lesssim K\max_{1\le k\le K}\varpi_{k,q}^q \log^{-q}(n)\log(bn),
\end{align}
which converges to 0 as $n\rightarrow\infty$ since $K\max_{1\le k\le K}\varpi_{k,q}^q=O(1)$.

For the term $\III_2'$, since $\sum_{j=1}^p\vartheta_{k,j}^2=1$, it follows from expression (\ref{eq_thm4_max_bd}) that
\begin{align}
    \III_2' \lesssim K\exp\Bigg\{\frac{-(1-c_q)^2e^{2q}\log^2(n)/(bn)^2}{2e^q\log(bn)/(bn)^2}\Bigg\},
\end{align}
which converges to 0 as $n$ grows since $K<1/b$. Regarding to the term $\III_3'$, by implementing expression (\ref{eq_thm4_max_bd}), we obtain
\begin{align}
    \III_3' \lesssim K\max_{1\le k\le K}\Bigg(\frac{\varpi_{k,q}^q\log(bn)/(bn)^q}{c_q^qe^{q^2} \log^{q}(n)/(bn)^q}\Bigg).
\end{align}
This result also tends to 0 as $n$ diverges since $\max_{1\le k\le K}\varpi_{k,q}^q=O(1/K)$. The desired result is achieved.

\end{proof}

\begin{proof}[Proof of Theorem \ref{thm_consistency} (ii)]
Let $\mu_i^{(l)}$ (resp. $\mu_i^{(r)}$) be $\hat\mu_i^{(l)}$ (resp. $\hat\mu_i^{(r)}$) with $Y_i$ therein replaced by $\mu(i/n)$. Recall the definitions of $U_{i,j}^{(l)}$ and $U_{i,j}^{(r)}$ in expression (\ref{eq_U}) and let $U_i^{(l)}=(U_{i,j}^{(l)})_{1\le j\le p}$ and $U_i^{(r)}=(U_{i,j}^{(r)})_{1\le j\le p}$. Then, we define $\Delta\mu_i = \mu_i^{(l)}-\mu_i^{(r)}$ and $\Delta U_i = U_i^{(l)}-U_i^{(r)}$. For each $1\le k\le K $ and all $t$ such that $|t-\tau_k|\le bn$, we have $\Delta\mu_i=\Big(1-|i-\tau_{k}|/(bn)\Big)\gamma_{k}$. Therefore, we can decompose $V_{t,j}$ into the signal and error parts, that is
\begin{align}
    V_{t,j} & = \sigma_j^{-1}(\hat\mu_t^{(l)}-\hat\mu_t^{(r)})  = \sigma_j^{-1}\gamma_{k,j}\Big(1-\frac{1}{bn}|t-\tau_k|\Big) + \sigma_j^{-1}\Delta U_{t,j}.\nonumber 
\end{align}
This leads to
\begin{align}
    & \sum_{j=1}^p (V_{\tau_k,j}^2 - V_{t,j}^2) = \sum_{j=1}^p\Bigg[\Big(\frac{2}{bn}|t-\tau_k| - \frac{1}{(bn)^2}|t-\tau_k|^2\Big)(\sigma_j^{-1}\gamma_{k,j})^2 \nonumber \\
    - & 2(\sigma_j^{-1}\gamma_{k,j})\sigma_j^{-1}(\Delta U_{t,j}-\Delta U_{\tau_k,j})  + 2(\sigma_j^{-1}\gamma_{k,j})|t-\tau_k|(bn)^{-1}\sigma_j^{-1}\Delta U_{t,j} \nonumber \\
    - & \sigma_j^{-2}(\Delta U_{t,j}^2-\Delta U_{\tau_k,j}^2)\Bigg] := m_{k,t,1}-m_{k,t,2}+m_{k,t,3}-m_{k,t,4}.
\end{align}
Now we investigate the four parts $m_{k,1}$ - $m_{k,4}$, respectively. Note that when $t=\tau_k$, all the four parts equal to zero. Therefore, we shall consider the case where $1\le |t-\tau_k|\le bn$. For the non-stochastic part $m_{k,1}$, we can directly achieve that
\begin{equation}
    m_{k,t,1} = \Big(\frac{2}{bn}|t-\tau_k| - \frac{1}{(bn)^2}|t-\tau_k|^2\Big)|\Lambda^{-1}\gamma_k|_2^2, \nonumber
\end{equation}
which gives
\begin{equation}
    \label{eq_thm4_m1}
    \min_{\substack{1\le k\le K \\ 1\le|t-\tau_k|\le bn}}m_{k,t,1}/\big(|t-\tau_k|\cdot|\Lambda^{-1}\gamma_k|_2^2\big) \gtrsim 1/(bn),
\end{equation}
uniformly over $t,k$. For $m_{k,t,2}$, by Lemma \ref{lemma_tail_cross} (ii), we have
\begin{align}
\label{eq_thm4_m2}
    \max_{\substack{1\le k\le K \\ 1\le|t-\tau_k|\le bn}}m_{k,t,2}/\big(|t-\tau_k|^{1/2}\cdot|\Lambda^{-1}\gamma_k|_2\big) = O_{\PP}\big\{\log(n)/(bn)\big\},
\end{align}
uniformly over $t,k$. 
By the similar arguments, for $m_{k,3}$, we have
\begin{align}
\label{eq_thm4_m3}
    \max_{\substack{1\le k\le K \\ 1\le|t-\tau_k|\le bn}}m_{k,t,3}/\big(|t-\tau_k|\cdot|\Lambda^{-1}\gamma_k|_2\big) = O_{\PP}\big\{\log(n)/(bn)^{3/2}\big\},
\end{align}
uniformly over $t,k$. Finally, we note that the part $m_{k,t,4}$ is centered since the expectations of two squared terms are cancelled out. Thus, its stochastic order is
\begin{align}
    & m_{k,t,4} = O_{\PP}\Big\{\sqrt{p}\big(|t-\tau_k|/(bn)^2+|t-\tau_k|^{1/2}/(bn)^{3/2}\big)\Big\}=O_{\PP}\Big\{\sqrt{p}|t-\tau_k|^{1/2}/(bn)^{3/2}\Big\}.\nonumber
\end{align}
Then it follows that
\begin{align}
    \label{eq_thm4_m4}
     \max_{\substack{1\le k\le K \\ 1\le|t-\tau_k|\le bn}}m_{k,t,4}/|t-\tau_k|^{1/2} =O_{\PP}\big\{ \sqrt{p}\log(n)/(bn)^{3/2}\big\}.
\end{align}
Let $\Gamma_k=p/(bn|\Lambda^{-1}\gamma_k|_2^2)$. By combining expressions (\ref{eq_thm4_m1})--(\ref{eq_thm4_m4}) and condition in Theorem \ref{thm_consistency}, we have, uniformly over $k$,
$$\max_{1\le k\le K }|\hat\tau_k-\tau_{k^*}|\cdot|\Lambda^{-1}\gamma_k|_2^2/(1+\Gamma_k)= O_{\PP}\big\{\log^2(n)\big\}.$$ 
\end{proof}

\begin{proof}[Proof of Theorem \ref{thm_consistency} (iii)]
Recall the definitions of $\mu_t^{(r)}$, $\mu_t^{(l)}$, $U_t^{(r)}$ and $U_t^{(l)}$ in the proof of (ii). Since $bn\gg (p\log(n))\delta_p^{-2}$,
$$\big|\mu_{\tau_k-bn}^{(l)}-\mu((\hat\tau_k-bn)/n)\big|_2=0,$$
and similarly, $\big|\mu_{\tau_k+bn-1}^{(r)}-\mu((\hat\tau_k+bn-1)/n)\big|_2=0$. Note that we also have
$$\big|\mu((\hat\tau_k-bn)/n)-\mu((\hat\tau_k+bn-1)/n)-\gamma_{k^*}\big|_2=0,$$
which yields
\begin{align}
    \label{eq_thm4_breaksize}
    \big||\Lambda^{-1}(\hat\gamma_k-\gamma_{k^*})|_2^2-\bar{c}\big| 
    & \lesssim \Big|\big|\Lambda^{-1}(\mu_{\hat\tau_k-bn}^{(l)}-\mu_{\hat\tau_k+bn-1}^{(r)}-\gamma_{k^*})\big|_2^2 \nonumber \\
    & \quad + \big|\Lambda^{-1}(U_{\hat\tau_k-bn}^{(l)}-U_{\hat\tau_k+bn-1}^{(r)})\big|_2^2 - \bar{c} \Big| \nonumber \\
    & = \Big|\big|\Lambda^{-1}(U_{\hat\tau_k-bn}^{(l)}-U_{\hat\tau_k+bn-1}^{(r)})\big|_2^2-\bar{c}\Big|.
\end{align}
It follows from the Gaussian approximation in Theorem \ref{thm1_constanttrend} and expression (\ref{eq_thm4_step1result}) that
$$\PP\Bigg(\Big|\big|\Lambda^{-1}(U_{\hat\tau_k-bn}^{(l)}-U_{\hat\tau_k+bn-1}^{(r)})\big|_2^2-\bar{c}\Big|\ge 4(p\log{(n)})^{1/2}(bn)^{-1}\Bigg)\rightarrow0,$$
as $n\rightarrow\infty$, which together with expression (\ref{eq_thm4_breaksize}) completes the proof.
\end{proof}

\subsection{Proof of Theorem \ref{thm2_constanttrend}}\label{subsec_thm2proof}

The main idea of this proof is similar to Theorem \ref{thm1_constanttrend}. To study the limiting distribution of our localized test statistic $\Q_n^{\diamond}$, we define $I_{\epsilon}^{\diamond}$ as $\Q_n^{\diamond}$ under the null, that is
\begin{equation}
    I_{\epsilon}^{\diamond} =: \max_{1\le s\le S}\max_{bn+1\le i\le n-bn}\frac{1}{\sqrt{|\L_s|}}\Bigg(\Big|\sum_{t=i-bn}^{i-1}(bn)^{-1}\Lambda^{-1}\epsilon_t-\sum_{t=i}^{i+bn-1}(bn)^{-1}\Lambda^{-1}\epsilon_t\Big|^2_{2,s}-\bar{c}_s^{\diamond}\Bigg).
\end{equation}
Then, we only need to investigate the limiting distribution of $I_{\epsilon}^{\diamond}$. Now, we scale $I_{\epsilon}^{\diamond}$ by multiplying $bn$ and write this rescaled $I_{\epsilon}^{\diamond}$ into the maximum of a sum of centered random variables $x^{\diamond}_{i,s,j}$ defined in (\ref{eq_thm2_x_def}), that is,
\begin{align}
    \label{eq_thm2_Iepsilon2}
    & bnI_{\epsilon}^{\diamond} =\max_{1\le s\le S}\max_{bn+1\le i\le n-bn}\frac{bn}{\sqrt{|\L_s|}}\sum_{j=1}^px^{\diamond}_{i,s,j},
\end{align}
where $x_{i,s,j}^{\diamond}$ is defined in expression (\ref{eq_thm2_x_def}) and can be written into
\begin{equation}
    \label{eq_thm2_xdiamond}
    x^{\diamond}_{i,s,j} = \frac{1}{(bn\sigma_j)^2}\Bigg[\Big(\sum_{t=i-bn}^{i-1}\epsilon_{t,j}-\sum_{t=i}^{i+bn-1}\epsilon_{t,j}\Big)^2 - \EE\Big(\sum_{t=i-bn}^{i-1}\epsilon_{t,j}-\sum_{t=i}^{i+bn-1}\epsilon_{t,j}\Big)^2\Bigg]\One_{j\in\L_s}.
\end{equation}
Similar to Theorem \ref{thm1_constanttrend}, given the form of the rescaled $I_{\epsilon}^{\diamond}$ above, we utilize the Gaussian approximation theorem in \textcite{chernozhukov_central_2017} to find its limiting distribution. Consider the Gaussian counterpart $z^{\diamond}_{i,s,j}$ for each $x^{\diamond}_{i,s,j}$. For $1\le j\le p$, we define $Z^{\diamond}_j=(z^{\diamond}_{bn+1,1,j},\ldots,z^{\diamond}_{n-bn,1,j},$ $\ldots,z^{\diamond}_{bn+1,S,j},\ldots,z^{\diamond}_{n-bn,S,j})^{\top}\in\RR^{S(n-2bn)}$, as independent centered Gaussian random vectors with covariance matrix $\EE(X^{\diamond}_jX^{\diamond\top}_j)\in\RR^{S(n-2bn)\times S(n-2bn)}$, where $X^{\diamond}_j=(x^{\diamond}_{bn+1,1,j},\ldots,$ $x^{\diamond}_{n-bn,1,j},\ldots,x^{\diamond}_{bn+1,S,j},\ldots,x^{\diamond}_{n-bn,S,j})^{\top}$. Note that although the length of the random vector $X^{\diamond}_j$ (resp. $Z^{\diamond}_j$) is $S(n-2bn)$, only the elements $x^{\diamond}_{i,s,j}$ (resp. $z^{\diamond}_{i,s,j}$) satisfying $j\in\L_s$ are nonzero. Moreover, the elements in the covariance matrix could be similarly developed by applying Lemma \ref{lemma_cov_thm1} as well. Let $\bar{z}_{i,s}^{\diamond}=|\L_s|^{-1/2}\sum_{j=1}^pz_{i,s,j}^{\diamond}$. Finally, by the standard argument of the Gaussian approximation, we can approach the distribution of $I_{\epsilon}^{\diamond}$ by the one of $\max_{i,s}\bar{z}_{i,s}^{\diamond}$, which completes the proof.

\begin{proof}[Proof of Theorem \ref{thm2_constanttrend}]
Now we give a rigorous proof of Theorem \ref{thm2_constanttrend}. We aim to bound the left-hand side of expression (\ref{eq_thm21_result}). Recall the Gaussian random variable $\Z_{\varphi}^{\diamond}$ defined in (\ref{eq_Z_local}), for $\varphi=(i,s)\in\N$. In this proof, we will use the notation $\Z_{\varphi}^{\diamond}=\Z_{i,s}^{\diamond}$ to explicitly indicate its dependence on $i$ and $s$. Since $\Q_n^{\diamond}=I_{\epsilon}^{\diamond}$, by Lemma \ref{lemma_chen2019}, we have 
\begin{align}
  & \sup_{u\in\mathbb{R}}\Big|\PP\big(\Q_n^{\diamond}\le u\big)-\PP\big(\max_{1\le s\le S}\max_{bn+1\le i\le n-bn}\Z_{i,s}^{\diamond}\le u\big)\Big| \nonumber\\
  \le & \sup_{u\in\mathbb{R}}\Big|\PP\big(\Q_n^{\diamond}\le u\big)-\PP\big(\max_{i,s}\bar{z}_{i,s}^{\diamond}\le u\big)\Big| + \sup_{u\in\mathbb{R}}\Big|\PP\big(\max_{i,s}\bar{z}_{i,s}^{\diamond}\le u\big) - \PP\big(\max_{i,s}\Z_{i,s}^{\diamond}\le u\big)\Big| \nonumber\\
  \lesssim & \sup_{u\in \mathbb{R}}\Big|\PP\Big(\max_{i,s}\sum_{j=1}^px^{\diamond}_{i,s,j}\le u\Big) -\PP\Big(\max_{i,s}\sum_{j=1}^pz^{\diamond}_{i,s,j}\le u\Big)\Big|+(bn)^{-1/3}\log^{2/3}(nS), \nonumber
\end{align}
where the last inequality holds by Lemma \ref{lemma_chen2019} and Lemma \ref{lemma_cov_thm1}. Recall the forms of $x_{i,j}$ and $x^{\diamond}_{i,s,j}$ under the null in expression (\ref{eq_thm1_x}) and (\ref{eq_thm2_xdiamond}) respectively. Then, it follows from expression (\ref{eq_thm11_m2result}) and Assumption \ref{asm_nbd_size_ratio} that, for some constants $k/2=3,4$
\begin{equation}
    \label{eq_thm21_m2result}
    \frac{1}{p}\sum_{j=1}^p\EE |bnx^{\diamond}_{i,s,j}|^{k/2} =  \frac{1}{p}\sum_{j\in\L_s}\EE|bnx_{i,j}|^{k/2}\lesssim |\L_s|/p.
\end{equation}
Similarly, by expressions (\ref{eq_thm11_e2result}) and (\ref{eq_thm21_m2result}), we can show that, for all $1\le j\le p$ and some constant $q>0$, 
\begin{align}
    \label{eq_thm21_e2result}
    \EE\Big(\max_{1\le s\le S}\max_{bn+1\le i\le n-bn}|bnx^{\diamond}_{i,s,j}|^{q/2}\Big) 
    \le \sum_{i=bn+1}^{n-bn}\sum_{1\le s\le S}\EE |bnx^{\diamond}_{i,s,j}|^{q/2} \lesssim nS.
\end{align}
Concerning the lower bound of $p^{-1}\sum_{j=1}^p\EE (bnx^{\diamond}_{i,s,j})^2$, expressions (\ref{eq_thm2_x_def}) and (\ref{eq_thm11_m1result}) entail that, for all $bn+1\le i\le n-bn$, $1\le s\le S$, and some constant $c>0$,
\begin{equation}
    \label{eq_thm21_m1result}
    \frac{p}{|\L_s|}\cdot\frac{1}{p}\sum_{j=1}^p\EE (bnx^{\diamond}_{i,s,j})^2 = 
    \frac{1}{|\L_s|}\sum_{j\in\L_s}\EE (bnx_{i,j})^2 \ge c.
\end{equation}

For any $r\le q/2$, we define
$$M_r^{\diamond}=\max_{1\le s\le S}\max_{bn+1\le i\le n-bn} \Big(p^{-1}\sum_{j=1}^p\big\lVert \sqrt{p/|\L_s|} bnx_{i,s,j}^{\diamond}\big\rVert_r^r\Big)^{1/r},\quad$$
and
$$\tilde M_r^{\diamond}=\max_{1\le j\le p}\Big\lVert\max_{1\le s\le S}\max_{bn+1\le i\le n-bn}\sqrt{p/|\L_s|}bnx_{i,s,j}^{\diamond}\Big\rVert_r.$$
In view of the two definitions above and the upper bounds derived in expressions (\ref{eq_thm21_m2result}) and (\ref{eq_thm21_e2result}), by Assumption \ref{asm_nbd_size_ratio}, we have
\begin{align}
    B_n & :=\max\Big\{M_3^{\diamond3},M_4^{\diamond2}, \tilde M_{q/2}^{\diamond} \Big\}\lesssim \sqrt{p/|\L_{\text{min}}}|(nS)^{2/q}. \nonumber
\end{align}
By Proposition 2.1 in \textcite{chernozhukov_central_2017}, we complete the proof.
\end{proof}

\subsection{Proof of Proposition \ref{prop_consistency}}\label{subsec_prop1proof}

\begin{proof}[Proof of Proposition \ref{prop_consistency} (i)]
Recall that $\bar{z}_{i,s}^{\diamond}=|\L_s|^{-1/2}\sum_{j=1}^pz_{i,s,j}^{\diamond}$ is a centered Gaussian variable with covariance matrix $|\L_s|^{-1}\sum_{j=1}^p\EE(X_j^{\diamond}X_j^{\diamond\top})$ which can be evaluated by Lemma \ref{lemma_cov_thm1} and expression (\ref{eq_cov_local_G}), where $z_{i,s,j}^{\diamond}=z_{i,j}\One_{j\in\L_s}$. This gives
\begin{align}
    \label{eq_prop1_step1}
    \PP\Big(\max_{1\le s\le S}\max_{bn+1\le i\le n-bn}bn\bar{z}_{i,s}^{\diamond}\ge u\Big) & \le \sum_{i=bn+1}^{n-bn} \sum_{s=1}^S \PP\Big(\frac{1}{\sqrt{|\L_s|}}\sum_{j=1}^pbnz_{i,s,j}^{\diamond} \ge u\Big) \nonumber \\
    & \le nS(2\pi)^{-1/2}(\tilde\sigma/u)e^{-u^2/(2\tilde\sigma^2)},
\end{align}
where $\tilde\sigma^2=\max_{1\le s\le S}|\L_s|^{-1}\sum_{j=1}^p\EE (bnz_{i,s,j}^{\diamond})^2$ which converges to $8$ by Lemma \ref{lemma_cov_thm1} and expression (\ref{eq_cov_local_G}). Therefore, by Assumption \ref{asm_nbd_size_ratio},
\begin{equation}
    \label{eq_prop1_step1result}
    \PP\Big(\max_{1\le s\le S}\max_{bn+1\le i\le n-bn}\bar{z}_{i,s}^{\diamond}\ge  4\log^{1/2}(n)(bn)^{-1}\Big)\rightarrow0. 
\end{equation}
Define $\I^{\diamond} := \{\V_{t,l}: 1\le t\le n,1\le l\le S, \,\forall 1\le r\le R , \S_{t,l}\cap\V_{\tau_r,s_r}=\emptyset \}$. We note that for any $\V_{i,s}\in\I^{\diamond}$, $(\EE V_i)_{j\in\L_s}=0$. It follows from Theorem \ref{thm2_constanttrend} that 
\begin{equation}
    \label{eq_prop1_outofset}
    \sup_{u\in\RR}\Big|\PP\big(\max_{\V_{i,s}\in\I^{\diamond}}|\L_s|^{-1/2}\big(|V_i|_{2,s}^2-\bar{c}_s^{\diamond}\big)\ge u\big) - \PP\big(\max_{\V_{i,s}\in\I^{\diamond}}\bar{z}_{i,s}^{\diamond}\ge u\big)\Big|\rightarrow0,
\end{equation}
where the operator $\max_{\V_{i,s}\in\I^{\diamond}}(\cdot)$ is defined as the maximum of $(\cdot)$ over all $(i,s)$ such that $\V_{i,s}\in\I^{\diamond}$.
This, along with expression (\ref{eq_prop1_step1result}) and the fact that $\max_{\V_{i,s}\in\I^{\diamond}}\bar{z}_{i,s}^{\diamond}\le \max_{bn+1\le i\le n-bn,1\le s\le S}\bar{z}_{i,s}^{\diamond}$ yields
$\PP\big(\max_{\V_{i,s}\in\I^{\diamond}}|\L_s|^{-1/2}\big(|V_i|_{2,s}^2-\bar{c}_s^{\diamond}\big)\ge \omega^{\diamond}\big)\rightarrow0$, for all $\omega^{\diamond}\ge 4\log^{1/2}(nS)(bn)^{-1}$. Thus, we have 
\begin{equation}
    \label{eq_prop1_mid}
    \PP(\A_1^{\diamond}\subset\I^{\diamond c})\rightarrow1,
\end{equation}
as $n\rightarrow\infty$, where $\A_1^{\diamond}$ is defined in Algorithm \ref{algo2}.
Recall $\W_{\tau,s}$ defined in Definition \ref{def_set}, which is the set of all the sliding windows $\S_{t,l}$ influenced by a true break located at $(\tau,s)$, which gives 
\begin{equation}
    \label{eq_I_c}
    \I^{\diamond c}=\cup_{1\le r\le R }\W_{\tau_r,s_r}.
\end{equation}
This, together with expression (\ref{eq_prop1_mid}) yields
\begin{equation}
    \label{eq_prop1_set1}
    \PP\big(\A_1^{\diamond}\subset\cup_{1\le r\le R }\W_{\tau_r,s_r}\big)\rightarrow1.
\end{equation}
Recall $U_{i,j}^{(l)}$, $U_{i,j}^{(r)}$ defined in expression (\ref{eq_U}) and the weighted break size $d_{\tau_k}=\Lambda^{-1}\gamma_k$ under the alternative. Let $\bar x_{i,s}^{\diamond}=|\L_s|^{-1/2}\sum_{j=1}^px_{i,s,j}^{\diamond}$ with $x_{i,s,j}^{\diamond}$ defined in expression (\ref{eq_thm2_xdiamond}). Then, we have
\begin{align}
    \label{eq_prop1_z_alter}
    & \min_{1\le r\le R }|\L_{s_r}|^{-1/2}\big(|V_{\tau_r}|_{2,s_r}^2-\bar{c}_{s_r}^{\diamond}\big) \nonumber \\
    = & \min_{1\le r\le R }|\L_{s_r}|^{-1/2}\Big(\sum_{j\in\L_{s_r}}\big(U_{\tau_r,j}^{(l)}-U_{\tau_r,j}^{(r)}+\sigma_jd_{\tau_r,j}\big)^2/\sigma_j^2-\bar{c}_{s_r}^{\diamond}\Big) \nonumber \\
    = & \min_{1\le r\le R }|\L_{s_r}|^{-1/2}\Big(|d_{\tau_r}|_{2,s_r}^2 - 2\sum_{j\in\L_{s_r}}d_{\tau_r,j}(U_{\tau_r,j}^{(l)}-U_{\tau_r,j}^{(r)})/\sigma_j \nonumber \\
    & + \sum_{j\in\L_{s_r}}(U_{\tau_r,j}^{(l)}-U_{\tau_r,j}^{(r)})^2/\sigma_j^2 -\bar{c}_{s_r}^{\diamond}\Big) \nonumber \\
    \gtrsim & \, |\L_{\text{min}}|^{-1/2}\delta_p^{\diamond2} - 2\max_{1\le r\le R }|\L_{s_r}|^{-1/2}\Big|\sum_{j\in\L_{s_r}}d_{\tau_r,j}(U_{\tau_r,j}^{(l)}-U_{\tau_r,j}^{(r)})/\sigma_j\Big| -\max_{1\le r\le R  }|\bar x_{\tau_r,s_r}^{\diamond}|.
\end{align}
Note that, by Lemma \ref{lemma_tail_cross} (i) and Assumption \ref{asm_nbd_size_ratio}, we obtain
\begin{align}
    \label{eq_prop_GA}
    & \quad \max_{1\le r\le R }|\L_{s_r}|^{-1/2}\Big|\sum_{j\in\L_{s_r}}d_{\tau_r,j}(U_{\tau_r,j}^{(l)}-U_{\tau_r,j}^{(r)})/\sigma_j\Big|/|\Lambda^{-1}\gamma_k|_2 \nonumber \\
    & = O_{\PP}\big(|\L_{\text{min}}|^{-1/2}\log^{1/2}(nS)/(bn)^{1/2}\big).
\end{align}
By the similar arguments in the proof for Theorem \ref{thm2_constanttrend}, we can achieve, under the null, 
$$\sup_{u\in\RR}\Big|\PP\big(\max_{1\le s\le S}\max_{bn+1\le i\le n-bn}|\bar x_{i,s}^{\diamond}|\le u\big)-\PP\big(\max_{1\le s\le S}\max_{bn+1\le i\le n-bn}|\bar{z}_{i,s}^{\diamond}|\le u\big)\Big| \rightarrow0,$$
which along with expressions (\ref{eq_prop1_step1result}), (\ref{eq_prop1_z_alter}), (\ref{eq_prop_GA}) and the conditions in Proposition \ref{prop_consistency} that $|\L_{\text{min}}|^{-1/2}\delta_p^{\diamond2}\ge3\omega^{\diamond}$ and $\omega^{\diamond}\gg 4\log^{1/2}(nS)(bn)^{-1}$ yields
\begin{equation}
    \label{eq_prop1_exceed}
    \PP\Big(\min_{1\le r\le R }|\L_{s_r}|^{-1/2}\big(|V_{\tau_r}|_{2,s_r}^2-\bar{c}_{s_r}^{\diamond}\big) \ge \omega^{\diamond} \Big)\rightarrow1.
\end{equation}
Namely, the test statistics shall exceed the threshold at the break points with probability approaching to 1, which gives 
\begin{equation}
    \label{eq_prop1_inside}
    \PP\big(\V_{\tau_r,s_r}\in\A_1^{\diamond},1\le r\le R \big)\rightarrow1.
\end{equation}
It follows from expressions (\ref{eq_prop1_set1}) and 
(\ref{eq_prop1_inside}) that 
\begin{equation}
    \label{eq_prop1_conclude}
    \PP\Big(\big\{\V_{\tau_1,s_1},\V_{\tau_2,s_2},\ldots,\V_{\tau_{R },s_{R }}\big\}\subset\A_1^{\diamond}\subset\cup_{1\le r\le R }\W_{\tau_r,s_r}\Big)\rightarrow1,
\end{equation}
as $n\rightarrow\infty$. 
Note that for any detected $\V_{\hat\tau,\hat s}$, there exists $1\le r\le R $ such that $\V_{\hat\tau,\hat s}\cap\S_{\tau_r,s_r}\neq\emptyset$. 

Finally, we show two claims for our removal step in Algorithm \ref{algo2}. First, in the $r$-th round, we remove all the vertical lines $\V_{\tau,s}$ affected by the true break located at $(\tau_r,s_r)$. Second, in each round, we remove only one true break. To see the first claim, we shall note that for any $\V_{\tau,s}\in\W_{\tau_r,s_r}$, we also have $\V_{\tau,s}\cap\S_{\tau_r,s_r}\neq\emptyset$. Therefore, by the definition of the removal step in Algorithm \ref{algo2}, any $\V_{\tau,s}\in\W_{\tau_r,s_r}$ shall be removed. 
To show the second claim, suppose that another true break $\V_{\tau_{r'},s_{r'}}$ is removed, $r\neq r'$. Then, there exists a window $\S_{\tau,s}$ such that $\V_{\hat\tau,\hat s}\cap\S_{\tau,s}\neq\emptyset$ and $\V_{\tau_{r'},s_{r'}}\cap\S_{\tau,s}\neq\emptyset$, which gives $\S_{\tau,s}\cap\W_{\tau_{r'},s_{r'}}\neq\emptyset$. Note that $\V_{\hat\tau,\hat s}\in\W_{\tau_r,s_r}$, and this together with $\V_{\hat\tau,\hat s}\cap\S_{\tau,s}\neq\emptyset$ yields $\S_{\tau,s}\cap\W_{\tau_r,s_r}\neq\emptyset$. Therefore, $\S_{\tau,s}$ intersects both $\W_{\tau_r,s_r}$ and $\W_{\tau_{r'},s_{r'}}$. This contradicts with Assumption \ref{asm_spatial_sep}, which completes the proof.
\end{proof}

\begin{proof}[Proof of Proposition \ref{prop_consistency} (ii)]
Note that by Algorithm \ref{algo2} and Assumption \ref{asm_spatial_sep}, for all $1\le r\le R $, 
\begin{equation}
    \label{eq_prop_max}
    |\L_{\hat s_r}|^{-1/2}\big(|V_{\hat\tau_r}|_{2,\hat s_r}^2-\bar{c}_{\hat s_r}^{\diamond}\big)=\max_{\V_{\tau,s}\in\W_{\tau_r,s_r}}|\L_s|^{-1/2}\big(|V_{\tau}|_{2,s}^2-\bar{c}_s^{\diamond}\big).
\end{equation} 
Therefore, with probability tending to 1, we have
\begin{equation}
    \label{eq_prop_max1}
    \III_{r,1}:=|\L_{\hat s_r}|^{-1/2}\Big[ \big(|V_{\tau_r}|_{2,\hat s_r}^2-\bar{c}_{\hat s_r}^{\diamond}\big) - \big(|V_{\hat\tau_r}|_{2,\hat s_r}^2-\bar{c}_{\hat s_r}^{\diamond}\big)\Big] \le 0,
\end{equation}
and
\begin{equation}
    \label{eq_prop_max2}
    \III_{r,2}:= |\L_{s_r}|^{-1/2}\big(|V_{\hat\tau_r}|_{2,s_r}^2-\bar{c}_{s_r}^{\diamond}\big) -|\L_{\hat s_r}|^{-1/2}\big(|V_{\hat\tau_r}|_{2,\hat s_r}^2-\bar{c}_{\hat s_r}^{\diamond}\big) \le0.
\end{equation}
We shall use $\III_{r,1}\le 0$ to show the consistency of the temporal location estimator and apply $\III_{r,2}\le 0$ for the spatial one.

We first investigate $\III_{r,1}$. Recall that by Assumption \ref{asm_nbd_size_ratio}, all the spatial neighborhoods $\L_s$ have the same size $|\L_{\text{min}}|$ up to a constant factor. Thus, by applying similar arguments in Theorem \ref{thm_consistency}, for any $(t,s)$ such that $\V_{t,s}\in\W_{\tau_r,s_r}$, we have
\begin{align}
    \label{eq_prop_part1}
    & \quad \big(|V_{\tau_r}|_{2,s}^2-\bar{c}_{s}^{\diamond}\big) - \big(|V_{t}|_{2,s}^2-\bar{c}_{s_r}^{\diamond}\big) \nonumber \\
    & \ge  (bn)^{-1}|t-\tau_r||\Lambda^{-1}\gamma_r|_{2,s}^2  - O_{\PP}\Big\{(bn)^{-1}\log(nS)|t-\tau_r|^{1/2}|\Lambda^{-1}\gamma_r|_{2,s}\Big\} \nonumber \\
    & \quad - O_{\PP}\Big\{(bn)^{-3/2}\log(nS)\sqrt{|\L_{\text{min}}|}|t-\tau_r|^{1/2}\Big\}.
\end{align}
Denote $\Phi_r = |\L_{\text{min}}|/(bn|\Lambda^{-1}\gamma_r|_2^2)$. As a direct consequence of expression (\ref{eq_prop_part1}), we achieve the temporal consistency as follows:
\begin{equation}
    \label{eq_prop_temporal}
    \max_{1\le r\le R }|\hat \tau_r-\tau_{r^*}|\cdot|\Lambda^{-1}\gamma_r|_2^2/(1+\Phi_r) = O_{\PP}\big\{\log^2(nS)\big\},
\end{equation}
uniformly over $r$. 

Next, we study $\III_{r,2}$ to show the spatial consistency. Recall that $\Delta U_{t,j}=U_{t,j}^{(l)}-U_{t,j}^{(r)}$ and  $d_{t,j}=\sigma^{-1}\big(1-|t-\tau_r|/(bn)\big)\gamma_{r,j}$. In fact, by expression (\ref{eq_thm21_o1}) and Assumption \ref{asm_local_min_breaksize}, we have $(\log(nS))^2(bn)^{-1}\delta_p^{\diamond-2}\ll (\log(nS))^{3/2}|\L_{\text{min}}|^{-1/2}$, which implies that $|\hat\tau_r-\tau_r|/(bn)\rightarrow0$. Hence, $d_{\hat\tau_r,j}=\sigma_j^{-1}\gamma_{r,j}(1+o(1))$, and we shall proceed the proof by assuming that $d_{t,j}=\sigma^{-1}\gamma_{r,j}(1+o(1))$. Now, we decompose the break statistic into the signal part and the error part, that is
$$V_{t,j} =\Lambda^{-1}\Delta\mu_{t,j}= d_{t,j} +\sigma_j^{-1}\Delta U_{t,j}.$$
Then, for all $(t,s)$ such that $\V_{t,s}\in\W_{\tau_r,s_r}$, it follows that
\begin{align}
    \label{eq_prop_part2_origin}
    & \quad  |\L_{s_r}|^{-1/2}\big(|V_t|_{2,s_r}^2-\bar{c}_{s_r}^{\diamond}\big) -|\L_s|^{-1/2}\big(|V_t|_{2,s}^2-\bar{c}_{s}^{\diamond}\big) \nonumber \\
    & \ge \Big\{|\L_{s_r}|^{-1/2}\sum_{j\in\L_{s_r}}d_{t,j}^2-|\L_s|^{-1/2}\sum_{j\in\L_s}d_{t,j}^2\Big\} \nonumber \\
    & \quad + \Big\{ |\L_{s_r}|^{-1/2}\sum_{j\in\L_{s_r}}d_{t,j}\sigma_j^{-1}\Delta U_{t,j} - |\L_s|^{-1/2}\sum_{j\in\L_s}d_{t,j}\sigma_j^{-1}\Delta U_{t,j}\Big\} \nonumber \\
    & \quad + \Big\{ |\L_{s_r}|^{-1/2}\sum_{j\in\L_{s_r}}\big(\sigma_j^{-1}\Delta U_{t,j}^2-\bar{c}_{s_r,j}^{\diamond}\big) - |\L_s|^{-1/2}\sum_{j\in\L_s}\big(\sigma_j^{-1}\Delta U_{t,j}^2-\bar{c}_{s,j}^{\diamond}\big)\Big\} \nonumber \\
    & =: \III_{r,s,t,21} + \III_{r,s,t,22} + \III_{r,s,t,23}.\end{align}
We shall study the parts $\III_{r,s,t,21}$--$\III_{r,s,t,23}$ respectively. For simplicity, we denote $\tilde\A_{s,s_r}=\L_{s_r}\setminus \L_s$, $\tilde\C_{s,s_r}=\L_s\setminus \L_{s_r}$, and $\tilde\B_{s,s_r}=\L_{s_r}\cap \L_s$. Now we first investigate the signal part $\III_{r,s,t,21}$. Note that $d_{t,j}=0$ when $j\notin\L_{s_r}$ and by the condition in Proposition \ref{prop_consistency}, $d_{t,j}=d_0$ up to a constant factor. Then, we have, for any 
\begin{align}
    & \quad \III_{r,s,t,21} \nonumber \\
    & = \frac{1}{\sqrt{|\tilde\A_{s,s_r}| + |\tilde\B_{s,s_r}|}}\sum_{j\in\tilde\A_{s,s_r}\cup\tilde\B_{s,s_r}}d_{t,j}^2 - \frac{1}{\sqrt{|\tilde\B_{s,s_r}|+|\tilde\C_{s,s_r}|}}\sum_{j\in\tilde\B_{s,s_r}}d_{t,j}^2 \nonumber \\
    & = \frac{1}{\sqrt{|\tilde\A_{s,s_r}| + |\tilde\B_{s,s_r}|}}\sum_{j\in\tilde\A_{s,s_r}}d_{t,j}^2 + \Bigg[\frac{1}{\sqrt{|\tilde\A_{s,s_r}| + |\tilde\B_{s,s_r}|}} - \frac{1}{\sqrt{|\tilde\B_{s,s_r}|+|\tilde\C_{s,s_r}|}}\Bigg]\sum_{j\in\tilde\B_{s,s_r}}d_{t,j}^2 \nonumber \\
    &  \gtrsim \Bigg[\frac{|\tilde\A_{s,s_r}|}{\sqrt{|\tilde\A_{s,s_r}|+|\tilde\B_{s,s_r}|}} \nonumber \\
    & \quad + \frac{\big||\tilde\C_{s,s_r}|-|\tilde\A_{s,s_r}|\big|\cdot|\tilde\B_{s,s_r}|}{\sqrt{\big(|\tilde\B_{s,s_r}|+|\tilde\C_{s,s_r}|\big)\big(|\tilde\A_{s,s_r}|+|\tilde\B_{s,s_r}|\big)}\cdot\Big(\sqrt{|\tilde\B_{s,s_r}|+|\tilde\C_{s,s_r}|}+\sqrt{|\tilde\A_{s,s_r}|+|\tilde\B_{s,s_r}|}\Big)}\Bigg]d_0^2 \nonumber \\
    & \gtrsim \frac{|\tilde\A_{s,s_r}|+\big||\tilde\C_{s,s_r}|-|\tilde\A_{s,s_r}|\big|}{\sqrt{|\tilde\B_{s,s_r}|}}d_0^2,
\end{align}
where the last inequality holds if we have $|\tilde\A_{s,s_r}|=o(|\tilde\B_{s,s_r}|)$ and $|\tilde\C_{s,s_r}|= o(|\tilde\B_{s,s_r}|)$. Actually, by Assumption \ref{asm_local_min_breaksize} and expressions (\ref{eq_prop_max1}) and (\ref{eq_prop_max2}), it can be shown that these two conclusions hold when $(t,s)=(\hat\tau_r,\hat s_r)$. We shall continue with the proof by assuming that these two conditions.
Next, for the cross-term $\III_{r,s,t,22}$, we have
\begin{align}
    &\quad |\III_{r,s,t,22}| \nonumber \\
    & =  \Bigg|\frac{1}{\sqrt{|\tilde\B_{s,s_r}|+|\tilde\C_{s,s_r}|}}\sum_{j\in\tilde\B_{s,s_r}}d_{t,j}\sigma_j^{-1}\Delta U_{t,j}
    \nonumber \\
    & \quad -\frac{1}{\sqrt{|\tilde\A_{s,s_r}| + |\tilde\B_{s,s_r}|}}\sum_{j\in\tilde\A_{s,s_r}\cup\tilde\B_{s,s_r}}d_{t,j}\sigma_j^{-1}\Delta U_{t,j}\Bigg| \nonumber \\
    & \le \Bigg|\frac{1}{\sqrt{|\tilde\B_{s,s_r}|+|\tilde\C_{s,s_r}|}}-\frac{1}{\sqrt{|\tilde\A_{s,s_r}|+|\tilde\B_{s,s_r}|}}\Bigg|\cdot\Big|\sum_{j\in\tilde\B_{s,s_r}}d_{t,j}\sigma_j^{-1}\Delta U_{t,j}\Big|
    \nonumber \\
    & \quad + \frac{1}{\sqrt{|\tilde\A_{s,s_r}| + |\tilde\B_{s,s_r}|}}\Big|\sum_{j\in\tilde\A_{s,s_r}}d_{t,j}\sigma_j^{-1}\Delta U_{t,j}\Big|, \nonumber 
\end{align}
which together with the similar arguments in Lemma \ref{lemma_tail_cross} (ii) implies that
\begin{align}
    & |\III_{r,s,t,22}| \nonumber \\
    = & O_{\PP}\Bigg\{\frac{d_0}{\sqrt{bn}} \Bigg[\frac{\sqrt{|\tilde\A_{s,s_r}|}}{\sqrt{|\tilde\A_{s,s_r}|+|\tilde\B_{s,s_r}|}}\Bigg] \nonumber \\
    & +  \frac{\big||\tilde\A_{s,s_r}|-|\tilde\C_{s,s_r}|\big|\cdot\sqrt{|\tilde\B_{s,s_r}|}}{\sqrt{\big(|\tilde\B_{s,s_r}|+|\tilde\C_{s,s_r}|\big)\big(|\tilde\A_{s,s_r}|+|\tilde\B_{s,s_r}|\big)}\cdot\Big(\sqrt{|\tilde\B_{s,s_r}|+|\tilde\C_{s,s_r}|}+\sqrt{|\tilde\A_{s,s_r}|+|\tilde\B_{s,s_r}|}\Big)}\Bigg\} \nonumber \\
    = & O_{\PP}\Bigg\{\frac{d_0}{\sqrt{bn}}\Bigg(\frac{\big||\tilde\A_{s,s_r}| - |\tilde\C_{s,s_r}|\big|}{|\tilde\B_{s,s_r}|} + \sqrt{\frac{|\tilde\A_{s,s_r}|}{|\tilde\B_{s,s_r}|}}\Bigg)\Bigg\}.
\end{align}
For the error part $\III_{r,s,t,23}$, it follows that
\begin{align}
    |\III_{r,s,t,23}| & = \Bigg|\frac{1}{\sqrt{|\tilde\B_{s,s_r}|+|\tilde\C_{s,s_r}|}}\sum_{j\in\tilde\B_{s,s_r}\cup\tilde\C_{s,s_r}}\big(\sigma_j^{-1}\Delta U_{t,j}^2-\bar{c}_{l,j}^{\diamond}\big) \nonumber \\
    & \qquad - \frac{1}{\sqrt{|\tilde\A_{s,s_r}|+|\tilde\B_{s,s_r}|}}\sum_{j\in\tilde\A_{s,s_r}\cup\tilde\B_{s,s_r}}\big(\sigma_j^{-1}\Delta U_{t,j}^2-\bar{c}_{s_r,j}^{\diamond}\big)\Bigg| \nonumber \\
    & \le \Bigg| \frac{1}{\sqrt{|\tilde\B_{s,s_r}|+|\tilde\C_{s,s_r}|}} - \frac{1}{\sqrt{|\tilde\A_{s,s_r}| + |\tilde\B_{s,s_r}|}} \Bigg|\cdot\Big|\sum_{j\in\tilde\B_{s,s_r}}\big(\sigma_j^{-1}\Delta U_{t,j}^2-\bar{c}_{l,j}^{\diamond}\big)\Big| \nonumber \\
    & \qquad + \frac{1}{\sqrt{|\tilde\B_{s,s_r}|+|\tilde\C_{s,s_r}|}} \Big|\sum_{j\in\tilde\C_{s,s_r}}\big(\sigma_j^{-1}\Delta U_{t,j}^2-\bar{c}_{s,j}^{\diamond}\big)\Big| \nonumber \\
    & \qquad + \frac{1}{\sqrt{|\tilde\A_{s,s_r}|+|\tilde\B_{s,s_r}|}} \Big|\sum_{j\in\tilde\A_{s,s_r}}\big(\sigma_j^{-1}\Delta U_{t,j}^2-\bar{c}_{s_r,j}^{\diamond}\big)\Big|, \nonumber
\end{align}
which gives
\begin{align}
    |\III_{r,s,t,23}| =O_{\PP}\Bigg\{\frac{1}{bn}\Bigg(\frac{\big||\tilde\A_{s,s_r}| - |\tilde\C_{s,s_r}|\big|}{|\tilde\B_{s,s_r}|} + \frac{\sqrt{|\tilde\A_{s,s_r}|}+\sqrt{|\tilde\C_{s,s_r}|}}{\sqrt{|\tilde\B_{s,s_r}|}}\Bigg)\Bigg\}.
\end{align}
We shall note that under the alternative hypothesis, it can be shown that by Assumption \ref{asm_local_min_breaksize}, for all $(t,s)$ satisfying $\V_{t,s}\in\W_{\tau_r,s_r}$, $|V_t|_{2,s}^2-\bar{c}_s^{\diamond}\ge0$ with probability tending to 1 since the signal part dominates. Finally, by applying the results of $\III_{r,s,t,21}$--$\III_{r,s,t,23}$ to expression (\ref{eq_prop_part2_origin}), we have
\begin{align}
    \label{eq_prop_three_bounds}
    & \III_{r,s,t,21}+\III_{r,s,t,22}+\III_{r,s,t,23} \nonumber \\
    \gtrsim & \frac{|\tilde\A_{s,s_r}| + \big||\tilde\A_{s,s_r}|-|\tilde\C_{s,s_r}|\big|}{\sqrt{|\tilde\B_{s,s_r}|}} d_0^2 - O_{\PP}\Bigg\{\frac{1}{\sqrt{bn}}\Bigg(\frac{\big||\tilde\A_{s,s_r}| - |\tilde\C_{s,s_r}|\big|}{|\tilde\B_{s,s_r}|} + \sqrt{\frac{|\tilde\A_{s,s_r}|}{|\tilde\B_{s,s_r}|}}\Bigg)d_0\Bigg\} \nonumber \\
    & - O_{\PP}\Bigg\{\frac{1}{bn}\Bigg(\frac{\big||\tilde\A_{s,s_r}| - |\tilde\C_{s,s_r}|\big|}{|\tilde\B_{s,s_r}|} + \frac{\sqrt{|\tilde\A_{s,s_r}|}+\sqrt{|\tilde\C_{s,s_r}|}}{\sqrt{|\tilde\B_{s,s_r}|}}\Bigg)\Bigg\}.
\end{align}
By employing Nagaev's inequality presented in \textcite{nagaev_large_1979}, similar to our approach in Lemma \ref{lemma_tail_cross} (ii), we can derive the upper bound for $\max_{1\le r\le R}\max_{{(t,s):,\V_{t,s}\in\W_{\tau_r,s_r}}}|\III_{r,s,t,22}|$. The detailed calculations supporting this claim are omitted due to their relevance. Furthermore, we can combine the results obtained for the term $\III_{r,s,t,23}$ with the Gaussian approximation in \textcite{chernozhukov_central_2017} to establish a bound for $\max_r\max_{(t,s)}|\III_{r,s,t,23}|$. Finally, we obtain
\begin{equation}
    \label{eq_prop_spatial}
    \max_{1\le r\le R }\big(|\tilde\A_{\hat s_r,s_r}| + |\tilde\C_{\hat s_r,s_r}|\big)/|\L_{\text{min}}|=O_{\PP}\Bigg\{\frac{\log^2(nS)}{bn|\L_{\text{min}}|d_0^2}\Bigg\},
\end{equation}
uniformly over $r$. The desired result is achieved.

\end{proof}

\begin{proof}[Proof of Proposition \ref{prop_consistency} (iii)]
Recall the definitions of $\mu_t^{(r)}$, $\mu_t^{(l)}$, $U_t^{(r)}$ and $U_t^{(l)}$ in the proof of (ii) for Theorem \ref{thm_consistency}. Since $bn\gg(|\L_{\text{min}}|\log(nS))^{1/2}\delta_p^{\diamond2}$, 
$$\big|\mu_{\hat\tau_r-bn}^{(l)}-\mu((\hat\tau_r-bn)/n)\big|_2=0,$$
and similarly, $\big|\mu_{\tau_r+bn-1}^{(r)}-\mu((\hat\tau_r+bn-1)/n)\big|_2=0$. By Assumption \ref{asm_nbd_size_ratio} and the condition in Proposition \ref{prop_consistency}, it follows that
$$\big|\mu((\hat\tau_r-bn)/n)-\mu((\hat\tau_r+bn-1)/n)-\gamma_{r^*}\big|_2=o(1),$$
which together with Assumption \ref{asm_nbd_size_ratio} yields
\begin{align}
    \label{eq_prop_breaksize}
    & \big||\Lambda^{-1}(\hat\gamma_r-\gamma_{\tau_{r^*}})|_2^2-\bar{c}\big| 
    \lesssim \Big|\big|\Lambda^{-1}(\mu_{\hat\tau_r-bn}^{(l)}-\mu_{\hat\tau_r+bn-1}^{(r)}-\gamma_{r^*})\big|_2^2 \nonumber \\
    + & \big|\Lambda^{-1}(U_{\hat\tau_r-bn}^{(l)}-U_{\hat\tau_r+bn-1}^{(r)})\big|_2^2 -\bar{c} \Big| =\Big|\big|\Lambda^{-1}(U_{\hat\tau_r-bn}^{(l)}-U_{\hat\tau_r+bn-1}^{(r)})\big|_2^2-\bar{c}\Big|+o(1).
\end{align}
By the Gaussian approximation in Theorem \ref{thm2_constanttrend} and expression (\ref{eq_prop1_step1result}), we have
$$\PP\Big(\Big|\big|\Lambda^{-1}(U_{\hat\tau_r-bn}^{(l)}-U_{\hat\tau_r+bn-1}^{(r)})\big|_2^2-\bar{c}\Big|\ge 4(p\log{(nS)})^{1/2}(bn)^{-1}\Big)\rightarrow0,$$
as $n\rightarrow\infty$, which along with expression (\ref{eq_prop_breaksize}) completes the proof.
\end{proof}

\subsection{Proof of Lemma \ref{lemma_nonli_longrun_corr}}

To prepare the proofs for nonlinear time series, we first introduce the predictive dependence measures following \textcite{wu_nonlinear_2005}.

\noindent\textbf{Predictive dependence measure:}

Let $\nu:\ZZ\rightarrow\ZZ^v$ be a bijection and $l=\nu^{-1}(\bbell)$. We denote the \textit{temporal} projection operator by $\P_k\cdot$, $k\ge0$, with
\begin{align}
    \label{eq_temp_proj}
    \P_k\epsilon_t(\bbell) & = \EE[\epsilon_t(\bbell)\mid\F_{k,l}] - \EE[\epsilon_t(\bbell)\mid \F_{k-1,l}],
\end{align}
and denote the \textit{temporal-spatial} projection operator by $\P_{k,l'}\cdot$, $k\ge0$, $l'\in\ZZ$, with
\begin{align}
    \label{eq_tempspatial_proj}
    \P_{k,l'}\epsilon_t(\bbell) & = \EE[\epsilon_t(\bbell)\mid \F_{k,l'}] - \EE[\epsilon_t(\bbell)\mid \F_{k,l'-1}] \nonumber \\
    & \qquad - \big(\EE[\epsilon_t(\bbell)\mid \F_{k-1,l'}] - \EE[\epsilon_t(\bbell)\mid \F_{k-1,l'-1}]\big), 
\end{align}
where $\F_{k,l'} = (\eta_{i,\nu(j)};\, i\le k, \,j\le l')$. 
Define the predictive dependence measures
\begin{align}
    \label{eq_pred_dep}
    \phi_{t,l,q}  = \big\|\P_0\epsilon_t(\bbell)\big\|_q, \quad
    \psi_{t,l,q} = \big\|\P_{0,0}\epsilon_t(\bbell)\big\|_q.
\end{align}
Note that the projections $(\P_k)_{k\ge 0}$ and $(\P_{k,l'})_{k\ge0,l'\in\ZZ}$ induce martingale differences with respect to $(\F_{k,l})_k$ and $(\F_{k,l'})_{k,l'}$, respectively. In view of Jensen's inequality, the functional and predictive dependence measures satisfy $$\phi_{k,l,q}\le \theta_{k,\bbell,q} \quad \text{and} \quad \psi_{k,l,q} \le \delta_{k,\bbell,q}.$$

Now we provide the proof for Lemma \ref{lemma_nonli_longrun_corr}.

\begin{proof}[Proof of Lemma \ref{lemma_nonli_longrun_corr}]

Let $l_1=\nu^{-1}(\bbell_1)$ and $l_2=\nu^{-1}(\bbell_2)$. Define
\begin{equation}
    \label{eq_nonli_Psi0}
    \Psi_{0,l,q} = \sum_{k\ge0}\psi_{k,l,q}.
\end{equation}
We shall first note that by the definition of the long-run covariance in (\ref{eq_nonli_longrun}) and the orthogonality of the projections $\P_{k,l'}$, we have
\begin{align}
    \label{eq_nonli_lonrun_form}
    |\sigma(\bbell_1,\bbell_2)| &= \Big|\sum_{h=-\infty}^{\infty}\EE\big(\epsilon_t(\bbell_1)\epsilon_{t+h}(\bbell_2)\big)\Big| \nonumber \\
    & = \Big|\sum_{h=-\infty}^{\infty}\EE\Big(\sum_{k_1\ge0}\sum_{l_1'\in\ZZ}\P_{t-k_1,l_1'}\epsilon_t(\bbell_1)\sum_{k_2\ge0}\sum_{l_2'\in\ZZ}\P_{t+h-k_2,l_2'}\epsilon_{t+h}(\bbell_2)\Big)\Big| \nonumber \\
    & = \Big|\sum_{h=-\infty}^{\infty}\sum_{k\ge0}\sum_{l'\in\ZZ}\EE\big(\P_{t-k,l'}\epsilon_t(\bbell_1)\P_{t+h-k,l'}\epsilon_{t+h}(\bbell_2)\big)\Big| \nonumber \\
    & = \Big|\sum_{l'\in\ZZ}\sum_{k\ge0}\sum_{h=-k}^{\infty}\EE\big(\P_{t-k,l'}\epsilon_t(\bbell_1)\P_{t+h-k,l'}\epsilon_{t+h}(\bbell_2)\big)\Big| \nonumber \\
    & \le \sum_{l'\in\ZZ}\Psi_{0,l_1-l',2}\Psi_{0,l_2-l',2},
\end{align}
where the last inequality holds by Cauchy-Schwarz inequality. Then, we denote $|\bbell_1-\bbell_2|_2=m$ and by (\ref{eq_nonli_lonrun_form}), $\sigma(\bbell_1,\bbell_2)$ can be decomposed into two parts, that is
\begin{align}
    \label{eq_nonli_sec_dep_twoparts}
    \sum_{l'\in\ZZ}\Psi_{0,l_1-l',2}\Psi_{0,l_2-l',2} & = \sum_{\{l'\in\ZZ: \, |\bbell_1-\nu(l')|_2< m/2\}}\Psi_{0,l_1-l',2} \Psi_{0,l_2-l',2} \nonumber \\
    & \quad + \sum_{\{l'\in\ZZ: \, |\bbell_1-\nu(l')|_2\geq  m/2\}}\Psi_{0,l_1-l',2} \Psi_{0,l_2-l',2} \nonumber \\
    & =:\M_1+\M_2.
\end{align}
Since $|\bbell_1-\bbell_2|_2=m$,  
$|\bbell_1-\nu(l')|_2<m/2$ implies 
$|\bbell_2-\nu(l')|_2\geq m/2.$ Note that $\Psi_{0,l-l',q}\le \Delta_{0,\bbell,q}$.
By Cauchy-Schwarz inequality and Assumption \ref{asm_nonli_sec_dep},  
\begin{align}
    \label{eq_nonli_sec_dep_part1}
    \M_1
    \leq & \sum_{\{l'\in\ZZ: \, |\bbell_2-\nu(l')|_2\geq m/2\}}\Psi_{0,l_1-l',2}\Psi_{0,l_2-l',2}\nonumber \\
    \le &  \Big(\sum_{l'\in\ZZ}\Psi_{0,l_1-l',2}\Big)^{1/2}\Big(\sum_{\{l'\in\ZZ: \, |\bbell_2-\nu(l')|_2> m/2\}}\Psi_{0,l_2-l',2}\Big)^{1/2} \nonumber \\
    = & O\big\{(m/2)^{-2\xi}\sigma(\bbell_1)\sigma(\bbell_2)\big\}.    
\end{align}
Similar argument leads to the same bound for $\M_2$ in (\ref{eq_nonli_sec_dep_twoparts}) and we obtain (\ref{eq_nonli_sec_dep2}) by combining the two parts. 
\end{proof}

\subsection{Proof of Theorem \ref{thm3_nonli}}\label{subsec_thm3proof}
Since the proof of Theorem \ref{thm3_nonli} is quite involved, we shall first introduce the proof strategies and then follow up with the rigorous details. 

\begin{remark}[Bernstein's blocks]\label{rmk_block}
The proofs of Theorem \ref{thm3_nonli} and Proposition \ref{thm3_constanttrend} essentially utilize the classical idea involving Bernstein's blocks (see e.g. \textcite{de_jong_block_1997}).
This technique was originally introduced to expect that the central limit theorem holds if a sequence of random vectors is more likely to be independent when they are far apart. Inspired by this device, suppose that our $p$ time series are cross-sectionally $m$-dependent, and we split them into a sequence of big blocks and small blocks. The small blocks are positioned between the big blocks. Consequently, when we get rid of the small blocks, we create the asymptotic independence among the big blocks. We could choose proper sizes for big and small blocks to ensure that the sum of all small blocks is asymptotically negligible. The rigorous proof of applying this technique shall be illustrated in this section.
\end{remark}

Recall the definition of $\epsilon_t(\bbell)$ in expression (\ref{eq_epsilon_nonlinear}) and we denote the $m$-approximation of $\epsilon_t(\bbell)$ by $\epsilon_t^{(m)}(\bbell)$, that is
$$\epsilon_t^{(m)}(\bbell) = f\big(\eta_{t-k,\bbell'}, \, k\ge 0, \, |\bbell'-\bbell|_2\le m/2\big).$$
Define the weighted sums of errors on the left and right side of time point $i$ respectively by
\begin{equation}
    \label{eq_U_nonli}
    U_i^{(l)}(\bbell) = \sum_{t=i-bn}^{i-1}\epsilon_t(\bbell)/(bn) \quad U_i^{(r)}(\bbell) = \sum_{t=i}^{i+bn-1}\epsilon_t(\bbell)/(bn).
\end{equation}
Let $x_i^{(m)}(\bbell)$ (resp. $I_{\epsilon}^{(m)}$, $U_i^{-(m)}(\bbell)$, $U_i^{+(m)}(\bbell)$) be $x_i(\bbell)$ (resp. $I_{\epsilon}$, $U_i^-(\bbell)$, $U_i^+(\bbell)$) with $\epsilon_t(\bbell)$ therein replaced by $\epsilon_t^{(m)}(\bbell)$, and define
$$\I_{i,\B_s}^{(m)}=\frac{1}{\sqrt{|\B_s\cap\L_0|}}\Big|\sum_{\bbell\in\B_s\cap\L_0} x_i^{(m)}(\bbell)\Big|,\qquad \I_{\epsilon}^{(m)}=\max_{1\le s\le S}\max_{bn+1\le i\le n-bn}\I_{i,\B_s}^{(m)}.$$

\begin{remark}[$m$-dependence of $x_i^{(m)}(\bbell)$]
    \label{rmk:mdepepsilont}
    By construction, for any $|\bbell_1-\bbell_2|_2>m$, $\epsilon_t^{(m)}(\bbell_1)$ is independent of $\epsilon_t^{(m)}(\bbell_2)$. As a consequence, $x_i^{(m)}(\bbell_1)$ is independent of $x_i^{(m)}(\bbell_2)$ for any $|\bbell_1-\bbell_2|_2>m$ as well. To see this, note that $
    \epsilon_t^{(m)}(\bbell_1)$ only depends on $\eta_{k,\bbell_0}$ for $k\leq t$ and $|\bbell_1-\bbell_0|_2\leq m/2.$ When $|\bbell_1-\bbell_2|_2>m$, there does not exist any $\bbell_0\in\ZZ^v,$ such that both $|\bbell_1-\bbell_0|_2$ and $|\bbell_2-\bbell_0|_2$ are upper bounded by $m/2.$ Hence, $\epsilon_t^{(m)}(\bbell_1)$ is independent of $\epsilon_t^{(m)}(\bbell_2)$. 
\end{remark}

Now, we divide each dimension $r$ of $\ZZ^v$ into consecutive big blocks and small blocks, $1\leq r\leq v$. We let the length of each small block be $m$ and big block be $Lm$, where $L$ is some large positive integer. 
Specifically, we define the $k$-th big block for the $r$-th dimension as the interval
\begin{equation}
    \label{eq_bigblock}
    \Theta_{r,k}=\big((k-1)(L+1)m, (k-1)(L+1)m+Lm\big],
\end{equation}
and the small block as the interval
\begin{equation}
    \label{eq_smallblock}
    \theta_{r,k}=\big((k-1)(L+1)m+Lm, k(L+1)m\big],
\end{equation}
for $k\in\ZZ$. 
Let $\underline{k}=(k_1,k_2,\ldots,k_v)^{\top}$, and define the hyper-block indexed by $\underline{k}$ as 
\begin{equation}
    \label{eq_block}
    \Omega(\underline{k}) = \big(\Theta_{1,k_1}\cup\theta_{1,k_1}\big) \times \big(\Theta_{2,k_2}\cup\theta_{2,k_2}\big) \times \cdots\times \big(\Theta_{v,k_v}\cup\theta_{v,k_v}\big).
\end{equation}
We consider the union of all the hyper-blocks which contains points in spatial space $\L_0$ and denote it by $\B_0$, that is,
\begin{equation}
    \label{eq_B0}
    \B_0 = \cup_{\big\{\underline{k}\in\ZZ^v: \Omega(\underline{k})\cap\L_0 \neq \emptyset\big\}}\Omega(\underline{k}).
\end{equation}
Let $N$ be the total number of hyper-blocks in $\B_0$.
Note that each hyper-block contains a big hyper-block and the remaining small blocks. We define the big hyper-block indexed by $\underline{k}$ as
\begin{equation}
    \label{eq_Omega}
    \Omega^{\dagger}(\underline{k}) = \Theta_{1,k_1} \times \Theta_{2,k_2} \times \cdots\times \Theta_{v,k_v}.
\end{equation}
Denote $G_i(\underline{k})$ as the summation of $x_i^{(m)}(\bbell)$ within the big hyper-block indexed by $\underline{k}$,
\begin{align}
\label{eq:def_Gibs}
G_i(\underline{k})=\sum_{\{\bbell\in\Omega^{\dagger}(\underline{k})\cap\L_0\}}x_i^{(m)}(\bbell).
\end{align}
Further, we combine all $x_i^{(m)}(\bbell)$ within the big hyper-blocks in $\B_s$, denote it by $\tilde \I_{i,\B_s}$ after rescaling, and define the maximum of $\tilde \I_{i,\B_s}$ over all the spatial neighborhoods and moving windows by $\tilde \I_{\epsilon}$, that is
\begin{equation}
    \label{eq_In_tilde}
    \tilde \I_{i,\B_s}= \frac{1}{\sqrt{|\B_s\cap\L_0|}} \sum_{\{\underline{k}: \,\Omega^{\dagger}(\underline{k})\subset\B_s\}} G_i(\underline{k}), \qquad \tilde \I_{\epsilon} = \max_{1\le s\le S} \max_{bn+1\le i\le n-bn}\tilde \I_{i,\B_s}.
\end{equation}

\begin{remark}[Independence of big hyper-blocks $G_i(\underline{k})$]
\label{rmk_ind_block}
Note that $G_i(\underline{k})$, the summations of $x_i^{(m)}(\bbell)$ within big hyper-blocks, are independent for different $\underline{k}$. To see this, note that
$x_i^{(m)}(\bbell)$ only depends on $\eta_{t,\bbell'}$ for $|\bbell-\bbell'|_2\leq m/2.$ Hence,
$G_i(\underline{k})$ only depends on $\eta_{t,\bbell'}$ with $|\bbell'-\bbell|_2\leq m/2$ and $\bbell\in \Omega^{\dagger}(\underline{k})\cap\L_0.$ 
For any $\tilde{\underline k} \neq \underline{k},$ the $G_i(\tilde{\underline k})$ only depends on $\eta_{t,\tilde \bbell'}$ with  
$|\tilde \bbell'-\tilde \bbell|_2\leq m/2$ and $\tilde\bbell\in \Omega^{\dagger}(\tilde{\underline k})\cap\L_0.$ 
Therefore, $$|\tilde \bbell'-\bbell'|_2
\geq |\tilde \bbell-\bbell|_2
-|\tilde \bbell'-\tilde\bbell|_2
-|\bbell'-\bbell|_2\geq (m+1)-m/2-m/2=1.$$ 
This indicates that the innovations $\eta_{t,\bbell'}$ which $G_i(\underline{k})$ depends on do not overlap for different $\underline{k}$. As a direct consequence, $G_i(\underline{k})$ are independent of each other.
\end{remark}

\begin{proof}[Proof of Theorem \ref{thm3_nonli}]
Now we provide the rigorous proof of Theorem \ref{thm3_nonli}. Recall the Gaussian random variable $\tilde\Z_{\varphi}$ defined in (\ref{eq_Z_nonli}), for $\varphi=(i,s)\in\N$. In this proof, we will use the notation $\tilde\Z_{\varphi}=\tilde\Z_{i,\B_s}$ to explicitly indicate its dependence on $i$ and $s$. Recall the definition of $x_i(\bbell)$ in expression (\ref{eq_test_nonli}). For brevity, we define
\begin{equation}
    \label{eq_x_nonli_nbd}
    x_{i,s}^{\star}(\bbell) = x_i(\bbell)\One_{\bbell\in\B_s\cap\L_0}.
\end{equation}
We denote the Gaussian counterpart for each $x_{i,s}^{\star}(\bbell)$ by $z_{i,s}^{\star}(\bbell)$. Then, we let $Z^{\star}(\bbell)=(z^{\star}_{bn+1,1}(\bbell),\ldots,$ $z^{\star}_{n-bn,1}(\bbell),\ldots,z^{\star}_{bn+1,S}(\bbell),\ldots,z^{\star}_{n-bn,S}(\bbell))^{\top}\in\RR^{S(n-2bn)}$, $\bbell\in\B_0\cap\L_0$, which are the independent centered Gaussian random vectors with covariance matrix $\EE(X^{\star}(\bbell)X^{\star\top}(\bbell))\in\RR^{S(n-2bn)\times S(n-2bn)}$, where $X^{\star}(\bbell)=(x^{\star}_{bn+1,1}(\bbell),\ldots,x^{\star}_{n-bn,1}(\bbell),\ldots,$ $x^{\star}_{bn+1,S}(\bbell),\ldots,x^{\star}_{n-bn,S}(\bbell))^{\top}$. Note that although the length of the random vector $X^{\star}(\bbell)$ (resp. $Z^{\star}(\bbell)$) is $S(n-2bn)$, only the elements $x^{\star}_{i,s}(\bbell)$ (resp. $z^{\star}_{i,s}(\bbell)$) satisfying $\bbell\in\B_s\cap\L_0$ are nonzero. Moreover, the elements in the covariance matrix could be similarly developed by applying Lemma \ref{lemma_cov_thm3} as well. Let 
\begin{equation}
    \label{eq_z_nonli_nbd_ave}
    \bar{z}_{i,s}^{\star}=|\B_s\cap\L_0|^{-1/2}\sum_{\bbell\in\B_s\cap\L_0}z_{i,s}^{\star}(\bbell).
\end{equation}

Recall the definition $\tilde \I_{\epsilon}$ in expression (\ref{eq_In_tilde}) and the fact that $\Q_n=\I_{\epsilon}$ under the null hypothesis. Then, for any $\alpha>0$, we have
\begin{align}
    \label{eq_thm3_nonli_goal}
    & \quad \sup_{u\in\RR}\big[\PP\big(\tilde\Q_n \ge u\big) - \PP\big(\max_{1\le s\le S}\max_{bn+1\le i\le n-bn}\tilde\Z_{i,\B_s} \ge u\big) \big] \nonumber \\
    & \le \PP\big(bn|\I_{\epsilon}-\tilde \I_{\epsilon}| \ge \alpha\big) + \sup_{u\in\RR}\big|\PP\big(\tilde \I_{\epsilon} \le u\big)-\PP\big(\max_{i,s}\bar{z}_{i,s}^{\star}\le u\big)\big| \nonumber \\
    & \qquad + \sup_{u\in\RR}\PP\big(\big|\max_{i,s}bn\tilde\Z_{i,\B_s}-u\big|\le \alpha\big) + \sup_{u\in\RR}\big|\PP\big(\max_{i,s}\bar{z}_{i,s}^{\star}\le u\big) - \PP\big(\max_{i,s}\tilde\Z_{i,\B_s} \le u\big)\big|  \nonumber \\
    & =: \tilde\III_1+\tilde\III_2+\tilde\III_3+\tilde\III_4.
\end{align}
It directly follows from Lemmas \ref{lemma_chen2019} and \ref{lemma_cov_thm3} that $\tilde \III_4\lesssim (bn)^{-1/3}\log^{2/3}(nS)$. Next, we shall investigate the parts $\tilde\III_1$--$\tilde\III_3$ separately. For the simplicity of notation, we denote the ratio of volumes of neighborhoods $\B_s$ and $\B_0$ by $\rho_s$, that is
\begin{equation}
    \label{eq_volume_ratio}
    \rho_s = \lambda(\B_s)/\lambda(\B_0), \quad \rho_{\text{min}} = \min_{1\le s\le S}\rho_s.
\end{equation}
\noindent Define
\begin{equation}
    \label{eq_Delta}
    \Delta_1^{\star} = \Bigg(\frac{(nS)^{4/q}\rho_{\text{min}}^{-1}\log^7(nSN)}{N}\Bigg)^{1/6}, \quad \Delta_2^{\star} = \Bigg(\frac{(nS)^{4/q}\rho_{\text{min}}^{-1}\log^3(nSN)}{N^{1-4/q}}\Bigg)^{1/3}.
\end{equation}
Let $\alpha = z+L^{-1/2}\log^{1/2}(nS)$. For the part $\tilde\III_1$, by Lemmas \ref{lemma_nonli_m_approx} and \ref{lemma_nonli_block}, we have
\begin{align}
    \label{eq_thm3_nonli_part1}
    \tilde\III_1 & \le  \PP\big(bn|\I_{\epsilon}-\I_{\epsilon}^{(m)}| \ge z\big) + \PP\big(bn|\I_{\epsilon}^{(m)}-\tilde\I_{\epsilon}| \ge L^{-1/2}\log^{1/2}(nS)\big) \nonumber \\
    & \lesssim  nSm^{-q\xi/2}z^{-q/2} + \Delta_1^{\star} + \Delta_2^{\star}. 
\end{align}
By the Gaussian approximation in Lemma \ref{lemma_nonli_GA}, we have
\begin{align}
    \label{eq_thm1_part21}
    \tilde\III_2 \lesssim \Delta_1^{\star} + \Delta_2^{\star}.
\end{align}
For the part $\tilde\III_3$, by applying Lemma \ref{lemma_nazarov}, we obtain
\begin{equation}
    \label{eq_thm1_part3}
    \tilde\III_3 \lesssim \alpha\sqrt{\log(nSN)}.
\end{equation}
Let $z=(nS)^{2/(2+q)}m^{-q\xi/(2+q)}$ and combine the results from parts $\tilde\III_1$--$\tilde\III_3$, we have
\begin{align*}
    \tilde\III_1+\tilde\III_2+\tilde\III_3 \lesssim (nS)^{2/(2+q)}m^{-q\xi/(2+q)}\log^{1/2}(nSN) + \Delta_1^{\star} + \Delta_2^{\star} + L^{-1/2}\log(nSN).
\end{align*}
Recall $c_{p,n}$ defined in (\ref{eq_cpn}). We choose $L = c_{p,n}^{1/(4v)}$, $m= c_{p,n}^{1/(4v)}(nS)^{2/(q\xi)}(\log(pnS))^{\frac{2+q}{2q\xi}}$, and $N=B_{\text{min}}(Lm)^{-v}$ to balance the three parts $\tilde\III_1$--$\tilde\III_3$.

Finally, we combine the results from the parts $\tilde \III_1$--$\tilde \III_4$, and apply similar arguments for the other side of the inequality in expression (\ref{eq_thm3_nonli_goal}), which completes the proof.

%

\end{proof}

\begin{lemma}[$m$-dependent approximation]
\label{lemma_nonli_m_approx}
Suppose that Assumptions \ref{asm_nonli_finitemoment}--\ref{asm_nonli_sec_dep} hold. For constant $\xi>1$, we have
$$\PP\Big(\max_{1\le s\le S}\max_{bn+1\le i\le n-bn}bn\big|\I_{i,\B_s}-\I_{i,\B_s}^{(m)}\big|>z\Big)\lesssim nS z^{-q/2}m^{-q\xi/2},$$
where the constant in $\lesssim$ is independent of $n,p,b$ and $S$.
\end{lemma}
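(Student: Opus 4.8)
The plan is to control the difference $bn|\I_{i,\B_s}-\I_{i,\B_s}^{(m)}|$ uniformly over $i$ and $s$ by a union bound over the $O(nS)$ pairs $(i,s)$, combined with a Markov-type inequality applied to the $(q/2)$-th moment of a single term. First I would observe that, by the triangle inequality for $|\cdot|$ and the definition of $\I_{i,\B_s}$, $\I_{i,\B_s}^{(m)}$ as $\ell^2$-aggregations of $x_i(\bbell)$ and $x_i^{(m)}(\bbell)$ over $\bbell\in\B_s\cap\L_0$ rescaled by $|\B_s\cap\L_0|^{-1/2}$, it suffices to bound
$$\frac{1}{\sqrt{|\B_s\cap\L_0|}}\Big|\sum_{\bbell\in\B_s\cap\L_0}\big(x_i(\bbell)-x_i^{(m)}(\bbell)\big)\Big|.$$
So the task reduces to bounding the $L^{q/2}$-norm of a single such sum (times $bn$), and then the union bound over the at most $nS$ choices of $(i,\B_s)$ together with Markov's inequality gives the factor $nS z^{-q/2}$; the remaining job is to show the $L^{q/2}$-norm of $bn\cdot|\B_s\cap\L_0|^{-1/2}\sum_{\bbell}(x_i(\bbell)-x_i^{(m)}(\bbell))$ is $O(m^{-\xi})$.

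The core estimate is on $\|bn(x_i(\bbell)-x_i^{(m)}(\bbell))\|_{q/2}$ and, more importantly, on the aggregated quantity. Recall $x_i(\bbell)=(V_i(\bbell)-\EE V_i(\bbell))^2-c(\bbell)$ is quadratic in the error sums $\Delta U_i(\bbell)=U_i^{(l)}(\bbell)-U_i^{(r)}(\bbell)$; writing $bn\,x_i(\bbell)=(bn)^{-1}\sigma^{-2}(\bbell)[(\sum \epsilon_t(\bbell))^2 - \EE(\cdot)]$, and similarly for the $m$-truncated version with $\epsilon_t^{(m)}(\bbell)$, the difference factors as $a\cdot b$ where $a$ involves $\epsilon_t(\bbell)-\epsilon_t^{(m)}(\bbell)$ and $b$ involves a sum of $\epsilon_t(\bbell)+\epsilon_t^{(m)}(\bbell)$-type terms; the former is controlled by the functional dependence measure tail $\sum_{|\bbell'-\bbell|_2>m/2}\|\cdot\|$, which Assumption \ref{asm_nonli_sec_dep} (in its switched form \eqref{eq_nonli_sec_dep_11}) bounds by $C'(m/2)^{-\xi}$. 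I would apply the martingale decomposition used in the proof of Theorem \ref{thm1_constanttrend} (the $D_{i,l}$ differences), Lemma \ref{lemma_burkholder} for the temporal sum, and Lemma \ref{lemma_snorm} (or a direct Cauchy–Schwarz / Minkowski argument over $\bbell$) to convert the cross-sectional sum of quadratic terms into a Frobenius-norm-type bound. The key point is that the cross-sectional aggregation $|\B_s\cap\L_0|^{-1/2}\sum_\bbell$ of differences, each of whose "dependence radius" exceeds $m/2$, produces a bound proportional to the tail sum in \eqref{eq_nonli_sec_dep} rather than to $\sqrt{|\B_s\cap\L_0|}$; combined with Assumption \ref{asm_density} relating counts to volumes, this yields the $m^{-q\xi/2}$ decay after taking the $(q/2)$-th power. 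Finiteness of $q$-th moments (Assumption \ref{asm_nonli_finitemoment}) and the temporal decay (Assumption \ref{asm_nonli_temp_dep}, ensuring the long-run variances $\sigma^2(\bbell)$ are bounded below) keep all constants uniform in $n,p,b,S$.

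The main obstacle I anticipate is the bookkeeping in the cross-sectional aggregation of a \emph{quadratic} functional: unlike a linear statistic, $x_i(\bbell)-x_i^{(m)}(\bbell)$ contains cross-products $\epsilon_t(\bbell)\epsilon_s(\bbell)$ minus their truncated analogues, so one must carefully split $(\epsilon\epsilon - \epsilon^{(m)}\epsilon^{(m)}) = (\epsilon-\epsilon^{(m)})\epsilon + \epsilon^{(m)}(\epsilon-\epsilon^{(m)})$ and track that each summand over $\bbell\in\B_s\cap\L_0$ involves the tail $\delta_{k,\bbell,q}$-type measure for $|\bbell'|_2\gtrsim m$; getting the aggregated $L^{q/2}$-norm down to $O(m^{-\xi})$ (not $O(\sqrt{|\B_s\cap\L_0|}\,m^{-\xi})$) requires applying Minkowski's inequality in the right order — summing the $L^{q/2}$-norms over the "perturbation direction" $\bbell'$ first, which is exactly what condition \eqref{eq_nonli_sec_dep} is designed to make summable — before invoking orthogonality/Burkholder in the temporal direction. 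Once that ordering is set up correctly, the rest is routine: raise to the $(q/2)$ power, multiply by $nS$ from the union bound, and conclude.
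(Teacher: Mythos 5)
Your proposal follows essentially the same route as the paper's proof: a union bound over the $nS$ pairs $(i,s)$ plus Markov's inequality on the $(q/2)$-th moment, the splitting of the quadratic truncation difference into cross terms involving $\epsilon-\epsilon^{(m)}$, a martingale (projection) decomposition with Burkholder in the temporal/perturbation indices, and Assumption \ref{asm_nonli_sec_dep} to make the tail over far-away perturbation sites contribute the $m^{-\xi}$ factor while the cross-sectional aggregation retains square-root cancellation. The only cosmetic difference is that the paper carries out the cancellation over $\bbell\in\B_s\cap\L_0$ via Jensen–Hölder and the half-distance splitting (pairwise decay in $|\bbell_1-\bbell_2|$ derived from the same assumption) using the temporal-spatial projections $\P_{k,l'}$, rather than the linear-case tools (the $D_{i,l}$ differences and Lemma \ref{lemma_snorm}) you cite, but the substance of the argument is the same.
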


\begin{proof}
First, we note that, for any $z\in\RR$,
\begin{align}
    & \PP\Big(\max_{1\le s\le S}\max_{bn+1\le i\le n-bn}bn\big|\I_{i,\B_s}-\I_{i,\B_s}^{(m)}\big|>z\Big) \nonumber \\
    \le & \sum_{1\le s\le S}\sum_{bn+1\le i\le n-bn}\PP\Big(bn\big|\I_{i,\B_s}-\I_{i,\B_s}^{(m)}\big|>z\Big).
\end{align}
We aim to derive the upper bound of $\|\I_{i,\B_s}-\I_{i,\B_s}^{(m)}\|_{q/2}$, for $q\ge4$. Recall the definitions of $U_i^-(\bbell)$, $U_i^+(\bbell)$ in expression (\ref{eq_U_nonli}) and $U_i^{-(m)}(\bbell)$, $U_i^{+(m)}(\bbell)$ defined below (\ref{eq_U_nonli}). Also, recall the operator $\E_0(\cdot)=\cdot -\EE(\cdot)$. Then, it follows that 
\begin{align}
    \label{eq_nonli_mdep}
    & bn\big|\I_{i,\B_s}-\I_{i,\B_s}^{(m)}\big| \nonumber \\
    = &  \frac{bn}{\sqrt{|\B_s\cap\L_0|}}\Big|\sum_{\bbell\in\B_s\cap\L_0}\big[x_i(\bbell) - x_i^{(m)}(\bbell)\big]\Big| \nonumber \\
    = & \frac{bn}{\sqrt{|\B_s\cap\L_0|}}\Big|\sum_{\bbell\in\B_s\cap\L_0}\E_0\big[\big(U_i^-(\bbell) - U_i^+(\bbell)\big)^2 - \big(U_i^{-(m)}(\bbell) - U_i^{+(m)}(\bbell)\big)^2\big]\Big| \nonumber \\
    \lesssim &  \frac{bn}{\sqrt{|\B_s\cap\L_0|}}\Big|\sum_{\bbell\in\B_s\cap\L_0}\E_0\big[\big(U_i^-(\bbell)\big)^2 - \big(U_i^{-(m)}(\bbell)\big)^2\big] \nonumber \\
    & \quad + \sum_{\bbell\in\B_s\cap\L_0}\E_0\big[\big(U_i^+(\bbell)\big)^2 - \big(U_i^{+(m)}(\bbell)\big)^2\big]\Big|.
\end{align}
Note that we can decompose each $\epsilon_t(\bbell)$ into a sequence of martingale differences $\P_{k,l'}\epsilon_t(\bbell)$, that is
\begin{equation}
    \label{eq_MD_decompose}
    \epsilon_t(\bbell) = \sum_{k\le t}\sum_{l'\in\ZZ}\P_{k,l'}\epsilon_t(\bbell).
\end{equation}
This, along with the definition of $U_i^-(\bbell)$ in expression (\ref{eq_U_nonli}) gives
\begin{align}
    \label{eq_MD_Ul}
    \E_0\big[\big(U_i^-(\bbell)\big)^2\big] & = \frac{1}{(bn\sigma(\bbell))^2}\sum_{k_1,k_2\le i-1}\sum_{l_1',l_2'\in\ZZ}\E_0\big[M_{i,k_1,l_1'}(\bbell)M_{i,k_2,l_2'}(\bbell)\big],
\end{align}
where $M_{i,k,l'}(\bbell)$ is a sequence of martingale differences with respect to $k$ and $l'$ defined as
\begin{equation}
    \label{eq_MD}
    M_{i,k,l'}(\bbell) = \sum_{t=(i-bn)\vee k}^{i-1}\P_{k,l'}\epsilon_t(\bbell).
\end{equation}
Let $M_{i,k,l'}^{(m)}(\bbell)$ be $M_{i,k,l'}(\bbell)$ with $\epsilon_t(\bbell)$ therein replaced by $\epsilon_t^{(m)}(\bbell)$. 
Then, for $q\ge 4$, by Burkholder's inequality, we have
\begin{align}
    \label{eq_nonli_mdep_goal}
    & \quad \Big\|\sum_{\bbell\in\B_s\cap\L_0}\E_0\big[\big(U_i^-(\bbell)\big)^2 - \big(U_i^{-(m)}(\bbell)\big)^2\big]\Big\|_{q/2} \nonumber \\
    & = \frac{1}{(bn)^2}\Big\|\sum_{\bbell\in\B_s\cap\L_0}\sum_{k_1,k_2\le i-1}\sum_{l_1',l_2'\in\ZZ}\E_0\big[M_{i,k_1,l_1'}(\bbell)M_{i,k_2,l_2'}(\bbell) \nonumber \\
    & \quad - M_{i,k_1,l_1'}^{(m)}(\bbell)M_{i,k_2,l_2'}^{(m)}(\bbell)\big]/\sigma^2(\bbell)\Big\|_{q/2} \nonumber \\
    & \lesssim \frac{1}{(bn)^2}\Big(\sum_{k_1,k_2\le i-1}\sum_{l_1',l_2'\in\ZZ}\Big\|\sum_{\bbell\in\B_s\cap\L_0}\E_0\big[M_{i,k_1,l_1'}(\bbell)M_{i,k_2,l_2'}(\bbell) \nonumber \\
    & \quad - M_{i,k_1,l_1'}^{(m)}(\bbell)M_{i,k_2,l_2'}^{(m)}(\bbell)\big]/\sigma^2(\bbell) \Big\|_{q/2}^2\Big)^{1/2}.
\end{align}
To simplify the notation, we let $A_{l',\bbell} = M_{i,k_1,l'}(\bbell)/\sigma(\bbell)$, $B_{l',\bbell} = M_{i,k_2,l'}(\bbell)/\sigma(\bbell)$, and set $A_{l',\bbell}^{(m)}$, $B_{l',\bbell}^{(m)}$ to be the corresponding $m$-dependent versions. Note that
\begin{align}
    \label{eq_nonli_mdep_goal_simple}
    & \quad \sum_{l_1',l_2'\in\ZZ}\Big\|\sum_{\bbell\in\B_s\cap\L_0} A_{l_1',\bbell}(B_{l_2',\bbell} - B_{l_2',\bbell}^{(m)})\Big\|_{q/2}^2 \nonumber \\
    & \le \sum_{l_1',l_2'\in\ZZ}\Big\|\Big(\sum_{\bbell\in\B_s\cap\L_0} A_{l_1',\bbell}(B_{l_2',\bbell} - B_{l_2',\bbell}^{(m)})\Big)^2\Big\|_{q/2} \nonumber \\
    & = \sum_{l_1',l_2'\in\ZZ}\Big\|\sum_{\bbell_1,\bbell_2\in\B_s\cap\L_0} \big[A_{l_1',\bbell_1}(B_{l_2',\bbell_1} - B_{l_2',\bbell_1}^{(m)})\big] \cdot \big[A_{l_1',\bbell_2}(B_{l_2',\bbell_2} - B_{l_2',\bbell_2}^{(m)})\big]\Big\|_{q/2} \nonumber \\
    & \le \sum_{\bbell_1,\bbell_2\in\B_s\cap\L_0} \sum_{l_1'\in\ZZ}\big\|A_{l_1',\bbell_1}A_{l_1',\bbell_2}\big\|_{q} \cdot \sum_{l_2'\in\ZZ}\big\|(B_{l_2',\bbell_1}-B_{l_1',\bbell_1}^{(m)})\cdot(B_{l_2',\bbell_2} - B_{l_2',\bbell_2}^{(m)})\big\|_{q},
\end{align}
where the first inequality is by Jensen's inequality and the second inequality holds due to H\"{o}lder's inequality. Recall the bijection $\nu: \ZZ\rightarrow\ZZ^v$. If $|\bbell_1-\nu(l_1')|_2 < |\bbell_1-\bbell_2|_2/2$, then $|\bbell_2-\nu(l_1')|_2 > |\bbell_1-\bbell_2|_2/2$. For $i-bn\le k_1 \le i-1$, it follows from the similar arguments in expressions (\ref{eq_nonli_sec_dep_twoparts}) and (\ref{eq_nonli_sec_dep_part1}) that
\begin{align}
    \label{eq_nonli_mdep_part1}
    \sum_{\{l_1'\in\ZZ:|\bbell_1-\nu(l_1')|_2<|\bbell_1-\bbell_2|_2/2\}}\big\|A_{l_1',\bbell_1}A_{l_1',\bbell_2}\big\|_{q} = O\big\{|\bbell_1-\bbell_2|_2^{-\xi}\big\}.
\end{align}
Similarly, for $i-bn\le k_2\le i-1$, we can obtain
\begin{align}
    \label{eq_nonli_mdep_part2}
    \sum_{l_2'\in\ZZ}\big\|(B_{l_2',\bbell_1}-B_{l_1',\bbell_1}^{(m)})\cdot(B_{l_2',\bbell_2} - B_{l_2',\bbell_2}^{(m)})\big\|_{q} = O\big\{(m/2)^{-2\xi}\big\}.
\end{align}
Implementing the results in expressions (\ref{eq_nonli_mdep_part1}) and (\ref{eq_nonli_mdep_part2}) into (\ref{eq_nonli_mdep_goal_simple}), we have
\begin{align}
    & \quad \sum_{k_1,k_2\le i-1}\sum_{l_1',l_2'\in\ZZ}\Big\|\sum_{\bbell\in\B_s\cap\L_0}\E_0\big[M_{i,k_1,l_1'}(\bbell)\big(M_{i,k_2,l_2'}(\bbell)- M_{i,k_2,l_2'}^{(m)}(\bbell)\big)\big]/\sigma^2(\bbell) \Big\|_{q/2}^2 \nonumber \\
    & \lesssim (bn)^2|\B_s\cap\L_0|m^{-2\xi}.
\end{align}
This, along with expression (\ref{eq_nonli_mdep_goal}) yields
\begin{align}
    \max_{1\le s\le S}\max_{bn+1\le i\le n-bn}\frac{bn}{\sqrt{|\B_s\cap\L_0|}}\Big\|\sum_{\bbell\in\B_s\cap\L_0}\E_0\big[\big(U_i^-(\bbell)\big)^2 - \big(U_i^{-(m)}(\bbell)\big)^2\big]\Big\|_{q/2} = O\{m^{-\xi}\}.
\end{align}
We apply the similar arguments to the part $\E_0\big[\big(U_i^+(\bbell)\big)^2 - \big(U_i^{+(m)}(\bbell)\big)^2\big]$ and insert the results back to expression (\ref{eq_nonli_mdep}), which gives the desired result.

\end{proof}

\begin{lemma}[Block approximation]
    \label{lemma_nonli_block}
    Under Assumptions \ref{asm_density}--\ref{asm_nonli_sec_dep}, 
    for $\Delta_1^{\star}$ and $\Delta_2^{\star}$ defined in expression (\ref{eq_Delta}),
    we have
    $$\PP\Big(\max_{1\le s\le S}\max_{bn+1\le i\le n-bn}bn\big|\I_{i,\B_s}^{(m)}-\tilde\I_{i,\B_s}\big|\ge  \sqrt{L^{-1}\log(nS)} \Big) \lesssim \Delta_1^{\star} + \Delta_2^{\star},$$
    where the constant in $\lesssim$ is independent of $n,p,b$ and $S$.
\end{lemma}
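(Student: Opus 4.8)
The plan is to realize $\I^{(m)}_{i,\B_s}-\tilde\I_{i,\B_s}$ as the (rescaled) contribution of exactly those points of $\B_s\cap\L_0$ that are \emph{not} covered by a big hyper-block lying entirely inside $\B_s$, and to show this remainder is uniformly negligible at the level $\sqrt{L^{-1}\log(nS)}$. Concretely, up to the harmless absolute value in the definition of $\I^{(m)}_{i,\B_s}$,
\[
bn\bigl(\I^{(m)}_{i,\B_s}-\tilde\I_{i,\B_s}\bigr)=\frac{1}{\sqrt{|\B_s\cap\L_0|}}\sum_{\underline{k}}R_i(\underline{k}),\qquad R_i(\underline{k})=\sum_{\bbell\in\bigl(\Omega(\underline{k})\setminus\Omega^{\dagger}(\underline{k})\bigr)\cap\B_s\cap\L_0}bn\,x_i^{(m)}(\bbell),
\]
where $\underline{k}$ runs over the hyper-blocks meeting $\B_s$ and the big hyper-blocks straddling $\partial\B_s$ are folded into the $R_i(\underline{k})$'s. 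By Assumptions~\ref{asm_density} and \ref{asm_nbd_shape}, $\B_s$ is a balanced rectangle of side lengths $\asymp B_{\text{min}}^{1/v}$, so the number of remainder points is $(1-(L/(L+1))^v)\,|\B_s\cap\L_0|\asymp L^{-1}|\B_s\cap\L_0|$, the $O\bigl((B_{\text{min}}^{1/v}/(Lm))^{v-1}\bigr)$ boundary hyper-blocks being of smaller order under the chosen $L,m$; consequently the per-coordinate variance of $bn(\I^{(m)}_{i,\B_s}-\tilde\I_{i,\B_s})$ is $\asymp 1/L$.

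Next I would control the moments of the $R_i(\underline{k})$'s by reusing the machinery of Lemma~\ref{lemma_nonli_m_approx}: decomposing each $\epsilon^{(m)}_t(\bbell)$ into its temporal--spatial martingale differences $\P_{k,l'}\epsilon^{(m)}_t(\bbell)$, applying Burkholder's inequality (Lemma~\ref{lemma_burkholder}) together with the weak cross-sectional decay of Assumption~\ref{asm_nonli_sec_dep}, one obtains $\|R_i(\underline{k})\|_r^2\lesssim |(\Omega(\underline{k})\setminus\Omega^{\dagger}(\underline{k}))\cap\L_0|$ for $r\le q/2$, hence $\sum_{\underline{k}}\|R_i(\underline{k})\|_{q/2}^2\lesssim L^{-1}|\B_s\cap\L_0|$ and an analogous bound on $\EE\max_{i,s}|R_i(\underline{k})/\sqrt{|\B_s\cap\L_0|}|^{q/2}$. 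The structural point is the independence: after colouring each hyper-block $\underline{k}$ by the parity vector $(k_1\bmod 2,\dots,k_v\bmod 2)$, two hyper-blocks of the same colour have their L-shaped remainder regions separated by a gap $>m$ in any coordinate in which they differ, so by the $m$-dependence of $x^{(m)}_i(\cdot)$ (Remark~\ref{rmk:mdepepsilont}, and as in Remark~\ref{rmk_ind_block}) the summands $\{R_i(\underline{k})\}_{\underline{k}}$ are mutually independent within each of the $2^v=O(1)$ colour classes. This lets me invoke the high-dimensional Gaussian approximation: applying Proposition~2.1 of \textcite{chernozhukov_central_2017} class by class to the $\RR^{|\N|}$-valued vector $\bigl(bn(\I^{(m)}_{i,\B_s}-\tilde\I_{i,\B_s})\bigr)_{(i,s)\in\N}$ (and to its negative, for the two-sided maximum), with the number of independent summands of order $N$, the moment envelope of order $(nS)^{2/q}$, and $|\B_s\cap\L_0|\gtrsim\rho_{\text{min}}|\B_0\cap\L_0|\asymp\rho_{\text{min}}N(Lm)^v$ producing the $\rho_{\text{min}}^{-1}$ factor, yields a Gaussian counterpart $\mathcal G=(\mathcal G_{i,s})$ sharing the covariance of the difference process, up to an error of the stated form $\Delta_1^{\star}+\Delta_2^{\star}$, mirroring the proof of Lemma~\ref{lemma_nonli_GA}.

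Finally, each $\mathcal G_{i,s}$ is centred Gaussian with variance $\asymp 1/L$, so a union bound over the $|\N|\le nS$ coordinates (the elementary Gaussian tail, or Lemma~\ref{lemma_nazarov}) gives $\PP\bigl(\max_{i,s}|\mathcal G_{i,s}|\ge\sqrt{L^{-1}\log(nS)}\bigr)\lesssim\Delta_1^{\star}$, using that $L=c_{p,n}^{1/(4v)}$ grows polynomially faster than $\log(nS)$ under the hypotheses of Theorem~\ref{thm3_nonli}, so the threshold dominates the Gaussian standard deviation by a diverging factor; combined with the approximation error this gives the claim. I expect the main obstacle to be the bookkeeping of the blocking geometry --- showing uniformly in $s$ that the boundary hyper-blocks of $\B_s$ and the non-rectangular L-shaped remainder regions contribute only at the $L^{-1}$ level, while simultaneously verifying the moment conditions of \textcite{chernozhukov_central_2017} with precisely the envelope that yields $\Delta_1^{\star},\Delta_2^{\star}$ --- with a secondary subtlety being a clean justification of the within-colour-class independence so that the Gaussian approximation is legitimately applicable.
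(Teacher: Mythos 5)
Your proposal is correct in substance and follows essentially the same route as the paper's proof: both identify $bn(\I^{(m)}_{i,\B_s}-\tilde\I_{i,\B_s})$ with the normalized sum of $x^{(m)}_i(\bbell)$ over the part of $\B_s\cap\L_0$ not covered by big hyper-blocks lying inside $\B_s$, show this remainder has relative size $O(1/L)$, bound its moments by the martingale/Burkholder machinery of Lemma \ref{lemma_nonli_m_approx}, use $m$-dependence to manufacture independent summands, apply Proposition 2.1 of \textcite{chernozhukov_central_2017}, and finish with a Gaussian maximal bound at the threshold $\sqrt{L^{-1}\log(nS)}$. The one organizational difference is how independence is arranged: the paper trims each big block by a further $m$ and covers most of the remainder by the pairwise $m$-separated small hyper-blocks $\omega^{\diamond}_r(\underline{k})$, handling the leftover set $\B^{\diamond}_{s,2}$ separately, whereas you group the remainder by parent hyper-block and use a $2^v$ parity colouring --- the same device the paper employs inside the proof of Lemma \ref{lemma_nonli_GA} --- which is equally valid, since two same-colour blocks differ by at least two indices in some coordinate and hence their remainder regions are separated by at least $(L+1)m>m$ there.

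Two points should be tightened. First, as written you invoke the Gaussian approximation for a vector whose coordinate variances are $\asymp 1/L\rightarrow0$, which violates the variance-lower-bound condition of Proposition 2.1 in \textcite{chernozhukov_central_2017}; rescale by $\sqrt{L}$ (equivalently, normalize by $|\B^{\diamond}_s\cap\L_0|$ as the paper does around (\ref{eq_block_lower_result})) before applying it, and transfer the factor back into the threshold at the end --- a one-line fix, but necessary. Second, your dismissal of the hyper-blocks straddling $\partial\B_s$ as lower order implicitly needs $B_{\text{min}}^{1/v}\gg L^2m$ under the choices of $L,m$ made in the proof of Theorem \ref{thm3_nonli}; this should be checked (the paper is no more explicit, asserting $\lambda(\B^{\diamond}_s)/\lambda(\B_s)\lesssim 1/L$ directly, so this is a shared rather than a new gap). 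With these adjustments your argument matches the paper's.
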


\begin{proof}
Recall the big hyper-block $\Omega^{\dagger}(\underline{k})$ defined in expression (\ref{eq_Omega}). We denote the remaining area after removing the big hyper-blocks from $\B_s$ as $\B_s^{\diamond}$, that is, for $1\le s\le S$,
\begin{equation}
    \label{eq_block_small}
    \B_s^{\diamond} = \B_s\setminus\cup_{\{\underline{k}:\,\Omega^{\dagger}(\underline{k})\subset\B_s\}}\Omega^{\dagger}(\underline{k}),
\end{equation}
Our goal is to derive an upper bound for $\max_{s,i} bn|\B_s\cap\L_0|^{-1/2}\big|\sum_{\bbell\in\B_s^{\diamond}\cap\L_0}x_i^{(m)}(\bbell)\big|$ by applying Gaussian approximation. For each big block $\Theta_{r,k}$ defined in expression (\ref{eq_bigblock}), we let $\Theta^{\diamond}_{r,k}$ be $\Theta_{r,k}$ with the last part of length $m$ removed, that is
\begin{equation}
    \label{eq_block_bigblock_short}
    \Theta^{\diamond}_{r,k} = \big((k-1)(L+1)m, (k-1)(L+1)m+(L-1)m\big].
\end{equation}
Define the small hyper-block indexed by $\underline{k}$ as
\begin{equation}
    \label{eq_block_first}
    \omega^{\diamond}_r(\underline{k}) = \Theta^{\diamond}_{1,k_1} \times \cdots \times \Theta^{\diamond}_{r-1,k_2}  \times \theta_{r,k_r} \times \Theta^{\diamond}_{r+1,k_{r+1}} \times \cdots \times \Theta^{\diamond}_{v,k_v},
\end{equation}
In fact, the union of $\omega_r^\diamond(\underline{k})$ is close to $\B_s^\diamond$. To see this,
we denote  $\B_{s,1}^{\diamond}=\cup_{\{r,\underline{k}:\,\omega^{\diamond}_r(\underline{k})\subset\B_s\}}\omega^{\diamond}_r(\underline{k})$ and let $\B_{s,2}^{\diamond} = \B_s^{\diamond}\setminus\B_{s,1}^{\diamond}$. Then
\begin{equation}
    \label{eq_block_dominate}
    \lambda(\B_s^{\diamond}) = \lambda(\B_{s,1}^{\diamond}) \big(1+O(1/L)\big).
\end{equation}
It follows that
\begin{align}
    \label{eq_block_lower_goal}
    \Big\| \sum_{\bbell\in\B_s^{\diamond}\cap\L_0}x_i^{(m)}(\bbell)\Big\|_2
    \ge & \Big\| \sum_{\bbell\in\B_{s,1}^{\diamond}\cap\L_0}x_{i}^{(m)}(\bbell) \Big\|_2 -\Big\| \sum_{\bbell\in\B_{s,2}^{\diamond}\cap\L_0}x_i^{(m)}(\bbell) \Big\|_2.
\end{align}
Recall the operator $\E_0(\cdot)=\cdot -\EE(\cdot)$. Since $x_i^{(m)}(\bbell)=\E_0\big[\big(U_i^{-(m)}(\bbell)-U_i^{+(m)}(\bbell)\big)^2\big]$, by the similar arguments in Remark \ref{rmk_ind_block}, $U_i^{-(m)}(\bbell)$ (or $U_i^{+(m)}(\bbell)$) within different small hyper-blocks $\omega_r^{\diamond}(\underline{k})$ are independent for different $r$ or $\underline{k}$. Therefore,
\begin{align}
\label{eq_block_ind}
    \Big\| \sum_{\bbell\in\B_{s,1}^{\diamond}\cap\L_0}\E_0\big[\big(U_i^{-(m)}(\bbell)\big)^2\big]\Big\|_2^2 
    = \sum_{\{r,\underline{k}:\,\omega_r^{\diamond}(\underline{k})\subset\B_{s,1}^{\diamond}\}} \Big\|\sum_{\bbell\in\omega_r^{\diamond}(\underline{k})\cap\L_0}\E_0\big[\big(U_i^{-(m)}(\bbell)\big)^2\big]\Big\|_2^2.
\end{align}
Recall that $M_{i,k,l'}^{(m)}(\bbell)$ is a sequence of martingale differences with respect to $k$ and $l'$, that is
$$
    M_{i,k,l'}^{(m)}(\bbell) = \sum_{t=(i-bn)\vee k}^{i-1}\P_{k,l'}\epsilon_t^{(m)}(\bbell).
$$
Then, it follows from Assumptions (dependence and moment) that
\begin{align}
    \label{eq_block_step1}
    & \quad  \Big\|\sum_{\bbell\in\omega_r^{\diamond}(\underline{k})\cap\L_0}\E_0\big[\big(U_i^{-(m)}(\bbell)\big)^2\big]\Big\|_2^2 \nonumber \\
    & =  \frac{1}{(bn)^4}\sum_{k_1,k_2\le i-1}\sum_{l_1',l_2'\in\ZZ} \Big\lVert \sum_{\bbell\in\omega_r^{\diamond}(\underline{k})\cap\L_0} \E_0\big[M_{i,k_1,l_1'}^{(m)}(\bbell)M_{i,k_2,l_2'}^{(m)}(\bbell)\big]/\sigma^2(\bbell)\Big\rVert_2^2 \nonumber \\
    & =  \frac{1}{(bn)^4}\sum_{\bbell_1,\bbell_2}  \Big\lVert \sum_{k_1,l_1'} \E_0\big[M_{i,k_1,l_1'}^{(m)}(\bbell_1)M_{i,k_2,l_1'}^{(m)}(\bbell_2)\big] \nonumber \\
    &\quad \cdot \sum_{k_2,l_2'} \E_0\big[M_{i,k_1,l_2'}^{(m)}(\bbell_1)M_{i,k_2,l_2'}^{(m)}(\bbell_2)\big]\Big\rVert_2/\big[\sigma(\bbell_1)\sigma(\bbell_2)\big]\nonumber \\
    & \asymp \frac{1}{(bn)^2}|\omega_r^{\diamond}(\underline{k})\cap\L_0|,
\end{align}
where the last $\asymp$ holds due to the decay of correlation indicated by Assumption \ref{asm_nonli_sec_dep}. This, together with similar arguments on $U_i^{+(m)}(\bbell)$ gives
\begin{equation}
    \label{eq_block_part1}
    \Big\| \sum_{\bbell\in\B_{s,1}^{\diamond}\cap\L_0}x_i^{(m)}(\bbell) \Big\|_2^2 \asymp (bn)^{-2}\sum_{\{r,\underline{k}:\,\omega_r^{\diamond}(\underline{k})\subset\B_{s,1}^{\diamond}\}} |\omega_r^{\diamond}(\underline{k})\cap\L_0|.
\end{equation}
This, along with similar arguments on the second part in expression (\ref{eq_block_lower_goal}) yields
\begin{align}
    \label{eq_block_lower_result}
    \Big\| \frac{bn}{\sqrt{|\B_s^{\diamond}\cap\L_0|}}\sum_{\bbell\in\B_s^{\diamond}\cap\L_0}x_i^{(m)}(\bbell)\Big\|_2^2 \gtrsim 1.
\end{align}
The upper bounds for the $q$-th moment of $bn|\B_s\cap\L_0|^{-1/2}\big|\sum_{\bbell\in\B_s^{\diamond}\cap\L_0}x_i^{(m)}(\bbell)\big| $ can be similarly derived. Hence, by Proposition 2.1 in \textcite{chernozhukov_central_2017} and the fact that $\max_{1\le s\le S}\lambda(\B_s^{\diamond})/\lambda(\B_s) \lesssim 1/L$, 
we have, for $\Delta_1$ and $\Delta_2$ defined in expression (\ref{eq_Delta}),
\begin{equation}
    \label{eq_block_result}
    \PP\Bigg(\max_{1\le s\le S}\max_{bn+1\le i\le n-bn} \frac{bn}{\sqrt{|\B_s^{\diamond}\cap\L_0|}}\Big|\sum_{\bbell\in\B_s^{\diamond}\cap\L_0}x_i^{(m)}(\bbell)\Big| \ge  \sqrt{\log(nS)} \Bigg) \lesssim \Delta_1+ \Delta_2,
\end{equation}
which further yields our desired result.

\end{proof}

\begin{lemma}[Gaussian approximation]
   \label{lemma_nonli_GA}
    Under Assumptions \ref{asm_density}--\ref{asm_nonli_sec_dep}, for $\Delta_1^{\star}$ and $\Delta_2^{\star}$ defined in expression (\ref{eq_Delta}), we have
    $$\sup_{u\in\RR}\Big|\PP\big(\max_{1\le s\le S} \max_{bn+1\le i\le n-bn}\I_{i,\B_s,b} \ge u\big) - \PP\big(\max_{1\le s\le S}\max_{bn+1\le i\le n-bn}|\bar{z}_{i,s}^{\star}| \ge u\big)\Big| \lesssim \Delta_1^{\star} + \Delta_2^{\star},$$
    where the constant in $\lesssim$ is independent of $n,p,b$ and $S$.
\end{lemma}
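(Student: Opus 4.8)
# Proof Proposal for Lemma~\ref{lemma_nonli_GA}

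The plan is to invoke the high-dimensional Gaussian approximation theorem (Proposition~2.1 in \textcite{chernozhukov_central_2017}), applied not to the individual coordinates $x_i(\bbell)$ but to the \emph{big hyper-block sums} $G_i(\underline k)$ defined in \eqref{eq:def_Gibs}, which are independent across $\underline k$ by Remark~\ref{rmk_ind_block}. In other words, the ``summands'' in the GA are indexed by the $N$ big hyper-blocks, while the ``coordinates'' of the maximum are indexed by the pairs $\varphi=(i,s)\in\N$, of which there are at most $(n-2bn)\cdot S \le nS$, together with the $N$ hyper-block labels appearing only through moment bounds (hence the $\log(nSN)$ factors). Concretely, for each $\underline k$ with $\Omega^{\dagger}(\underline k)\subset\B_s$ we form the vector whose $(i,s)$-entry is $|\B_s\cap\L_0|^{-1/2}\,bn\,G_i(\underline k)\One_{\Omega^{\dagger}(\underline k)\subset\B_s}$; summing over $\underline k$ recovers $bn\,\tilde\I_{i,\B_s}$, whose Gaussian counterpart is exactly $\max_{i,s}|\bar z_{i,s}^{\star}|$ up to the negligible small-block contribution already controlled in Lemma~\ref{lemma_nonli_block}. (Strictly, $\bar z_{i,s}^{\star}$ is the Gaussian analogue of the full $\I_{i,\B_s}^{(m)}$; since Lemma~\ref{lemma_nonli_block} shows the discrepancy between $\I^{(m)}$ and $\tilde\I$ is $O_\PP(\sqrt{L^{-1}\log(nS)})$, which is dominated by $\Delta_1^\star+\Delta_2^\star$ under our choice of $L$, it suffices to prove GA for $\tilde\I$ and then transfer via Lemma~\ref{lemma_nazarov}-type anti-concentration.)

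The key steps, in order: (i) \textbf{Moment conditions.} Verify the analogue of the quantity $B_n$ in \textcite{chernozhukov_central_2017}: bound $N^{-1}\sum_{\underline k}\EE|bn\,G_i(\underline k)/\sqrt{\rho_{\min}}|^{s}$ for $s=3,4$ and $\EE\max_{i,s}|bn\,G_i(\underline k)|^{q/2}$, obtaining $B_n\lesssim \rho_{\min}^{-1/2}(nS)^{2/q}$ (up to constants). These are the same computations as in the proofs of Theorems~\ref{thm1_constanttrend} and \ref{thm2_constanttrend}, now carried out on each big hyper-block: each $G_i(\underline k)$ is a sum of roughly $(Lm)^v\asymp B_{\min}/N$ quadratic terms $x_i^{(m)}(\bbell)$, and the weak cross-sectional dependence (Assumption~\ref{asm_nonli_sec_dep}, via Lemma~\ref{lemma_nonli_longrun_corr}) makes the variance of $G_i(\underline k)$ of order $(Lm)^v/(bn)^2$, so $\|bn\,G_i(\underline k)\|_2^2\asymp B_{\min}/N$, giving the $\rho_{\min}^{-1}$ scaling after normalization. (ii) \textbf{Non-degeneracy.} Show $N^{-1}\sum_{\underline k}\EE(bn\,G_i(\underline k)/\sqrt{\rho_{\min}})^2$ is bounded below, again via the martingale-difference lower bound as in \eqref{eq_thm11_m1result}; the decay of long-run correlations ensures the leading diagonal terms dominate. (iii) \textbf{Apply Proposition~2.1.} Plug $B_n\lesssim \rho_{\min}^{-1/2}(nS)^{2/q}$ and the effective number of coordinates $\asymp nSN$ into the rate, yielding the two terms $\Delta_1^\star=\big((nS)^{4/q}\rho_{\min}^{-1}\log^7(nSN)/N\big)^{1/6}$ and $\Delta_2^\star=\big((nS)^{4/q}\rho_{\min}^{-1}\log^3(nSN)/N^{1-4/q}\big)^{1/3}$. (iv) \textbf{Covariance matching.} Confirm via Lemma~\ref{lemma_cov_thm3} that the covariance of the block-based Gaussian vector agrees asymptotically with $\tilde\Xi$ (equivalently with the covariance of $\bar z_{i,s}^\star$), so the Gaussian comparison in step~(iii) is against the correct limiting object.

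The main obstacle I expect is step~(i)--(ii), i.e., pushing the moment bounds through the block structure while keeping the dependence on $\rho_{\min}$ (equivalently $B_{\min}$ and $N$) explicit and uniform. The quadratic nature of $x_i^{(m)}(\bbell)$ means each $G_i(\underline k)$ is a double sum; bounding its $L^{q/2}$-norm requires combining Burkholder's inequality (Lemma~\ref{lemma_burkholder}) in the temporal-spatial martingale filtration with the summability in Assumption~\ref{asm_nonli_sec_dep} to control cross terms between distinct $\bbell_1,\bbell_2$ inside the same block — the same mechanism as in Lemma~\ref{lemma_nonli_m_approx}, but now I need the \emph{sharp} scaling $\|G_i(\underline k)\|_{q/2}^{q/2}\asymp (|\Omega^{\dagger}(\underline k)\cap\L_0|)^{q/4}/(bn)^{q/2}$ rather than just an $m$-decay bound, so that after normalization by $\sqrt{\rho_{\min}}$ the per-block moments are $O(1)$. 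A secondary technical point is that the density assumption (Assumption~\ref{asm_density}) is what guarantees $|\Omega^{\dagger}(\underline k)\cap\L_0|\asymp\lambda(\Omega^{\dagger}(\underline k))$ uniformly, hence $N\asymp B_{\min}(Lm)^{-v}$ and $\rho_{\min}\asymp B_{\min}/(N(Lm)^v)\cdot(\dots)$ — I will need to track these equivalences carefully so that the final rate matches the statement. Everything else (the reduction from $\I^{(m)}$ to $\tilde\I$, the anti-concentration transfer, the covariance check) is routine given the lemmas already proved.
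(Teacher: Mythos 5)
Your proposal is correct and follows essentially the same route as the paper's proof: apply Proposition~2.1 of \textcite{chernozhukov_central_2017} to the independent big hyper-block sums $G_i(\underline{k})$, with the $q/2$-moment upper bounds obtained via Burkholder in the temporal-spatial martingale filtration together with Assumption~\ref{asm_nonli_sec_dep} and the density Assumption~\ref{asm_density} (giving $B_n\lesssim\rho_{\text{min}}^{-1/2}(nS)^{2/q}$), the second-moment lower bound from the orthogonality of the martingale differences and the long-run correlation decay, and the stated rates $\Delta_1^{\star},\Delta_2^{\star}$. The only cosmetic difference is that your steps on the $\I^{(m)}$-to-$\tilde\I$ reduction and the anti-concentration transfer belong to the proof of Theorem~\ref{thm3_nonli} rather than to this lemma, which does not affect correctness.
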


\begin{proof}
The goal of this lemma is to apply the Gaussian approximation theorem on the summation of $G_i(\underline{k})$, for all $\underline{k}$ such that $\Omega^{\dagger}(\underline{k})\subset \B_s$. We shall first derive the upper bound of $\|G_i(\underline{k})\|_{q/2}$. W.l.o.g., let us consider the first big hyper-block, i.e. $\underline {k}=\underline{1}.$
To this end, we shall divide this big hyper-block into small hyper-blocks by partitioning block $\Theta_{r,1},$ $1\leq r\leq v,$ into small ones $I_{r}(j_r)$, $1\le j_r\le L$, with expression
\begin{equation}
    \label{eq_GA_group_index}
    I_r(j_r) = \big(  (j_r-1)m,\, j_rm \big].
\end{equation}
Define $\omega(\underline{j})$ as the locations of series in the small group indexed by $\underline{j}$ in the first big block, that is
\begin{equation}
    \label{eq_omega}
    \omega(\underline{j}) = I_1(j_1)\times I_2(j_2) \times \cdots I_v(j_v).
\end{equation}
Further, we define the summation of $x_i^{(m)}(\bbell)$ within each small hyper-block as
\begin{equation}
    \label{eq_GA_group}
    g_i(\underline{j}) = \sum_{\bbell\in\omega(\underline{j})\cap\L_0} x_i^{(m)}(\bbell).
\end{equation}
Then $G_i(\underline{1})$ in expression (\ref{eq:def_Gibs}) can be written into
\begin{equation}
    \label{eq_GA_sum_group}
    G_i(\underline{1}) = \sum_{j_1,\ldots,j_v=1}^L g_i(\underline{j}).
\end{equation}
By the parity of index, we can divide all the $g_i(\underline{j})$ into $2^v$ parts, and for each part, each coordinate of index $\underline{j}$ are all even or odd. Note that
$g_i(\underline{j})$ within each part are independent by similar arguments in Remark \ref{rmk_ind_block}. W.l.o.g., we consider such a part where $j_1,\ldots,j_v$ are all odd, then by Burkholder's inequality,
\begin{align}
\label{eq_GA_qnorm_goal}
&\Big\|\sum_{j_1,\ldots,j_v \textrm{ are odd}} g_i(\underline{j})\Big\|_{q/2}^2
\lesssim
\sum_{j_1,\ldots,j_v \textrm{ are odd}}\big\| g_i(\underline{j})\big\|_{q/2}^2. \end{align}
Recall that $x_i^{(m)}(\bbell) = \E_0\big[(U_i^{-(m)}(\bbell)-U_i^{+(m)}(\bbell))^2\big],$ where $\E_0(\cdot)=\cdot -\EE(\cdot)$, and $M_{i,k,l'}(\bbell)$ is a sequence of martingale differences with respect to $k$ and $l'$, that is
$$
    M_{i,k,l'}(\bbell) = \sum_{t=(i-bn)\vee k}^{i-1}\P_{k,l'}\epsilon_t(\bbell).
$$ 
By Assumptions \ref{asm_nonli_finitemoment}--\ref{asm_nonli_sec_dep}, we have
\begin{align}
\label{eq_GA_qnorm_step1}
    & \quad\Big\lVert\sum_{\bbell\in\omega(\underline{j})\cap\L_0}\E_0\big[\big(U_i^{-(m)}(\bbell)\big)^2\big]\Big\rVert_{q/2}^2 \nonumber \\
    & \lesssim (bn)^{-4}\sum_{k_1,k_2\le i-1} \sum_{l_1',l_2'\in\ZZ}\Big\|\sum_{\bbell\in\Omega^{\dagger}(\underline{k})\cap\B_s\cap\L_0}\E_0\big[M_{i,k_1,l_1'}^{(m)}(\bbell)M_{i,k_2,l_2'}^{(m)}(\bbell)\big]/\sigma^2(\bbell)\Big\|_{q/2}^2 \nonumber\\
    & \lesssim (bn)^{-2}\big|\omega(\underline{j})\cap\L_0\big|,
\end{align}
The same upper bounds can be derived for the remaining $2^v-1$ parts by similar arguments. This, along with expressions (\ref{eq_GA_qnorm_goal}), (\ref{eq_GA_qnorm_step1}) yields
\begin{equation}
    \label{eq_GA_G_bd}
    \|G_i(\underline1)\|_{q/2}^2 \lesssim
    \sum_{j_1,\ldots,j_v=1}^L\|g_i(\underline{j})\|_{q/2}^2\leq (bn)^{-2}\sum_{j_1,\ldots,j_v=1}^L
    |\omega(\underline{j})\cap\L_0|
    =(bn)^{-2}|\Omega^{\dagger}(\underline 1)\cap\L_0|.
\end{equation}
The bound in expression (\ref{eq_GA_G_bd}) holds uniformly over $\underline{k}$. Therefore, we achieve
\begin{align}
    \label{eq_GA_qnorm_result}
    & \quad \frac{1}{N}\sum_{\{\underline{k}:\, \Omega^{\dagger}(\underline{k})\subset\B_s\}}\EE\Big|\frac{bn\sqrt{N}}{\sqrt{|\B_s\cap\L_0|}}G_i(\underline{k})\Big|^{q/2} \nonumber \\
    & \lesssim N^{q/4-1}\sum_{\{\underline{k}:\, \Omega^{\dagger}(\underline{k})\subset\B_s\}} \Big(\frac{|\Omega^{\dagger}(\underline{k})\cap\L_0|}{|\B_s\cap\L_0|}\Big)^{q/4}\nonumber\\
    & \lesssim N^{q/4-1}\sum_{\{\underline{k}:\, \Omega^{\dagger}(\underline{k})\subset\B_s\}} \Big(\frac{|\Omega^{\dagger}(\underline{k})\cap\L_0|}{|\B_s\cap\L_0|}\Big)
    \max_{\{\underline{k}:\, \Omega^{\dagger}(\underline{k})\subset\B_s\}} \Big(\frac{|\Omega^{\dagger}(\underline{k})\cap\L_0|}{|\B_s\cap\L_0|}\Big)^{q/4-1}.
\end{align}
Recall the ratio of volumes $\rho_s$, $1\le s\le S$, defined in expression (\ref{eq_volume_ratio}). By Assumption \ref{asm_density}, we have $\sum_{\{\underline{k}:\, \Omega^{\dagger}(\underline{k})\subset\B_s\}}|\Omega^{\dagger}(\underline{k})\cap\L_0|/|\B_s\cap\L_0|\lesssim 1$, and
\begin{align*}
\max_{\underline{k}} \frac{|\Omega^{\dagger}(\underline{k})\cap\L_0|}{|\B_s\cap\L_0|} 
\lesssim \max_{\underline{k}} \frac{\lambda(\Omega^{\dagger}(\underline{k}))}{\lambda(\B_s)}= \max_{\underline{k}}\frac{\lambda(\Omega^{\dagger}(\underline{k}))}{\lambda(\B_0)}\cdot \frac{\lambda(\B_0)}{\lambda(\B_s)} = N^{-1}\cdot\rho_s^{-1}.
\end{align*}
Implementing above to \eqref{eq_GA_qnorm_result}, we have
\begin{align}
    \label{eq_GA_qnorm_result2}
    \frac{1}{N}\sum_{\{\underline{k}:\, \Omega^{\dagger}(\underline{k})\subset\B_s\}}\EE\Big|\frac{bn\sqrt{N}}{\sqrt{|\B_s\cap\L_0|}}G_i(\underline{k})\Big|^{q/2} \lesssim \rho_s^{1-q/4}.
\end{align}
Further, following from the similar arguments to achieve expression (\ref{eq_GA_qnorm_result2}), we obtain
\begin{align}
    \label{eq_GA_maxnorm}
    & \quad \EE\Big(\max_{1\le s\le S} \max_{bn+1\le i\le n-bn} \Big|\frac{bn\sqrt{N}}{\sqrt{|\B_s\cap\L_0|}}G_i(\underline{k})\One_{\Omega^{\dagger}(\underline{k})\subset\B_s}\Big|^{q/2}\Big) \nonumber \\
    & \le  \sum_{1\le s\le S}\sum_{bn+1\le i\le n-bn} \EE\Big|\frac{bn\sqrt{N}}{\sqrt{|\B_s\cap\L_0|}}G_i(\underline{k})\One_{\Omega^{\dagger}(\underline{k})\subset\B_s\}}\Big|^{q/2}
    \lesssim nS\rho_{\text{min}}^{-q/4}.
\end{align}
Now we prove the lower bound of $N^{-1}\sum_{\{\underline{k}:\, \Omega^{\dagger}(\underline{k})\subset\B_s\}} \EE\big(bnN^{1/2}(|\B_s\cap\L_0|)^{-1/2}G_i(\underline{k})\big)^2$ is away from zero. By the orthogonality of the martingale differences $M_{i,k,l'}^{(m)}(\bbell)$ with respect to $k$ and $l'$, we have
\begin{align}
    \label{eq_GA_lower}
    & \quad \big\lVert G_i(\underline{k})\One_{\Omega^{\dagger}(\underline{k})\subset\B_s\}}\big\rVert_2^2 \nonumber \\
    & =  \frac{1}{(bn)^4}\sum_{k_1,k_2\le i-1}\sum_{l_1',l_2'\in\ZZ} \Big\lVert \sum_{\bbell\in\Omega^{\dagger}(\underline{k})\cap\B_s\cap\L_0} \E_0\big[M_{i,k_1,l_1'}^{(m)}(\bbell)M_{i,k_2,l_2'}^{(m)}(\bbell)\big]/\sigma^2(\bbell)\Big\rVert_2^2 \nonumber \\
    & = \frac{1}{(bn)^4}  \sum_{\bbell_1,\bbell_2} \EE \Big( \sum_{k_1,l_1'} \E_0\big[M_{i,k_1,l_1'}^{(m)}(\bbell_1)M_{i,k_1,l_1'}^{(m)}(\bbell_2)\big] \nonumber \\
    & \quad \cdot \sum_{k_2,l_2'} \E_0\big[M_{i,k_2,l_2'}^{(m)}(\bbell_1)M_{i,k_2,l_2'}^{(m)}(\bbell_2)\big]\Big)/\big[\sigma(\bbell_1)\sigma(\bbell_2)\big] \nonumber \\
    & = \frac{1}{(bn)^4}  \sum_{\bbell_1,\bbell_2} \EE \Big( \sum_{k,l'} \E_0\big[M_{i,k,l'}^{(m)}(\bbell_1)M_{i,k,l'}^{(m)}(\bbell_2)\big]\Big)^2/\big[\sigma(\bbell_1)\sigma(\bbell_2)\big] \nonumber \\
    & \asymp \frac{1}{(bn)^2}  \sum_{\bbell_1,\bbell_2} \sigma(\bbell_1,\bbell_2)/\big[\sigma(\bbell_1)\sigma(\bbell_2)\big] \nonumber \\
    & \asymp \frac{1}{(bn)^2}|\Omega^{\dagger}(\underline{k})\cap\L_0|.
\end{align}
where the first $\asymp$ follows from the temporal dependence decay in Assumption \ref{asm_nonli_temp_dep} and the $m$-dependent approximation in Lemma \ref{lemma_nonli_m_approx}, and the last $\asymp$ is due to the decay of long-run correlation in (\ref{eq_nonli_sec_dep2}).
Consequently, by expression (\ref{eq_GA_lower}) and Assumption (volume ratio), we obtain
\begin{align}
    \label{eq_GA_lower_result}
    \frac{1}{N}\sum_{\{\underline{k}:\Omega^{\dagger}(\underline{k})\subset\B_s\}}\EE\Big(\frac{bn\sqrt{N}}{\sqrt{|\B_s\cap\L_0|}}G_i(\underline{k})\Big)^2 \asymp \sum_{\{\underline{k}:\Omega^{\dagger}(\underline{k})\subset\B_s\}}\frac{|\Omega^{\dagger}(\underline{k})\cap\L_0|}{|\B_s\cap\L_0|} \gtrsim 1.
\end{align}
Finally, by expressions (\ref{eq_GA_qnorm_result}), (\ref{eq_GA_maxnorm}) and (\ref{eq_GA_lower_result}), we apply Proposition 2.1 in \textcite{chernozhukov_central_2017}, and the desired result is achieved.


\end{proof}

\subsection{Proofs of Corollaries \ref{cor_power}--\ref{cor_sec_dep_power}}
\begin{proof}[Proof of Corollary \ref{cor_power}]
Under the conditions in Theorem \ref{thm1_constanttrend} and (\ref{eq_thm11_o1}), the testing power $\PP(\Q_n>\omega)$ satisfies
\begin{equation}
    \label{eq_power_expression}
    \PP(\Q_n>\omega)=\PP\big(\max_{bn+1\le i\le n-bn}\big(|V_i|_2^2-\bar{c}\big)> \omega\big)=\PP\big(\max_{bn+1\le i\le n-bn}\Z_i>\omega\big)+o(1).
\end{equation}
Recall the definitions of $U_{i,j}^{(l)}$, $U_{i,j}^{(r)}$ in expression (\ref{eq_U}), $U_i^{(l)}=(U_{i,j}^{(l)})_{1\le j\le p}$ and $U_i^{(r)}=(U_{i,j}^{(r)})_{1\le j\le p}$, and the weighted break size $d_i$ defined in (\ref{eq_EV_appr}). Then, we have
\begin{align}
    \label{eq_power_step1}
    \max_{bn+1\le i\le n-bn}\big(|V_i|_2^2-\bar{c}\big) & = \max_{bn+1\le i\le n-bn}\Big(\big|d_i + \Lambda^{-1}(U_i^{(l)}-U_i^{(r)})\big|_2^2 -\bar{c}\Big) \nonumber \\
    & \ge \max_{bn+1\le i\le n-bn}|d_i|_2^2 - \max_{bn+1\le i\le n-bn}\Big|2\sum_{j=1}^pd_{i,j}(U_{i,j}^{(l)}-U_{i,j}^{(r)})/\sigma_j\Big| \nonumber \\
    & \quad - \max_{bn+1\le i\le n-bn}\Big|\bar{c}-\sum_{j=1}^p(U_{i,j}^{(l)}-U_{i,j}^{(r)})^2/\sigma_j^2\Big|,
\end{align}
where by Lemma \ref{lemma_tail_cross} (i),
$$\max_{bn+1\le i\le n-bn}\Big|2\sum_{j=1}^pd_{i,j}(U_{i,j}^{(l)}-U_{i,j}^{(r)})/\sigma_j\Big|/|d_i|_2 = O_{\PP}\Big(\sqrt{\log(n)/(bn)}\Big),$$
and by the Gaussian approximation in Theorem \ref{thm1_constanttrend},
$$\max_{bn+1\le i\le n-bn}\Big|\bar{c}-\sum_{j=1}^p(U_{i,j}^{(l)}-U_{i,j}^{(r)})^2/\sigma_j^2\Big| = O_{\PP}\Big(\sqrt{p\log(n)/(bn)^2}\Big).$$
Therefore, we can see from expression (\ref{eq_power_step1}) that the power of our test depends on the vectors $d_i$'s, and $d_i$ is determined by the true jump sizes $\gamma_k$'s. Since $b\ll\min_{0\le k\le K}(u_{k+1}-u_k)$, when $\max_{1\le k\le K}n(u_{k+1}-u_k)|\Lambda^{-1}\gamma_k|_2^2\gg \sqrt{p\log(n)}$, 
we have the testing power $\PP(\Q_n>\omega)\rightarrow1$, as $n\rightarrow\infty$.
\end{proof}

\begin{proof}[Proof of Corollary \ref{cor_local_power}]
Recall the neighborhood norm $|\cdot|_{2,s}$, $1\le s\le S$, defined in Definition \ref{def_linear_nbdnorm}. Suppose that the conditions in Theorem \ref{thm2_constanttrend} and (\ref{eq_thm21_o1}) hold. Then, the testing power $\PP(\Q_n^{\diamond}>\omega^{\diamond})$ satisfies
\begin{align}
    \label{eq_local_power_expression}
    \PP(\Q_n^{\diamond}>\omega^{\diamond}) & = \PP\Big(\max_{1\le s\le S}\max_{bn+1\le i\le n-bn}|\L_s|^{-1/2}\big(|V_i|_{2,s}^2-\bar{c}_s^{\diamond}\big)> \omega^{\diamond}\Big) \nonumber \\
    & = \PP\big(\max_{1\le s\le S}\max_{bn+1\le i\le n-bn}\Z_{i,s}^{\diamond}> \omega^{\diamond}\big)+o(1).
\end{align}
Note that
\begin{align}
    \label{eq_local_power_step1}
    & \quad \max_{1\le s\le S}\max_{bn+1\le i\le n-bn}|\L_s|^{-1/2}\big(|V_i|_{2,s}^2-\bar{c}_s^{\diamond}\big) \nonumber \\
     & \ge \max_{i,s}|\L_s|^{-1/2}|d_i|_{2,s}^2 - \max_{i,s}2|\L_s|^{-1/2}\Big|\sum_{j\in\L_s}d_{i,j}(U_{i,j}^{(l)}-U_{i,j}^{(r)})/\sigma_j\Big| \nonumber \\
    & \quad -  \max_{i,s}|\L_s|^{-1/2}\Big|\bar{c}_s^{\diamond}-\sum_{j\in\L_s}(U_{i,j}^{(l)}-U_{i,j}^{(r)})^2/\sigma_j^2\Big|,
\end{align}
where by Lemma \ref{lemma_tail_cross} (i),
$$\max_{i,s}|\L_s|^{-1/2}\Big|\sum_{j\in\L_s}d_{i,j}(U_{i,j}^{(l)}-U_{i,j}^{(r)})/\sigma_j\Big|/|d_i|_{2,s} = O_{\PP}\Big(\sqrt{\log(nS)/(bn|\L_{\text{min}}|)}\Big),$$
and by the Gaussian approximation in Theorem \ref{thm2_constanttrend},
$$\max_{i,s}|\L_s|^{-1/2}\Big|\bar{c}_s^{\diamond}-\sum_{j\in\L_s}(U_{i,j}^{(l)}-U_{i,j}^{(r)})^2/\sigma_j^2\Big| = O_{\PP}\Big(\sqrt{\log(nS)/(bn)^2}\Big).$$
As a result, Since $b\ll\min_{0\le k\le K}(u_{k+1}-u_k)$, when $\max_{1\le s\le S}n(u_{k+1}-u_k)|\Lambda^{-1}\gamma_k|_{2,s}^2\gg \sqrt{|\L_{\min}|\log(nS)}$,
the testing power $\PP(\Q_n^{\diamond}>\omega^{\diamond})\rightarrow1$, as $n\rightarrow\infty$.
\end{proof}
The proofs for Corollaries \ref{cor_nonli_power} and \ref{cor_sec_dep_power} closely resemble the proofs for Corollaries \ref{cor_local_power} and \ref{cor_power}, respectively. However, they require a different approach to deal with the cross-sectional dependence when deriving the tail probability for the noise component using the Gaussian approximation. This approach is similar to the block approximation technique used in the proofs of Theorems \ref{thm3_nonli} and \ref{thm3_constanttrend}. Due to this similarity, we have omitted the proofs for Corollaries \ref{cor_nonli_power} and \ref{cor_sec_dep_power} here.

\subsection{Proof of Proposition \ref{thm3_constanttrend}}

The proof strategies for Proposition \ref{thm3_constanttrend} are similar to the ones used for Theorem \ref{thm3_nonli}. Specifically, the $v$-dimensional block approximation applied in the proof of Theorem \ref{thm3_nonli} can be reduced to a one-dimension version.
Recall expression (\ref{eq_thm1_prime_Iepsilon2}) that
$$\frac{bn}{\sqrt{p}}I_{\epsilon}=\max_{bn+1\le i\le n-bn}\Big|\frac{1}{\sqrt{p}}\sum_{j=1}^p x_{i,j}\Big|.$$
To extend the proof of Theorem \ref{thm1_constanttrend} with spatial dependence, we first construct an $l_p$-dependent approximation of $x_{i,j}$, denoted as $x_{i,j}^*.$ Divide the sequence $x_{i,j}^*$ into several consecutive big blocks. By dropping off some small block at the beginning of each big block, we obtain a sequence of independent blocks. Finally, we shall finish the proof by applying the Gaussian approximation theorem to this independent sequence.

\begin{proof}[Proof of Proposition \ref{thm3_constanttrend}]
First, we introduce some necessary definitions. Let $\epsilon_t^*=\sum_{k\ge0}A_k^*\eta_{t-k}$ to be the $l_p$-dependent approximation of $\epsilon_t$, where, for each $k\ge0$ and $1\le j\le p$, 
\begin{equation}
    \label{eq_thm31_step1_Ak}
    A_{k,j,s}^*= 
    \begin{cases}
        A_{k,j,s},  &|s-j|\le l_p/2, \\
        0,          & \text{otherwise}.
    \end{cases}
\end{equation}
The new coefficient matrices $A_k^*\in\RR^{p\times p}$ leads to the $l_p$-dependent sequence of $\epsilon_{t,j}^*$ over $j$. Let $x_{i,j}^*$ (resp. $I_{\epsilon}^*$) be $x_{i,j}$ (resp. $I_{\epsilon}$) with $\epsilon_{t,j}$ therein replaced by $\epsilon_{t,j}^*$, and define 
$$\frac{bn}{\sqrt{p}}I_{\epsilon}^*=\max_{bn+1\le i\le n-bn}\Big|\frac{1}{\sqrt{p}}\sum_{ j=1}^p x_{i,j}^*\Big|.$$
We then divide $\{x_{i,j}^*\}_{1\le j\le p}$, into a sequence of big blocks. For $1\le  k\le r_p$, we denote the index set of the $ k$-th big block by
$$\Theta_k:=\{( k-1)b_p+1, ( k-1)b_p+2, \cdots,  k b_p\}.$$ 
Each big block has $b_p$ elements and the number of big blocks is $r_p=\lfloor p/b_p\rfloor$. 
Moreover, define the index set of the $ k$-th small block by
$$\theta_k:=\{( k-1)b_p+1, ( k-1)b_p+2, \cdots, ( k-1)b_p+l_p\}.$$ 
Note that each small block is defined to be the first $l_p$ elements of the corresponding big block, and we have $\cup_{1\le k\le r_p} \Theta_k=\{1,\cdots,p\}$. We define the rescaled sum of $x_{i,j}^*$ within the $ k$-th chunked big block as
$$ y_{i, k} = \frac{1}{\sqrt{b_p}}\sum_{j\in\Theta_k\setminus\theta_k} x_{i,j}^*.$$
Clearly, the sequence $\{ y_{i, k}\}$ generated from the chunked big blocks are independent over $ k$. We denote the maximum of the rescaled sum of $ y_{i, k}$ by $\tilde I_{\epsilon}$, that is,
$$\frac{bn}{\sqrt{p}}\tilde I_{\epsilon}=\max_{bn+1\le i\le n-bn}\Big|\frac{1}{\sqrt{r_p}}\sum_{ k=1}^{r_p}  y_{i, k}\Big|.$$
Also, set $\tilde I_g$ to be $\tilde I_{\epsilon}$ with $y_{i,k}$ therein replaced by $g_{i,k}$, where $g_{i,k}$ is centered Gaussian random variables and the covariance matrix of $g_k=(g_{bn+1,k},...,g_{n-bn,k})^{\top}$ is $\EE(y_ky_k^{\top})$, where $y_k=(y_{bn+1,k},...,y_{n-bn,k})^{\top}$. 

Now we are ready to proceed the proof. Let $r_p=\big(n^{4/q}p^{3(2\xi-1)/(2\xi+2)}\big)^{q(\xi+1)/c_{q,\xi}}$, and $l_p=(p^{3/2}/r_p)^{\frac{1}{1+\xi}}=\big(n^{-4}p^{3(q-1)}\big)^{1/c_{q,\xi}}$, where the constant $c_{q,\xi}=4q\xi+q-2\xi-2$. Recall that $\Q_n=I_{\epsilon}$ under the null. For any $\alpha>0$,
\begin{align}
    \label{eq_thm31_oneside}
    & \sup_{u\in\mathbb{R}}\Big[\PP\big(\Q_n\le u\big)-\PP\big(\max_{bn+1\le i\le n-bn}\Z'_i\le u\big)\Big] \nonumber \\
    \le & \PP\Big(bnp^{-1/2}|I_{\epsilon}-\tilde I_{\epsilon}|\ge \alpha \Big) + \sup_{u\in \mathbb{R}}\big\vert\PP(\tilde I_{\epsilon}\le u)-\PP\big(\max_i\Z'_i\le u\big)\big\vert \nonumber \\
    &\quad + \sup_{u\in\RR}\PP\big(\big|\max_i\Z'_i-u\big|\le \alpha\big)
    =: \III_1'+\III_2'+\III_3'.
\end{align}
We shall investigate the parts $\III_1'$--$\III_3'$ separately. We define $\alpha=c_0\big(n^{2\xi}p^{(2-q)(2\xi-1)/4}\big)^{1/c_{q,\xi}}$, where $c_0$ is some constant independent of $p$ and $n$. For the $\III_1'$ part, since $l_p=(p^{3/2}/r_p)^{\frac{1}{1+\xi}}$, we have $\sqrt{p}l_p^{-\xi}=r_pl_p/p$. Then, by Lemmas \ref{lemma_thm31_lp_dependence} and \ref{lemma_thm31_block}, we obtain
$$\III_1' \lesssim r_pl_pp^{-1}\log^2(pn)/\alpha =  \big(n^{2\xi}p^{(2-q)(2\xi-1)/4}\big)^{1/c_{q,\xi}}\log^2(pn).$$
For the $\III_2'$ part, note that
$$\III_2'\le \sup_{u\in \mathbb{R}}\big\vert\PP(\tilde I_{\epsilon}\le u)-\PP(\tilde I_g\le u)\big\vert + \sup_{u\in \mathbb{R}}\big\vert\PP(\tilde I_g\le u)-\PP\big(\max_{bn+1\le i\le n-bn}\bar{z}_i\le u\big)\big\vert =: \III_{21}'+\III_{22}'.$$
It follows from Lemma \ref{lemma_thm31_GS} that
\begin{align}
    \III_{21}' & \lesssim \Bigg(\frac{n^{4/q}\log^7(r_pn)}{r_p} \Bigg)^{1/6} + \Bigg(\frac{n^{4/q}\log^3(r_pn)}{r_p^{1-2/q}}\Bigg)^{1/3} \nonumber\\
    & \lesssim \Big(\frac{n^{2\xi-4(\xi+1)/(3q)}}{p^{q(2\xi-1)/4}}\Big)^{1/c_{q,\xi}}\log^{7/6}(pn) + \Big(\frac{n^{4\xi}}{p^{(2\xi-1)(q-2)/2}}\Big)^{1/c_{q,\xi}}\log(pn). \nonumber 
\end{align}
By Lemma \ref{lemma_chen2019} and the similar arguments in Lemmas \ref{lemma_thm31_lp_dependence} and \ref{lemma_thm31_block}, we have 
\begin{align*}
    \III_{22}' & \le \sup_{u\in \mathbb{R}}\big\vert\PP(\tilde I_g\le u)-\PP\big(\max_i\bar{z}_i\le u\big)\big\vert + \sup_{u\in \mathbb{R}}\big\vert\PP\big(\max_i\bar{z}_i\le u\big) - \PP\big(\max_i\Z'_i\le u\big)\big\vert  \\
    & \lesssim \big(n^{4\xi/3}p^{(2-q)(2\xi-1)/6}\big)^{1/c_{q,\xi}}\log^{2/3}(n) + (bn)^{-1/3}\log^{2/3}(n).
\end{align*}
Concerning the $\III_3'$ part, as a direct consequence of Lemma \ref{lemma_nazarov},
$$\III_3'\lesssim \alpha\sqrt{\log(pn)} = \big(n^{2\xi}p^{(2-q)(2\xi-1)/4}\big)^{1/c_{q,\xi}}\sqrt{\log(pn}).$$
By combining the parts $\III_1'$--$\III_3'$ and a similar argument for the other side of the inequality in expression (\ref{eq_thm31_oneside}), we achieve the desired result.
\end{proof}

\begin{lemma}[$l_p$-dependent approximation]
    \label{lemma_thm31_lp_dependence}
    Assume that conditions in Proposition \ref{thm3_constanttrend} hold. For some small block size $l_p=o(p)$ and constant $\xi>1$, we have
    \begin{equation}
    \label{eq_thm31_step1_goal}
    \frac{bn}{\sqrt{p}}\big|I_{\epsilon}^* -  I_{\epsilon}\big|=O_{\PP}\big(\sqrt{p}l_p^{-\xi}\log(np)\big).
\end{equation}
\end{lemma}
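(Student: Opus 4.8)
The plan is to reduce (\ref{eq_thm31_step1_goal}) to a per-window, per-series moment estimate. Recall from (\ref{eq_thm1_prime_Iepsilon2}) that $\tfrac{bn}{\sqrt p}I_\epsilon=\max_{bn+1\le i\le n-bn}\tfrac{bn}{\sqrt p}\sum_{j=1}^p x_{i,j}$ and that $I_\epsilon^*$ is the same functional with $\epsilon_{t,j}$ replaced by its $l_p$-dependent approximation $\epsilon_{t,j}^*$ from (\ref{eq_thm31_step1_Ak}). Using $\big|\max_i a_i-\max_i b_i\big|\le\max_i|a_i-b_i|$ together with the triangle inequality over $j$,
\[
\tfrac{bn}{\sqrt p}\big|I_\epsilon^*-I_\epsilon\big|\ \le\ \max_{bn+1\le i\le n-bn}\tfrac{bn}{\sqrt p}\Big|\sum_{j=1}^p(x_{i,j}-x_{i,j}^*)\Big|\ \le\ \tfrac{bn}{\sqrt p}\sum_{j=1}^p\ \max_{bn+1\le i\le n-bn}\big|x_{i,j}-x_{i,j}^*\big|.
\]
Hence it suffices to establish the uniform bound $bn\,\big\|x_{i,j}-x_{i,j}^*\big\|_{q/2}\lesssim l_p^{-\xi}$, i.e. that each $bn(x_{i,j}-x_{i,j}^*)$ is of size $l_p^{-\xi}$ in $L^{q/2}$, in the normalization of (\ref{eq_thm11_m2result}). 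Granting this, $\EE\max_i|bn(x_{i,j}-x_{i,j}^*)|^{q/2}\le n\max_i\EE|bn(x_{i,j}-x_{i,j}^*)|^{q/2}\lesssim n\,l_p^{-q\xi/2}$, Markov's inequality disposes of the maximum over the $O(n)$ windows, and summing the $p$ terms and dividing by $\sqrt p$ yields $O_\PP(\sqrt p\,l_p^{-\xi}\,\log(np))$, the logarithmic factor absorbing the maximal-inequality cost.

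The single-window estimate is obtained by the Burkholder-type decomposition already used in Section \ref{subsec_thm1proof}. Write $\epsilon_{t,j}=\sum_{l\le t}\sum_s A_{t-l,j,s}\eta_{l,s}$ and $\Delta_{i,j}=\sum_{t=i-bn}^{i-1}\epsilon_{t,j}-\sum_{t=i}^{i+bn-1}\epsilon_{t,j}=\sum_l\sum_s a_{i,l,j,s}\eta_{l,s}$, with $a_{i,l,j,s}$ the matrix-valued analogue of the coefficients in (\ref{eq_thm11_defa}); since the approximation only truncates the spatial index, $a_{i,l,j,s}^*=a_{i,l,j,s}\One_{|s-j|\le l_p/2}$, and because $bn\,\sigma_j^2\,x_{i,j}=(bn)^{-1}\E_0[\Delta_{i,j}^2]$,
\[
bn\,\sigma_j^2\,(x_{i,j}-x_{i,j}^*)=\tfrac{1}{bn}\,\E_0\big[(\Delta_{i,j}-\Delta_{i,j}^*)^2+2(\Delta_{i,j}-\Delta_{i,j}^*)\,\Delta_{i,j}^*\big],
\]
where $\Delta_{i,j}-\Delta_{i,j}^*=\sum_l\sum_{|s-j|>l_p/2}a_{i,l,j,s}\eta_{l,s}$ is carried by the ``far'' coordinates only. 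Both the purely far quadratic term and the far$\times$all bilinear term are centered quadratic forms that split into martingale differences over the innovation index $l$, exactly as in (\ref{eq_thm11_md}); applying Lemma \ref{lemma_burkholder}, and for the bilinear term the quadratic-form estimate of Lemma \ref{lemma_snorm}, their $L^{q/2}$ norms are controlled by the product of the far-coordinate coefficient mass $\big(\sum_l|a_{i,l,j,\cdot}\One_{|s-j|>l_p/2}|_2^2\big)^{1/2}$ and the full coefficient mass $\big(\sum_l|a_{i,l,j,\cdot}|_2^2\big)^{1/2}$ of series $j$. The second factor is $\lesssim bn\,\sigma_j^2$ by the crude bound established in (\ref{eq_thm11_m2step4}) (now applied via Assumption \ref{asm_temp_dep} at $h=0$), while the first is $\lesssim bn\,\sigma_j^2\,l_p^{-2\xi}$ by the cross-sectional decay of Assumption \ref{asm_sec_dep}, in the $\ell^2$-then-$\sum_k$ form in which it is stated (equivalently by (\ref{eq_sec_dep2})), together with Assumption \ref{asm_finitemoment}. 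Collecting the $(bn)^{-1}$ and $\sigma_j^{-2}$ prefactors, the powers of $bn$ cancel and one is left with $bn\,\|x_{i,j}-x_{i,j}^*\|_{q/2}\lesssim l_p^{-\xi}$, uniformly in $i$ and $j$.

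The main obstacle is precisely this last estimate: one must show that banding the coefficient matrices $A_k$ to spatial width $l_p$ perturbs the \emph{quadratic} statistic $x_{i,j}$ by only $O(l_p^{-\xi})$ in $L^{q/2}$, which requires carrying the far-coordinate coefficient mass simultaneously through the temporal martingale decomposition and the cross-sectional sum, and invoking the decay assumption in the aggregated $\ell^2$-then-$\sum_k$ form that matches Assumption \ref{asm_sec_dep} rather than a naive coordinatewise bound (which would not produce the polynomial rate $l_p^{-\xi}$). Because the truncation is spatial, the temporal-dependence hypotheses enter only through the benign mass bound (\ref{eq_thm11_m2step4}), so there is no temporal--spatial interaction to disentangle there; the remaining steps—the passage from the single-window $\|\cdot\|_{q/2}$ estimate to the maximum over windows, and the summation over $j$—are routine and are what produce the factors $\log(np)$ and $\sqrt p$ in the stated rate $\sqrt p\,l_p^{-\xi}\log(np)$.
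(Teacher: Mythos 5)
Your initial reduction (replacing the difference of maxima by $\max_i$ of the difference, then by $\sum_j\max_i$) and your single-window moment estimate are consistent with the paper: the bound $bn\,\lVert x_{i,j}-x_{i,j}^*\rVert_{q/2}\lesssim l_p^{-\xi}$, obtained through the martingale/Burkholder decomposition, Lemma \ref{lemma_snorm}, and the aggregated form of Assumption \ref{asm_sec_dep}, is essentially the paper's bound (\ref{eq_thm31_lpupper}). The genuine gap is in your final step, where you pass from per-window $L^{q/2}$ moments to the maximum over the $O(n)$ windows (and to the sum over the $p$ series) and assert that "the logarithmic factor absorbs the maximal-inequality cost." With only polynomial ($q/2$-th) moments, the union/crude-max bound $\EE\max_i|bn(x_{i,j}-x_{i,j}^*)|^{q/2}\le n\max_i\EE|bn(x_{i,j}-x_{i,j}^*)|^{q/2}$ yields $\EE\max_i bn|x_{i,j}-x_{i,j}^*|\lesssim n^{2/q}l_p^{-\xi}$, so your argument delivers $\frac{bn}{\sqrt p}|I_\epsilon^*-I_\epsilon|=O_\PP(\sqrt p\,n^{2/q}\,l_p^{-\xi})$ (or $(np)^{2/q}$ if you also take the maximum over $j$), a polynomial penalty, not the claimed $\log(np)$. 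Since $q\ge 8$ only gives $n^{1/4}\gg\log(np)$, this does not prove the lemma as stated, and the discrepancy matters because the block sizes $l_p,b_p,r_p$ in Proposition \ref{thm3_constanttrend} are tuned to the logarithmic rate.

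The paper avoids this by not working with the quadratic quantities directly: it factors $A^2-B^2=(A-B)(A+B)$ with $A=(U_{i,j}^{(l)}-U_{i,j}^{(r)})/\sigma_j$ and $B$ its $l_p$-dependent version, and then applies the high-dimensional Gaussian approximation of \textcite{chernozhukov_central_2017} to the \emph{linear} arrays $\sqrt{bn}(U_{i,j}^{(l)}-U_{i,j}^{(l)*})/\sigma_j$ (sums of the independent innovations $\eta_l$ over time), after verifying the moment upper bound (\ref{eq_thm31_lpupper}) and, crucially, the variance lower bound (\ref{eq_thm31_step1_part1bound}) that makes the GA applicable. This replaces the union bound over the $np$ pairs $(i,j)$ by the maximum of a Gaussian vector whose coordinates have standard deviation of order $l_p^{-\xi}(bn)^{-1/2}$, so the uniformity costs only a logarithmic factor; combining the GA bound $\max_{i,j}\sqrt{bn}|A-B|\lesssim l_p^{-\xi}\log(np)$ with $\max_{i,j}\sqrt{bn}|A+B|\lesssim\log(np)$, and treating the deterministic centering term by Cauchy--Schwarz, gives (\ref{eq_thm31_step1_goal}). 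To repair your proof you would need either this GA step (and hence the lower bound on the variance of the truncation error, which your argument never establishes) or an exponential-type tail for $bn(x_{i,j}-x_{i,j}^*)$, which the moment conditions of Assumption \ref{asm_finitemoment} do not provide.
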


\begin{proof}
Recall that we have defined $U_{i,j}^{(l)}$ and $U_{i,j}^{(r)}$ in expression (\ref{eq_U}).
We further define $U_{i,j}^{(l)*}$ (resp. $U_{i,j}^{(r)*}$) as $U_{i,j}^{(l)}$ (resp. $U_{i,j}^{(r)}$) with $\epsilon_{t,j}$ therein replaced by $\epsilon_{t,j}^*$. The definitions of $x_{i,j}$ and $x_{i,j}^*$ lead to
\begin{align}
    \label{eq_thm31_step1_twoparts}
    & \max_{bn+1\le i\le n-bn}\frac{1}{\sqrt{p}}\Big|\sum_{j=1}^p x_{i,j} - \sum_{j=1}^p x_{i,j}^*\Big| \nonumber \\
    \le &  \frac{1}{\sqrt{p}}\sum_{j=1}^p\max_{bn+1\le i\le n-bn}| x_{i,j} - x_{i,j}^*| \nonumber \\
    \le & \frac{1}{\sqrt{p}}\sum_{j=1}^p\max_{bn+1\le i\le n-bn}\Big(bn\sigma_j^{-2}\big|(U_{i,j}^{(l)}-U_{i,j}^{(r)})^2-(U_{i,j}^{(l)*}-U_{i,j}^{(r)*})^2\big| \nonumber \\
    & + bn\sigma_j^{-2}\EE\big|(U_{i,j}^{(l)}-U_{i,j}^{(r)})^2-(U_{i,j}^{(l)*}-U_{i,j}^{(r)*})^2\big|\Big)
    =:\III_1+\III_2.
\end{align}
We shall study the two parts $\III_1$ and $\III_2$ respectively. First, we aim to apply the Gaussian approximation theorem in \textcite{chernozhukov_central_2017} to the part $\III_1$. We shall verify the conditions therein.
It follows from expression (\ref{eq_epsilon_linear}) and Lemma \ref{lemma_snorm} (ii) that
\begin{align}
    \label{eq_thm31_step1_part1}
     \Big\lVert (U_{i,j}^{(l)}-U_{i,j}^{(l)*})/\sigma_j\Big\rVert_2 
    = &(bn)^{-1}\Big\lVert\sum_{l\le i-1}\sum_{t=(i-bn)\vee l}^{i-1}(A_{t-l,j,\cdot}^{\top}-A_{t-l,j,\cdot}^{*\top}) \eta_l/\sigma_j\Big\rVert_2  \nonumber \\
    \gtrsim & (bn)^{-1}\Big(\sum_{l\le i-1}\Big|\sum_{t=(i-bn)\vee l}^{i-1}(A_{t-l,j,\cdot}-A_{t-l,j,\cdot}^*)/\sigma_j\Big|_2^2\Big)^{1/2},
\end{align}
where the constants in $\lesssim$ here and the rest of the proof are independent of $n,p$ and $b$, so are the ones in $\gtrsim$ and $O(\cdot)$. Recall that the errors $\{\epsilon_{t,j}\}_t$ and $\{\epsilon_{t,j}^*\}_t$ both have an algebraic decay rate of the temporal dependence, which yields
\begin{align}
\label{eq_thm31_tail}
    & \sum_{l\le i-1}\Big|\sum_{t=(i-bn)\vee l}^{i-1}(A_{t-l,j,\cdot}-A_{t-l,j,\cdot}^*)/\sigma_j\Big|_2^2 = \sum_{l=i-bn}^{i-1}\Big|\sum_{t=l}^{i-1}(A_{t-l,j,\cdot}-A_{t-l,j,\cdot}^*)/\sigma_j\Big|_2^2 \nonumber \\
    & \quad + \sum_{l\le i-bn-1}\Big|\sum_{t=i-bn}^{i-1}(A_{t-l,j,\cdot}-A_{t-l,j,\cdot}^*)/\sigma_j\Big|_2^2 =:  \tilde \III_1+ \tilde \III_2.
\end{align}
For the part $\tilde \III_1$, by Assumption \ref{asm_temp_dep}, it follows that
\begin{align}
    \tilde \III_1 & = \sum_{l=i-bn}^{i-1}\Big|(\tilde A_{0,j,\cdot}-\tilde A_{0,j,\cdot}^*)/\sigma_j- \sum_{t=i}^{\infty}(A_{t-l,j,\cdot}-A_{t-l,j,\cdot}^*)/\sigma_j\Big|_2^2 \nonumber \\
    & = bn\big|(\tilde A_{0,j,\cdot}-\tilde A_{0,j,\cdot}^*)/\sigma_j\big|_2^2 +o(bn).
\end{align}
By the same arguments, we have $\tilde \III_2 = o(bn)$. By inserting the results of parts $\tilde\III_1$ and $\tilde \III_2$ into expression (\ref{eq_thm31_step1_part1}), we can obtain
\begin{align}
    \label{eq_thm31_step1_part1boundpre}
    &\big\lVert (U_{i,j}^{(l)}-U_{i,j}^{(l)*})/\sigma_j\big\rVert_2
    \gtrsim (bn)^{-1/2}\big|(\tilde A_{0,j,\cdot} - \tilde A_{0,j,\cdot}^*)/\sigma_j\big|_2,
\end{align}
which along with Assumption \ref{asm_sec_dep} leads to
\begin{align}
    \label{eq_thm31_step1_part1bound}
    &\big\lVert \sqrt{bn}(U_{i,j}^{(l)}-U_{i,j}^{(l)*})/\sigma_j\big\rVert_2
    \gtrsim l_p^{-\xi}.
\end{align}
Similarly, by Lemma \ref{lemma_snorm} (i), we can derive the upper bound for $|U_{i,j}^{(l)}-U_{i,j}^{(l)*}|/\sigma_j$, that is, for $s\ge4$,
\begin{align}
    \label{eq_thm31_lpupper}
    \big\lVert \sqrt{bn}(U_{i,j}^{(l)}-U_{i,j}^{(l)*})/\sigma_j\big\rVert_{s/2}\lesssim l_p^{-\xi}.
\end{align}
Therefore, we can apply the Gaussian approximation theorem in Proposition 2.1 by \textcite{chernozhukov_central_2017}, which enables us to approximate the limit distribution of $\max_{i,j}\sqrt{bn}|(U_{i,j}^{(l)}-U_{i,j}^{(l)*})/\sigma_j|$ by the maximum coordinate of a centered Gaussian vector in $\RR^{p(n-2bn)}$. Consequently, we obtain that, with probability tending to 1,
\begin{align}
    \label{eq_thm31_lpGA_bound}
    \max_{\substack{bn+1\le i\le n-bn\\1\le j\le p}}\sqrt{bn}\big|(U_{i,j}^{(l)}-U_{i,j}^{(l)*})/\sigma_j\big| \lesssim l_p^{-\xi}\log(np).
\end{align}
Likewise, we can achieve, with probability tending to 1, $\max_{i,j}\sqrt{bn}\big|(U_{i,j}^{(l)}+U_{i,j}^{(l)*})/\sigma_j\big|\lesssim \log(np)$, which together with the similar arguments for the parts $|(U_{i,j}^{(r)*}-U_{i,j}^{(r)})/\sigma_j|$ and $|(U_{i,j}^{(r)}+U_{i,j}^{(r)*})/\sigma_j|$ yields
\begin{align}
    \label{eq_thm31_lpGAresult}
    &\max_{\substack{bn+1\le i\le n-bn\\1\le j\le p}}bn\sigma_j^{-2}\big|(U_{i,j}^{(l)}-U_{i,j}^{(r)})+(U_{i,j}^{(l)*}-U_{i,j}^{(r)*})\big|\nonumber \\
    & \qquad \qquad \cdot\big|(U_{i,j}^{(l)}-U_{i,j}^{(r)})-(U_{i,j}^{(l)*}-U_{i,j}^{(r)*})\big| \lesssim l_p^{-\xi}\log(np),
\end{align}
with probability tending to 1. For the part $\III_2$, it follows from Cauchy-Schwarz inequality and the similar argument in expression (\ref{eq_thm31_lpupper}) that
\begin{align}
    \label{eq_thm31_lpGAcenter}
    &\max_{\substack{bn+1\le i\le n-bn\\1\le j\le p}}bn\sigma_j^{-2}\EE\big|(U_{i,j}^{(l)}-U_{i,j}^{(r)})+(U_{i,j}^{(l)*}-U_{i,j}^{(r)*})\big|\nonumber \\
    & \qquad \qquad \cdot\big|(U_{i,j}^{(l)}-U_{i,j}^{(r)})-(U_{i,j}^{(l)*}-U_{i,j}^{(r)*})\big| \lesssim l_p^{-\xi}.
\end{align}
This, along with expressions (\ref{eq_thm31_step1_twoparts}) and (\ref{eq_thm31_lpGAresult}) implies that
\begin{align}
    \max_{bn+1\le i\le n-bn}\frac{1}{\sqrt{p}}\Big|\sum_{j=1}^p x_{i,j} - \sum_{j=1}^p x_{i,j}^*\Big|=O_{\PP}\big(\sqrt{p}l_p^{-\xi}\log(np)\big).
\end{align}
The desired result is achieved.
\end{proof}

\begin{lemma}[Block approximation]
    \label{lemma_thm31_block}
    Assume conditions in Proposition \ref{thm3_constanttrend}. For some big block size $b_p=o(p)$ and small block size $l_p=o(b_p)$, we have
    \begin{equation}
    \label{eq_thm31_step2_goal}
    \frac{bn}{\sqrt{p}}\big| \tilde I_{\epsilon} - I_{\epsilon}^*\big|=O_{\PP}\big(l_pb_p^{-1}\log(n)\big).
\end{equation}
\end{lemma}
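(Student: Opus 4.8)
The plan is to recast the claim as a bound on a \emph{difference of two maxima} and to show that it is governed by the discrepancy between the two covariance structures, which is of order $l_p/b_p$, rather than by the fluctuation of the deleted mass, which would only produce the weaker $(l_p/b_p)^{1/2}$ rate. Writing (in the normalisation of the proof) $T_i=\frac{bn}{\sqrt p}\sum_{j=1}^p x_{i,j}^*$ and $\tilde T_i=\frac{bn}{\sqrt p}\sum_{j\in\cup_k(\Theta_k\setminus\theta_k)}x_{i,j}^*$, we have $\frac{bn}{\sqrt p}I_\epsilon^*=\max_i|T_i|$ and $\frac{bn}{\sqrt p}\tilde I_\epsilon=\max_i|\tilde T_i|$, and the rescaled small-block contribution is $R_i=T_i-\tilde T_i=\frac{bn}{\sqrt p}\sum_{j\in\cup_k\theta_k}x_{i,j}^*$. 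Since $\{T_i\}$ and $\{\tilde T_i\}$ are strongly positively correlated, the object $\big|\max_i|T_i|-\max_i|\tilde T_i|\big|$ should be measured against how much removing the small blocks shifts the maximum, not against $\max_i|R_i|$.

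The first concrete step is to record the moment and covariance estimates for $R_i$. By construction $x_{i,j}^*$ is $l_p$-dependent in $j$, and the small blocks $\theta_k$ are pairwise separated by more than $l_p$, so the block sums $\sum_{j\in\theta_k}x_{i,j}^*$ are independent across $k$; within a single block, Assumption \ref{asm_sec_dep} (decay exponent $\xi>1$) renders the cross-sectional correlations summable, giving each block sum variance $\asymp l_p/(bn)^2$. Summing the $r_p$ independent blocks and applying the scaling $(bn)^2/p=(bn)^2/(r_pb_p)$ yields $\mathrm{Var}(R_i)\asymp l_p/b_p$, while $\mathrm{Var}(T_i)\asymp 1$; consequently the banded covariance matrices of $\{T_i\}$ and $\{\tilde T_i\}$ agree up to an entrywise discrepancy of order $l_p/b_p$. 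The higher moments needed below, in particular $\|R_i\|_{q/2}\lesssim (l_p/b_p)^{1/2}$ uniformly in $i$, follow from Burkholder's inequality (Lemma \ref{lemma_burkholder}) together with Lemma \ref{lemma_snorm}, exactly as in the proof of Theorem \ref{thm1_constanttrend}.

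The core step converts this covariance gap into a bound on the expected difference of maxima and then upgrades it to $O_{\PP}$. I would first invoke the high-dimensional Gaussian approximation (Proposition 2.1 of \textcite{chernozhukov_central_2017}), justified by the moment bounds above, to replace $\{T_i\}$ and $\{\tilde T_i\}$ by centered Gaussian vectors with matching covariances. On the Gaussian side I would run the interpolation $T_i(u)=\tilde T_i+uR_i$, $u\in[0,1]$, applied to a log-sum-exp smoothing $M_\beta$ of the maximum: Gaussian integration by parts turns $\frac{d}{du}\EE M_\beta(T(\cdot,u))$ into a quadratic form tested against the \emph{second-moment} structure of the perturbation, so each increment carries a factor $\mathrm{Var}(R_i)=O(l_p/b_p)$ rather than its square root. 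Integrating in $u$, balancing the smoothing error $\log n/\beta$ against the increment, and then passing to a probability statement by Markov's inequality (with the anti-concentration Lemma \ref{lemma_nazarov} and the comparison Lemma \ref{lemma_chen2019} controlling the return from the Gaussian surrogate) would deliver $\frac{bn}{\sqrt p}|\tilde I_\epsilon-I_\epsilon^*|=O_{\PP}(l_pb_p^{-1}\log n)$.

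The main obstacle is precisely the attainment of this non-square-root rate. The naive estimate $\big|\max_i|T_i|-\max_i|\tilde T_i|\big|\le\max_i|R_i|$ is fatally lossy: since $R_i$ has standard deviation $(l_p/b_p)^{1/2}$ and one maximises over $\asymp n$ indices, it yields only $O_{\PP}((l_p/b_p)^{1/2}\log n)$. Extracting the sharp $l_p/b_p$ scale forces one to demonstrate that the maximum reacts to the \emph{variance} of the deleted coordinates, which is exactly where the interpolation/integration-by-parts identity must replace the triangle inequality, and where the smoothing parameter $\beta$ must be tracked to land the logarithmic factor. A secondary difficulty is that each small block $\theta_k$ is adjacent to the remainder $\Theta_k\setminus\theta_k$ of its own big block, so $R_i$ and $\tilde T_i$ are not exactly independent; this cross term is not negligible a priori, but by the polynomial decay in Assumption \ref{asm_sec_dep} it contributes only at order $1/b_p$ and is therefore absorbed into the stated rate.
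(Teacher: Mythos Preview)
The paper's proof is far more direct than what you propose. It simply uses the crude bound
\[
\frac{bn}{\sqrt p}\big|\tilde I_\epsilon-I_\epsilon^*\big|\;\le\;\max_{bn+1\le i\le n-bn}\Big|\frac{1}{\sqrt{r_p}}\sum_{k=1}^{r_p}\frac{1}{\sqrt{b_p}}\sum_{j\in\theta_k}x_{i,j}^*\Big|,
\]
shows that the right-hand side has variance of order $l_p/b_p$ (independence of the $r_p$ small blocks together with Lemma~\ref{lemma_snorm}), and then invokes the Gaussian approximation of \textcite{chernozhukov_central_2017} to bound the maximum of these $n-2bn$ variables. There is no interpolation, no smoothed maximum, and no comparison of the covariance structures of $\{T_i\}$ and $\{\tilde T_i\}$.

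Your diagnosis that the triangle inequality only delivers the $(l_p/b_p)^{1/2}$ scale is correct --- indeed the paper's own nonlinear analogue, Lemma~\ref{lemma_nonli_block}, carries exactly the threshold $\sqrt{L^{-1}\log(nS)}$ --- but the remedy you propose does not close the gap. The interpolation $T(u)=\tilde T+uR$ combined with Gaussian integration by parts controls $\big|\EE M_\beta(T)-\EE M_\beta(\tilde T)\big|$, a \emph{difference of expectations}; neither Markov's inequality nor Lemmas~\ref{lemma_nazarov}--\ref{lemma_chen2019} convert this into a bound on the \emph{random} variable $\big|\max_i|T_i|-\max_i|\tilde T_i|\big|$ appearing in the lemma. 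That random difference is, at the argmax $i^*$ of $|\tilde T_i|$, essentially $|R_{i^*}|$, and since $R_{i^*}$ is (after approximation) nearly independent of $i^*$ with standard deviation $\asymp(l_p/b_p)^{1/2}$, it cannot be $O_{\PP}\big((l_p/b_p)\log n\big)$ unless $l_p/b_p\gtrsim 1/\log^2 n$. Either the exponent in the stated rate is a slip (compare Lemma~\ref{lemma_nonli_block}) and the paper's elementary argument already suffices, or one must check that the specific $l_p,b_p$ chosen in the proof of Proposition~\ref{thm3_constanttrend} satisfy $(l_p/b_p)^{1/2}\lesssim(l_p/b_p)\log n$; in neither case does the interpolation machinery help.
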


\begin{proof}
Recall that the number of big blocks is denoted by $r_p=\lfloor p/b_p\rfloor$. First, we shall note that
\begin{align}
    \label{eq_thm31_blockGA}
    \max_{bn+1\le i\le n-bn}\Big|\frac{1}{\sqrt{r_p}}\sum_{ k=1}^{r_p}\frac{1}{\sqrt{b_p}}\sum_{j\in\theta_k}x_{i,j}^*\Big|\le \sum_{i=bn+1}^{n-bn}\Big|\frac{1}{\sqrt{r_p}}\sum_{ k=1}^{r_p}\frac{1}{\sqrt{b_p}}\sum_{j\in\theta_k}x_{i,j}^*\Big|,
\end{align}
and we aim to apply the Gaussian approximation theorem to $(r_pb_p)^{-1/2}\sum_{k=1}^{r_p}\sum_{j\in\theta_k}x_{i,j}^*$.
Since the chunked small blocks are independent, it follows that,
\begin{align}
    \label{eq_thm31_step2_bound}
    & \EE\Big(\frac{1}{\sqrt{r_p}}\sum_{ k=1}^{r_p}\frac{1}{\sqrt{b_p}}\sum_{j\in\theta_k}x_{i,j}^*\Big)^2 = \frac{1}{r_pb_p}\sum_{ k=1}^{r_p}\EE\Big(\sum_{j\in\theta_k}x_{i,j}^*\Big)^2.
\end{align}
We can rewrite $x_{i,j}^*$ into
\begin{align}
    \label{eq_thm31_cov}
    & x_{i,j}^* = \E_0\Big[\Big(\sum_{l\le i+bn}G_{i,l,j,\cdot}^{\top}\eta_l\Big)^2\Big],
\end{align}
where
\begin{equation}
    \label{eq_thm31_defG}
    G_{i,l,j,\cdot}=
    \begin{cases}
        (bn)^{-1/2}\sigma_j^{-1}\Big(\sum_{t=(i-bn)\vee l}^{i-1}A_{t-l,j,\cdot}^* - \sum_{t=i\vee l}^{i+bn-1}A_{t-l,j,\cdot}^*\Big), & \,\text{if } l\le i-1,\\
        (bn)^{-1/2}\sigma_j^{-1}\sum_{t=i\vee l}^{i+bn-1}A_{t-l,j,\cdot}^*, & \, \text{if } i\le l\le i+bn-1.
    \end{cases}
\end{equation}
Recall the operator $\E_0(\cdot)=\cdot -\EE(\cdot)$. Note that $\{\eta_l\}$ are i.i.d. by Assumption \ref{asm_finitemoment}. In view of expressions (\ref{eq_thm31_tail}) and (\ref{eq_thm31_cov}), it follows from Lemmas \ref{lemma_burkholder} and \ref{lemma_snorm} (ii) that
\begin{align}
    \label{eq_thm31_cov_part1}
    \EE\Big(\sum_{j\in\theta_k}x_{i,j}^*\Big)^2 & = \Big\lVert\sum_{j\in\theta_k}\sum_{l,r\le i+bn}G_{i,l,j,\cdot}^{\top}\E_0[\eta_l\eta_r^{\top}]G_{i,r,j,\cdot}\Big\rVert_2^2 \nonumber \\
    & = \sum_{l,r\le i+bn}\Big\lVert\sum_{j\in\theta_k}G_{i,l,j,\cdot}^{\top}\E_0[\eta_l\eta_r^{\top}]G_{i,r,j,\cdot}\Big\rVert_2^2 \nonumber \\
    & \gtrsim \sum_{j_1,j_2\in\theta_k}\big(\tilde A_{0,j_2,\cdot}^{*\top}\tilde A_{0,j_1,\cdot}^*\big)^2/(\sigma_{j_1}\sigma_{j_2})^2.
\end{align}
which along with expression (\ref{eq_thm31_step2_bound}) and Assumption \ref{asm_sec_dep} yields,
\begin{align}
    \label{eq_thm31_l2_bound}
    & \EE\Big(\frac{1}{\sqrt{r_p}}\sum_{ k=1}^{r_p}\frac{1}{\sqrt{b_p}}\sum_{j\in\theta_k}x_{i,j}^*\Big)^2 
    \gtrsim l_p/b_p.
\end{align}
Therefore, we can apply the Gaussian approximation in \textcite{chernozhukov_central_2017} to approximate the limiting distribution of $\max_i|(r_pb_p)^{-1/2}\sum_k\sum_jx_{i,j}^*|$ by the maximum of a centered Gaussian vector in $\RR^n$. Then, by expression (\ref{eq_thm31_blockGA}), we achieve that, with probability tending to 1,
\begin{align}
    \max_{bn+1\le i\le n-bn}\Big|\frac{1}{\sqrt{r_p}}\sum_{ k=1}^{r_p}\frac{1}{\sqrt{b_p}}\sum_{j\in\theta_k}x_{i,j}^*\Big| \lesssim  l_pb_p^{-1}\log(n),
\end{align}
which completes the proof.
\end{proof}

\begin{lemma}[Gaussian approximation on independent blocks]
    \label{lemma_thm31_GS}
    Assume that conditions in Proposition \ref{thm3_constanttrend} hold. Recall the number of blocks $r_p=\lfloor p/b_p\rfloor$, where the big block size $b_p=o(p)$. Then, as $r_p\rightarrow\infty$, we have
    \begin{equation}
        \label{eq_thm31_step3_goal}
    \sup_{u\in \mathbb{R}}\big\vert\PP(\tilde I_{\epsilon}\le u)-\PP(\tilde I_g\le u)\big\vert\lesssim \Bigg(\frac{n^{4/q}\log^7(r_pn)}{r_p} \Bigg)^{1/6} + \Bigg(\frac{n^{4/q}\log^3(r_pn)}{r_p^{1-2/q}}\Bigg)^{1/3}.
    \end{equation}
\end{lemma}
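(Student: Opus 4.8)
The plan is to apply Proposition 2.1 in \textcite{chernozhukov_central_2017} directly to the array of centered, independent random vectors $\{y_k = (y_{bn+1,k},\ldots,y_{n-bn,k})^\top\}_{1\le k\le r_p}$, each taking values in $\RR^{n-2bn}$, since by construction the chunked big blocks $\Theta_k\setminus\theta_k$ depend on disjoint sets of innovations $\eta_l$ and hence $\{y_k\}$ are independent over $k$. The Gaussian counterpart $\{g_k\}$ has, by definition, matching covariances $\EE(g_kg_k^\top)=\EE(y_ky_k^\top)$, and $\tilde I_\epsilon = \max_i |r_p^{-1/2}\sum_k y_{i,k}|$, $\tilde I_g = \max_i |r_p^{-1/2}\sum_k g_{i,k}|$ play the roles of $\max$ of the normalized sum and its Gaussian analogue. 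So the whole task reduces to verifying the moment conditions (M.1)/(M.2) and the non-degeneracy condition (E.1)/(E.2) of \textcite{chernozhukov_central_2017} with the ``$p$'' there equal to $n-2bn$ (at most $n$ coordinates) and ``$n$'' there equal to $r_p$, and then reading off the rate $\Delta_1+\Delta_2$ with $p\mapsto r_p$ and the envelope parameter $B_{r_p}\mapsto n^{2/q}$.

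First I would establish the upper moment bounds: for $s=6,8$, I need $r_p^{-1}\sum_{k=1}^{r_p}\EE|bn\,y_{i,k}|^{s/2}=O(1)$ uniformly in $i$, and for the envelope I need $\max_{1\le k\le r_p}\big\lVert\max_{bn+1\le i\le n-bn} bn\,y_{i,k}\big\rVert_{q/2}\lesssim n^{2/q}$. Since $bn\,y_{i,k}=b_p^{-1/2}\sum_{j\in\Theta_k\setminus\theta_k} bn\,x_{i,j}^*$ is a sum of at most $b_p$ terms and $x_{i,j}^*$ is a quadratic form in $\eta_l$, I would expand $(bn x_{i,j}^*)$ via its martingale-difference representation as in \eqref{eq_thm31_cov}--\eqref{eq_thm31_defG}, apply Burkholder's inequality (Lemma \ref{lemma_burkholder}) together with the Frobenius-norm bound in Lemma \ref{lemma_snorm}(i) to control $\lVert b_p^{-1/2}\sum_{j\in\Theta_k\setminus\theta_k} bn\,x_{i,j}^*\rVert_{s/2}$ by $\big(b_p^{-1}\sum_{j_1,j_2}(\tilde A^*_{0,j_1,\cdot}{}^\top\tilde A^*_{0,j_2,\cdot})^2/(\sigma_{j_1}\sigma_{j_2})^2\big)^{1/2}$, and then invoke Assumption \ref{asm_sec_dep} (equivalently \eqref{eq_cov_dep_decay}) so that the double sum over $j_1,j_2$ within one block of size $b_p$ is $O(b_p)$; after normalization by $b_p^{-1/2}$ this gives $O(1)$. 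The envelope bound follows by summing the $s=q/2$ version of this estimate over the $n-2bn$ values of $i$, exactly as in \eqref{eq_thm11_e2result}.

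Next I would verify the lower bound $r_p^{-1}\sum_{k=1}^{r_p}\EE(bn\,y_{i,k})^2\ge c>0$. This is the analogue of \eqref{eq_thm11_m1result}: using the orthogonality of the martingale differences and keeping only the diagonal ($j_1=j_2$) contributions to $\EE(b_p^{-1/2}\sum_{j\in\Theta_k\setminus\theta_k} bn\,x_{i,j}^*)^2$, which each equal a constant multiple of $\big(\tilde A^*_{0,j,j}/\sigma_j\big)^4$ plus lower-order terms, I get that the block variance is $\gtrsim (b_p-l_p)/b_p\to 1$ since $l_p=o(b_p)$; this mirrors the computation in \eqref{eq_thm31_l2_bound} but now with the small blocks $\theta_k$ removed rather than retained, which is why the constant is bounded away from zero rather than tending to $l_p/b_p$. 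With $B_{r_p}:=\max\{M_3^3,M_4^2,\tilde M_{q/2}\}\lesssim n^{2/q}$ verified, Proposition 2.1 of \textcite{chernozhukov_central_2017} applied with their sample size replaced by $r_p$ and their dimension by $n$ yields precisely the bound $\big(n^{4/q}\log^7(r_pn)/r_p\big)^{1/6}+\big(n^{4/q}\log^3(r_pn)/r_p^{1-2/q}\big)^{1/3}$, which is the claim.

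The main obstacle I anticipate is the bookkeeping of the block-level second moments: one must show that passing from the full block $\Theta_k$ (which contains simultaneous diagonal-quadratic contributions and gives a variance bounded away from zero) to the punctured block $\Theta_k\setminus\theta_k$ does not destroy the lower bound, while simultaneously the cross-sectional-dependence decay from Assumption \ref{asm_sec_dep} must be strong enough that the off-diagonal double sum $\sum_{j_1\ne j_2 \in \Theta_k}(\tilde A^*_{0,j_1,\cdot}{}^\top\tilde A^*_{0,j_2,\cdot})^2/(\sigma_{j_1}\sigma_{j_2})^2$ stays $O(b_p)$ uniformly over blocks and over the length of the temporal window $bn$ — this uses $\xi>1$ together with the fact that $\tilde A^*$ has rows truncated to width $l_p$. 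Everything else is a routine adaptation of the arguments already carried out for Theorem \ref{thm1_constanttrend}.
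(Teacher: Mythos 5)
Your proposal is correct and follows essentially the same route as the paper: verify the moment, envelope, and variance lower-bound conditions for the independent block sums $y_{i,k}$ via the martingale representation \eqref{eq_thm31_cov}--\eqref{eq_thm31_defG}, Burkholder's inequality, Lemma \ref{lemma_snorm} and Assumption \ref{asm_sec_dep}, and then apply Proposition 2.1 of \textcite{chernozhukov_central_2017} with sample size $r_p$, dimension of order $n$, and $B_{r_p}\lesssim n^{2/q}$. The only (immaterial) differences are that the paper organizes the upper moment bound by further splitting each punctured block into alternating exactly independent sub-groups of width $l_p$ before applying Burkholder, whereas you control the full within-block double sum directly through the correlation decay $\xi>1$ (which gives the same $O(b_p)$ bound, and is in fact the computation the paper uses for the lower bound), and in the variance lower bound the relevant diagonal quantity is $(\tilde A_{0,j,\cdot}^{*\top}\tilde A_{0,j,\cdot}^{*})^2/\sigma_j^4$ rather than a power of the scalar entry $\tilde A^*_{0,j,j}$ — a notational slip that does not affect the argument.
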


\begin{proof}
We shall apply the Gaussian approximation theorem to $\tilde I_{\epsilon}$ that consists of $r_p$ independent big blocks $ y_{i, k}$. 
Note that in each big block $ y_{i, k}$ in $\tilde I_{\epsilon}$, there are $b_p-l_p$ many $x_{i,j}^*$'s. We further divide them into $M=\lceil (b_p-l_p)/l_p\rceil$ groups. For the $ k$-th big block, define the index set of the $m$-th group as
\begin{equation}
    \G_{m, k}:=( k-1)b_p+\Big\{ml_p+1,ml_p+2,...,(m+1)l_p\Big\}, 
\end{equation}
for all $m=1,...,M, \,  k=1,...,r_p$, and $\cup_{1\le m\le M}\G_{m,k}=\Theta_k\setminus\theta_k$. All the groups with odd $m$ are independent, so are the ones with even $m$. The grouping step illustrated above entails another view of $ y_{i, k}$ that
\begin{align}
    & y_{i, k} =  \frac{1}{\sqrt{b_p}}\sum_{m=1}^{M}\Big[\sum_{j\in\G_{m\text{(odd)}, k}} x_{i,j}^* +\sum_{j\in\G_{m\text{(even)}, k}}x_{i,j}^* \Big],
\end{align}
where $\G_{m\text{(odd)}, k}$ (resp. $\G_{m\text{(even)}, k}$) denotes $\G_{m, k}$ with an odd $m$ (resp. even $m$). We shall study the odd groups first, and then the even groups can be investigated along the same lines. For all $bn+1\le i\le n-bn$, $1\le  k\le r_p$ and some constant $s=6,8,$ we consider the upper bound of $ r_p^{-1}\sum_{ k=1}^{r_p}\EE|y_{i, k}|^{s/2}$. It follows from Lemma \ref{lemma_burkholder} that
\begin{align}
    \label{eq_thm31_m2_bound1}
    & \Big\lVert\sum_{m=1}^{M}\sum_{j\in\G_{m\text{(odd)}, k}} x_{i,j}^*\Big\rVert_{s/2} \lesssim \Big(\sum_{m=1}^{M}\Big\lVert\sum_{j\in\G_{m\text{(odd)}, k}}x_{i,j}^*\Big\rVert_{s/2}^2\Big)^{1/2}.
\end{align}
By applying Lemma \ref{lemma_burkholder} again and accounting for the fact that the sequence $\{\eta_l\}$ are i.i.d., one can show that
\begin{align}
    \label{eq_thm31_m2step1}
    & \Big\lVert\sum_{j\in\G_{m\text{(odd)}, k}} x_{i,j}^*\Big\rVert_{s/2}^2
    \lesssim \sum_{l,r\le i+bn-1}\Big\lVert\sum_{j\in\G_{m\text{(odd)}, k}}G_{i,l,j,\cdot}^{\top}\E_0[\eta_l\eta_r^{\top}]G_{i,r,j,\cdot}\Big\rVert_{s/2}^2. 
\end{align}
It follows by expression (\ref{eq_thm31_tail}) and Lemma \ref{lemma_snorm} (i) that
\begin{equation}
    \label{eq_thm31_snorm}
    \sum_{l,r\le i+bn-1}\Big\lVert\sum_{j\in\G_{m\text{(odd)}, k}}G_{i,l,j,\cdot}^{\top}\E_0[\eta_l\eta_r^{\top}]G_{i,r,j,\cdot}\Big\rVert_{s/2}^2 \lesssim \sum_{j_1,j_2\in\G_{m\text{(odd)},k}}\big(\tilde A_{0,j_2,\cdot}^{*\top}\tilde A_{0,j_1,\cdot}^*\big)^2/\big(\sigma_{j_1}\sigma_{j_2}\big)^2,
\end{equation}
which along with expressions (\ref{eq_thm31_m2_bound1}) and (\ref{eq_thm31_m2step1}) yields
\begin{align}
    \label{eq_thm31_m2step2}
    \Big\lVert\sum_{m=1}^{M}\sum_{j\in\G_{m\text{(odd)}, k}} x_{i,j}^*\Big\rVert_{s/2} \lesssim \Big(\sum_{m=1}^{M}\sum_{j_1,j_2\in\G_{m\text{(odd)},k}}\big(\tilde A_{0,j_2,\cdot}^{*\top}\tilde A_{0,j_1,\cdot}^*\big)^2/\big(\sigma_{j_1}^2\sigma_{j_2}^2\big) \Big)^{1/2}.
\end{align}
This together with a similar argument for the even groups and Assumption \ref{asm_sec_dep} lead to
\begin{align}
    \label{eq_thm31_m2result}
    r_p^{-1}\sum_{ k=1}^{r_p}\EE|y_{i, k}|^{s/2} \lesssim & \frac{1}{r_p}\sum_{ k=1}^{r_p}\frac{1}{b_p^{s/4}}\Big(\Big\lVert\sum_{m=1}^{M}\sum_{j\in\G_{m\text{(odd)}, k}} x_{i,j}^*\Big\rVert_{s/2}^{s/2} + \Big\lVert\sum_{m=1}^{M}\sum_{j\in\G_{m\text{(even)}, k}} x_{i,j}^*\Big\rVert_{s/2}^{s/2}\Big) \nonumber \\
    \lesssim & \frac{1}{r_p}\sum_{ k=1}^{r_p}\frac{2}{b_p^{s/4}}\Big(\sum_{m=1}^{M}\sum_{j_1,j_2\in\G_{m\text{(odd)},k}}\big(\tilde A_{0,j_2,\cdot}^{*\top}\tilde A_{0,j_1,\cdot}^*\big)^2/\big(\sigma_{j_1}^2\sigma_{j_2}^2\big) \Big)^{s/4} = O(1).
\end{align}
Similarly, we have 
\begin{equation}
    \label{eq_thm31_e2result}
    \EE\big(\max_{bn+1\le i\le n-bn}|y_{i, k}|^q\big) \le \sum_{i=bn+1}^{n-bn}\EE |y_{i, k}|^q \lesssim n.
\end{equation}
Now, we shall prove the lower bound of $r_p^{-1}\sum_{ k=1}^{r_p}\EE y_{i, k}^2$ away from zero. It follows by Lemma \ref{lemma_snorm} (ii) as well as the definition of $ y_{i, k}$ that, for each $bn+1\le i\le n-bn$ and $ k=1,...,r_p$,
\begin{align}
    \label{eq_thm31_m1expression}
    \EE y_{i, k}^2 \ge &  4\sum_{l\le i-1}\sum_{r<l}\EE\Big(\sum_{j\in\Theta_k\setminus\theta_k}G_{i,l,j,\cdot}^{\top}\E_0[\eta_l\eta_r^{\top}]G_{i,r,j,\cdot}\Big)^2 \nonumber \\
    \gtrsim &  \frac{1}{b_p}\sum_{l\le i-1}\sum_{r<l}\Big\lVert\sum_{j\in\Theta_k\setminus\theta_k}G_{i,l,j,\cdot}G_{i,r,j,\cdot}^{\top}\Big\rVert_F^2.
\end{align}
Since
\begin{equation}
    \sum_{l\le i-1}\sum_{r<l}\Big\lVert\sum_{j\in\Theta_k\setminus\theta_k}G_{i,l,j,\cdot}G_{i,r,j,\cdot}^{\top}\Big\rVert_F^2 = \Big\lVert\sum_{j\in\Theta_k\setminus\theta_k}\tilde A_{0,j,\cdot}^*\tilde A_{0,j,\cdot}^{*\top}/\sigma_j^2 \Big\rVert_F^2 -o(b_p),
\end{equation}
in view of expression (\ref{eq_thm31_m1expression}) and Lemma \ref{lemma_snorm} (ii), we get
\begin{align}
    \label{eq_thm31_m1result}
    & \frac{1}{r_p}\sum_{ k=1}^{r_p}\EE y_{i, k}^2 
    \gtrsim \frac{1}{r_pb_p}\sum_{ k=1}^{r_p}\sum_{j_1,j_2\in\Theta_k\setminus\theta_k}\big(\tilde A_{0,j_2,\cdot}^{*\top}\tilde A_{0,j_1,\cdot}^*\big)^2/(\sigma_{j_1}\sigma_{j_2})^2 \ge 1.
\end{align}

For any $r\le q$, in view of $M_r$ and $\tilde M_r$ defined in the proof of Theorem \ref{thm1_constanttrend}, expressions (\ref{eq_thm31_m2result}) and (\ref{eq_thm31_e2result}) imply that
$$B_n :=\max\Big\{M_3^3,M_4^2, \tilde{M}_{q/2} \Big\}\lesssim  n^{2/q}.$$
Then, Proposition 2.1 in \textcite{chernozhukov_central_2017} yields the desired result.
\end{proof}

\subsection{Covariance Matrices of Gaussian Processes}

Now we attach the detailed calculation of the covariance matrix for the Gaussian vectors $Z_j\in\RR^{n-2bn}$ in Theorems \ref{thm1_constanttrend} and \ref{thm3_constanttrend}, respectively. Note that to apply the Gaussian approximation, we only need to evaluate the covariance matrix of corresponding $X_j\in\RR^{n-2bn}$ derived from our test statistics $\Q_n$ and use this same covariance matrix to generate Gaussian random vector $Z_j$. For simplicity, we assume that the long-run covariance of the errors $(\epsilon_{t,j})_t$ is known. One can estimate this long-run covariance by applying the robust M-estimation method provided in Section \ref{subsec_longrun}.

\begin{lemma}[Covariance matrix of the independent summands in $\Q_n$]
    \label{lemma_cov_thm1}
    For $bn+1\le i\le n-bn$ and $1\le j\le p$, $x_{i,j}$ is defined in expression (\ref{eq_thm1_x}). If $x_{j_1}$ and $x_{j_2}$ are independent, $j_1\neq j_2$, then, for $c\ge 0$, we have 
    \begin{equation}
        (bn)^2\EE (x_{i,j}x_{i+cbn,j})= \begin{cases}
            18c^2-24c +8 +O(1/(bn)),\quad 0\le c <1, \\
             2c^2-8c +8+O(1/(bn)), \quad 1\le c <2, \\
            O(1/(bn)), \quad c \ge 2,
        \end{cases}
    \end{equation}
    where constant in $O(\cdot)$ is independent of $n,b,p,i,j.$
\end{lemma}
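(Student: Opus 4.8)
The plan is to compute $\EE(x_{i,j} x_{i+cbn,j})$ directly by expanding each $x_{i,j}$ as a centered quadratic form in the i.i.d. innovations $(\eta_{l,j})_l$ and using the formula for the fourth moment of Gaussian-like quadratic forms, namely that for a centered quadratic form $\sum_{l} a_{l}^2(\eta_l^2-\EE\eta_l^2) + 2\sum_{r<l}a_l a_r \eta_l\eta_r$ the covariance with another such form reduces to a sum over pairs of shared indices. Concretely, recall from the proof of Theorem~\ref{thm1_constanttrend} that $bn\,x_{i,j} = (bn\sigma_j^2)^{-1}\sum_{l\le i+bn-1} D_{i,l,j}$ with the martingale differences $D_{i,l,j}$ defined in \eqref{eq_thm11_md} and the coefficients $a_{i,l,j}$ in \eqref{eq_thm11_defa}. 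Since the $\eta_{l,j}$ are i.i.d. with unit variance, the martingale-difference structure and orthogonality give
$$
(bn)^2\EE(x_{i,j}x_{i+cbn,j}) = \frac{1}{(bn\sigma_j^2)^2}\sum_{l} \EE\big(D_{i,l,j}D_{i+cbn,l,j}\big),
$$
and each summand is, up to lower-order moment terms controlled by Assumption~\ref{asm_finitemoment}, essentially $2\,a_{i,l,j}^2 a_{i+cbn,l,j}^2$ plus the contribution $\big(\sum_{r<l}a_{i,r,j}a_{i+cbn,r,j}\big)\cdot 4 a_{i,l,j}a_{i+cbn,l,j}$ from the cross terms — i.e., it collapses to a function of the two correlation-type quantities $\sum_l a_{i,l,j}a_{i+cbn,l,j}$ type sums.

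The key simplification is that $a_{i,l,j}$, being a difference/sum of partial sums of the $A_{k,j,j}$'s, is asymptotically (up to $O(1/(bn))$ relative error, using Assumption~\ref{asm_temp_dep}) a \emph{piecewise-linear} function of the normalized index $l/(bn)$ scaled by $\sigma_j = \tilde A_{0,j,j}$ (loosely): on the left window it behaves like the "ramp up then down" shape, and the overlap pattern between the window centered at $i$ and the window centered at $i+cbn$ depends only on $c$. So the whole computation reduces to evaluating, for the idealized weight functions $w(x)$ on $[-1,1]$ given by $w(x)=1-|x|$-type triangular kernels (the MOSUM difference kernel), the integral $\int (w_c \cdot w_0)^2$-type and $\int w_c w_0$-type quantities as a function of the shift $c$. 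This is exactly where the piecewise-quadratic polynomials $18c^2-24c+8$ (for $0\le c<1$), $2c^2-8c+8$ (for $1\le c<2$), and $0$ (for $c\ge 2$) come from: the support of the difference kernel has length $2bn$, so two such kernels shifted by $\ge 2bn$ share no innovations and the covariance vanishes; for $0\le c<1$ both the "plateau overlap" and "ramp overlap" regions contribute, while for $1\le c<2$ only a single ramp-overlap region survives.

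I would carry this out in three steps. First, replace $a_{i,l,j}$ by its leading-order piecewise-linear approximation $\sigma_j \cdot \tilde a((l-i)/(bn))$ for an explicit universal profile $\tilde a$, and show via Assumption~\ref{asm_temp_dep} (polynomial decay $(1\vee h)^{-\beta}$) that the replacement error in each of the sums $\sum_l a_{i,l,j}^2 a_{i+cbn,l,j}^2$ and $\sum_l a_{i,l,j}a_{i+cbn,l,j}$ is $O(bn)$ relative to a leading term of order $(bn)^2$, hence contributes $O(1/(bn))$ after the $1/(bn\sigma_j^2)^2$ normalization — this is the step I expect to be the main obstacle, since it requires carefully tracking boundary terms of the partial sums and checking the error is uniform in $i,j$ and in $c$. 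Second, with the profile $\tilde a$ in hand, convert the two Riemann-type sums into the corresponding integrals over $[-1,1]$ and evaluate them explicitly as polynomials in $c$ on the three regimes $c\in[0,1)$, $c\in[1,2)$, $c\ge 2$; the vanishing for $c\ge 2$ is immediate from disjoint supports. Third, assemble: $(bn)^2\EE(x_{i,j}x_{i+cbn,j}) = 2\cdot(\text{quartic-overlap integral}) + 4\cdot(\text{cross integral})$ and verify the resulting polynomial matches $g(c)$ from \eqref{eq_def_g} / the stated formula, with the $O(1/(bn))$ error absorbing both the Riemann-sum discretization and the temporal-tail truncation. Note that the case $c=1$ requires a bit of care (it is exactly the junction where $g$ fails to have a second derivative, cf. the remark following \eqref{eq_def_g}), but both one-sided polynomial expressions agree there, so the statement as written is consistent; one only needs the bound, not continuity of derivatives.
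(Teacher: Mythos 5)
Your overall route is the same as the paper's: expand $bn\,x_{i,j}$ as a quadratic form in the i.i.d.\ innovations via the martingale differences $D_{i,l,j}$ of (\ref{eq_thm11_md}), reduce the covariance to diagonal terms $\sum_l\EE(D_{i,l,j}D_{i+cbn,l,j})$ (which is legitimate, since both are martingale differences in $l$ with respect to the same filtration), and then do overlap-region bookkeeping in $l$ with Assumption \ref{asm_temp_dep} absorbing boundary and tail effects into $O(1/(bn))$; this is exactly the mechanics of (\ref{eq_cov1_0c1})--(\ref{eq_cov1_1c2}). However, there is a genuine error in the key identification on which your explicit polynomials rest: the leading-order profile of $a_{i,l,j}$ in (\ref{eq_thm11_defa}) is \emph{not} a triangular kernel $w(x)=1-|x|$. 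For $l$ well inside the left window the first partial sum is $\approx\tilde A_{0,j,j}$ and the second is a negligible tail, and symmetrically on the right; so $a_{i,l,j}$ is, up to an $O(1)$-width transition zone controlled by the polynomial decay in Assumption \ref{asm_temp_dep}, equal to $\tilde A_{0,j,j}$ times the two-sided \emph{step} kernel ($+1$ on $(i-bn,i-1]$, $-1$ on $[i,i+bn-1]$). The triangular weight $\big(1-|i-\tau_k|/(bn)\big)$ appears in this paper only in the signal term $\EE V_i$ of (\ref{eq_EV_appr}), as a function of the window center, not in the noise coefficients. This matters quantitatively: the overlap sum $\sum_l a_{i,l,j}a_{i+cbn,l,j}$ equals $(2-3c)\,bn\,\tilde A_{0,j,j}^2+O(1)$ for $0\le c\le 1$ and $-(2-c)\,bn\,\tilde A_{0,j,j}^2+O(1)$ for $1\le c\le 2$, and since $\sigma_j^2=\tilde A_{0,j,j}^2$ the leading term $2\big(\sum_l a_{i,l,j}a_{i+cbn,l,j}\big)^2/(bn\sigma_j^2)^2$ gives precisely $18c^2-24c+8$ and $2c^2-8c+8$. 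Had you integrated triangular kernels as proposed, the overlap functions would be piecewise cubic and their squares piecewise degree six, which cannot match (\ref{eq_def_g}); so the step where you assert "this is exactly where the polynomials come from" fails as stated (your final matching step would have exposed it, but as written the derivation does not produce the lemma).

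A secondary, smaller inaccuracy is your assembly formula "$2\cdot(\text{quartic-overlap integral})+4\cdot(\text{cross integral})$". After summing over $l$, the cross terms produce $2\big[\big(\sum_l a_{i,l,j}a_{i+cbn,l,j}\big)^2-\sum_l a_{i,l,j}^2a_{i+cbn,l,j}^2\big]$, i.e.\ the \emph{square} of the cross-overlap sum is the dominant object, while the quartic-overlap term $\sum_l a_{i,l,j}^2a_{i+cbn,l,j}^2\,\EE(\eta^2-1)^2$ is only $O(bn)$ (its coefficient is $\mu_4-1$, not $2$, in the non-Gaussian case) and is swallowed by the $O(1/(bn))$ remainder after the $(bn\sigma_j^2)^{-2}$ normalization. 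With the step profile substituted for the triangular one and the assembly corrected to "leading term $=2(\text{cross-overlap sum})^2$ normalized", your three-step plan does reduce to the paper's computation.
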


\begin{proof}
First, since the errors are cross-sectionally independent, it follows that the long-run covariance of $(\epsilon_{t,j})_t$ can be expressed as
\begin{align}
    \label{eq_cov1_longrun}
    \sigma_j^2=\sum_{h=-\infty}^{\infty}\EE(\epsilon_{t,j}\epsilon_{t+h,j}) = \sum_{h=-\infty}^{\infty}\sum_{k\ge0}A_{k,j,j}A_{k+h,j,j}=\Big(\sum_{k\ge0} A_{k,j,j}\Big)^2 =\tilde A_{0,j,j}^2,
\end{align}
where the last equality is a direct consequence of the definition of $\tilde A_{k,j,j}$. Recall that $x_{i,j}$'s can be rewritten in expression (\ref{eq_thm1_x_linear}) as the centered independent summands in $|V_i|_2^2$ under the null, which yields, for any $c\ge0$,
\begin{align}
    \label{eq_cov1_goal}
    (bn)^2\EE (x_{i,j}x_{i+cbn,j})
    & = (bn)^2\sigma_j^{-4}\EE\Bigg(\E_0\Big[\Big(\sum_{t=i-bn}^{i-1}\frac{\epsilon_{t,j}}{bn}-\sum_{t=i+1}^{i+bn}\frac{\epsilon_{t,j}}{bn}\Big)^2\Big] \nonumber \\
    & \quad \cdot \E_0\Big[\Big(\sum_{t=i-(1-c)bn}^{i-1+cbn}\frac{\epsilon_{t,j}}{bn}-\sum_{t=i+1+cbn}^{i+(1+c)bn}\frac{\epsilon_{t,j}}{bn}\Big)^2\Big]\Bigg).
\end{align}
Recall the definitions of $U_{i,j}^{(l)}$ and $U_{i,j}^{(r)}$ in expression (\ref{eq_U}). To evaluate expression (\ref{eq_cov1_goal}), we first study the expectation of $(U_{i,j}^{(l)}-U_{i,j}^{(r)})^2$. By Assumption \ref{asm_temp_dep} and $\EE\eta_{t,j}^2=1$, we have
\begin{equation}
    \label{eq_cov1_exp_part1}
    bn\EE (U_{i,j}^{(l)2})=(bn)^{-1}\sum_{k\le i-1}\Big(\sum_{t=(i-bn)\vee k}^{i-1}A_{t-k,j,j}\Big)^2 = \tilde A_{0,j,j}^2\big[1+O(1/(bn))\big],
\end{equation}
and $bn\EE (U_{i,j}^{(r)2})=\tilde A_{0,j,j}^2\big[1+O(1/(bn))\big]$. Similarly, we get
\begin{align}
    \label{eq_cov1_exp_part12}
    & bn\EE (U_{i,j}^{(l)}U_{i,j}^{(r)})= (bn)^{-1}\sum_{k\le i-1}\Big(\sum_{t=(i-bn)\vee k}^{i-1}A_{t-k,j,j}\Big)\Big(\sum_{t=i+1}^{i+bn}A_{t-k,j,j}\Big)=O(1/(bn)),
\end{align}
which together with expressions (\ref{eq_cov1_longrun}) and (\ref{eq_cov1_exp_part1}) gives
\begin{equation}
    \label{eq_cov1_exp}
    bn\EE [(U_{i,j}^{(l)}-U_{i,j}^{(r)})^2/\sigma_j^2] = 2+O(1/(bn)).
\end{equation}
Hence, when $c=0$, by expression (\ref{eq_cov1_exp}), we have the variance of $x_{i,j}$ to be
\begin{align*}
    (bn)^2\EE (x_{i,j}^2)
    &=\EE\Big\{\Big( bn\frac{(U_{i,j}^{(l)}-U_{i,j}^{(r)})^2-\EE(U_{i,j}^{(l)}-U_{i,j}^{(r)})^2 }{\sigma_j^2} \Big)^2\Big\}\nonumber\\
    &= \EE\Big\{\Big( bn\frac{(U_{i,j}^{(l)}-U_{i,j}^{(r)})^2}{\sigma_j^2}-2+O(1/(bn)) \Big)^2\Big\}\nonumber\\
    &= (bn)^2\EE [(U_{i,j}^{(l)}-U_{i,j}^{(r)})^4/\sigma_j^4] -4bn\EE[(U_{i,j}^{(l)}-U_{i,j}^{(r)})^2/\sigma_j^2]+4+O(1/(bn))\nonumber \\
    &= (bn)^2\EE [(U_{i,j}^{(l)}-U_{i,j}^{(r)})^4/\sigma_j^4] -4+O(1/(bn)),
\end{align*}
which along with the definition of $a_{i,l,j}$ in expression (\ref{eq_thm11_defa}) gives
\begin{align}
    \label{eq_cov1_var}
    (bn)^2\EE (x_{i,j}^2)
    &= \sigma_j^{-4}\EE\Big[\Big(\sum_{l\le i+bn} a_{i,l,j}\eta_{l,j}\Big)^4\Big] -4 +O(1/(bn))\nonumber\\ 
    &= \sigma_j^{-4}\sum_{l\le i+bn} a_{i,l,j}^4 + 6\sigma_j^{-4}\sum_{l_1\le i+bn}a_{i,l_1,j}^2\sum_{l_2<l_1} a_{i,l_2,j}^2 -4+O(1/(bn))\nonumber \\
    & =  6\sigma_j^{-4}\sum_{i-bn\le l_1\le i-1}a_{i,l_1,j}^2\sum_{i-bn\le l_2<l_1} a_{i,l_2,j}^2 \nonumber \\
    & \quad +6\sigma_j^{-4}\sum_{i+1\le l_1\le i+bn}a_{i,l_1,j}^2\sum_{i-bn\le l_2<l_1} a_{i,l_2,j}^2 -4 +O(1/(bn)) \nonumber \\
    & = 8+O(1/(bn)).
\end{align}
For $0<c\le1$, we have
\begin{align}
    \label{eq_cov1_0c1}
    & \quad (bn)^2\EE (x_{i,j}x_{i+cbn,j}) \nonumber \\
    & = \sigma_j^{-4}\EE\Big[\Big(\sum_{l\le i+bn} a_{i,l,j}\eta_{l,j}\Big)^2\Big(\sum_{l\le i+(1+c)bn} a_{i,l,j}\eta_{l,j}\Big)^2\Big] -4+O(1/(bn))\nonumber \\
    & = \sigma_j^{-4}\EE\Big[\Big(\sum_{l=i-bn}^{i-(1-c)bn} a_{i,l,j}\eta_{l,j} + \sum_{l=i-(1-c)bn+1}^{i} a_{i,l,j}\eta_{l,j} \nonumber \\
    & \quad + \sum_{l=i+1}^{i-1+cbn} a_{i,l,j}\eta_{l,j} + \sum_{l=i+1+cbn}^{i+bn} a_{i,l,j}\eta_{l,j} \Big)^2 \nonumber \\
    & \quad \cdot \Big(\sum_{l=i-(1-c)bn+1}^{i} a_{i,l,j}\eta_{l,j} + \sum_{l=i+1}^{i-1+cbn} a_{i,l,j}\eta_{l,j} \nonumber \\
    & \quad + \sum_{l=i+1+cbn}^{i+bn} a_{i,l,j}\eta_{l,j} + \sum_{l=i+bn+1}^{i+(1+c)bn} a_{i,l,j}\eta_{l,j} \Big)^2\Big] - 4+O(1/(bn)) \nonumber \\
    & =  18c^2-24c+8 + O(1/(bn)).
\end{align}
Similarly, for $1<c\le2$, it follows that
\begin{align}
    \label{eq_cov1_1c2}
    & (bn)^2\EE (x_{i,j}x_{i+cbn,j}) \nonumber \\
    = &  \sigma_j^{-4}\EE\Big[\Big(\sum_{l=i-bn}^{i-1} a_{i,l,j}\eta_{l,j} + \sum_{l=i}^{i+(c-1)bn-1} a_{i,l,j}\eta_{l,j} + \sum_{l=i+(c-1)bn}^{i+bn} a_{i,l,j}\eta_{l,j}  \Big)^2 \nonumber \\
    & \cdot \Big(\sum_{l=i+(c-1)bn}^{i+bn} a_{i,l,j}\eta_{l,j} + \sum_{l=i+bn+1}^{i-1+cbn} a_{i,l,j}\eta_{l,j} + \sum_{l=i+cbn}^{i+(c+1)bn} a_{i,l,j}\eta_{l,j} \Big)^2\Big] -4 +O(1/(bn)) \nonumber \\
    = & 2c^2-8c+8 + O(1/(bn)),
\end{align}
which along with a similar argument for $c>2$ and expression (\ref{eq_cov1_0c1}) completes the proof. 
\end{proof}

In cases with weak cross-sectional dependence, calculating the covariance matrix for $x_{i,j}$, as defined in (\ref{eq_thm1_x}), is considerably more complex. We present the calculation procedures only for the scenario where $\epsilon_t$ is linear, as defined in (\ref{eq_epsilon_linear}), in the lemma below. The nonlinear case can be addressed in a similar manner, and the same dominant terms as the linear case can be obtained. We choose not to include this part here for brevity.

\begin{lemma}[Covariance matrix of the dependent summands in $\Q_n$]
    \label{lemma_cov_thm3}
    For $bn+1\le i\le n-bn$ and $1\le j\le p$, $x_{i,j}$ is defined in expression (\ref{eq_thm1_x}). If the dependence between $x_{j}$ and $x_{j+h}$ satisfies Assumption \ref{asm_sec_dep}, then, for $c\ge 0$, we have 
    \begin{align}
        & (bn)^2\EE (x_{i,j}x_{i+cbn,j+h}) \nonumber \\
        &=\begin{cases}
            (15c^2-20c+8)(\tilde A_{0,j,\cdot}^{\top}\tilde A_{0,j+h,\cdot})^2/(\sigma_j^2\sigma_{j+h}^2) + 3c^2-4c +O(1/(bn)), & 0<c\le 1, \\
            (3c^2-12c+12)(\tilde A_{0,j,\cdot}^{\top}\tilde A_{0,j+h,\cdot})^2/(\sigma_j^2\sigma_{j+h}^2) -c^2+4c -4+O(1/(bn)), & 1< c\le 2,\\
            O(1/(bn)), & c>2. \nonumber 
        \end{cases}
    \end{align}
\end{lemma}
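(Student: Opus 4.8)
The plan is to compute $(bn)^2\EE(x_{i,j}x_{i+cbn,j+h})$ by following the same template used in the proof of Lemma \ref{lemma_cov_thm1}, but now tracking the cross-sectional covariance terms that no longer vanish. First I would expand each $x_{i,j}$ via its linear martingale-difference representation. Recall from expression (\ref{eq_thm1_x_linear}) that, under the null, $bn\,x_{i,j}=\sigma_j^{-2}\E_0\big[(\sum_{l\le i+bn-1}a_{i,l,j}\eta_{l,j})^2\big]$ where $a_{i,l,j}$ is the triangular-array coefficient in (\ref{eq_thm11_defa}) and $\E_0(\cdot)=\cdot-\EE(\cdot)$. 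With cross-sectional dependence, however, $\epsilon_{t,j}=\sum_{k\ge0}A_{k,j,\cdot}^{\top}\eta_{t-k}$ involves the full innovation vector $\eta_{t-k}\in\RR^{\tilde p}$, so one must write $bn\,x_{i,j}=\sigma_j^{-2}\E_0\big[(\sum_{l}G_{i,l,j,\cdot}^{\top}\eta_l)^2\big]$ with $G_{i,l,j,\cdot}$ the analogue of (\ref{eq_thm31_defG}) but without the block truncation (simply $G_{i,l,j,\cdot}=(bn)^{-1/2}\sigma_j^{-1}(\sum_{t=(i-bn)\vee l}^{i-1}A_{t-l,j,\cdot}-\sum_{t=i\vee l}^{i+bn-1}A_{t-l,j,\cdot})$ for $l\le i-1$, and the one-sided version otherwise). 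The product $x_{i,j}\,x_{i+cbn,j+h}$ then becomes a double sum over $l_1,l_2$ (for the first factor) and $r_1,r_2$ (for the second factor), and taking expectations leaves only pairings among the four indices.

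Second, I would carry out the Wick/Isserlis-type bookkeeping: $\EE[\E_0(\cdot)\E_0(\cdot)]$ of the two quadratic forms splits into the ``$\rho^2$'' contributions (where the index of the first factor pairs with that of the second) and the ``plain'' contribution that survives even when $h$ is large. Using $\EE(\eta_l\eta_l^{\top})=I_{\tilde p}$ and independence across $l$, the surviving terms are, schematically, $2\sum_{l}(G_{i,l,j,\cdot}^{\top}G_{i+cbn,l,j+h,\cdot})^2$ plus the self-convolution terms $\big(\sum_l G_{i,l,j,\cdot}^{\top}G_{i,l,j,\cdot}\big)\big(\sum_r G_{i+cbn,r,j+h,\cdot}^{\top}G_{i+cbn,r,j+h,\cdot}\big)$-type pieces; the former concentrates, as $bn\to\infty$, on $\tilde A_{0,j,\cdot}^{\top}\tilde A_{0,j+h,\cdot}$ weighted by an overlap factor in $c$, while the latter concentrates on products of $\tilde A_{0,j,j}$-type long-run variances, i.e.\ on $1$ after normalization by $\sigma_j^2\sigma_{j+h}^2$. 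The overlap factors are exactly the piecewise polynomials already appearing in $g(\zeta)$ of (\ref{eq_def_g}): for $0<c\le1$ one splits the index ranges into the four overlap regions as in (\ref{eq_cov1_0c1}), counts which cross-products of $\eta$'s survive, and collects the coefficients $15c^2-20c+8$ for the $\tilde\rho^2$ part and $3c^2-4c$ for the plain part; for $1<c\le2$ one repeats with the three-region split as in (\ref{eq_cov1_1c2}) to get $3c^2-12c+12$ and $-c^2+4c-4$; for $c>2$ there is no overlap and everything is $O(1/(bn))$. Throughout, Assumption \ref{asm_temp_dep} controls the edge effects near the window boundaries, giving the $O(1/(bn))$ error exactly as in (\ref{eq_cov1_exp_part1})--(\ref{eq_cov1_exp}), and Assumption \ref{asm_sec_dep} (more precisely its consequence (\ref{eq_cov_dep_decay})) guarantees $\tilde A_{0,j,\cdot}^{\top}\tilde A_{0,j+h,\cdot}/(\sigma_j\sigma_{j+h})=O(|h|^{-\xi})$ so the $\tilde\rho^2$ terms are genuinely small for distant $j,j+h$, which is what later makes $\Xi'$ a sensible covariance.

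Finally, I would identify the normalized long-run correlation $\rho_{j,j+h}=\tilde A_{0,j,\cdot}^{\top}\tilde A_{0,j+h,\cdot}/(\sigma_j\sigma_{j+h})$ (as in (\ref{eq:rhojjpandbdd})), substitute it into the collected coefficients, verify consistency with Lemma \ref{lemma_cov_thm1} by setting $h=0$ (so $\rho_{j,j}=1$, giving $15c^2-20c+8+3c^2-4c=18c^2-24c+8$ and $3c^2-12c+12-c^2+4c-4=2c^2-8c+8$, matching), and assemble the three cases into the stated formula. The main obstacle I anticipate is the combinatorial care in the $c\in(0,1]$ and $c\in(1,2]$ cases: one must correctly enumerate, over the overlapping index blocks of the two windows, which second-moment pairings of the $\eta$'s contribute to the ``square-of-cross-inner-product'' term versus the ``product-of-traces'' term, and get the multiplicities right so that the polynomial coefficients come out as claimed. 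This is purely a matter of careful index accounting — there is no analytic difficulty beyond what already appears in the proof of Lemma \ref{lemma_cov_thm1} — but it is error-prone, and the sanity check against the $h=0$ specialization of Lemma \ref{lemma_cov_thm1} is the key guardrail I would use to confirm the bookkeeping.
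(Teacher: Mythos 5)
Your proposal is correct and follows essentially the same route as the paper's proof: both expand the product of the two centered quadratic forms in the innovations (your $G_{i,l,j,\cdot}$ is the paper's $B_{i,l,j,\cdot}$ up to where the $\sigma_j^{-1}$ factor is placed), separate the surviving pairings into the cross-inner-product-squared part, which concentrates on $(\tilde A_{0,j,\cdot}^{\top}\tilde A_{0,j+h,\cdot})^2/(\sigma_j^2\sigma_{j+h}^2)$, and the product-of-variances part, and then redo the overlap-region index accounting of (\ref{eq_cov1_0c1})--(\ref{eq_cov1_1c2}) for $0<c\le1$ and $1<c\le2$, with boundary effects absorbed into $O(1/(bn))$ via Assumption \ref{asm_temp_dep}. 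Your $h=0$ consistency check against Lemma \ref{lemma_cov_thm1} is exactly the verification the paper performs at the end of its proof, so no gap remains (only note that the pairing bookkeeping needs independence and finite moments rather than Gaussianity, with the diagonal fourth-moment terms absorbed into $O(1/(bn))$, as the paper implicitly does).
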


\begin{proof}
By the definition of $x_{i,j}$ in expression (\ref{eq_thm1_x}), we have 
\begin{align}
    \label{eq_cov3_goal}
    bnx_{i,j} = bn\sigma_j^{-2}\E_0\big[(U_{i,j}^{(l)} - U_{i,j}^{(r)})^2 \big]=\sigma_j^{-2}\Big(\sum_{l\le i+bn} B_{i,l,j,\cdot}^{\top}\eta_l\Big)^2 - \sigma_j^{-2}\EE\Big[\Big(\sum_{l\le i+bn} B_{i,l,j,\cdot}^{\top}\eta_l\Big)^2\Big],
\end{align}
where
\begin{equation}
    B_{i,l,j,\cdot}=
    \begin{cases}
        (bn)^{-1/2}\sum_{t=(i-bn)\vee l}^{i-1}A_{t-l,j,\cdot}-(bn)^{-1/2}\sum_{t=(i+1)\vee l}^{i+bn}A_{t-l,j,\cdot} , & \, \text{if } l\le i-1,\\
        (bn)^{-1/2}\sum_{t=(i+1)\vee l}^{i+bn}A_{t-l,j,\cdot}, & \, \text{if } i\le l\le i+bn.
    \end{cases}
\end{equation}
By applying the same argument as expression (\ref{eq_cov1_exp}) in Lemma \ref{lemma_cov_thm1}, we can get the second part in expression (\ref{eq_cov3_goal}), that is
\begin{equation}
    \label{eq_cov3_exp}
    \sigma_j^{-2}\EE\Big[\Big(\sum_{l\le i+bn} B_{i,l,j,\cdot}^{\top}\eta_l\Big)^2\Big] = 2+O(1/(bn)).
\end{equation}
For the covariance, the main idea of our proof is to extend the proof of the covariance for the temporal direction in Lemma \ref{lemma_cov_thm1} to the spatial direction. Thus, we first consider the case when $c=0$. Note that
\begin{align}
    \label{eq_cov3_var}
    & \quad (bn)^2\EE (x_{i,j}x_{i,j+h}) \nonumber \\
    & =  \sigma_j^{-2}\sigma_{j+h}^{-2}\EE \Big[\Big(\sum_{l\le i+bn} B_{i,l,j,\cdot}^{\top}\eta_l\Big)^2\Big(\sum_{r\le i+bn} B_{i,r,j+h,\cdot}^{\top}\eta_r\Big)^2\Big] -4+O(1/(bn)) \nonumber \\
    & =: \sigma_j^{-2}\sigma_{j+h}^{-2}\EE \Big[\Big(\sum_{l\le i+bn} P_l^{\top}\eta_l\Big)^2\Big(\sum_{r\le i+bn} Q_r^{\top}\eta_r\Big)^2\Big] -4+O(1/(bn)) \nonumber \\
    & = 4\sigma_j^{-2}\sigma_{j+h}^{-2}\Big(\sum_{l\le i+bn}\EE[(P_l^{\top}\eta_l)^2] \sum_{r<l}\EE[(Q_r^{\top}\eta_r)^2]  + 2\sum_{l\le i+bn}\sum_{r<l}\EE [P_l^{\top}\eta_l\eta_r^{\top}P_rQ_l^{\top}\eta_l\eta_r^{\top}Q_r]\Big) \nonumber \\
    & \quad -4+o(1) \nonumber \\
    & =: \sigma_j^{-2}\sigma_{j+h}^{-2}(\III_1^*+\III_2^*) -4+O(1/(bn)).
\end{align}
We shall investigate the two parts $\III_1^*$ and $\III_2^*$ separately. For the part $\III_1$, we have
\begin{align}
    \label{eq_cov3_var_part1}
    \III_1^* = &  4\sum_{l\le i+bn}\EE(P_l^{\top}\eta_l)^2 \sum_{r<l}\EE[(Q_r^{\top}\eta_r)^2] \nonumber \\
    = & 4\sum_{l\le i+bn} \text{tr}(P_lP_l^{\top}\EE[\eta_l\eta_l^{\top}]) \sum_{r<l}\text{tr}(Q_rQ_r^{\top}\EE[\eta_r\eta_r^{\top}]) \nonumber \\
    = & 4\sum_{l\le i+bn} P_l^{\top}P_l \sum_{r<l}Q_r^{\top}Q_r =4\sigma_j^2\sigma_{j+h}^2(1+O(1/(bn))).
\end{align}
For the part $\III_2$, it follows that
\begin{align}
    \label{eq_cov3_var_part2}
    \III_2^* & = 8\sum_{l\le i+bn}\sum_{r<l}\EE [P_l^{\top}\eta_l\eta_r^{\top}P_rQ_r^{\top}\eta_r\eta_l^{\top}Q_l]\nonumber \\
    & = 8\sum_{l\le i+bn} P_l^{\top}Q_l \sum_{r<l}P_r^{\top}Q_r = 8(\tilde A_{0,j,\cdot}^{\top}\tilde A_{0,j+h,\cdot})^2 +O(1/(bn)),
\end{align}
which together with expression (\ref{eq_cov3_var_part1}) yields
\begin{equation}
    \label{eq_cov3_var_result}
    (bn)^2\EE (x_{i,j}x_{i,j+h}) = 8(\tilde A_{0,j,\cdot}^{\top}\tilde A_{0,j+h,\cdot})^2/(\sigma_j^2\sigma_{j+h}^2) +O(1/(bn)).
\end{equation}
Now we are ready to combine the shifts in both temporal and spatial coordinates and evaluate the corresponding covariance. For $0<c\le 1$, we get
\begin{align}
    \label{eq_cov3_0c1}
    & \quad (bn)^2\EE (x_{i,j}x_{i+cbn,j+h}) \nonumber \\ 
    & =  \sigma_j^{-2}\sigma_{j+h}^{-2}\EE\Big[\Big(\sum_{l=i-bn}^{i-(1-c)bn} B_{i,l,j,\cdot}^{\top}\eta_l + \sum_{l=i-(1-c)bn+1}^{i} B_{i,l,j,\cdot}^{\top}\eta_l + \sum_{l=i+1}^{i-1+cbn} B_{i,l,j,\cdot}^{\top}\eta_l \nonumber \\
    & \quad + \sum_{l=i+1+cbn}^{i+bn} B_{i,l,j,\cdot}^{\top}\eta_l \Big)^2\Big(\sum_{r=i-(1-c)bn+1}^{i} B_{i,r,j+h,\cdot}^{\top}\eta_r + \sum_{r=i+1}^{i-1+cbn} B_{i,r,j+h,\cdot}^{\top}\eta_r \nonumber \\
    & \quad + \sum_{r=i+1+cbn}^{i+bn} B_{i,r,j+h,\cdot}^{\top}\eta_r 
    + \sum_{r=i+bn+1}^{i+(1+c)bn} B_{i,r,j+h,\cdot}^{\top}\eta_r \Big)^2\Big] -4 + O(1/(bn)) \nonumber \\
    & = (15c^2-20c+8)(\tilde A_{0,j,\cdot}^{\top}\tilde A_{0,j+h,\cdot})^2\sigma_j^{-2}\sigma_{j+h}^{-2} +3c^2-4c + O(1/(bn)).
\end{align}
Similarly, for $1< c\le 2$, we have
\begin{align}
    \label{eq_cov3_1c2}
    & \quad (bn)^2\EE (x_{i,j}x_{i+cbn,j+h}) \nonumber \\
    & = \sigma_j^{-2}\sigma_{j+h}^{-2}\EE\Big[\Big(\sum_{l=i-bn}^{i-1} B_{i,l,j,\cdot}^{\top}\eta_l + \sum_{l=i}^{i+(c-1)bn-1} B_{i,l,j,\cdot}^{\top}\eta_l + \sum_{l=i+(c-1)bn}^{i+bn} B_{i,l,j,\cdot}^{\top}\eta_l  \Big)^2 \nonumber \\
    & \quad \cdot \Big(\sum_{r=i+(c-1)bn}^{i+bn} B_{i,r,j+h,\cdot}^{\top}\eta_r + \sum_{r=i+bn+1}^{i-1+cbn} B_{i,r,j+h,\cdot}^{\top}\eta_r + \sum_{r=i+cbn}^{i+(c+1)bn} B_{i,r,j+h,\cdot}^{\top}\eta_r \Big)^2\Big] \nonumber \\
    & \quad -4+O(1/(bn)) \nonumber \\
    & = (3c^2-12c+12)(\tilde A_{0,j,\cdot}^{\top}\tilde A_{0,j+h,\cdot})^2\sigma_j^{-2}\sigma_{j+h}^{-2}-c^2+4c-4 +O(1/(bn)),
\end{align}
which together with expression (\ref{eq_cov3_0c1}) shows our desired result. When $h=0$, since $\sigma_j^2=\tilde A_{0,j,\cdot}^{\top}\tilde A_{0,j,\cdot}$, it follows that
$$(\tilde A_{0,j,\cdot}^{\top}\tilde A_{0,j+h,\cdot})^2 = \sigma_j^4,$$
which is a special case with cross-sectional independence, consistent with our result in Lemma \ref{lemma_cov_thm1}.
\end{proof}

\end{appendices}

\end{document}